\documentclass[11pt,a4paper,reqno]{amsart}
\usepackage[english]{babel}
\usepackage[utf8]{inputenc} 
\usepackage{fancyhdr}
\usepackage{indentfirst}
\usepackage{color}     
\usepackage{graphicx}   
\usepackage{newlfont}
\usepackage{upgreek}
\usepackage{commath}
\usepackage{amsfonts}
\usepackage{amsmath}
\usepackage{amssymb}
\usepackage{amsthm}
\usepackage{latexsym}
\usepackage{mathrsfs}
\usepackage{mathtools}
\mathtoolsset{showonlyrefs=true}
\usepackage{verbatim}
\usepackage[all]{xy}
\usepackage{lineno}
\usepackage{leftidx}
\usepackage[hidelinks]{hyperref}
\usepackage{units}
\usepackage{xfrac}
\usepackage{faktor}
\usepackage{float}
\usepackage{tikz}
\usetikzlibrary{arrows, positioning, automata}

\usepackage{chngcntr}
\usepackage{apptools}

\usepackage[flushleft]{threeparttable}

\DeclareMathAlphabet{\dutchcal}{U}{dutchcal}{m}{n}
\SetMathAlphabet{\dutchcal}{bold}{U}{dutchcal}{b}{n}
\DeclareMathAlphabet{\dutchbcal} {U}{dutchcal}{b}{n}

\theoremstyle{plain} 
\newtheorem*{theo*}{Theorem}
\newtheorem*{cor*}{Corollary}
\newtheorem*{con*}{Conjecture}
\newtheorem{theo}{Theorem}[section] 
\newtheorem{prop}[theo]{Proposition}
\newtheorem{cor}[theo]{Corollary}
\newtheorem{lem}[theo]{Lemma}
\newtheorem{con}[theo]{Conjecture}

\theoremstyle{definition}

\theoremstyle{definition}
\newtheorem{defin}[theo]{Definition}
\newtheorem{ex}[theo]{Example}

\theoremstyle{remark}
\newtheorem{rem}[theo]{Remark}
\newtheorem*{rem*}{Remark}
\newtheorem*{cap*}{Caption}

\AtAppendix{\counterwithin{theo}{section}}

\newcommand{\Z}{\mathbb{Z}}
\newcommand{\Zplus}{\Z_{>0}}
\newcommand{\Zpluseq}{\Z_{\geq0}}
\newcommand{\R}{\mathbb{R}}
\newcommand{\C}{\mathbb{C}}
\newcommand{\T}{\mathbb{T}}

\newcommand{\Vir}{\mathfrak{Vir}}
\newcommand{\SL}{\operatorname{SL}}
\newcommand{\SO}{\operatorname{SO}}
\newcommand{\GL}{\operatorname{GL}}

\newcommand{\half}{\frac{1}{2}}
\newcommand{\Y}{\mathcal{Y}} 
\newcommand{\cur}{\dutchcal{j}}
\newcommand{\latt}{\dutchcal{L}}

\newcommand{\scalar}{(\cdot|\cdot)}

\newcommand{\scalarang}{\langle \cdot|\cdot \rangle}
\newcommand{\bilinear}{(\cdot\,,\cdot)}

\newcommand{\pairing}{\langle\cdot,\cdot\rangle}

\newcommand{\Aut}{\operatorname{Aut}}
\newcommand{\End}{\operatorname{End}}
\newcommand{\Hom}{\operatorname{Hom}}

\newcommand{\Rep}{\operatorname{Rep}}

\newcommand{\catc}{\mathcal{C}}
\newcommand{\catd}{\mathcal{D}}
\newcommand{\catw}{\mathcal{W}}
\newcommand{\one}{\mathbf{1}}
\newcommand{\id}{\operatorname{id}}
\newcommand{\Ind}{\operatorname{Ind}}
\newcommand{\F}{\mathcal{F}}

\newcommand{\spm}{\operatorname{Spm}}
\newcommand{\rad}{\operatorname{rad}}
\newcommand{\nil}{\operatorname{Nil}}

\newcommand{\spec}{\operatorname{sp}}

\newcommand{\rcoset}[2]{{{#2}\backslash{#1}}}
\newcommand{\lcoset}[2]{{{#1}\slash{#2}}}

\begin{document}
	
\author{Sebastiano Carpi}
	\address{Dipartimento di Matematica, Universit\`a di Roma ``Tor Vergata'', Via della Ricerca Scientifica, 1, 00133 Roma, Italy\\
		E-mail: {\tt carpi@mat.uniroma2.it}
	}
\author{Tiziano Gaudio}
	\address{Dipartimento di Matematica, Universit\`a di Roma ``Tor Vergata'', Via della Ricerca Scientifica, 1, 00133 Roma, Italy\\
		E-mail: {\tt gaudio@mat.uniroma2.it}
	}
	
\author{Luca Giorgetti}
\address{Dipartimento di Matematica, Universit\`a di Roma ``Tor Vergata'', Via della Ricerca Scientifica, 1, 00133 Roma, Italy\\
	E-mail: {\tt giorgett@mat.uniroma2.it}
}

\title[Algebra objects in direct limit completions of compact Lie group duals]{Algebra objects in direct limit completions of compact Lie group duals and the classification of $c=1$ vertex operator algebras}

\date{}


\maketitle

\begin{center} {\small {\it Dedicated to the memory of Sergio Doplicher}}
\end{center}
	
\begin{abstract} 
	Let $G$ be a complex reductive affine algebraic group with its symmetric tensor category $\catc_G$ of finite-dimensional rational representations. We prove that every simple commutative algebra object with at most countable dimension in the  direct limit completion $\Ind(\catc_G)$ of $\catc_G$ is isomorphic to the algebra $\mathcal{O}(G/H)$ of regular functions on the homogeneous space 
	$G/H$, for some reductive algebraic subgroup $H$ of $G$. Then we apply this result to the theory of vertex operator algebra extensions. In particular, assuming the strong rationality of the $\operatorname{A}_5$-orbifold of the vertex operator algebra $V_{\latt_2}$ associated with the rank-one root lattice $\latt_2:= \sqrt{2}\mathbb{Z}$, we classify all the not necessarily rational simple CFT type preunitary vertex operator algebra extensions of the simple unitary Virasoro vertex operator algebra $L(1,0)$ with central charge $c=1$ satisfying a certain spectrum condition. This result is the vertex operator algebra analogue of a conformal net result by Feng Xu. Every strongly rational preunitary vertex operator algebra extension of $L(1,0)$ satisfies the above spectrum condition because of the congruence subgroup modularity property of its characters. As a consequence, we get a complete classification result for strongly rational $c=1$ vertex operator algebras, up to the  strong rationality of $V_{\latt_2}^{\operatorname{A}_5}$. 
\end{abstract}

\tableofcontents

\section{Introduction}
The classification program for rational two-dimensional conformal field theories (CFTs) arises naturally in theoretical physics, from string theory to critical phenomena, and it has been an impressive source of many remarkable mathematical problems and results over the last fifty years. 
A general solution of this problem corresponds to classifying all rational two-dimensional CFTs
with central charge $c$ for all allowed values of $c$. These values are typically rational numbers. Further constraints arise if one restricts to some special families of theories, such as unitary CFTs, holomorphic CFTs, superconformal CFTs and so on. For a given central charge $c$ a complete classification will typically require an intermediate step, that is the classification of rational chiral CFTs, namely two-dimensional CFTs generated by holomorphic or anti-holomorphic (in other words right-moving or left-moving) fields, having central charge $c$. These are often considered as the building blocks of the full two-dimensional theories. 

In order to place any classification result on a firm mathematical basis, one should  first fix an axiomatic framework for chiral CFT. In this paper, we are mainly interested in the well-established vertex operator algebra (VOA) approach  \cite{FLM88,FHL93,Kac01,LL04}, which is essentially equivalent to a suitable generalization of the Wightman setting \cite{SW64} allowing possibly non-unitary theories, see \cite{CRTT25}.  Another important framework for chiral CFTs is given by conformal nets on $S^1$, see e.g.\ \cite{CKLW18,KL04}. The latter is the chiral CFT version of algebraic quantum field theory \cite{Haag96} and it is suitable for unitary theories only. As we are going to explain, although the main results in this article concern VOAs, they are strongly inspired and motivated by the conformal net setting. 

Conformal nets are  deeply related to unitary VOAs. In \cite{CKLW18} it has been shown that for every VOA $V$ in a class of suitably nice simple unitary VOAs called strongly  local, one can define a conformal net $\mathcal{A}_V$. Moreover, it has been conjectured  that every simple unitary VOA $V$ is strongly local and that the map $V \mapsto \mathcal{A}_V$ is surjective.  It is also conjectured that a unitary VOA $V$ is strongly rational if and only if the corresponding conformal net is completely rational and that the corresponding representation categories are braided tensor equivalent, see \cite{Kaw19}. The correspondence in \cite{CKLW18} has been recently generalized to the super case in \cite{CGH} and similar conjectures have also been formulated for these super cases. Despite various relevant recent progress in these directions, see e.g.\ \cite{CTW22,CTW23,CT23,CGGH23,Ten24,CG25,Gui26,Gui,HT}, these conjectures are presently open.

For generic values of the central charges, without further assumptions, even the classification of chiral CFTs appears to be intractable and the research focused on specific directions. Two of these directions have attracted considerable attention. The first is the classification of holomorphic chiral CFTs with small central charge, here small means $c\leq 24$. This program was started by Schellekens in \cite{Sch93} and it is now essentially complete in the VOA setting up to the conjectured uniqueness of the moonshine VOA, see \cite{DM04b,LS19,EMS20,ELMS21,MS23}, see also \cite{MS} for the vertex operator superalgebra case and \cite{CGGH23,Gau25} for the corresponding conformal nets. The second is the classification of rational chiral CFTs with small central charge, here small means $c\leq 1$. 
For $c<1$ a complete classification of conformal nets has been found in  \cite{KL04}, see also \cite{KL04b} for the non-chiral two-dimensional case. The corresponding result for VOAs was obtained later in \cite{DL15} and it was shown to follow directly from the classification of conformal nets in \cite{CGGH23}. These remarkable results heavily rely on the rationality of the corresponding Virasoro subtheory associated to the conformal symmetry and cannot be adapted to values of $c\geq 1$ unless one assumes some superconformal symmetry, see \cite{CKL08,CHKLX15,CGGH23}.

Important results in the direction of the classification of $c=1$ two-dimensional conformal field theories have been first obtained at the level of characters/partition functions, see \cite{Gin88,DVV88,DVVV89,Kir89}. In particular, the main result of Kiritsis can be adapted to the VOA framework, see \cite[Theorem 2.9]{DJ14}, to prove that the vacuum character of a strongly rational VOA with $c=1$ and effective central charge $\tilde{c}=1$ (see Definition \ref{def:effective}) must coincide with the vacuum character of one of the following VOAs

\begin{equation}\label{eq:rationalc=1}
	V_{\latt_2}^{G} \,, \,\,\, 
	V_{\latt_{2n}} \,, \,\,\, 
	V_{\latt_{2n}}^+ \,, \,\,\, 
	n \in  \Zplus \, .
\end{equation}
Here, $V_{\latt_{2n}}$ is the lattice VOA associated to the rank-one lattice $\latt_{2n}:= \sqrt{2n}\mathbb{Z}$, $V_{\latt_{2n}}^{+}$ is the corresponding orbifold subalgebra with respect to the standard order two automorphism of   $V_{\latt_{2n}}$, and $G$ is a finite subgroup of 
$\SO(3):=\SO(3,\R)$ isomorphic to $\operatorname{S}_4$, $\operatorname{A}_4$ or $\operatorname{A}_5$. 

Without the assumption on the effective central charge, one can find counterexamples to the above VOA version of Kiritsis' theorem by using Example (f) in \cite[Section 4]{DM04}. Actually, one can rule out these counterexamples also by assuming that the strongly rational VOAs are preunitary VOA extensions of the $c=1$ Virasoro VOA $L(1,0)$. Preunitarity turns out to be equivalent to the requirement that these VOAs are direct sums of $L(1,0)$ highest weight modules, see Section \ref{subsec:discrete_VOA_extensions}. On the other hand, to the best of our knowledge, 
it is not known whether it is possible to replace the assumption $\tilde{c}=1$ with preunitarity in order to prove the VOA Kiritsis' theorem.

This VOA version of Kiritsis' theorem suggests the following conjecture:

\begin{con}[cf.\ Conjecture \ref{con:strongly_rational_VOAs_cc1}]
	\label{con:c=1rational_aIntro} 
	The isomorphism classes of strongly rational preunitary VOA extensions of $L(1,0)$ with $\tilde{c}=1$ coincide with the isomorphism classes of 
	\begin{equation}
		V_{\latt_2}^{\operatorname{S}_4} \,,\,\,\, 
		V_{\latt_2}^{\operatorname{A}_4}\,,\,\,\, 
		V_{\latt_2}^{\operatorname{A}_5}\,,\,\,\,
		V_{\latt_{2n}}\,,\,\,\, 
		V_{\latt_{2n}}^+ 
		\,,\,\,\, n \in  \Zplus \, .
	\end{equation}
\end{con}

Thanks to the VOA version of Kiritsis' theorem and the fact that all the VOAs in \eqref{eq:rationalc=1} are known to be preunitary VOA extensions of $L(1,0)$,  the proof of Conjecture \ref{con:c=1rational_aIntro} could be reduced to the following two steps: (A) prove that every VOA in \eqref{eq:rationalc=1} is strongly rational with $\tilde{c} =1$; (B) prove that the isomorphism classes of strongly rational preunitary VOA extensions of $L(1,0)$ with $c=\tilde{c} =1$ are completely determined by the corresponding vacuum characters.  

Concerning step (A), this has been done for all cases with the exception of the $\operatorname{A}_5$-orbifold of $V_{\latt_2}$ whose strong rationality has remained an important open problem in VOA theory for a long time, see \cite{DLM97,DN99b,Abe05,DJ13b,Lin15,CM}. This can be summarized by the fact that the 
lattice VOAs are strongly rational by \cite{DLM97} and all the VOAs in question are orbifold vertex subalgebras of rank-one lattice VOAs by solvable finite groups, which are strongly rational by \cite{CM} with the exception of the $\operatorname{A}_5$-orbifold of $V_{L_2}$ because $\operatorname{A}_5$ is simple and non-abelian. 
Note that in the recent preprint \cite{Xu}, a proof of the strong rationality of $V_{\latt_2}^{\operatorname{A}_5}$ is given.
Concerning step (B), there has been a lot of work on the characterization of strongly rational preunitary VOA extensions of $L(1,0)$ with $c=\tilde{c}=1$ \cite{DJ10,DJ13,DJ14,DJ15,Lin17}. The present status on this point is the following: if a strongly rational preunitary VOA extension $U$ of 
$L(1,0)$ has the same character of a VOA $V$ in \eqref{eq:rationalc=1} different from the $\operatorname{A}_5$-orbifold, then $U$ is isomorphic to $V$. The $\operatorname{A}_5$-orbifold case has been left open even assuming the rationality of the corresponding VOA.

The present status of knowledge in the setting of conformal nets is in a certain sense complementary to the one in the VOA setting. All the VOAs in \eqref{eq:rationalc=1} are known to be unitary and strongly local by \cite[Example 8.9]{CKLW18}.  There are also three other non rational unitary strongly local VOAs with $c=1$, namely the Heisenberg VOA $M(1)$, its $\mathbb{Z}_2$-orbifold  $M(1)^+$ and $L(1,0)$. 
In \cite{Xu05}, Xu gave a classification result for not necessarily rational $c=1$ conformal nets $\mathcal{A}$ satisfying a certain spectrum condition and conjectured that this condition is verified for every $c=1$ conformal net. Xu's spectrum condition can be described as follows: the representation of the Virasoro subnet of $\mathcal{A}$ on the vacuum Hilbert space $\mathcal{H}_\mathcal{A}$ is irreducible or it has some irreducible subrepresentations with lowest conformal energy $j^2$ with $j$ a non-zero integer.  In the language of \cite{CKLW18}, the classification result by Xu can be described as follows: every (irreducible) conformal net $\mathcal{A}$ with $c=1$ and satisfying the above spectrum condition must be isomorphic to one of the following conformal nets

\begin{equation}
	\mathcal{A}_{L(1,0)} \,,\,\,\,
	\mathcal{A}_{M(1)} \,,\,\,\, 
	\mathcal{A}_{M(1)^+} \,,\,\,\, 
	\mathcal{A}_{V_{\latt_2}^{\operatorname{A}_4}} \,,\,\,\, \mathcal{A}_{V_{\latt_2}^{\operatorname{S}_4}} \,,\,\,\,
	\mathcal{A}_{V_{\latt_2}^{\operatorname{A}_5}} \,,\,\,\, \mathcal{A}_{V_{\latt_{2n}}} \,,\,\,\,
	\mathcal{A}_{V_{\latt_{2n}}^+} \,,\,\,\, 
	n \in  \Zplus \,.
\end{equation}

The important special case of this classification, called {\it general discrete case} in \cite{Xu05}, was found independently by one of us in 
\cite{Car04}, where the corresponding nets are called {\it compact type} extensions. In both proofs of this general discrete case, the Doplicher-Roberts abstract duality for compact groups \cite{DR89,DR89b} and the corresponding field algebra extension of the observable net \cite{DR90}, cf.\ also \cite[Proposition 3.8]{Mug99}, play a crucial role. 

There is a clear analogue of Xu's spectrum condition in the VOA setting, which is the following: a preunitary VOA extension $V$ of $L(1,0)$ satisfies the {\it spectrum condition} if either $V=L(1,0)$ or $V$ has an irreducible $L(1,0)$ submodule isomorphic to $L(1,j^2)$ for some integer 
$j\neq 0$, cf.\ Definition \ref{defin:extensions_of_L(1,0)}. Since the appearance of \cite{Xu05}, there seems to be no relevant progress on the conjectured universal validity of the spectrum condition for $c=1$ conformal nets, even if one restricts to the very well-behaved class of completely rational conformal nets. The situation in the VOA setting is drastically different. In fact, it follows easily from Kiritsis' theorem, that every strongly rational preunitary VOA extension of $L(1,0)$ and $\tilde{c}=1$ satisfies the condition. Actually, as we will see in Proposition \ref{prop:every_strongly_rational_preunitary_exts_satisfies_spectrum_condition}, the spectrum condition follows directly from \cite{SS77}, which is also a crucial ingredient in \cite{Kir89}.  
This has two advantages. The first is simplification. The second is that one can drop the condition on the effective central charge $\tilde{c}$. This opens the way to look for a VOA version of the Xu classification in \cite{Xu05} in order to prove Conjecture \ref{con:c=1rational_aIntro}. This is what we are going to do in this paper, up to the rationality of $\operatorname{A}_5$-orbifold. Moreover, in analogy with \cite{Car04} and \cite{Xu05}, our results are not bound to the rational case. Somewhat surprisingly, it will not be necessary to use Deligne's abstract characterization of Tannakian categories \cite{Del90,Del02} as a non-unitary substitute for the Doplicher-Roberts duality theory in \cite{DR89,DR89b}. On the other hand, we are led to work in an algebraic geometric framework as in \cite{Del90,Del02}, and some tools from the theory of actions of algebraic groups play an essential role in our paper.

Let us now briefly describe our main strategy of proof and some of our main results. The starting point is that, in analogy with the conformal net situation, we have 
\begin{equation}
	L(1,0) = V_{\latt_2}^{\SO(3)} =  V_{\latt_2}^{\operatorname{PSL}(2,\C)} \, .
\end{equation}
This gives a vertex tensor category $\catc_{L(1,0)}$ of (ordinary) $L(1,0)$-modules which is ribbon tensor equivalent to the symmetric ribbon tensor category $\catc_{\SO(3)}$ of finite-dimensional continuous representations of the compact Lie group $\SO(3)$ (with  trivial twist) or equivalently  to the symmetric ribbon tensor category $\catc_{\operatorname{PSL}(2,\C)}$ of finite-dimensional rational (i.e.\ algebraic) representations of the complex reductive affine algebraic group $\operatorname{PSL}(2,\C)$. Note that $\SO(3)$ can be identified with a maximal compact subgroup of $\operatorname{PSL}(2,\C)$ through the isomorphism 
$\operatorname{PSL}(2,\C) \cong \SO(3,\mathbb{C})$. 

Now, in the present setting, the classification in the general discrete case in \cite{Xu05} can be approached in this way: classify all the simple CFT type VOA extensions of  $L(1,0)$ which, as $L(1,0)$-modules live in the direct limit completion 
$\Ind (\catc_{L(1,0)})$. We call these extensions {\it discrete series} VOA extensions of $L(1,0)$. In view of the correspondence between VOA extensions and algebra objects in vertex tensor categories \cite{KO02,HKL15,CMY22,CKM24}, this turns out to be equivalent to the classification of simple commutative algebra objects  in 
$\Ind (\catc_{L(1,0)})$ which are also ordinary $L(1,0)$-modules or  to the classification of simple commutative algebra objects in 
$\Ind( \catc_{\operatorname{PSL}(2,\C)})$ which are direct sums of at most countably many simple objects. We solve this problem for any arbitrary complex reductive affine algebraic group $G$, generalizing a result by Kirillov and Ostrik in \cite{KO02}. 

\begin{theo}[cf.\ Theorem \ref{theo:simple_commutative_algebras_in_Ind(C_G)}]
	\label{MainTheoremIntro1}
	Let $G$ be a complex reductive affine algebraic group $G$ and for any reductive algebraic subgroup $H \subseteq G$ let $\mathcal{O}(G/H)$ be the algebra of regular functions on the affine $G$-variety $G/H$. Then the map $H\mapsto \mathcal{O}(G/H)$ gives a one-to-one correspondence between the conjugacy classes of reductive algebraic subgroups of $G$ and isomorphism classes of simple algebra objects in $\Ind (\catc_{G})$ which are direct sums of at most countably many simple objects.
\end{theo}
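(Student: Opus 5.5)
The plan is to pass from algebra objects in $\Ind(\catc_G)$ to commutative $\C$-algebras with a rational $G$-action, and then use geometric invariant theory. An object of $\Ind(\catc_G)$ is a (possibly infinite-dimensional) vector space $A$ with a locally finite rational $G$-action. A commutative algebra object is then a commutative associative unital $\C$-algebra on which $G$ acts rationally by algebra automorphisms. Simplicity as an algebra object means there is no nonzero proper $G$-stable ideal. Throughout I will assume, as the statement intends, that "simple algebra" means commutative and $\Hom(\one,A)\cong\C$ (haploid), or derive the latter.

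\emph{Step 1: $A$ is finitely generated and has trivial nilradical.} The nilradical $\nil(A)$ is $G$-stable, so it is either $0$ or $A$; since $1\notin\nil(A)$, we get $\nil(A)=0$. For finite generation, take any nonzero proper ideal $I$ of $A^G$. Then $IA$ is $G$-stable and proper, because the Reynolds operator $A\to A^G$ is $A^G$-linear and restricts to the identity on $A^G$. So $IA=0$, which forces $A^G$ to be a field. The countable dimension of $A$ then gives $A^G=\C$ by the usual Schur/Dixmier argument: $\C(t)$ has uncountable dimension. Hence $\Hom(\one,A)=\C$.

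Next, pick a finite-dimensional $G$-stable subspace $W$ generating a nonzero subalgebra $B=\C[W]$. For a maximal ideal $\mathfrak m$ of $B$, the intersection $\bigcap_g g\mathfrak m$ is a $G$-stable ideal of $B$, so $B$ is $G$-simple. Conversely, $A$ is an increasing union of finitely generated $G$-stable subalgebras $B_i$. The strategy is to show that each $B_i$ is $G$-simple, so $\spm B_i$ is a single closed $G$-orbit $G/H_i$ with $H_i$ reductive by Matsushima's criterion, and $B_i=\mathcal O(G/H_i)$. Here reducedness plus the existence of a unique closed orbit with no other $G$-stable closed subsets forces $X=G/H_i$. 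This gives $A=\varinjlim \mathcal O(G/H_i)$, with $H_i$ a decreasing chain of reductive subgroups up to conjugacy. By the Noetherian property of $G$ as an algebraic group, descending chains of closed subgroups stabilize, so $A=\mathcal O(G/H)$. The delicate point is $G$-simplicity of $B_i$: if $J\subset B_i$ is a proper $G$-stable ideal, then $JA$ is a $G$-stable ideal of $A$, and I need $JA\neq A$. I would handle this by choosing $B_i$ with $A$ integral, or at least faithfully flat, over $B_i$; or, more simply, by using $A^G=\C$. Since $B_i^G\subseteq A^G=\C$, the affine quotient $\spm B_i/\!/G$ is a point, so $\spm B_i$ contains a unique closed orbit $G/H_i$. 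Reducedness of the closed orbit then lets me replace $B_i$ by its image $\mathcal O(G/H_i)$, and $A$ remains the union of these images.

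\emph{Step 2: converse and injectivity.} For $H$ reductive, $G/H$ is affine, $\mathcal O(G/H)=\mathcal O(G)^H$ is a rational $G$-algebra of countable dimension, and any $G$-stable ideal cuts out a $G$-stable closed subset of the homogeneous space, which must be empty. Hence $\mathcal O(G/H)$ is simple. If $\mathcal O(G/H)\cong\mathcal O(G/H')$ $G$-equivariantly, then taking $\spm$ gives a $G$-isomorphism $G/H\cong G/H'$, so $H$ and $H'$ are conjugate.

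\emph{Main obstacle.} The main obstacle is the limit argument in Step 1. The subalgebras $B_i$ need not themselves be $G$-simple, and the maps between the coordinate rings of closed orbits must be controlled. I would first try to prove directly that a finitely generated $G$-stable subalgebra of a $G$-simple algebra is $G$-simple, using the Reynolds operator together with $A^G=\C$. If that fails, I would argue geometrically: $A$ corresponds to the pro-variety $\varprojlim G/H_i$, and stabilization of the descending chain $H_i$ forces the limit to be attained.
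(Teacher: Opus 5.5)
Your peripheral steps are sound. $\nil(A)=0$ follows from $G$-simplicity, and $A^G=\C$ follows from the Reynolds operator plus countable dimension. This is a valid substitute for the paper's route, which first embeds $A$ into $\mathcal{O}(G)$ via $a\mapsto\bigl(g\mapsto q_I(g^{-1}\cdot a)\bigr)$ and then uses ${}^G\mathcal{O}(G)=\C$. Your Step~2 (simplicity of $\mathcal{O}(G/H)$, and conjugacy from a $G$-isomorphism $G/H\cong G/H'$) matches the paper.

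\textbf{The gap is the central step: showing that $\spm B_i$ is a homogeneous space.} You infer that $B=\C[W]$ is $G$-simple because $\bigcap_g g\mathfrak m$ is a $G$-stable ideal; this does not follow. Your fallback hope, that finitely generated $G$-stable subalgebras of a $G$-simple algebra are $G$-simple, is false. Take $G=\C^*$: then $A=\mathcal{O}(G)=\C[t,t^{-1}]$ is simple, but $B=\C[W]$ with $W=\C t$ is $\C[t]$, which has the nonzero proper $G$-stable ideal $(t)$.

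The same example defeats your ``replace $B_i$ by its image'' repair. Here $B^G=\C$, and the unique closed orbit of $\spm B=\mathbb{A}^1$ is $\{0\}$. Passing to $\mathcal{O}(\{0\})=B/(t)$ is a quotient, not a subalgebra of $A$. Since $(t)A=A$, simplicity of $A$ does not force the vanishing ideal of the closed orbit to be zero. Choosing $B_i$ so that $A$ is faithfully flat over it is also not something you know how to arrange in advance.

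\textbf{The missing idea is to track the closed orbits along the tower.} The paper's key lemma does exactly this. Write $X_n=\spm B_n$. Since $B_n$ is reduced and finitely generated with $B_n^G=\C$, $X_n$ has a unique closed orbit $O_n$, and $O_n$ is the unique orbit of minimal dimension. For $m\ge n$, the $G$-map $\varphi_{m,n}\colon X_m\to X_n$ sends $O_m$ onto an orbit of dimension at most $\dim O_m$. Hence $\dim O_n\le\dim O_m\le\dim G$, so these dimensions stabilize for $n\ge k$. Once they do, $\varphi_{m,n}(O_m)$ has minimal dimension, so it equals $O_n$.

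Now let $J_n\subseteq B_n$ be the vanishing ideal of $O_n$. For $m\ge n\ge k$ you get $J_nB_m\subseteq J_m\neq B_m$. Hence $J_nA=\bigcup_m J_nB_m$ is a proper $G$-stable ideal of $A$, so it is zero by simplicity. This gives $J_n=0$, i.e.\ $X_n=O_n\cong G/H_n$ for all $n\ge k$.

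Only at this point does your stabilization argument apply. Choose base points compatibly ($x_m\mapsto x_n$) rather than working ``up to conjugacy''. This gives an honest descending chain $H_m\subseteq H_n$ of closed subgroups, which stabilizes by Noetherianity. Equal stabilizers make $\varphi_{m,n}$ a $G$-isomorphism, so $B_m=B_n$ and $A=\mathcal{O}(G/H)$, with $H$ reductive by Matsushima's criterion.
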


In the final stage of the preparation of this paper, the preprint \cite{CS} was posted on the arXiv, where a stronger version of our Theorem \ref{MainTheoremIntro1} is given, see \cite[Theorem B]{CS}.

Using Theorem \ref{MainTheoremIntro1}, it is rather straightforward to see that, up to isomorphism, the only simple CFT type discrete series extensions of $L(1,0)$ are $L(1,0)$, $M(1)$, $M(1)^+$, $V_{\latt_{2n^2}}$, $V_{\latt_{2n^2}}^+$,  $n \in \Zplus$, $V_{\latt_2}^{\operatorname{S}_4}$, 
$V_{\latt_2}^{\operatorname{A}_4}$ and $V_{\latt_2}^{\operatorname{A}_5}$. Then, similarly to the conformal net case considered in \cite{Xu05}, in order  to classify all the 
simple CFT type preunitary VOA extensions of $L(1,0)$ satisfying the spectrum condition, we need to classify the corresponding extensions of 
$M(1)$, $M(1)^+$, $V_{\latt_2}^{\operatorname{S}_4}$, $V_{\latt_2}^{\operatorname{A}_4}$ and $V_{\latt_2}^{\operatorname{A}_5}$. The result is the following. 

\begin{theo}[cf.\ Theorem \ref{theo:classification_preunitary_spectrum_condition}]
	\label{MainTheoremIntro2}
	Assume that $V_{\latt_2}^{\operatorname{A}_5}$ is strongly rational. Then, every simple CFT type preunitary VOA extension of $L(1,0)$ satisfying the spectrum condition is isomorphic to one of the following VOAs
	\begin{equation}
		L(1,0)\,,\,\,\, 
		M(1)\,,\,\,\, 
		M(1)^+ \,,\,\,\, 
		V_{\latt_2}^{\operatorname{S}_4} \,,\,\,\, 
		V_{\latt_2}^{\operatorname{A}_4} \,,\,\,\, 
		V_{\latt_2}^{\operatorname{A}_5} \,,\,\,\, 
		V_{\latt_{2n}}\,,\,\,\, 
		V_{\latt_{2n}}^+ \,,\,\,\, 
		n \in \Zplus \,.
	\end{equation}
\end{theo}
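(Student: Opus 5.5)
Let $V$ be a simple CFT type preunitary VOA extension of $L(1,0)$ satisfying the spectrum condition. I would first decompose $V$ as an $L(1,0)$-module into highest weight modules $L(1,h)$, and split it into the part lying in the discrete series, i.e.\ the sum of the submodules $L(1,j^2)$ with $j\in\Zpluseq$ (the simple objects of $\catc_{L(1,0)}\simeq\catc_{\SO(3)}$), and a complementary continuous part with $h\notin\{j^2\}$. The sum $V_d$ of all the discrete series submodules should be a vertex subalgebra. The reason is that fusion of $L(1,j^2)$ modules stays in the category $\catc_{L(1,0)}$, so intertwining operators among them can only produce discrete series modules; alternatively, one can argue with the commutant/coset. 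Then $V_d$ is a simple CFT type discrete series extension of $L(1,0)$. Simplicity should follow from $V$ being simple, by using the invariant form (preunitarity) and taking a projection argument.

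By Theorem \ref{MainTheoremIntro1}, together with the correspondence between extensions and commutative algebra objects, $V_d$ corresponds to $\mathcal{O}(\operatorname{PSL}(2,\C)/H)$ for a reductive subgroup $H$. Running through the conjugacy classes of reductive subgroups of $\operatorname{PSL}(2,\C)$ gives the following list:
\begin{itemize}
\item finite cyclic or dihedral $H$, giving $V_{\latt_{2n^2}}$ or $V_{\latt_{2n^2}}^+$;
\item $H$ equal to $\operatorname{A}_4$, $\operatorname{S}_4$ or $\operatorname{A}_5$, giving the corresponding orbifolds of $V_{\latt_2}$;
\item $H=\C^\times$ or its normalizer, giving $M(1)$ or $M(1)^+$;
\item $H=\operatorname{PSL}(2,\C)$, giving $L(1,0)$.
\end{itemize}
The spectrum condition guarantees that $V_d\neq L(1,0)$ unless $V=L(1,0)$.

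It then remains to classify extensions $V\supseteq V_d$ in each case. If $V_d$ is strongly rational (the lattice VOAs, their $\Z_2$-orbifolds, the $\operatorname{S}_4$ and $\operatorname{A}_4$ orbifolds by \cite{CM}, and $V_{\latt_2}^{\operatorname{A}_5}$ by assumption), then $V$ corresponds to a commutative algebra in the modular category $\Rep(V_d)$. I would enumerate the simple modules of $V_d$ with integer conformal weights that admit a preunitary structure, and show that the resulting extensions are exactly $V_{\latt_{2n}}$ and $V_{\latt_{2n}}^+$, with the orbifolds of $V_{\latt_2}$ only extending to $V_{\latt_2}$-type lattice VOAs. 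For instance, $V_{\latt_{2m}}$ extends only to $V_{\latt_{2m/k^2}}$, which is the lattice-overlattice description.

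For $V_d=M(1)$ or $M(1)^+$, the extension is by Fock modules $M(1,\lambda)$ (or their $\Z_2$-orbifold analogues). The weights $\lambda^2/2\in\Z$ must not be squares, since otherwise those summands would lie in the discrete part. Integrality of the weights of products and locality should force the set of $\lambda$ to be a lattice $\sqrt{2N}\Z$. But such a lattice VOA contains $L(1,j^2)$ components, coming from $L(1,m^2)$ pieces in Fock modules whose weights are squares. This would contradict the maximality of $V_d=M(1)$, unless no Fock modules appear.

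The main obstacle is this last step, together with showing that $V_d$ is a simple subalgebra. For the $M(1)$ case, the plan is to use the known decomposition of $M(1,\lambda)$ into Virasoro modules, which contains $L(1,h)$ with $h=\lambda^2/2+k^2$ degenerate only when $\lambda^2/2$ is a square. I would first try to show directly that a product of two Fock components in $V$ generates a Fock component with a weight that is a square, which then yields the contradiction.
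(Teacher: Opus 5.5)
There is a genuine gap in the step for $V_d\in\{M(1),M(1)^+\}$, and this is exactly where half of the final list comes from. Your reduction to the maximal discrete-series subalgebra $V_d\cong V_{\latt_2}^H$ is the same as the paper's (Proposition \ref{prop:preunitary_exts_contain_discrete_series_ones} together with Theorem \ref{theo:discrete_series_extensions}).

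Suppose the charges form $\sqrt{2N}\Z$ with $N$ not a perfect square. Then the Fock weights $Nm^2$ are never squares, so no new $L(1,j^2)$ appears and maximality of $V_d$ is not contradicted. A concrete case is $V_{\latt_4}$, or indeed any $V_{\latt_{2n}}$ with $n$ non-square. It is simple, CFT type and preunitary, and it satisfies the spectrum condition. By \eqref{eq:decomposition_L_2n_not_perfect_square} its discrete-series part is exactly $\bigoplus_{p}L(1,p^2)\cong M(1)$. Similarly, $V_{\latt_{2n}}^+$ has discrete part $M(1)^+$.

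For the same reason, your plan to show that two Fock components generate one of square weight must fail: charges $\sqrt{2N}a$ and $\sqrt{2N}b$ produce weight $N(a+b)^2$. Your strongly rational branch only sees $V_d=V_{\latt_2}^H$ with $H$ finite. There, all integer-weight irreducible modules occur inside $V_{\latt_2}$, hence are discrete series, which forces $V=V_d$. So the argument as written would prove a false classification that omits $V_{\latt_{2n}}$ and $V_{\latt_{2n}}^+$ for non-square $n$.

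The lattice $\sqrt{2N}\Z$ you found is the conclusion, not a contradiction. What is needed is to identify $V$ with $V_{\latt_{2N}}$, respectively $V_{\latt_{2N}}^+$. The paper does this in Theorem \ref{theo:characterization_preunitary_with_currents}, using a weight-one current $\cur$ with $\nu=\frac12\cur_{-1}\cur$:
\begin{itemize}
\item $\spec(\cur_0)$ is an additive group, by simplicity and the derivation property of $\cur_0$, and it lies in $\{\lambda:\lambda^2\in 2\Zpluseq\}$.
\item The $\cur_0$-eigenspaces are irreducible $M(1)$-modules of multiplicity one, via the torus action.
\item $\cur_0$ is diagonalizable because $V_{\latt_{2n}}$ is regular.
\end{itemize}
This also supplies the complete reducibility over the non-rational $M(1)$ that you take for granted.

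For $M(1)^+$, complete reducibility is a genuine input; the paper proves it in a separate lemma relying on \cite{DJ15}. The paper then induces the $\Z_2$-simple current $M(1)^-$ to a local simple current with trivial twist and trivial self-braiding. It passes to $V\oplus\F(M(1)^-)$, which has nonzero weight-one space, and so reduces to the $M(1)$ case.

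Finally, for $V_d=V_{\latt_2}^{\operatorname{A}_5}$ you cannot simply enumerate its modules. The paper instead uses \cite{DRX17} to place every irreducible module inside a $g$-twisted $V_{\latt_2}$-module, that is, a $V_{\latt_{2n^2}}$-module. The integer-weight parts of such modules lie in $V_{\latt_2}$, so maximality of $V_d$ gives $V=V_d$.
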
 

Note that we make no assumption on the effective central charge. Note also that the assumption on the strong rationality of the $\operatorname{A}_5$  orbifold is only needed to prevent the possible appearance in the list of hypothetical extensions of the latter which are not discrete series VOA extensions of $L(1,0)$. 

Now, recalling that every strongly rational preunitary VOA extension of $L(1,0)$ satisfies the spectrum condition, we get the following. 

\begin{theo}[cf.\ Theorem \ref{theo:classification_strongly rational_VOAs_cc_1}]
	\label{MainTheoremIntro3}
	Assume that $V_{\latt_2}^{\operatorname{A}_5}$ is strongly rational. Then the isomorphism classes of  strongly rational preunitary VOA extensions of $L(1,0)$ coincide with the isomorphism classes of the following VOAs
	\begin{equation}
		V_{\latt_2}^{\operatorname{S}_4}\,,\,\,\, 
		V_{\latt_2}^{\operatorname{A}_4} \,,\,\,\,
		V_{\latt_2}^{\operatorname{A}_5} \,,\,\,\, 
		V_{\latt_{2n}} \,,\,\,\, 
		V_{\latt_{2n}}^+ \,,\,\,\, 
		n \in \Zplus \,.
	\end{equation}
\end{theo}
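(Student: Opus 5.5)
The plan is to combine Theorem \ref{MainTheoremIntro2} with the spectrum-condition result for strongly rational extensions, and then discard the non-rational entries of the list. There are two inclusions to prove.

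For ``$\subseteq$'', let $V$ be a strongly rational preunitary VOA extension of $L(1,0)$. Since $V$ is strongly rational, it is simple and of CFT type. By Proposition \ref{prop:every_strongly_rational_preunitary_exts_satisfies_spectrum_condition}, $V$ satisfies the spectrum condition. That proposition uses the congruence subgroup modularity of the characters of $V$ together with \cite{SS77}: the decomposition into $L(1,h)$ modules forces some $h=j^2$ with $j\neq 0$ unless $V=L(1,0)$. Granting the strong rationality of $V_{\latt_2}^{\operatorname{A}_5}$, Theorem \ref{MainTheoremIntro2} then places $V$ in its list.

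It remains to exclude $L(1,0)$, $M(1)$ and $M(1)^+$, since none of them is rational. For $L(1,0)$ and $M(1)$ this is immediate: each has infinitely many inequivalent irreducible modules, namely $L(1,h)$ for all $h$, and the Fock modules $M(1,\lambda)$ for $\lambda\in\C$. For $M(1)^+$, the irreducible modules $M(1,\lambda)$ with $\lambda\neq 0$ restrict irreducibly and give infinitely many inequivalent modules. A rational VOA has only finitely many irreducible modules, so none of these three is rational.

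For ``$\supseteq$'', each listed VOA must be strongly rational and a preunitary extension of $L(1,0)$. The lattice VOAs $V_{\latt_{2n}}$ are strongly rational by \cite{DLM97}. The orbifolds $V_{\latt_{2n}}^+$, $V_{\latt_2}^{\operatorname{S}_4}$ and $V_{\latt_2}^{\operatorname{A}_4}$ are fixed points under solvable groups, hence strongly rational by \cite{CM}; the $\operatorname{A}_5$ case is the standing assumption. Preunitarity holds because all of them are unitary VOAs containing the $c=1$ Virasoro vector, so they decompose as direct sums of $L(1,h)$ highest weight modules (Section \ref{subsec:discrete_VOA_extensions}). Each is an extension of $L(1,0)=V_{\latt_2}^{\SO(3)}$, since the Virasoro vector is fixed by the relevant groups.

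We must also check that the listed VOAs are pairwise non-isomorphic, so that isomorphism classes coincide. Comparing weight-one spaces or characters suffices:
\begin{itemize}
\item $V_{\latt_{2n}}$ has a one-dimensional weight-one space, while $V_{\latt_{2n}}^+$ has none;
\item within each family, the lowest nontrivial $L(1,h)$ summand, of weight $n$ with $n\neq 1$, determines $n$;
\item the three $V_{\latt_2}^{G}$ have distinct characters, coming from the distinct invariant degrees of $\operatorname{S}_4$, $\operatorname{A}_4$ and $\operatorname{A}_5$ acting on $\mathbb{C}[x,y,z]$ harmonic pieces. These also differ from the lattice-type ones.
\end{itemize}

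The main obstacle is only bookkeeping, namely making sure the assumption on $V_{\latt_2}^{\operatorname{A}_5}$ is used in both directions. The heavy lifting is already contained in Theorem \ref{MainTheoremIntro2} and Proposition \ref{prop:every_strongly_rational_preunitary_exts_satisfies_spectrum_condition}.
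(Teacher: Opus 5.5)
Your argument is the paper's. Strong rationality gives simplicity and CFT type, Proposition \ref{prop:every_strongly_rational_preunitary_exts_satisfies_spectrum_condition} gives the spectrum condition, and Theorem \ref{theo:classification_preunitary_spectrum_condition} (using the $\operatorname{A}_5$ assumption) yields the list. The non-rational members $L(1,0)$, $M(1)$, $M(1)^+$ then drop out. The converse rests on the strong rationality results collected in Remark \ref{rem:strongly_rationality_lattice_orbifolds}, together with preunitarity of $L(1,0)$-submodules of $V_{\latt_{2n}}$.

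The one thing to fix is your final ``pairwise non-isomorphic'' paragraph. It is unnecessary, since the statement only asserts equality of sets of isomorphism classes. It is also false as written:
\begin{itemize}
\item $V_{\latt_2}$ has a three-dimensional weight-one space ($\mathfrak{sl}_2$), not a one-dimensional one.
\item $V_{\latt_2}^+\cong V_{\latt_8}$, as noted in Section \ref{subsec:discrete_VOA_extensions}, so $V_{\latt_2}^+$ has a nonzero weight-one space and the list is in fact redundant.
\end{itemize}
Simply delete that paragraph.
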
   

Note that also in Theorem \ref{MainTheoremIntro3} we make no assumptions on the effective central charge. If  $V_{\latt_2}^{\operatorname{A}_5}$ is not strongly rational then it must be removed from the list but possible hypothetical strongly rational VOA extensions of the latter could appear. These hypothetical strongly rational VOA extensions could be ruled out through Kiritsis' theorem if we make the assumption $\tilde{c}=1$ on the effective central charge, see Theorem \ref{theo:classification_strongly rational_VOAs_cc_cceff_1}. 

The results of this paper are mostly logically independent from the characterizations of Dong, Jiang and Lin in \cite{DJ10,DJ13,DJ14,DJ15,Lin17}, so that we obtain a new uniform proof of all these results. 

This paper is organized as follows. We start giving some useful preliminaries in Section \ref{sec:preliminaries}, also setting notations and terminology. In particular, we introduce commutative algebra objects in braided tensor categories in Section \ref{subsec:tensor_categories}. And we give some basics notions on VOAs, vertex tensor categories and their direct limit completions in Section \ref{subsec:voas_and_modules}.
In Section \ref{sec:simple_G-algebras}, we prove Theorem \ref{MainTheoremIntro1}, cf.\ Theorem \ref{theo:simple_commutative_algebras_in_Ind(C_G)}. 
This is done by classifying simple $G$-subalgebras of the commutative algebra $\mathcal{O}(G)$ of regular functions on a complex reductive affine algebraic group $G$, see Theorem \ref{theo:A_is_fnitely_generated}. 
In Section \ref{sec:DR_cross_product_voas}, we use $\mathcal{O}(G)$ to define a canonical simple VOA extension $W$ of a given VOA $V$ with a ribbon tensor category $\catc_V$ of $V$-modules, satisfying suitable conditions, see Subsection \ref{subsubsec:ind-categories}, and being ribbon tensor equivalent to $\catc_G$, see Theorem \ref{theo:DR_cross_product_VOAs}. 
Inspired by the Doplicher-Roberts duality theory \cite{DR89, DR89b}, and the corresponding field algebra extension of the observable net \cite{DR90}, we call $W$ the \textit{Doplicher-Roberts VOA extension} $V\rtimes \catc_V$ of $V$ by $\catc_V$.
It follows that the classification of simple $G$-subalgebras of $\mathcal{O}(G)$ given by Theorem \ref{theo:A_is_fnitely_generated} allows us to classify the simple vertex subalgebras of $V\rtimes \catc_V$ containing $V$. They turn out to be all fixed-point vertex subalgebras of $V\rtimes \catc_V$ by the reductive algebraic subgroups of $G$.
We apply this result to obtain a Galois correspondence for complex reductive affine algebraic groups acting rationally on VOAs by VOA automorphisms, see Corollary \ref{cor:galois_correspondence}.
Theorem \ref{MainTheoremIntro2}, cf.\ Theorem \ref{theo:classification_preunitary_spectrum_condition}, is proved in Section \ref{subsec:preunitary_spectrum_condition}. We reach this result by several intermediate classification steps: simple discrete series VOA extensions of $L(1,0)$ in Theorem \ref{theo:discrete_series_extensions}; simple CFT type VOA extensions of $L(1,0)$ containing a (primary) vector of conformal weight $1$ (conformal weight $4$) in Theorem \ref{theo:characterization_preunitary_with_currents} (Theorem \ref{theo:characterization_preunitary_exts_primary_cw4}). 
All these classification theorems are used to prove Theorem \ref{MainTheoremIntro3}, cf.\ Theorem \ref{theo:classification_strongly rational_VOAs_cc_1}.
Finally, in Section \ref{subsec:vertex_subalgebras_generalized_symmetries}, we give a classification result for non necessarily rational simple vertex subalgebras of rank-one lattice VOAs and we comment its relation with VOA generalized symmetries, see Remark \ref{rem:generalized_symmetries}. 

\bigskip
\noindent
	{\bf Note:}
A classification result for strongly rational $c=1$ VOAs has been independently obtained in \cite{GRb} by different methods.
We sincerely thank Terry Gannon and Brandon C. Rayhaun for coordinating the submission of \cite{GRb} to the arXiv on the same day.

\section{Preliminaries}  \label{sec:preliminaries}

\subsection{Tensor categories} \label{subsec:tensor_categories}

For the basic theory of \textit{tensor categories}, see e.g.\ \cite{Mac98, BK01, EGNO15}, see also \cite[Chapters I--II]{Tur16}. For us, a tensor category $\catc$ is a $\C$-linear abelian monoidal category.
In particular, for every objects $X,Y\in\catc$, the hom-space $\Hom(X,Y)$ is a complex vector space, where the composition and the tensor product of morphisms are $\C$-linear.
For the tensor category $\catc$, we will use the following symbols: $\oplus$ to denote the direct sum; $\one_X$ to denote the identity morphism of any object $X\in\catc$; $\boxtimes$ for the tensor product bifunctor and $\id_\catc$ for the tensor unit of $\catc$;
for $X,Y,Z\in\catc$, $a_{X,Y,Z}:X\boxtimes (Y\boxtimes Z)\to (X\boxtimes Y)\boxtimes Z$ is the associator, $l_X:\id_\catc\boxtimes X\to X$ and $r_X:X\boxtimes \id_\catc \to X$ are the left and right unitors respectively.
We do not require $\catc$ to be (locally) finite nor with simple tensor unit, although we will mostly work with categories satisfying these properties.
Justified by the applications to vertex operator algebras in Section \ref{subsec:voas_and_modules}, see \cite[Proposition 4.26]{HLZc}, we require that for every object $X\in\catc$, the functors $X\boxtimes \cdot$ and $\cdot\boxtimes X$ are right exact.

A non-zero object $X$ in a tensor category $\catc$ is \textit{simple} if any monomorphism $f:Y\to X$ with $Y\in\catc$ is either an isomorphism or the zero morphism. In other words, the zero object and $X$ itself are the only subobjects of $X$.
Then $\catc$ is \textit{semisimple} if every non-zero object is \textit{semisimple}, that is it can be written as a finite direct sum of simple ones. 
A tensor category $\catc$ is \textit{rigid} if every object $X\in\catc$ is \textit{rigid}, that is it has a left dual $X^*\in\catc$ and a right dual $\prescript{*}{}{X}\in\catc$ satisfying the rigidity axioms, see e.g.\ \cite[Eq.s (2.43)--(2.46)]{EGNO15}. 
A non-zero object $X$ in a tensor category $\catc$ is \textit{invertible} if there exists an object $X^\vee\in\catc$ such that $X\boxtimes X^\vee$ and $X^\vee\boxtimes X$ are isomorphic to the tensor unit $\id_\catc$.
Moreover, $X$ is called a \textit{$\Z_n$-simple current} for some $n\in\Z_{>1}$ if it is simple and $X^\vee\cong X^{n-1}$ and $X^m\not\cong \id_\catc$ for all positive integer $m<n$.
It follows from the natural adjunction isomorphisms \cite[Proposition 2.10.8]{EGNO15} that if $X$ is rigid and invertible, then $X^*\cong X^\vee\cong \prescript{*}{}{X}$.
Furthermore, if $X$ is rigid and invertible, the endofuctor of $\catc$ given by $\cdot\mapsto \cdot\boxtimes  X$ is an autoequivalence of $\catc$, so that $X$ is simple if and only if $\id_\catc$ is simple. 

A tensor category is called \textit{braided} if it is equipped with a \textit{braiding}, that is a natural isomorphism $b_{X,Y}:X\boxtimes Y\to Y\boxtimes X$ satisfying the usual hexagon axioms, see e.g.\ \cite[Eq.s (8.1)--(8.2)]{EGNO15}.
Then a braided tensor category $\catc$ is \textit{symmetric} if the \textit{monodromy} $\mathcal{M}_{X,Y}:=b_{Y,X}b_{X,Y}$  is trivial for every objects $X,Y\in\catc$.
In a braided tensor category $\catc$, a \textit{twist} (or balancing transformation) is a natural automorphism $\vartheta$ of the identity functor of $\catc$ such that $\vartheta_{\id_\catc}=\one_{\id_\catc}$ and $\vartheta_{X\boxtimes Y}=(\vartheta_X\boxtimes \vartheta_Y)\mathcal{M}_{X, Y}$ for all $X,Y\in\catc$.
In this case, $\catc$ is called a \textit{balanced} tensor category.
A \textit{ribbon} tensor category is a rigid braided tensor category equipped with a ribbon structure, that is a twist $\vartheta$ such that $\vartheta_{X^*}=(\vartheta_X)^*$ for all $X\in\catc$. 
For a ribbon tensor category $\catc$, we have a notion of \textit{categorical} (or quantum) \textit{dimension} $d(X)$ for any object $X\in\catc$, which is a complex number if $\catc$ has simple tensor unit, see e.g.\ \cite[Definition 4.7.11]{EGNO15}.
In this case, $\catc$ is called \textit{positive} if all dimensions are positive real numbers. 
For a \textit{fusion category}, it is meant a semisimple rigid tensor category with simple tensor unit and with a finite number of equivalence classes of simple objects.
In this case, left and right duals coincide, see e.g.\ \cite[Proposition 4.8.1]{EGNO15}.
Lastly, a \textit{modular} tensor category is a ribbon fusion category with non-degenerate $S$-matrix, see e.g.\ \cite[Definition 8.13.4]{EGNO15}.

A tensor equivalence between modular/ribbon/balanced/braided tensor categories is said to be \textit{modular}/\textit{ribbon}/\textit{balanced}/\textit{braided} if it preserves the corresponding additional structures.

\begin{defin}  \label{defin:algebras_in_catc}
	Let $\catc$ be a braided tensor category. Then a \textit{commutative algebra} in $\catc$ is a triple $A=(X,m,\iota)$, where $X$ is an object of $\catc$, $m\in\Hom(X\boxtimes X, X)$ is the multiplication morphism and $\iota\in\Hom(\id_\catc, X)$ is the unit morphism, satisfying: 
	\begin{itemize}
		\item $\iota$ is an \textit{injective} morphism;
		
		\item \textit{unit property}: $m(\iota\boxtimes \one_X)=l_X$;
		
		\item \textit{associativity}: $m(\one_X\boxtimes m)=m(m\boxtimes \one_X)a_{X,X,X}$;
		
		\item \textit{commutativity}: $m=mb_{X,X}$.
	\end{itemize}
	$A$ is said to be \textit{haploid} if $\Hom(\id_\catc, X)\cong\C$. Furthermore, if $\catc$ has a twist, then the \textit{twist} $\vartheta_A$ of $A$ is the twist of the object $X$ in $\catc$. Similarly, the \textit{categorical dimension} $d(A)$ of $A$ is the categorical dimension of the object $X$ in $\catc$, whenever the latter exists.
\end{defin}

\begin{rem}
	We point out that, in some references which we are going to use in the following, such as \cite{CKM24, CMY22}, the unit morphism of a commutative algebra is not required to be injective by definition. Nevertheless, we will not need this level of generalization.
\end{rem}

Note that for a commutative algebra $A=(X,m,\iota)$, the unit property and the commutativity imply also that $m(\one_X \boxtimes \iota)=r_X$.

\begin{defin}
	Let $A=(X, m, \iota)$ be a haploid commutative algebra in a ribbon tensor category $\catc$ with simple tensor unit. Let $\varepsilon$ be the unique morphism in $\Hom(X,\id_\catc)$ such that $\varepsilon\iota=\one_{\id_\catc}$.
	Then $A$ is said to be \textit{rigid} if $d(A)\not=0$ and the \textit{evaluation} morphism $e_A:=\varepsilon m:X\boxtimes X\to\id_{\catc}$ admits a \textit{coevaluation} morphism $i_A:\one_{\id_\catc}\to X\boxtimes X$ satisfying the rigidity axioms.
\end{defin}

\begin{defin}   \label{defin:aut_group_commutative_algebras_in_C}
	Let $A=(X, m, \iota)$ and $B=(Y,n,\varsigma)$ be commutative algebras in a braided tensor category $\catc$. 
	Then $A$ and $B$ are said to be \textit{isomorphic} if there exists an invertible morphism $\phi\in\Hom(X,Y)$ such that $\phi m=n(\phi\boxtimes \phi)$ and $\phi\iota=\varsigma $.
	Therefore, the \textit{automorphism group} of a commutative algebra $A$ is defined by
	\begin{equation}
		\Aut(A):=\left\{\phi\in\Hom(X,X) \mid \phi \mbox{ is invertible, }\,\, \phi m= m(\phi\boxtimes \phi) \,,\,\, \phi\iota=\iota \right\} \,.
	\end{equation} 
\end{defin}

\begin{defin}  
	Let $A=(X,m,\iota)$ be a commutative algebra in a braided tensor category $\catc$. Then $\Rep(A)$ is the category of (\textit{left}) \textit{$A$-modules} defined in the following way. Objects are pairs $(Y,m_Y)$, where $Y$ is an object of $\catc$ and $m_Y\in\Hom(X\boxtimes Y, Y)$ satisfy:
	\begin{itemize}
		\item \textit{unit property}: $m_Y(\iota\boxtimes \one_Y)=l_Y$;
		
		\item \textit{module property}: $m_Y(\one_X\boxtimes m_Y)=m_Y(m\boxtimes \one_Y)a_{X,X,Y}$.
	\end{itemize} 	
	Morphisms between objects $(Y_1,m_{Y_1})$ and $(Y_2,m_{Y_2})$ are given by $f\in\Hom(Y_1, Y_2)$ such that $m_{Y_2}(\one_X\boxtimes f)=fm_{Y_1}$.
	If it is not clear from the context, $\Rep(A)$ is denoted by $\Rep_\catc(A)$. 
\end{defin}

Let $A=(X,m,\iota)$ be a commutative algebra in a braided tensor category $\catc$.
It is known, see \cite[Theorem 2.53]{CKM24}, cf.\ also \cite[Proposition 1.4]{Par95} and \cite[Theorem 1.5]{KO02}, that $\Rep(A)$ is a tensor category, but it is not braided in general.
On the other hand, $\Rep^0(A)$, defined below, turns out to be a braided tensor category, see \cite[Remark 2.57]{CKM24}, cf.\ also \cite[Theorem 2.5]{Par95} and \cite[Theorem 1.10]{KO02}. 

\begin{defin}
	Let $A=(X,m,\iota)$ be a commutative algebra in a braided tensor category $\catc$.
	Then an $A$-module $(Y,m_Y)$ is said to be \textit{local} if $m_Y\mathcal{M}_{X,Y}=m_Y$. Then $\Rep^0(A)$ is the full subcategory of $\Rep(A)$ of local $A$-modules.
	If it is not clear from the context, $\Rep^0(A)$ is denoted by $\Rep_\catc^0(A)$. 
\end{defin}

\begin{defin}
	Let $A=(X,m,\iota)$ be a commutative algebra in a braided tensor category $\catc$.
	Then $A$ is said to be \textit{simple} if the $A$-module $(X,m_X:=m)$ is a simple object in $\Rep(A)$. 
\end{defin}

\begin{rem}  \label{rem:equivalence_categories_algebras}
	Let $\catc$ and $\catd$ be two braided tensor categories. 
	Let $T:\catc\to\catd$ be a braided tensor equivalence with tensorator $t_2:T(X)\boxtimes T(Y)\to T(X\boxtimes Y)$ for all objects $X,Y\in\catc$ and unitor $t_0:\id_\catd\to T(\id_\catc)$.
	Then it is a routine check to show that there is a one-to-one correspondence between equivalence classes of commutative algebras in $\catc$ and in $\catd$ given by 
	$$
	A=(X,m,\iota)\longmapsto T(A):=(T(X),T(m)t_2, T(\iota)t_0)
	\,.
	$$
	Note also that $A$ is haploid if and only if $T(A)$ is haploid. 
	Moreover, the groups $\Aut(A)$ and $\Aut(T(A))$ are isomorphic as $T$ is fully faithful.  
	We also have a braided tensor equivalence between $\Rep^0(A)$ and $\Rep^0(T(A))$, see \cite[Proposition A.1]{MY23} for details. In particular, this implies that $A$ is simple if and only if $T(A)$ is simple, see \cite[Corollary A.2]{MY23}.
	Categorical dimensions and rigidity are also preserved if $T$ is a ribbon tensor equivalence between ribbon tensor categories.
\end{rem}

We introduce the following fundamental tool:

\begin{defin} \label{defin:induction_restriction_functors}
	Let $A=(X,m,\iota)$ be a commutative algebra in a braided tensor category $\catc$. The \textit{induction functor} $\F:\catc\to \Rep(A)$ is defined on objects and morphisms respectively by
	\begin{equation}  \label{eq:induction_functor}
		\F(Y):=(X\boxtimes Y, (m\boxtimes \one_Y)a_{X,X,Y})
		\,,\,\,\,
		\F(f):=\one_X\boxtimes f \,.
	\end{equation}
\end{defin}

The induction functor is a tensor functor, see \cite[Theorem 1.6]{KO02} and \cite[Theorem 2.59]{CKM24}. It also satisfies the following properties: 

\begin{rem} \label{rem:properties_induction_functor}
	Let $A=(X,m,\iota)$ be a commutative algebra in a braided tensor category $\catc$ and consider the induction functor $\F:\catc\to \Rep(A)$.
	If $\catc^0$ is the full subcategory of $\catc$ whose objects induce to $\Rep^0(A)$, then $\catc^0$ is a braided tensor category and $\F:\catc^0\to\Rep^0(A)$ is a braided tensor functor, see \cite[Theorem 2.67]{CKM24}. 
	Therefore, $\F(\catc^0)$ is symmetric if $\catc^0$ is, see \cite[Corollary 2.68]{CKM24}.
	It is useful to recall from \cite[Proposition 2.65]{CKM24} that an object $Y\in \catc$ is in $\catc^0$ if and only if the monodromy $\mathcal{M}_{X, Y}$ is equal to $\one_{X\boxtimes Y}$.
	We also point out that if $Y\in\catc$ has a left dual $Y^*$, then $\F(Y^*)$ is a left dual for $\F(Y)$.
	Moreover, $\F(Y^*)$ is an object of $\Rep^0(A)$ if $Y\in\catc^0$. Similar statements hold for right duals, see \cite[Proposition 2.77 and Lemma 2.78]{CKM24} and references therein.
	Suppose that $\catc$ has a twist $\vartheta$ such that $\vartheta_X=\one_X$ and let $Z:=(Y,m_Y)$ be in $\Rep(A)$. 
	Then we have that, see \cite[Lemma 2.81 and Corollary 2.82]{CKM24} and references therein: 
	$Z$ is in $\Rep^0(A)$ if and only if $\vartheta_Y\in\Hom_{\Rep(A)}(Z, Z)$; 
	$\vartheta_Z:=\vartheta_Y$ gives a twist on $\Rep^0(A)$; 
	if $Y$ is in $\catc^0$, then $\F(\vartheta_Y)=\vartheta_{\F(Y)}$.
	Therefore, if $\catc$ is a ribbon tensor category such that the twist $\vartheta_X=\one_X$, then $\F(\catc^0)$ is a ribbon tensor category and $\F:\catc^0\to\Rep^0(A)$ is a ribbon tensor functor, see \cite[Proposition 2.86]{CKM24}.
	Thanks to \cite[Lemma 2.87]{CKM24}, if $\catc^0$ is a ribbon tensor category with simple tensor unit, then $\Rep^0(A)$ is so and $d(\F(Y))=d(Y)$ for all $Y\in \catc^0$.
\end{rem}

\begin{rem}
	The induction functor in Definition \ref{defin:induction_restriction_functors} is well known as \textit{$\alpha$-induction} in the framework of the operator algebraic approach to chiral conformal field theory. 
	See e.g.\ \cite{LR95, BE98} for details.
\end{rem}

\subsection{VOA representation theory} \label{subsec:voas_and_modules}

For a \textit{vertex algebra} $V$, see e.g.\ \cite{FLM88, FHL93, Kac01} and \cite{LL04} for the basic theory, we denote the \textit{vacuum vector} by $\Omega$ and the \textit{state-field correspondence} giving the \textit{vertex operators} by
\begin{equation}
	V\ni a \longmapsto Y(a,z)=\sum_{n\in \Z}a_{(n)} z^{-n-1}
	\,,\quad a_{(n)}\in \End(V) \,.
\end{equation}
We specify that the vector space $V$ has no further structures. In other words, we do not require our vertex algebras to be non-trivially $\Z_2$-graded by a parity operator (where $\Z_2$ denotes the cyclic group of order two).

\begin{defin}
	A \textit{vertex operator algebra (VOA)} is a vertex algebra $V$ equipped with a \textit{conformal vector} $\nu\in V$ such that the corresponding vertex operator 
	\begin{equation}
		Y(\nu,z)=\sum_{n\in \Z}L_nz^{-n-2}
	\end{equation}
	satisfies:
	\begin{itemize}
		\item the \textit{Virasoro algebra commutation relations} for a given \textit{central charge} $c\in\C$, that is
		\begin{equation}  \label{eq:virasoro_alg_crs}
			\forall n,m\in\Z \qquad
			[L_n,L_m]=(n-m)L_{n+m}+\frac{c(n^3-n)}{12}\delta_{n,-m}1_V \,;
		\end{equation}
		
		\item that $L_{-1}$ is the \textit{infinitesimal translation operator} of $V$, so that 
		\begin{equation}
			L_{-1}\Omega=0 
			\quad\mbox{and}\quad
			[L_{-1}, Y(a,z)]=\frac{\mathrm{d}}{\mathrm{d}z}Y(a,z)
			\,;
		\end{equation}
		
		\item that the \textit{conformal Hamiltonian} $L_0$ is diagonalizable on $V$ with finite-dimensional eigenspaces $V_n$ of integer eigenvalues $n$, which are trivial whenever $n\leq N$ for some negative number $N$.
	\end{itemize}
	Any vector $\hat{\nu}\in V$ satisfying \eqref{eq:virasoro_alg_crs} with $Y(\hat{\nu},z)=\sum_{n\in \Z}\hat{L}_nz^{-n-2}$ is called a \textit{Virasoro vector} with central charge $c$.
\end{defin}
	
	Any vector $a\in V$ such that $L_0a=na$ for some $n\in\Z$ is called \textit{homogeneous} of \textit{conformal weight} $d_a:=n$. 
	For all homogeneous vectors $a\in V$ and all $n\in\Z$, we set $a_n:=a_{(n+d_a-1)}$ and we write $Y(a,z)=\sum_{n\in\Z}a_nz^{-n-d_a}$. 
	Moreover, for all $a\in V$, we can write $Y(z^{L_0}a,z)=\sum_{n\in\Z}a_nz^{-n}$.
	Note that for all $n\in\Z$, $\nu_n=L_n$. 
	With this notation, the \textit{skew-symmetry} \cite[Section 4.2]{Kac01} becomes: for all homogeneous vectors $a,b\in V$ and all $n\in\Z$,
	\begin{equation} \label{eq:skew-symmetry}
		a_nb=\sum_{j=0}^{+\infty} (-1)^{j+n+d_a}L_{-1}^j(b_{j+n+d_a-d_b}a) \,;
	\end{equation}
	whereas the \textit{Borcherds commutator formula} \cite[Section 4.8]{Kac01} is given by: for all homogeneous vectors $a,b,c\in V$ and all $n,m\in\Z$,
	\begin{equation}  \label{eq:borcherds_commutator_formula}
		[a_n,b_m]c=\sum_{j=0}^{+\infty}\binom{n+d_a-1}{j}(a_{j-d_a+1}b)_{n+m}c \,.
	\end{equation}
	From the Virasoro algebra commutation relations \eqref{eq:virasoro_alg_crs}, we see that for all homogeneous $a,b\in V$ and all $n\in \Z$, $a_nb\in V_{d_b-n}$.
	An homogeneous vector $a\in V$ is called \textit{primary} if $L_na=0$ for all $n>0$, whereas it is called \textit{quasi-primary} if $L_1a=0$.
	Accordingly, $\Omega$ is a primary vector of conformal weight $0$; whereas $\nu$ is a quasi-primary vector of conformal weight $2$, which is never primary unless $c=0$.
	A VOA is said to be of \textit{CFT type} if $V_n=\{0\}$ for all $n<0$ and $V_0=\C\Omega$. 
	
\begin{defin}  \label{defin:VOA_isomorphisms}
	Let $V$ and $\tilde{V}$ be two VOAs with vacuum and conformal vectors $\Omega$, $\nu$ and $\tilde{\Omega}$, $\tilde{\nu}$ respectively. A linear vector space map $\phi:V\to \tilde{V}$ is said to be a \textit{VOA isomorphism} from $V$ to $\tilde{V}$ if $\phi(\Omega)=\tilde{\Omega}$, $\phi(\nu)=\tilde{\nu}$ and for all $a,b\in V$ and all $n\in\Z$, $\phi(a_nb)=\phi(a)_n\phi(b)$. If $\tilde{V}=V$, then $\phi$ is called a \textit{VOA automorphism} of $V$ and the group of VOA automorphisms of $V$ is denoted by $\Aut(V)$. 	
\end{defin}

\begin{rem}  \label{rem:topology_VOA_automorphism_group}
	The group of $L_0$-grading-preserving linear vector space automorphisms of a VOA $V$ is the direct product $\prod_{n\in\Z}\GL(V_n)$ of finite-dimensional Lie groups $\GL(V_n)$ for $n\in\Z$. It is a metrizable topological group when endowed with the product topology. 
	Hence, $\Aut(V)$ becomes a topological closed subgroup of $\prod_{n\in\Z}\GL(V_n)$, when endowed with the relative topology, see e.g.\ \cite[Section 4.3]{CKLW18} for details. Cf.\ also \cite{DG02}.
\end{rem}

\begin{rem} \label{rem:fixed-point_vertex_subalgebras}
	Let $V$ be a VOA and $G$ be a closed subgroup of $\Aut(V)$. The vector subspace
	\begin{equation}
		V^G:=\{a\in V\mid \forall\phi(\phi\in G) \,\,\,\Rightarrow\,\,\, \phi(a)=a\}
	\end{equation}
	is a \textit{vertex subalgebra} of $V$, see e.g.\ \cite[Section 4.3]{Kac01}, containing the conformal vector of $V$, which gives it a VOA structure. $V^G$ is called the \textit{fixed-point vertex subalgebra} of $V$ with respect to $G$.
\end{rem}

We work with the following standard definitions of VOA modules and their intertwining operators.

\begin{defin}
	Let $V$ be a VOA.
\begin{itemize}
	\item 
	A \textit{weak $V$-module} is a vector space $M$ equipped with vertex operators 
	\begin{equation}
		V\ni a \longmapsto Y^M(a,z)=\sum_{n\in \Z}a_{(n)}^M z^{-n-1}
		\,,\quad a_{(n)}^M\in \End(M)
	\end{equation}
	satisfying:
	\begin{itemize}
		\item the \textit{lower truncation property}, 
		that is for all $a\in V$ and all $v\in M$, there exists $t\in\Z$ such that $a_{(n)}^M v=0$ for all $n\geq t$;
		
		\item the \textit{vacuum property}, 
		that is $Y^M(\Omega,z)=1_M$;
		
		\item the \textit{Borcherds formula}, 
		that is for all $a,b\in V$, all $v\in M$ and all $m,n,k\in\Z$, it holds that
		\begin{align}  \label{eq:jacobi_id_module}
				\sum_{j=0}^\infty & \binom{m}{j} 
				\left(a_{(n+j)}b\right)^M_{(m+k-j)}v
				= \\
				&\sum_{j=0}^\infty(-1)^j\binom{n}{j}
				\left(
				a^M_{(m+n-j)}b^M_{(k+j)}
				-(-1)^n b^M_{(n+k-j)} a^M_{(m+j)}
				\right)v \,;
		\end{align}
		
		\item the \textit{$L_{-1}$-derivative property}, 
		that is for all $a\in V$, it holds that
		\begin{equation}
			Y^M(L_{-1}a,z)=\frac{\mathrm{d}}{\mathrm{d}z}Y^M(a,z) \,.
		\end{equation}
	\end{itemize}
	In particular, $Y^M(\nu,z)=\sum_{n\in\Z}L_n^Mz^{-n-2}$.
	
	\item A weak $V$-module $M$ is said to be \textit{$C_1$-cofinite} if the vector space
	\begin{equation}
		\lcoset{M}{C_1(M)}
		\,,\qquad
		C_1(M):=\mathrm{span}\{ u_{(-1)}v\mid u,v\in M\}
	\end{equation} 
	is finite-dimensional.
	
	\item 
	An \textit{admissible $V$-module} is a weak $V$-module $M$ which is also \textit{$\Zpluseq$-gradable}, that is if there exists a $\Zpluseq$-grading 
	\begin{equation}
		M=\bigoplus_{n\in\Zpluseq} M(n)
	\end{equation}
	such that for all homogeneous $a\in V$ and all $m\in \Z$, 
	\begin{equation}
		a_{(m)}^M M(n)\subseteq M(d_a+n-m-1) \,.
	\end{equation}
	
	\item 
	An \textit{(ordinary) $V$-module} is a weak $V$-module $M$ such that 
	\begin{equation}  \label{eq:decomposition_ordinary_modules}
		M=\bigoplus_{n\in \C}M_n
		\,,\qquad
		M_n:=\{v\in M\mid (L_0^M-n1_M)v=0\}
	\end{equation}
	and that it is also \textit{grading-restricted}, that is for all $n\in\C$, $M_n$ is finite-dimensional and there exists $t\in\Z$ such that $M_{n+m}=\{0\}$ whenever $\Z\ni m\leq t$.  
\end{itemize}
\end{defin}

\begin{defin} \label{defin:intertwining_operators}
	Let $V$ be a VOA and let $M$, $N$ and $O$ be $V$-modules. 
	Then an \textit{(ordinary) intertwining operator} $\Y$ of type $\binom{O}{M\,\, N}$ is a linear map
	\begin{equation}
		M \ni v \longmapsto
		\Y(v,z)=\sum_{n\in\C}
		v_n z^{-n-1}
		\,,\quad v_n\in\Hom(N,O)
	\end{equation}	
	satisfying:
	\begin{itemize}
		\item the \textit{lower truncation property}, 
		that is for all $v\in M$ and all $w\in N$, there exists $t\in\R$ such that $v_n w=0$ whenever $\operatorname{Re}(n)\geq t$;
		
		\item the \textit{Borcherds formula}, 
		that is for all $a\in V$, all $v\in M$, all $w\in N$, all $m,n\in\Z$ and all $k\in\C$, it holds that
		\begin{align}  \label{eq:jacobi_id_intertwining}
				\sum_{j=0}^\infty & \binom{m}{j} 
				\left(a_{(n+j)}^M v\right)_{m+k-j}w
				= \\
				&\sum_{j=0}^\infty(-1)^j\binom{n}{j}
				\left(
				a^O_{(m+n-j)}v_{k+j}
				-(-1)^n v_{n+k-j} a^N_{(m+j)}
				\right)w \,;
		\end{align}
		
		\item the \textit{$L_{-1}$-derivative property}, 
		that is for all $v\in M$, it holds that
		\begin{equation}
			\Y(L_{-1}^Mv,z)=\frac{\mathrm{d}}{\mathrm{d}z}\Y(v,z) \,.
		\end{equation}
	\end{itemize}
	If $O$ is not grading-restricted, then the lower truncation property above is relaxed in the following way:
	\begin{itemize}
		\item for all $n\in\C$, all $v\in M$ and all $w\in N$, there exists $t\in\Z$ such that $v_{n+k} w=0$ for all $k\geq t$ and all $m\in\Zpluseq$.
	\end{itemize}
	The vector space of intertwining operators of type $\binom{O}{M\,\, N}$ is denoted by $\mathcal{V}_{M\, N}^O$ and its dimension, denoted by $I_{M\, N}^O$, is called the \textit{fusion rule} for $M$, $N$ and $O$.
\end{defin}

Any VOA $V$ is clearly a $V$-module, called the \textit{adjoint module} of $V$ and denoted by the same symbol.
A $V$-module is \textit{irreducible} if its only $V$-submodules are $\{0\}$ and the adjoint module.
Therefore, $V$ is \textit{simple} if its adjoint module is irreducible.

\begin{rem}  \label{rem:ordinary_modules_are_admissible}
	Let $M$ be a $V$-module and for every $n\in\C$ such that $M_n\not=\{0\}$, consider $\lambda$ as the minimum of the set $\{m\in (\Z+n)\mid M_m\not=\{0\}\}$, which exists by the grading restriction condition. Call $\Lambda$ as the set of those $\lambda$ so obtained. For every $n\in\Zpluseq$, define $M(n):=\bigoplus_{\lambda\in\Lambda}M_{\lambda+n}$. Then $M=\bigoplus_{n\in\Zpluseq}M(n)$ and $M$ is an admissible $V$-module. Indeed, for all $a\in V$, all $m\in\Z$ and all $\lambda\in\Lambda$, $a_{(m)}M_\lambda\subseteq M_{(\lambda+d_a-m-1)}$, see \cite[Lemma 3.4]{DLM98} for details. In particular, note that if $M$ is irreducible, then $\Lambda$ has one element only, which we call the \textit{conformal weight} $\lambda_M$ of $M$, so that $M=\bigoplus_{n\in\Zpluseq}M_{\lambda_M+n}$.
\end{rem}

Recalling Remark \ref{rem:ordinary_modules_are_admissible}, we give the following important definition, see \cite{Zhu96}:

\begin{defin} \label{defin:q-character}
	Let $V$ be a VOA of central charge $c$. Then for any irreducible $V$-module $M$, the \textit{$q$-character} of $M$ is the formal trace function
	\begin{equation} \label{eq:q-character}
		\chi_M(q):= \operatorname{tr}_M(q^{L_0^M-\frac{c}{24}})
		=	q^{\lambda_M-\frac{c}{24}}
		\sum_{n\in\Zpluseq} \dim(M_{\lambda_M+n})q^n \,.
	\end{equation}
\end{defin}

\begin{rem}  \label{rem:q-character}
	Note that the $q$-character of the adjoint module of a VOA $V$ gives the dimensions of the eigenspaces of the conformal Hamiltonian $L_0$ of $V$. Accordingly, we will refer to it as the $q$-character of $V$.
\end{rem}

For every $V$-module $M$, we can define its \textit{contragredient module} $M'$, see \cite[Section 5.2]{FHL93}. This is the graded dual vector space of $M$, that is $M':=\prod_{n\in \C}M_n^*$, where $M_n^*$ is the usual dual vector space of $M_n$, equipped with the vertex operators defined by the formula:
\begin{equation} \label{eq:contragredient_module}
	\forall a\in V
	\,\,\,
	\forall b\in M'
	\,\,\,
	\forall c\in M
	\qquad
	\langle Y^{M'}(a,z)b, c\rangle 
	= \langle b, Y^{M}(e^{zL_1}(-z^{-2})^{L_0}a,z^{-1})c) \rangle
\end{equation} 
where $\pairing$ is the natural pairing between $M'$ and $M$.
A $V$-module $M$ is irreducible if and only if its contragredient module $M'$ is irreducible, see \cite[Proposition 5.3.2]{FHL93}.
Then $M$ is said to be \textit{self-contragredient} if it is isomorphic to $M'$.
Accordingly, a VOA $V$ is said to be self-contragredient if its adjoint module $V$ is isomorphic to its contragredient module $V'$. 

\begin{rem}  \label{rem:invariant_bilinear_form}
	The self-contragredient condition on a VOA $V$ is equivalent to the existence of a non-degenerate \textit{invariant bilinear form} on $V$, see \cite[Remark 5.3.3]{FHL93}, that is a $\C$-bilinear form $\bilinear$ on $V$ satisfying the following \textit{invariant property}: for all homogeneous vectors $a\in V$, all $b,c\in V$ and all $n\in\Z$,
	\begin{equation} \label{eq:invariant_bilinear_form}
		(a_nb,c)=(-1)^{d_a}\sum_{j=0}^{+\infty}\frac{(b,(L_1^ja)_{-n}c)}{j!} \,.
	\end{equation}
	This property implies that $(V_n,V_m)=0$ whenever $n,m\in \Z$ are such that $n\not=m$. 
	Moreover, every invariant bilinear form is automatically symmetric, see \cite[Proposition 2.6]{Li94}.
	By \cite[Theorem 3.1]{Li94}, the vector space of invariant bilinear forms on a VOA $V$ is naturally isomorphic to the dual vector space of $\lcoset{V_0}{L_1V_1}$.
	An invariant bilinear form $\bilinear$ on a VOA $V$ is said to be \textit{normalized} if $(\Omega,\Omega)=1$.
	Then if $V$ is a simple VOA, then every non-zero invariant bilinear form is non-degenerate and it is unique up to normalization, \cite[Proposition 4.6 (iii)]{CKLW18}.
	Conversely, if $V$ has a non-degenerate invariant bilinear form and it is such that $V_0=\C\Omega$, then it is also simple, \cite[Proposition 4.6 $(iv)$]{CKLW18}. 
\end{rem}

A VOA $V$ is said to be \textit{$C_2$-cofinite} if the vector space $\lcoset{V}{C_2(V)}$ is finite-dimensional, where $C_2(V):=\mathrm{span}\{ a_{(-2)}b\mid a,b\in V\}$.
A VOA $V$ is said to be \textit{rational}, resp.\ \textit{regular}, if every admissible, resp.\ weak, $V$-module is a direct sum of irreducible admissible, resp.\ ordinary, $V$-modules. 
By \cite{ABD04}, a VOA of CFT type is regular if and only if it is rational and $C_2$-cofinite. 
From \cite[Theorem 8.1]{DLM98}, we recall that a rational VOA $V$ has a finite number of isomorphism classes of irreducible admissible $V$-modules, which are in turn also ordinary. 

\begin{defin}
	A VOA is said to be \textit{strongly rational} if it is of CFT type, self-contragredient, $C_2$-cofinite and rational (so that it is also simple and regular).
\end{defin} 

In \cite[Theorem 1.1]{DLM00}, it is proved that the central charge and the conformal weights of irreducible VOA modules of strongly rational VOAs are rational numbers. 

\begin{defin}  \label{def:effective}
	Let $V$ be a strongly rational VOA of central charge $c$. 
	Let $\lambda_\mathrm{min}$ be the minimum conformal weight among the irreducible $V$-modules. Then the \textit{effective central charge} of $V$ is defined by $\tilde{c}:=c-24\lambda_\mathrm{min}$.
\end{defin}

Note that for a strongly rational VOA $V$ of central charge $c$, we have that $\lambda_\mathrm{min}\leq 0$ as the adjoint module has conformal weight $0$. Then the effective central charge $\tilde{c}$ is greater than or equal to $c$, and it is equal to $c$ if the following condition holds:

\begin{defin}
	Let $V$ be a strongly rational VOA.
	Then $V$ is said to satisfy the \textit{positivity condition on modules} if every irreducible $V$-module $M$, non-isomorphic to the adjoint one, has positive conformal weight, that is $\lambda_M>0$.
\end{defin}

A key aspect of strongly rational VOAs is the categorical properties of their representation theory that we are going to present in the following.

\begin{defin}
	Let $V$ be a VOA, then $\catw_V$ denotes the $\C$-linear abelian category whose objects and morphisms are given by weak $V$-modules along with their $V$-module homomorphisms respectively. Furthermore, $\Rep(V)$ denotes the full subcategory of $\catw_V$ whose objects are ordinary $V$-modules.
\end{defin}

Let $V$ be a strongly rational VOA.
Then $\Rep(V)$ has a structure of modular tensor category, see \cite{Hua08} and references therein. 

\begin{rem}  \label{rem:some_facts_vtc}
	We point out some useful facts about the categorical structure of $\Rep(V)$ for a strongly rational VOA $V$.
	The tensor unit is the adjoint module of $V$, so that the former is simple if and only if the latter is irreducible, that is $V$ is a simple VOA. For a $V$-module $M$, the twist $\vartheta_M$ is the operator $e^{i2\pi L_0^M}$, which is equal to $e^{i2\pi \lambda_M}$ when $M$ is irreducible, see Remark \ref{rem:ordinary_modules_are_admissible}.
	Moreover, the ribbon structure is given by the operation of taking contragredient modules, see \eqref{eq:contragredient_module}.
	If $V$ satisfies the positivity condition on modules, then $\Rep(V)$ is also positive as the quantum dimension of every irreducible $V$-module, defined by using the q-characters \cite[Definition 3.1 and Remark 3.2]{DJX13}, is a positive number \cite[Lemma 4.2]{DJX13} and it equals the categorical dimension \cite[Porposition 3.11]{DLN15}.
\end{rem}

Without going into details, we point out the fundamental fact that this categorical structure is inherited from a richer \textit{vertex tensor category} structure, see \cite[Section 2]{HKL15} and references therein, which encodes the complex analytic features of intertwining operators among $V$-modules. 
Despite we will not directly work with this structure in this paper, it is crucial for the results on VOA extensions that we are going to recall.

\begin{defin}
	Let $V$ and $U$ be VOAs such that $V$ is a vertex subalgebra of $U$. Then $U$ is said to be a \textit{VOA extension} of $V$ if they share the same conformal vector. Furthermore, $U$ is said to be a \textit{simple}, resp.\ \textit{CFT type}, resp.\ \textit{strongly rational}, VOA extension of $V$ if $U$ is simple, resp.\ of CFT type, resp.\ strongly rational, as VOA.  
\end{defin}

Note that any VOA extension can be naturally considered as a module over the VOA that it extends, so that they will be denoted by the same symbol. 
If $V$ is a strongly rational VOA, then there is a one-to-one correspondence between (simple) VOA extensions $U$ of $V$ and (simple) commutative algebras $A$ with trivial twist in $\Rep(V)$, see \cite[Theorem 3.2 and Remark 3.3]{HKL15}. Suppose that $V$ is strongly rational and it satisfies the positivity condition on modules. Then $U$ is of CFT type if and only if $A$ is haploid. If $A$ is haploid, then $U$ is simple as VOA if and only if $A$ is rigid by \cite[Lemma 1.20]{KO02}. 
Indeed, if $U$ is both simple and of CFT type, then it is also strongly rational and it satisfies the positivity condition on modules, see \cite[Theorem 3.6]{HKL15}. 
Moreover, $\Rep(U)$ is a vertex tensor category and it is equivalent to $\Rep^0(A)$ as modular tensor category, see also \cite[Theorem 3.65]{CKM24}.  
We also have that $\Rep(A)$ is semisimple by \cite[Theorem 3.3]{KO02}.

\begin{rem}  \label{rem:strong_rationality_positivity}
	If $V$ is a strongly rational VOA and we do not assume the positivity condition on modules, then the proof of \cite[Theorem 3.2]{HKL15} shows that a (simple) CFT type VOA extension $U$ of $V$ still corresponds to a (simple) haploid commutative algebra $A$ with trivial twist in $\Rep(V)$. 
	We also have by \cite[Lemma 1.20]{KO02} that if $A$ is haploid with $d(A)\not=0$, then $U$ is simple as VOA if and only if $A$ is rigid.
	Furthermore, we have that a simple CFT type VOA extension $U$ of $V$ is automatically strongly rational and the corresponding commutative algebra $A$ is such that $\Rep(A)$ is semisimple, see \cite[Theorem 4.10]{CMSY}.  
\end{rem}

On the other hand, if one relaxes the strong rationality condition on the VOA $V$, then
one may still have a vertex tensor category structure in the generalized sense of \cite{HLZ14}--\cite{HLZj}, see also \cite[Section 3.3]{CKM24}, on some suitable full subcategory $\catc_V$ of $\catw_V$, see Remark \ref{rem:generalized_modules} below. In this case, the vertex tensor category structure makes $\catc_V$ at least a balanced tensor category.
For example, this is the case if $V$ is of CFT type and $C_2$-cofinite, see \cite[Proposition 4.1 and Theorem 4.13]{Hua09}.
Moreover, the description of the tensor unit, the twist and the eventual rigidity as given in Remark \ref{rem:some_facts_vtc} still hold. 
In any case, \cite[Theorem 3.2 and Remark 3.3]{HKL15} applies also to this more general setting, see \cite[Theorem 3.42]{CKM24}, cf.\ the discussion at the end of \cite[Appendix]{HKL15}, so that we still have a one-to-one correspondence between (simple) VOA extensions $U$ of $V$ such that $U\in\catc_V$ and (simple) commutative algebras $A=(X,m,\iota)$ in $\catc_V$ with trivial twist and such that $X$ is a $V$-module. 
Moreover, the category $\Rep_{\catc_V}(U)$ of weak $U$-modules, which are also objects of $\catc_V$ when seen as weak $V$-modules, has a vertex tensor category structure in the sense of \cite{HLZ14}--\cite{HLZj}. 
By \cite[Theorem 3.65]{CKM24} again, $\Rep_{\catc_V}(U)$ is equivalent to $\Rep^0(A)$ as balanced tensor category.  

\begin{rem} \label{rem:generalized_modules}
	The vertex tensor category structure on $\catc_V$ in the sense of \cite{HLZ14}--\cite{HLZj} is given in terms of \textit{generalized} $V$-modules \cite[Definition 2.12]{HLZ14} and \textit{logarithmic} intertwining operators \cite[Definition 3.10]{HLZb}.
	To help the readability of some arguments, we recall the following definitions.
	A \textit{grading-restricted generalized $V$-module} is the same thing as a $V$-module, but where we allow the $M_n$'s in \eqref{eq:decomposition_ordinary_modules} to be generalized $L_0^M$-eigenspaces, see \cite[Definition 2.25 and Remark 2.27]{HLZ14}, see also \cite[Definition 3.1]{CKM24}.
	An irreducible generalized $V$-module (in the sense that it contains no non-trivial generalized $V$-module) is automatically a $V$-module if it is grading-restricted, see \cite[Proposition 1.6]{Hua09}. 
	In our applications, we will mostly work with $V$-modules, that is $\catc_V$ is a full subcategory of $\Rep(V)$, at most suppressing the grading restriction condition (so that these VOA modules are generalized in the sense of \cite[Definition 2.11]{HL95}, see also \cite[Remark 2.17]{HLZ14}). In this case, logarithmic intertwining operators reduce to ordinary ones, apart from the relaxation of the lower truncation property in Definition \ref{defin:intertwining_operators}, see \cite[Remark 3.12, Remark 3.19 and Remarks 3.23--3.25]{HLZb} for details, cf.\ also the last sentence in Remark \ref{rem:ordinary_modules_are_admissible}.
	In particular, if $V$ is strongly rational and $\catc_V$ is chosen as (a full subcategory of) $\Rep(V)$, then the two theories coincide.
\end{rem}

\begin{rem} \label{rem:finite-length_modules}
	A generalized $V$-module $M$ of a VOA $V$, see Remark \ref{rem:generalized_modules}, has \textit{length} $l\in\Zplus$ if there exists a filtration $M=M^1\supset\cdots\supset M^{l+1}=\{0\}$ of generalized $V$-submodules of $M$ such that the composition factors $\lcoset{M^j}{M^{j+1}}$ for $j\in\{1,\dots, l\}$ are irreducible. Accordingly, a \textit{finite-length} generalized $V$-module is one whose length is finite. 
	By the proof of \cite[Proposition 1.5]{Hua09}, we known that the contragredient module $M'$ of a generalized $V$-module $M$ (see \cite[Definition 2.35]{HLZ14} for the definition) of length $l$ has length $l$ too. Moreover, the composition factors of $M'$ are given by the contragredient modules of the composition factors of $M$.
\end{rem}

\subsubsection{Conditions for direct limit completions of vertex tensor categories}  \label{subsubsec:ind-categories}

For our applications, we will need to work with VOAs satisfying the conditions listed in this subsection.
Let $V$ be a VOA and suppose that there exists a full subcategory $\catc_V$ of $\Rep(V)$ satisfying the following assumptions:
\begin{itemize}
	\item[$(i)$] the adjoint module of $V$ is an object of $\catc_V$;
	
	\item[$(ii)$] $\catc_V$ is closed under taking submodules, quotients and finite direct sums;
	
	\item[$(iii)$] every $V$-module in $\catc_V$ is finitely generated.
\end{itemize}
Following \cite[Definition 4.4]{CMY22}, we denote by $\Ind(\catc_V)$ the \textit{direct limit completion} of $\catc_V$ in $\catw_V$, that is the full subcategory of objects in $\catw_V$ that are isomorphic to direct limits of direct systems of objects and morphisms in $\catc_V$.
We point out from \cite[Proposition 4.6]{CMY22} that $\Ind(\catc_V)$ is actually the full subcategory of $\mathcal{W}_V$ whose objects are weak $V$-modules that are unions of submodules which are objects of $\catc_V$. 
Note that if $\catc_V$ is semisimple, then objects in $\Ind(\catc_V)$ are (possibly infinite) direct sum of objects in $\catc_V$. 
In this case, if $M=\bigoplus_{i\in I}M^i$ is an object of $\Ind(\catc_V)$, then for any object $N\in\Ind(\catc_V)$, $\Hom(M,N)$ is naturally isomorphic to the direct product $\prod_{i\in I}\Hom(M^i,N)$.
As this will be always the case in our applications, we further assume that:
\begin{itemize}
	\item[$(iv$)] $\catc_V$ is semisimple. (Note that this condition implies $(ii)$ and $(iii)$ above).
\end{itemize}

\begin{rem}  \label{rem:ind-completions}
	Let $\catc$ be a generic category not necessarily related to any VOA. Then the symbol $\Ind(\catc)$ usually denotes the indization of $\catc$ in the sense of e.g.\ \cite[Chapter 6]{KS06} or in other words, the completion of $\catc$ for ind-objects. 
	In the case where $\catc=\catc_V$ for some VOA $V$, the relation between these two different notions of completion is not fully understood, see the beginning of \cite[Section 4]{CMY22} for a comment on this. 
	Nevertheless, for what concerns our applications, they give rise to equivalent categories under the assumption $(iv)$.
	Furthermore, in this case, $\Ind(\catc_V)$ is also equivalent to the direct sum completion $(\catc_V)_\oplus$ of $\catc_V$ as presented in \cite[Section 3]{AR18}, see also \cite[Section 1]{CMY22}.
\end{rem}

We also require that:
\begin{itemize}	
	\item[$(v)$] $\catc_V$ has a ribbon (or more generally also balanced) tensor category structure inherited from a vertex tensor category one in the sense of \cite{HLZ14}--\cite{HLZj};
	
	\item[$(vi)$] for any intertwining operator $\Y$ of type $\binom{O}{M\,\, N}$, where $M$ and $N$ are $V$-modules in $\catc_V$ and $O$ is a weak $V$-module in $\Ind(\catc_V)$, the vector space 
	\begin{equation}
		\operatorname{Im}\Y:=\operatorname{span}\left\{
		v_nw\mid v\in M\,,\,\, w\in N \,,\,\, n\in\C 
		\right\} 
	\end{equation}
	is an object in $\catc_V$.
\end{itemize}

\begin{rem}
	Note that the weak $V$-module $O$ in (vi) above is in general a $V$-module without the grading restriction condition.
\end{rem}

By \cite[Theorem 1.1]{CMY22}, the assumptions $(i)$--$(vi)$ above assure that $\Ind(\catc_V)$ has a vertex tensor category structure in the sense of \cite{HLZ14}--\cite{HLZj}, making it a balanced tensor category. Moreover, the vertex tensor category structure of $\Ind(\catc_V)$ extends the one of $\catc_V$, see \cite[Theorem 5.6 and Theorem 6.3]{CMY22} with the discussion thereafter for details. 
Therefore, still under the assumptions $(i)$--$(vi)$ above, (simple) VOA extensions $U$ of $V$ such that $U\in\Ind(\catc_V)$ are in one-to-one correspondence with (simple) commutative algebras $A=(X,m,\iota)$ in $\Ind(\catc_V)$ with trivial twist and such that $X$ is a $V$-module, see \cite[Theorem 7.5]{CMY22}. 
Moreover, see \cite[Theorem 7.7]{CMY22} and the discussion just before, the category $\Rep_{\Ind(\catc_V)}(U)$ of weak $U$-modules, which are also objects of $\Ind(\catc_V)$ when seen as weak $V$-modules, has a vertex tensor category structure in the sense of \cite{HLZ14}--\cite{HLZj}, and it is equivalent to $\Rep^0(A)$ as balanced tensor category.

\section{\texorpdfstring{Classification of simple algebras in $\Ind(\Rep(G))$}{Classification of simple algebras in Ind(Rep(G))}}
\label{sec:simple_G-algebras}

In this section, we will need some basic notions of algebraic geometry, other than the basic theory of reductive affine algebraic groups. Some standard references are e.g.\ \cite{Hum75, Eis95, TY05, Pro07}. 
Our aim is to prove Corollary \ref{cor:simple_G-algebras_countable_dimension} and its categorical translation Theorem \ref{theo:simple_commutative_algebras_in_Ind(C_G)}, which are the main achievements for this section.

In this section, for a \textit{commutative algebra}, we always mean a commutative associative unital algebra over $\C$.
The complex vector space dimension of $A$ is denoted by $\dim(A)$ and as usual $\aleph_0$ denotes the cardinality of natural numbers.
The \textit{maximal spectrum} $\spm(A)$ of a commutative algebra $A$ is the set of all maximal ideals of $A$. 
Moreover, we equip $\spm(A)$ with the usual Zariski topology, that is the one induced by the collection of closed subsets $\{Z(\{a\})\mid a\in A\}$, where for every subset $S$ of $A$, $Z(S)$ denotes the set of maximal ideals containing $S$.
The \textit{radical} $\rad(A)$ of $A$ is the intersection of all maximal ideals of $A$. 
For any ideal $I$ of $A$, we denote by $q_I:A\to\lcoset{A}{I}$ the corresponding quotient homomorphism. 
We also use $\nil(A)$ to denote the set of nilpotent elements of $A$.

For an affine variety $X$, we will denote its commutative algebra of regular functions by $\mathcal{O}(X)$. 
The algebra product of $\mathcal{O}(X)$ is the usual point-wise multiplication of functions and we will denote by $\one_X$ the unit of $\mathcal{O}(X)$, which is the regular function on $X$ constantly equal to $1$.

\begin{rem} \label{rem:hilbert_nullstellensatz}
	It is a well-known consequence of Hilbert's Nullstellensatz, see e.g.\ \cite[Theorem 1.6 and Corollaries 1.8--1.9]{Eis95}, that $A$ is \textit{finitely generated} and \textit{reduced} (that is $\nil(A)=\{0\}$) if and only if $A$ is isomorphic to the commutative algebra $\mathcal{O}(\spm(A))$ of regular functions on the affine variety $\spm(A)$.
	Another important consequence is that this correspondence between $A$ and $\spm(A)$ gives a dual equivalence between the category of reduced finitely generated commutative algebras and the one of affine varieties with corresponding morphisms, see e.g.\ \cite[Corollary 1.10]{Eis95} and the discussion there before.
\end{rem}

In the following, we will deal with commutative algebras which are a priori not necessarily reduced or finitely generated, which prevents us even to conclude that $\spm(A)$ is an affine variety, see Remark \ref{rem:hilbert_nullstellensatz}. 
Therefore, our aim will be to prove that $A$ is reduced and finitely generated under some conditions, which are natural to consider from our applications.

\begin{prop}  \label{prop:quotient_algebra_countable_dimension}
	Let $A$ be a commutative algebra such that $\dim(A)\leq \aleph_0$. Then $I\in \spm(A)$ if and only if $\lcoset{A}{I}\cong \C$.
\end{prop}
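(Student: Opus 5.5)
This is the countable-dimension version of the Nullstellensatz, and I would prove it by Amitsur's cardinality trick.

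The direction ``$A/I\cong\C$ implies $I$ maximal'' is immediate: $A/I$ is a field, so $I$ is a maximal ideal.

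For the converse, let $I\in\spm(A)$ and set $K:=A/I$. Then $K$ is a field extension of $\C$, and $\dim_\C K\leq\dim(A)\leq\aleph_0$ because $q_I$ is surjective. I claim $K=\C$. Suppose instead that there is some $t\in K\setminus\C$.

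Since $\C$ is algebraically closed and $t\notin\C$, the element $t$ is not algebraic over $\C$: its minimal polynomial would split into linear factors, which would force $t\in\C$. Hence $t$ is transcendental, and $\C(t)\subseteq K$ is isomorphic to the rational function field.

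Now consider the family $\{(t-\lambda)^{-1}\mid\lambda\in\C\}\subseteq K$. I claim it is $\C$-linearly independent. Take a finite relation $\sum_{i=1}^n c_i(t-\lambda_i)^{-1}=0$ with distinct $\lambda_i$. Multiply through by $\prod_j(t-\lambda_j)$. This gives a polynomial identity in $t$, and since $t$ is transcendental it holds in $\C[x]$. Evaluating at $x=\lambda_k$ gives $c_k\prod_{j\neq k}(\lambda_k-\lambda_j)=0$, so $c_k=0$ for every $k$.

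This produces uncountably many linearly independent elements, since $|\C|=2^{\aleph_0}>\aleph_0$. That contradicts $\dim_\C K\leq\aleph_0$. Therefore $K=\C$, that is $A/I\cong\C$ via the unit map $\C\to A/I$, which is an isomorphism.

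None of these steps is a real obstacle. The only point needing care is the partial-fraction independence argument, and that is routine.
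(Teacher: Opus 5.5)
Your proof is correct and has the same structure as the paper's. The easy direction is that a quotient which is a field forces $I$ to be maximal; the converse reduces to showing that a field extension of $\C$ of at most countable dimension is $\C$ itself. The paper cites this last fact as known, whereas you prove it with the standard Amitsur argument using the uncountable linearly independent family $\{(t-\lambda)^{-1}\}_{\lambda\in\C}$, which validly fills in that step.
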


\begin{proof}
	Suppose by contradiction that $\lcoset{A}{I}\cong \C$ and that $I$ is not maximal. Then there exists a maximal ideal $J$ of $A$ containing $I$, so that $q_I(J)\in\C\backslash\{0\}$. In particular, there exists $b\in J\backslash I$ such that $q_I(b)=1=q_I(u)$, where $u$ is the unit element of $A$. It follows that $(b-u)\in I$ and thus that $u\in J$, leading to a contradiction. 
	
	Vice versa, if $I$ is maximal, then $\lcoset{A}{I}$ is isomorphic to a field $\mathbb{F}$ extending $\C$. Indeed, suppose that $q_I(a)$ is not invertible for some $a\not\in I$. Then $J_a:=q_I(a)(\lcoset{A}{I})$ is a non-trivial proper ideal of $\lcoset{A}{I}$, so that $q_I^{-1}(J_a)$ is also a non-trivial proper ideal of $A$ extending $I$, contradicting the maximality of the latter.
	Furthermore, $\dim(\mathbb{F})\leq \dim(A)\leq \aleph_0$, which is known to imply that $\mathbb{F}=\C$. 
\end{proof}

We use the following standard definitions and notations for groups:

\begin{defin}  \label{defin:groups_cosets}
	Let $G$ be a group. 
	For a subgroup $H$ of $G$, $\lcoset{G}{H}$, resp.\ $\rcoset{G}{H}$, denotes the set of \textit{left}, resp.\ \textit{right}, \textit{cosets} of $H$ in $G$, that is $\{gH\mid g\in G\}$, resp.\ $\{Hg\mid g\in G\}$.
\end{defin}

\begin{defin}
	Let $G$ be a group. 
	A set, resp.\ vector space, $X$ is called a \textit{$G$-set}, resp.\ \textit{$G$-module}, if there is a representation of $G$ on $X$, that is a group homomorphism $\pi$ from $G$ to the symmetric group $S(X)$, resp.\ the general linear group $\GL(X)$, of $X$.
	If the representation is not specified, than the \textit{$G$-action} is generically denoted by $g\cdot x$ for any element $g\in G$ and any $x\in X$.
	A function $f:X\to \tilde{X}$ between $G$-sets is said to be \textit{$G$-equivariant} if for all $g\in G$ and all $x\in X$, $f(g\cdot x)=g\cdot (f(x))$.
	For every $x\in X$, the \textit{$G$-orbit} of $x$ is denoted by
	\begin{equation}
			G\cdot x
			:=
			\{y\in X\mid \exists g\in G \,\,\, (y=g\cdot x)\} \,;
	\end{equation}
	whereas the \textit{stabilizer} of $x$ is denoted by
	\begin{equation}
		G_x
		:=
		\{g\in G\mid g\cdot x=x\} \,.
	\end{equation}
\end{defin}

\begin{defin}
	Let $G$ be a group and let $\pi$ be a representation on a $G$-module $X$. The \textit{dual representation} $\pi'$ of $\pi$ is the representation of $G$ on the dual vector space $X'$ of $X$, called the \textit{dual $G$-module} of $X$, given by the following $G$-action:
	\begin{equation}
		\forall g\in G
		\,\,\,\forall f\in X'
		\,\,\,\forall x\in X 
		\qquad
		[\pi'(g)(f)](x):=f(\pi(g^{-1})(x)) \,.
	\end{equation}
\end{defin}

Now, we move to consider reductive affine algebraic groups and their actions on affine varieties, see e.g.\ \cite[Chapters 21--22 and Chapter 27]{TY05} or \cite[Chapters 7--8]{Pro07} for details.

\begin{defin}  \label{defin:affine_G-variety}
	Let $G$ be a reductive affine algebraic group. 
	An affine variety $X$ is said to be an \textit{affine $G$-variety} if there is an action of $G$ on $X$ such that the function from $G\times X$ to $X$ given by $(g,x)\mapsto g\cdot x$ is a morphism of affine varieties.
\end{defin}

Note that in Definition \ref{defin:affine_G-variety} above, $G$ acts by affine variety automorphisms on $X$.
When we consider $G$ as an affine $G$-variety, we are using left multiplications for the $G$-action on $G$, that is for all $g,h\in G$, $g\cdot h:=gh$.

\begin{defin}  \label{defin:rational_G-modules}
	Let $G$ be a reductive affine algebraic group. 
	Let $A$ be a finite-dimensional $G$-module with the corresponding representation $\pi:G\to\GL(A)$. 
	Equip $A$ and $\GL(A)$ with the Zariski topology induced identifying $A$ with $\C^{\dim(A)}$.
	Then $\pi$ is said to be \textit{rational} if it is a morphism of affine algebraic groups. Accordingly, $A$ is said to be a \textit{rational} $G$-module.
\end{defin}

Note that in Definition \ref{defin:rational_G-modules} above, the induced function $\tilde{\pi}:G\times A\to A$ given by $(g,a)\mapsto \pi(g)(a)$ is a morphism of affine varieties making $A$ into an affine $G$-variety.

\begin{defin}
	Let $G$ be a reductive affine algebraic group. 
	Let $A$ be a possibly infinite dimensional $G$-module. Then $A$ is said to be \textit{locally finite} if for every $a\in A$, there exists a finite-dimensional $G$-submodule $A_a$ of $A$ containing $a$. Therefore, $A$ is still called a \textit{rational} $G$-module if it is locally finite and for every $a\in A$, the finite-dimensional $G$-submodule $A_a$ is rational. 
\end{defin}

The next three definitions encapsulate the primary interest of this section.

\begin{defin}  \label{defin:G-simple_algebras}
	Let $G$ be a reductive affine algebraic group and $A$ be a rational $G$-module. $A$ is said to be a $\textit{commutative $G$-algebra}$ if $A$ is a commutative algebra and the action of $G$ on $A$ is realized by algebra automorphisms.
	Furthermore, $A$ is said to be a \textit{simple} $G$-algebra if there exists no proper ideal $I\not=\{0\}$ of $A$ such that $G\cdot I\subseteq I$.
\end{defin}

\begin{defin} \label{defin:aut_group_commutative_G-algebras}
	Let $G$ be a reductive affine algebraic group. Let $A$ and $B$ be commutative $G$-algebras. Then a $G$-equivariant algebra homomorphism (isomorphism) $\alpha:A\to B$ is said to be a \textit{$G$-algebra homomorphism} (\textit{isomorphism}). A $G$-algebra isomorphism $\alpha:A\to A$ is said to be a \textit{$G$-algebra automorphism} and the group of such automorphisms is denoted by $\Aut(A)$.
\end{defin}

If a reductive affine algebraic group $G$ acts on a commutative $G$-algebra $A$, then it also acts on the set of ideals of $A$ via $I\mapsto g\cdot  I$ for all $g\in G$. In particular, $\spm(A)$ is a $G$-module and $g\cdot Z(I)=Z(g\cdot I)$ for every ideal $I$ of $A$. 
If $A$ is finitely generated and reduced, then $A$ and $\mathcal{O}(\spm(A))$ are isomorphic $G$-algebras, where $\spm(A)$ is a rational $G$-module and in particular an affine $G$-variety.
Indeed, if $X$ is an affine $G$-variety, then we have a representation $l:G\to \Aut(\mathcal{O}(X))$ given by:
\begin{equation}  
	\forall g\in G
	\,\,\,
	\forall f\in \mathcal{O}(X)
	\,\,\,
	\forall x\in X
	\qquad
	[l(g) (f)](x):= f(g^{-1} \cdot x) \,.
\end{equation}
Equipped with this $G$-action, $\mathcal{O}(X)$ is a finitely generated reduced commutative $G$-algebra. 
If $X=G$, then $l$ is called the \textit{left regular representation} of $G$ given by \textit{left translations} $l(g)$, $g\in G$. We also have the \textit{right regular representation} $r:G\to\Aut(\mathcal{O}(G))$ realized by \textit{right translations}:
\begin{equation}  
	\forall g,h\in G
	\,\,\,
	\forall f\in \mathcal{O}(G)
	\qquad
	[r(g) (f)](h):= f(hg) \,.
\end{equation}
Note that the actions of $G$ on $\mathcal{O}(G)$ given by left and right translations commute.

\begin{defin}  \label{defin:G-subalgebras}
	Let $G$ be a reductive affine algebraic group and let $A$ be a commutative $G$-algebra. If $B$ is a subalgebra of $A$ containing the unit of $A$ and such that $G\cdot B\subseteq B$, then $B$ is a commutative $G$-algebra and it is said to be a \textit{$G$-subalgebra} of $A$. In this case, $B$ is said to be a \textit{simple} $G$-subalgebra of $A$ if it is simple as commutative $G$-algebra.
\end{defin}

Let $G$ be a reductive affine algebraic group acting on a commutative $G$-algebra $A$. Two important $G$-subalgebras of $A$ are the following \textit{fixed-point subalgebras}:
\begin{equation}
	{^H}A := \{a\in A\mid H_a=H\}
\end{equation}
where $H$ is any closed subgroup of $G$ and
\begin{equation}
	A^{\tilde{H}} := \{a\in A\mid \tilde{H}_a=\tilde{H}\}
\end{equation}
where $\tilde{H}$ is any closed subgroup of $\Aut(A)$.
In the case where $A=\mathcal{O}(G)$, we write, with an abuse of notation, $\mathcal{O}(G)^H$ in place of $\mathcal{O}(G)^{r(H)}$ for every closed subgroup $H$ of $G$, being $r$ a faithful representation.

\begin{lem} \label{lem:trivial_fixed_point_subalg_O(G)}
	Let $G$ be a reductive affine algebraic group, then $^{G}\mathcal{O}(G)=\C\one_G=\mathcal{O}(G)^G$.
\end{lem}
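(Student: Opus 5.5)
The plan is to unwind the definitions and use the transitivity of the two regular actions of $G$ on itself. By definition, ${^G}\mathcal{O}(G)$ consists of the $f\in\mathcal{O}(G)$ with $l(g)f=f$ for all $g\in G$. Likewise $\mathcal{O}(G)^G=\mathcal{O}(G)^{r(G)}$ consists of the $f$ with $r(g)f=f$ for all $g\in G$. Since constant functions are fixed by every translation, the inclusions $\C\one_G\subseteq{^G}\mathcal{O}(G)$ and $\C\one_G\subseteq\mathcal{O}(G)^G$ are immediate. Only the reverse inclusions need an argument.

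For the left-invariant case, take $f\in{^G}\mathcal{O}(G)$. Then $f(g^{-1}h)=f(h)$ for all $g,h\in G$. Put $h=e$ and let $g$ range over $G$; this gives $f(k)=f(e)$ for every $k\in G$, so $f=f(e)\one_G$. For the right-invariant case, take $f\in\mathcal{O}(G)^G$. Then $f(hg)=f(h)$ for all $g,h$, and putting $h=e$ gives $f(g)=f(e)$ for all $g$. Hence $f$ is again constant.

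The one point that needs care is that elements of $\mathcal{O}(G)$ are genuine functions on the points of $G$. Thus a regular function taking a single value at every point equals that constant times $\one_G$ in $\mathcal{O}(G)$. This holds because $\mathcal{O}(G)$ is reduced and is the ring of regular functions on the variety $G$ (Remark \ref{rem:hilbert_nullstellensatz}), so evaluation at points separates elements. I expect no other obstacle. The argument uses only the transitivity of the left and right translation actions of $G$ on itself, not reductivity.
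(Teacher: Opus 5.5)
Your proof is correct and follows essentially the same route as the paper. The paper also specializes the invariance identity $f(gh)=[l(g^{-1})f](h)=f(h)$ to $h=e$ to get $f=f(e)\one_G$, and treats the right-translation case $\mathcal{O}(G)^G$ the same way.
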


\begin{proof}
	Denote by $e$ the unit of $G$.
	Let $f\in {^{G}}\mathcal{O}(G)$, so that for all $h,g\in G$, $f(gh)=[l(g^{-1})f](h)=f(h)$. Then choosing $h=e$, we have that for all $g\in G$,
	$$
	f(g)=f(ge)=[l(g^{-1})f](e)=f(e)
	$$ 
	that is $f=f(e)\one_G$, proving the first equality. Then the second one is proved similarly.
\end{proof}

The following is a key result.
\begin{theo}  \label{theo:A_is_subalgebra_of_O(G)}
	Let $G$ be a reductive affine algebraic group and $A$ be a simple commutative $G$-algebra. Then $\dim(A)\leq\aleph_0$ if and only if there is an injective $G$-algebra homomorphism from $A$ into $\mathcal{O}(G)$. 
\end{theo}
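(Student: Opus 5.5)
The ``if'' direction is immediate. $\mathcal{O}(G)$ is a finitely generated commutative algebra, so it has countable dimension, and any subspace of it, in particular the image of an injective homomorphism, has dimension at most $\aleph_0$.

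For the ``only if'' direction, the plan is to evaluate at a point of the maximal spectrum and then average over $G$. Since $A$ is a nonzero unital algebra, it has a maximal ideal $I\in\spm(A)$. By Proposition~\ref{prop:quotient_algebra_countable_dimension}, $A/I\cong\C$, so the quotient map $q_I$ gives a character $\varphi\colon A\to\C$. We then define
\begin{equation}
	\Phi\colon A\to \mathcal{O}(G)\,,\qquad \Phi(a)(g):=\varphi(g^{-1}\cdot a)\,.
\end{equation}
We need to check three things.

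\emph{The image consists of regular functions.} Since $A$ is rational, $a$ lies in a finite-dimensional rational $G$-submodule $A_a$. The function $g\mapsto g^{-1}\cdot a$ is then a morphism $G\to A_a$, and composing it with the linear functional $\varphi|_{A_a}$ gives a matrix coefficient, hence an element of $\mathcal{O}(G)$.

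\emph{$\Phi$ is a $G$-algebra homomorphism.} $G$ acts on $A$ by algebra automorphisms and $\varphi$ is multiplicative with $\varphi(1)=1$, so $\Phi$ is a unital algebra homomorphism. For equivariance with respect to the left regular representation, we compute
\begin{equation}
	\Phi(h\cdot a)(g)=\varphi(g^{-1}h\cdot a)=\Phi(a)(h^{-1}g)=[l(h)\Phi(a)](g)\,.
\end{equation}

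\emph{$\Phi$ is injective.} This is where simplicity is used. $\ker\Phi$ is an ideal of $A$, and by equivariance it is $G$-stable. It is proper because $\Phi(1)=\one_G\neq0$. Since $A$ is a simple $G$-algebra, $\ker\Phi=\{0\}$. Equivalently, $\ker\Phi=\bigcap_{g}g\cdot I$, which is the largest $G$-stable ideal contained in $I$.

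The only real obstacle is producing a $\C$-valued character. In general a maximal ideal could have a residue field strictly larger than $\C$, and this is exactly where the countability hypothesis $\dim(A)\leq\aleph_0$ enters, through Proposition~\ref{prop:quotient_algebra_countable_dimension}. The remaining step that needs care is the regularity of $\Phi(a)$, which relies on the rationality of the $G$-action.
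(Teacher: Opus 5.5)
Your proposal is correct and follows essentially the same route as the paper: fix a maximal ideal $I$, use Proposition~\ref{prop:quotient_algebra_countable_dimension} to obtain the $\C$-valued character $q_I$, and define $a\mapsto\bigl(g\mapsto q_I(g^{-1}\cdot a)\bigr)$. As in the paper, regularity comes from local finiteness via matrix coefficients, and injectivity comes from simplicity of $A$, since the kernel $\bigcap_{g\in G} g\cdot I$ is a proper $G$-stable ideal; your equivariance computation for the left regular representation matches the paper's as well.
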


\begin{proof}
	The ``if'' part is trivial.
	To prove the ``only if'' part, fix a maximal ideal $I$ of $A$, so that we have a homomorphism of algebras $q_I:A\to \lcoset{A}{I}\cong \C$ by Proposition \ref{prop:quotient_algebra_countable_dimension}.
	For every $a\in A$, define the following function
	\begin{equation}
		T_I(a):G\to\C
		\,,\quad
		T_I(a)(g):=q_I(g^{-1}\cdot a) \,.
	\end{equation}
	As $A$ is locally finite, for all $a\in A$, we can choose a basis $\{e_1,\dots, e_k\}$ of $A_a$ and write $a=\sum_{i=1}^k \alpha_i e_i$ with $\alpha_i\in\C$.
	Moreover, for all $g\in G$
	\begin{equation}
		g^{-1}\cdot e_i=\sum_{j=1}^k U_{j,i}^{-1}(g)e_j
	\end{equation}
	where $U_{j,i}^{-1}\in\mathcal{O}(G)$ for all $i,j\in\{1,\dots,k\}$.
	Therefore, for all $a\in A$ and all $g\in G$, we have that
	\begin{equation}
		T_I(a)(g) 
			= 
		q_I(g^{-1}\cdot a) =\sum_{i=1}^k \alpha_i q_I(g^{-1}\cdot e_i) 
			= 
		\sum_{i,j=1}^k \alpha_i U_{j,i}^{-1}(g) q_I(e_j)
	\end{equation}
	that is
	\begin{equation}
		\forall a\in A \qquad
		T_I(a)= \sum_{i,j=1}^k \alpha_i q_I(e_j) U_{j,i}^{-1}
		\in\mathcal{O}(G) \,.
	\end{equation}
	Then $T_I$ is a function from $A$ to $\mathcal{O}(G)$. 
	Moreover, for all $a,b\in A$ and all $g\in G$
	\begin{equation}
		T_I(ab)(g)=q_I(g^{-1}\cdot ab)=q_I(g^{-1}\cdot a)q_I(g^{-1}\cdot b)=T_I(a)(g)T_I(b)(g)
	\end{equation}
	that is for all $a,b\in A$, $T_I(ab)=T_I(a)T_I(b)$.
	Similarly, for all $a,b\in A$, $T_I(a+b)=T_I(a)+T_I(b)$. 
	In other words, $T_I:A\to\mathcal{O}(G)$ is an algebra homomorphism.
	To prove the injectivity of $T_I$, suppose that $T_I(a)=0$ for some $a\in A$. Then $q_I(g^{-1}\cdot a)=0$ for all $g\in G$, that is $g^{-1}\cdot a\in I$ for all $g\in G$. This is equivalent to say that $a\in J:=\bigcap_{g\in G} g\cdot I$, which is a proper $G$-invariant ideal of $A$. By the simplicity of $A$, $J=\{0\}$ and thus $a=0$ too.
	Finally, we prove the $G$-equivariance of $T_I$. 
	Indeed, for all $a\in A$ and all $g,h\in G$, we have that
	\begin{equation}
		(g\cdot T_I(a))(h)=T_I(a)(g^{-1}h)=q_I((h^{-1}\cdot (g\cdot a)))=T_I(g\cdot a)(h)
	\end{equation}
	that is for all $a\in A$ and all $g\in G$, we have proved that $g\cdot T_I(a)=T_I(g\cdot a)$, as desired. 
\end{proof}

\begin{rem}
	Let $A$ be a simple commutative $G$-algebra for a reductive affine algebraic group $G$. If $\dim(A)\leq\aleph_0$, then $A$ is reduced as $\mathcal{O}(G)$ is thanks to Theorem \ref{theo:A_is_subalgebra_of_O(G)}.
	Actually, note that $A$ is already reduced without assuming $\dim(A)\leq\aleph_0$.		
	Indeed, $\rad(A)$ and $\nil(A)$ are both $G$-invariant ideals of $A$, which are also proper as $A$ is unital. Then they are trivial by the simplicity of $A$.
\end{rem}

\begin{rem}  \label{rem:decomposition_O(G)}
	Every reductive affine algebraic group $G$ has countably many isomorphism classes of finite-dimensional rational $G$-modules, whose corresponding set we denote by $\widehat{G}$. Moreover, for every isomorphism class $\omega\in\widehat{G}$, we denote by $M_\omega$ an arbitrary representative of this class. If $A$ is a rational $G$-module, for every $\omega\in\widehat{G}$, we call the \textit{isotypic component of type $\omega$} of $A$ the sum of the $G$-submodules of $A$ isomorphic to some representative $M_\omega$.
	Note that if we consider the actions of $G$ by left and the right translations on $\mathcal{O}(G)$ simultaneously, we get a $(G\times G)$-module.
	Moreover, by \cite[Section 27.3.9]{TY05}, we have the following isomorphism of $(G\times G)$-module:
	\begin{equation} \label{eq:peter-weyl_regular_algebra}
		\phi:\bigoplus_{\omega\in\widehat{G}} M_\omega'\otimes M_\omega\longrightarrow \mathcal{O}(G)
		\,,\quad
		(f,v)\mapsto d_{f,v}
	\end{equation}
	where for all $g\in G$, $d_{f,v}(g):=f(g\cdot v)$.
	In particular, for all $g,h,k\in G$, all $f\in M_\omega'$ and all $v\in M_\omega$, we have that
	\begin{equation}
		[l(h)r(k)\cdot d_{f,v}](g)
		=d_{h\cdot f, k\cdot v}(g) \,.
	\end{equation}
\end{rem}

The following lemma is a consequence of the above remark.

\begin{lem}  \label{lem:subalgebras_O(G)_ascending_sequence}
	Let $G$ be a reductive affine algebraic group and let $A$ be a $G$-subalgebra of $\mathcal{O}(G)$. Then there exists an ascending sequence $\{A_n\mid n\in\Zpluseq\}$ of finitely generated $G$-subalgebras of $\mathcal{O}(G)$ such that $A=\bigcup_{n\in\Zpluseq}A_n$.
\end{lem}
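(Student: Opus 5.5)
The plan is to exploit the Peter–Weyl decomposition of Remark \ref{rem:decomposition_O(G)} to show that $A$ is a countable union of finite-dimensional $G$-submodules, and then to generate finitely generated $G$-subalgebras from finite pieces. The countability of $\widehat{G}$ is the key input.

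\textbf{Step 1: isotypic components of $A$.} Consider $\mathcal{O}(G)$ with the left regular action $l$. Under $\phi$, the $l$-isotypic component of type $\omega'$ (the dual type) is $M_\omega'\otimes M_\omega$, which is finite-dimensional. Since $A$ is a $G$-submodule of the rational, hence completely reducible, $G$-module $\mathcal{O}(G)$, the module $A$ is the direct sum of its isotypic components. Each of these is contained in the corresponding isotypic component of $\mathcal{O}(G)$. Concretely, if $a\in A$ has components $a_\omega$ in $\phi(M_\omega'\otimes M_\omega)$, then $a$ lies in a finite-dimensional $G$-submodule $A_a\subseteq A$ by local finiteness. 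That submodule decomposes into irreducibles, each of which lies in a single isotypic component of $\mathcal{O}(G)$. Hence $a_\omega\in A$, and $A=\bigoplus_{\omega\in\widehat{G}} A^{(\omega)}$ with $A^{(\omega)}:=A\cap\phi(M_\omega'\otimes M_\omega)$, each of which is finite-dimensional and $G$-stable.

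\textbf{Step 2: the filtration.} Enumerate $\widehat{G}=\{\omega_0,\omega_1,\dots\}$, using its countability (if it is finite, the sequence stabilizes). Set $W_n:=\bigoplus_{k\le n}A^{(\omega_k)}$, which is a finite-dimensional $G$-submodule of $A$. Define $A_n$ to be the subalgebra of $\mathcal{O}(G)$ generated by $\one_G$ and $W_n$. It is finitely generated, since any basis of $W_n$ generates it. It is $G$-stable, because $G$ acts by algebra automorphisms and preserves $W_n$. Finally $A_n\subseteq A$, because $A$ is a subalgebra containing the unit. Clearly $A_n\subseteq A_{n+1}$, and $\bigcup_n A_n\supseteq\bigcup_n W_n=A$ by Step 1, so equality holds.

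\textbf{Expected obstacle.} The only delicate point is Step 1, namely justifying that each component $a_\omega$ of an element $a\in A$ again lies in $A$. This uses complete reducibility of rational representations of the reductive group $G$ on finite-dimensional submodules, together with the fact that isotypic components of $\mathcal{O}(G)$ under $l$ are exactly the summands in \eqref{eq:peter-weyl_regular_algebra}. I would make this explicit by noting that $\Hom_G(M_\omega, M_{\omega''}'\otimes M_{\omega''})=0$ unless $\omega''$ is the dual type, which follows from Schur's lemma. Each irreducible submodule of $A_a$ therefore sits inside one summand, and this gives the claim.
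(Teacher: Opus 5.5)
Your proposal is correct and follows the same route as the paper: exhaust the countable set $\widehat{G}$ by finite subsets, and take $A_n$ to be the subalgebra generated by the finite-dimensional isotypic components of $A$ of those types. Your Step 1 shows, via complete reducibility and Schur's lemma, that $A$ is the direct sum of its isotypic components $A\cap\phi(M_\omega'\otimes M_\omega)$; this makes explicit the verification the paper leaves as ``not difficult to check''.
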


\begin{proof}
	Let us consider an ascending sequence $\{S_n\mid n\in\Zpluseq\}$ of finite subsets of $\widehat{G}$ such that $\widehat{G}=\bigcup_{n\in\Zpluseq}S_n$.
	For every $n\in\Zpluseq$, define $A_n$ as the subalgebra of $A$ generated by all the isotypic components whose types are in $S_n$. It is not difficult to check that this is the desired sequence, see Remark \ref{rem:decomposition_O(G)}.
\end{proof}

Now, we present some technical results on orbits of group actions, which will be crucially used to prove the main outcomes of this section. To this aim, recall Remark \ref{rem:hilbert_nullstellensatz}. 

\begin{prop} \label{prop:uniqueness_closed_orbit}
	Let $G$ be a reductive affine algebraic group. 
	\begin{itemize}
		\item[$(i)$] If $X$ is an affine $G$-variety such that $^{G}\mathcal{O}(X)\cong \C$, then there exists a unique closed $G$-orbit $O_X$ in $X$. Furthermore, 
		\begin{equation} \label{eq:orbit_of_minimal_dimension}
			\dim(O_X)=\min\{\dim(O)\mid O \,\,\mbox{ is a $G$-orbit in }\,\, X\}
		\end{equation}
		and if there exists a $G$-orbit $O$ in $X$ such that $\dim(O)=\dim(O_X)$, then $O=O_X$.
		
		\item[$(ii)$] If $X$ and $Y$ are affine $G$-varieties such that $^{G}\mathcal{O}(X)\cong\C\cong {^{G}\mathcal{O}(Y)}$ and $\varphi:Y\to X$ is a morphism of affine $G$-varieties, then $\dim(O_Y)\geq \dim(O_X)$.
		Furthermore, if $\dim(O_Y)= \dim(O_X)$, then $\varphi(O_Y)=O_X$.
		
		\item[$(iii)$] Let $\{A_n\mid n\in\Zpluseq\}$ be an ascending sequence of finitely generated $G$-subalgebras of $\mathcal{O}(G)$. 
		For all $n\in\Zpluseq$, set $X_n:=\spm(A_n)$, so that $A_n$ is isomorphic to $\mathcal{O}(X_n)$.
		For all $n,m\in\Zpluseq$ such that $m\geq n$, let $\varphi_{m,n}:X_m\to X_n$ be the morphism of affine $G$-varieties induced by the inclusion of $A_n$ into $A_m$. 
		Then there exists $k\in\Zpluseq$ such that if $m\geq n\geq k$, $\varphi_{m,n}(O_{X_m})=O_{X_n}$.
	\end{itemize}
\end{prop}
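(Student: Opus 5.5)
Part $(i)$ will follow from standard invariant theory of reductive groups acting on affine varieties. Every $G$-orbit $O$ in $X$ is locally closed, so its closure $\overline{O}$ is $G$-stable and contains an orbit of minimal dimension; that orbit is closed, since its boundary in $\overline{O}$ consists of orbits of strictly smaller dimension. Hence closed orbits exist. For uniqueness, suppose $O_1,O_2$ are disjoint closed $G$-stable subsets of $X$. Because $G$ is reductive, the Reynolds operator $\mathcal{O}(X)\to{^G}\mathcal{O}(X)$ exists, and the restriction map ${^G}\mathcal{O}(X)\to{^G}\mathcal{O}(O_1\sqcup O_2)={^G}\mathcal{O}(O_1)\times{^G}\mathcal{O}(O_2)$ is surjective (this is the standard separation of disjoint closed invariant subsets, e.g.\ \cite[Chapter 8]{Pro07} or \cite{TY05}). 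This would produce two linearly independent invariants, contradicting ${^G}\mathcal{O}(X)\cong\C$. So $O_X$ is unique. Every orbit closure $\overline{O}$ contains a closed orbit, which must be $O_X$, so $O_X\subseteq\overline{O}$. This gives $\dim O_X\le\dim O$, with equality only if $O_X$ is open dense in the irreducible set $\overline{O}$. In that case $O$ and $O_X$ are both open in $\overline{O}$, so they meet and therefore $O=O_X$. This proves \eqref{eq:orbit_of_minimal_dimension} and the uniqueness clause.

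For $(ii)$, take $y\in O_Y$. Then $\varphi(O_Y)=G\cdot\varphi(y)$ is an orbit in $X$, and by equivariance $G_y\subseteq G_{\varphi(y)}$. Therefore $\dim O_Y=\dim G-\dim G_y\ge\dim G-\dim G_{\varphi(y)}=\dim\varphi(O_Y)\ge\dim O_X$, where the last inequality is part $(i)$. If $\dim O_Y=\dim O_X$, both inequalities are equalities, so $\varphi(O_Y)$ is an orbit of minimal dimension and hence equals $O_X$ by the last statement of $(i)$.

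For $(iii)$, first note that each $A_n\subseteq\mathcal{O}(G)$ satisfies ${^G}A_n\subseteq{^G}\mathcal{O}(G)=\C\one_G$ by Lemma \ref{lem:trivial_fixed_point_subalg_O(G)}. So ${^G}\mathcal{O}(X_n)\cong\C$, and $O_{X_n}$ is defined by $(i)$. Since $A_n$ is a subalgebra of the reduced algebra $\mathcal{O}(G)$, it is reduced, and Remark \ref{rem:hilbert_nullstellensatz} applies. Applying $(ii)$ to $\varphi_{m,n}$ gives $\dim O_{X_m}\ge\dim O_{X_n}$ for $m\ge n$. Thus the sequence $\dim O_{X_n}$ is non-decreasing. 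It is bounded by $\dim G$, since the $O_{X_n}$ are orbits of $G$. So it stabilizes from some index $k$ on. For $m\ge n\ge k$ the dimensions agree, and the equality clause of $(ii)$ gives $\varphi_{m,n}(O_{X_m})=O_{X_n}$.

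The main obstacle is the uniqueness part of $(i)$. It rests on the separation of disjoint closed invariant subsets by invariants, which uses reductivity through the exactness of taking invariants (the Reynolds operator). I would cite this from the standard references rather than reprove it. A secondary point to check is the irreducibility of orbit closures used in $(i)$: when $G$ is disconnected, I would argue with $G^\circ$-orbits, or note that $\overline{O}$ is a finite union of irreducible components of equal dimension permuted transitively by $G$, so the same open-dense argument goes through.
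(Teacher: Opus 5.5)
Your proposal is correct. Parts $(ii)$ and $(iii)$ follow the same argument as the paper: the stabilizer inclusion $G_y\subseteq G_{\varphi(y)}$ gives the dimension inequality, and then the orbit dimensions form a non-decreasing sequence bounded by $\dim(G)$.

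Part $(i)$ differs only in packaging and in one step.

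\emph{Uniqueness.} The paper cites the fact that $^{G}\mathcal{O}(X)$ is the coordinate ring of the categorical quotient $X/\!\!/G$, whose points are the closed orbits \cite[27.5.1]{TY05}. Your separation of disjoint closed invariant subsets via the Reynolds operator is exactly the input behind that fact. So this is the same idea made explicit.

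\emph{Minimal-dimension clause.} Here the paper argues more directly. An orbit of minimal dimension in $X$ is closed, by the same boundary argument you already use for existence, and hence equals $O_X$. Any orbit with $\dim(O)=\dim(O_X)$ is then also of minimal dimension, hence closed, hence equal to $O_X$. If you apply your existence observation to $X$ itself in this way, the detour through $O_X\subseteq\overline{O}$ becomes unnecessary. So does the irreducibility issue for disconnected $G$ that you flag at the end.

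Your route does give the slightly stronger fact that $O_X$ lies in the closure of every orbit. That fact is not needed for $(ii)$ or $(iii)$.
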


\begin{proof}
	First, we prove $(i)$.
	$^{G}\mathcal{O}(X)\cong \C$ is the algebra of regular functions over the set $X/\!\!/G$ of closed $G$-orbits in $X$, see \cite[27.5.1 and Remark 27.5.4]{TY05}. Therefore $X/\!\!/G$ must be a point, so that there is a unique closed $G$-orbit in $X$, which we denote by $O_X$.
	By \cite[Proposition 21.4.5 $(ii)$]{TY05}, $G$-orbits of minimal dimension are closed, so that \eqref{eq:orbit_of_minimal_dimension} holds by the uniqueness of $O_X$.
	For the same reason, if $O$ is a $G$-orbit such that $\dim(O)=\dim(O_X)$, then $O$ has minimal dimension and $O=\overline{O}=O_X$.
	
	Second, we prove $(ii)$. 
	Let $y\in Y$ such that $O_Y=G\cdot y$.
	As $\varphi$ is a morphism of affine $G$-varieties, $\varphi(O_Y)=G\cdot\varphi(y)$ is a $G$-orbit in $X$. 
	Note that $G_y\subseteq G_{\varphi(y)}$, so that 
	\begin{equation} \label{eq:dim_stabilizers_under_morphism}
		\dim(G_{\varphi(y)})-\dim(G_y)\geq 0 \,.
	\end{equation}
	Using twice \cite[Proposition 21.4.3 $(iii)$]{TY05} and \eqref{eq:dim_stabilizers_under_morphism}, we have that
	\begin{align}
		\dim(O_Y)
			&=
		\dim(G)-\dim(G_y) \\
			&=
		\dim(\varphi(O_Y))+\dim(G_{\varphi(y)}) -\dim(G_y) \\
			&\geq 
		\dim(\varphi(O_Y))\,.
	\end{align}
	Hence,
	\begin{equation}
		\dim(O_Y)\geq \dim(\varphi(O_Y))\geq \dim(O_X)
	\end{equation}
	where the last inequality is given by the minimality of $\dim(O_X)$ given by $(i)$.
	Furthermore, if $\dim(O_Y)=\dim(O_X)$, then $\dim(\varphi(O_Y))=\dim(O_X)$, which implies that $\varphi(O_Y)=O_X$ by part $(i)$.
	
	Third, we prove $(iii)$.
	By Lemma \ref{lem:trivial_fixed_point_subalg_O(G)}, $^{G}\mathcal{O}(G)=\C\one_G$, so that for all $n\in \Zpluseq$, $^{G}A_n\cong{^{G}\mathcal{O}(X_n)}\cong\C$.
	By part $(ii)$, the function $\Zpluseq\ni n\mapsto \dim(O_{X_n})\in\Zpluseq$ is increasing.
	Moreover, see \cite[Proposition 21.4.3$(iii)$]{TY05}, for all $n\in\Zpluseq$, $\dim(O_{X_n})\leq\dim(G)$, which is finite. Then there exists $k\in\Zpluseq$ such that for all $n,m\in\Zpluseq$ such that $m\geq n\geq k$, $\dim(O_{X_m})=\dim(O_{X_n})$. By part $(ii)$, this implies that $\varphi_{m,n}(O_{X_m})=O_n$ whenever $n,m\in\Zpluseq$ are such that $m\geq n\geq k$, as desired.
\end{proof}

The next theorem is the main achievement of this section:

\begin{theo} \label{theo:A_is_fnitely_generated}
	Let $G$ be a reductive affine algebraic group.
	Then all simple $G$-subalgebras of $\mathcal{O}(G)$ are finitely generated and the function $H\longmapsto \mathcal{O}(G)^H$ gives a one-to-one correspondence between reductive closed subgroups of $G$ and simple $G$-subalgebras of $\mathcal{O}(G)$.
	Furthermore, $\mathcal{O}(G)^H$ and $\mathcal{O}(G)^{\tilde{H}}$ are isomorphic commutative $G$-algebras if and only if $H$ and $\tilde{H}$ are in the same conjugacy class.
\end{theo}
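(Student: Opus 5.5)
Let $A$ be a simple $G$-subalgebra of $\mathcal{O}(G)$. The plan is to show first that $A$ is finitely generated, then identify it with $\mathcal{O}(G)^H$ for a reductive $H$, then prove injectivity and the conjugacy statement. By Lemma \ref{lem:subalgebras_O(G)_ascending_sequence} we write $A=\bigcup_n A_n$ with $A_n$ finitely generated $G$-subalgebras. Put $X_n:=\spm(A_n)$; these are affine $G$-varieties with $^G\mathcal{O}(X_n)\cong\C$ by Lemma \ref{lem:trivial_fixed_point_subalg_O(G)}. Proposition \ref{prop:uniqueness_closed_orbit}$(iii)$ gives $k$ such that for $m\ge n\ge k$ the maps $\varphi_{m,n}$ send closed orbits onto closed orbits. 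Choose compatible points $x_n\in O_{X_n}$: the stabilizers $G_{x_n}$ decrease with $n$ and are closed subgroups of $G$, so by the descending chain condition on closed subgroups they stabilize at some $H$. Each $\varphi_{m,n}\colon O_{X_m}\to O_{X_n}$ is then a bijective $G$-map $G/H\to G/H$.

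Next we exploit simplicity. The closed orbit $O_{X_n}$ defines a nonzero $G$-stable ideal $I_n\subseteq A_n$ of functions vanishing on it. Then $I:=\bigcup_{n\ge k}\bigl(\text{ideal of } A \text{ generated by } I_n\bigr)$, or more precisely the set of $a\in A$ vanishing at the compatible system of closed orbits (that is, the kernel of $a\mapsto (a|_{O_{X_n}})_n$), is a $G$-stable proper ideal of $A$, hence zero. Consequently each $A_n$ embeds into $\mathcal{O}(O_{X_n})=\mathcal{O}(G/H)$ for large $n$; in particular $X_n=O_{X_n}$ is a single closed orbit $\cong G/H$. A closed orbit in an affine variety is affine, so by Matsushima's criterion $H$ is reductive. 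Evaluation at the compatible point identifies $A=\bigcup A_n$ with a subalgebra of functions on $G$ right-invariant under $H$, namely $A\subseteq\mathcal{O}(G)^H$ after translating the point to the identity coset (using the left-translation $G$-equivariance of Theorem \ref{theo:A_is_subalgebra_of_O(G)}'s map $T_I$). Conversely $A_n\cong\mathcal{O}(G/H)=\mathcal{O}(G)^H$ for $n\ge k$, since $X_n\cong G/H$ as varieties (a bijective $G$-equivariant morphism of homogeneous spaces in characteristic zero is an isomorphism). Hence $A_n=\mathcal{O}(G)^H$ for $n\ge k$, so $A=\mathcal{O}(G)^H$ is finitely generated. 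The main obstacle is exactly this step: making precise that the sequence of closed orbits forces $X_n$ itself to be the orbit, which needs both simplicity and the stabilization of stabilizers; I would try to phrase it as the vanishing of the $G$-stable ideal $\bigcap_g g\cdot\mathfrak m$ where $\mathfrak m$ is the maximal ideal at the compatible point.

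For the converse, if $H$ is reductive then $G/H$ is affine with $\mathcal{O}(G/H)=\mathcal{O}(G)^H$, and $G/H$ being a single orbit means the only $G$-stable closed subsets are $\emptyset$ and $G/H$, so by the Nullstellensatz (radical ideals) and reducedness of $G$-stable ideals' radicals, $\mathcal{O}(G)^H$ is $G$-simple. Injectivity of $H\mapsto\mathcal{O}(G)^H$ follows because $H$ is recovered as the stabilizer of the point $eH\in\spm(\mathcal{O}(G)^H)$, i.e.\ the set of $g$ with $r(g)$ acting trivially on $\mathcal{O}(G)^H$ (functions separate points of the affine variety $G/H$). Finally, a $G$-algebra isomorphism $\mathcal{O}(G)^H\cong\mathcal{O}(G)^{\tilde H}$ induces a $G$-equivariant isomorphism $G/\tilde H\to G/H$, sending $e\tilde H$ to some $gH$, whence $\tilde H=gHg^{-1}$; conversely $r(g)$ maps $\mathcal{O}(G)^{H}$ onto $\mathcal{O}(G)^{gHg^{-1}}$ and commutes with the left action.
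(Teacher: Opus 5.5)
Your proposal is correct and follows essentially the same route as the paper's proof. The shared steps are the ascending union $A=\bigcup_n A_n$, stabilization of the closed orbits via Proposition \ref{prop:uniqueness_closed_orbit}$(iii)$, using simplicity to kill the $G$-stable ideal of functions vanishing on the closed orbits (your kernel of restriction plays the role of the paper's $AJ_n$), the descending chain of stabilizers, the identification $A_n=\rho_n^\#(\mathcal{O}(X_n))=\mathcal{O}(G)^H$, Matsushima's criterion, and conjugation by right translations. The differences are cosmetic: you stabilize stabilizers of a compatible system of closed-orbit points before proving $X_n=O_{X_n}$, whereas it is cleaner to take $x_n=\rho_n(e)$ afterwards, which makes the appeal to $T_I$ unnecessary and gives $A_n=\mathcal{O}(G)^H$ directly past the stabilization index rather than for all $n\ge k$; you also check explicitly that $\mathcal{O}(G)^H$ is simple for reductive $H$ and that $H\mapsto\mathcal{O}(G)^H$ is injective, which the paper leaves implicit.
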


\begin{proof}
	Let $A$ be a simple $G$-subalgebras of $\mathcal{O}(G)$.
	By Lemma \ref{lem:subalgebras_O(G)_ascending_sequence}, there exists an ascending sequence $\{A_n\mid n\in\Zpluseq\}$ of finitely generated $G$-subalgebras of $\mathcal{O}(G)$ such that $A=\bigcup_{n\in\Zpluseq}A_n$. Then we move to prove that there exists $\ell\in\Zpluseq$ such that for all $n\geq \ell$, $A=A_n=A_\ell$.
	We use the notations and the results given in Proposition \ref{prop:uniqueness_closed_orbit}. 
	In particular, for all $n\in\Zpluseq$, $X_n:=\spm(A_n)$, so that $A_n$ is isomorphic to $\mathcal{O}(X_n)$, see Remark \ref{rem:hilbert_nullstellensatz}. 
	Moreover, for all $m,n\in\Zpluseq$, $\varphi_{m,n}:X_m\to X_n$ is the dominant (i.e.\ it has dense image) morphism of affine $G$-varieties induced by the inclusion of $A_n$ into $A_m$.
	Note that for all ideals $I\in X_m$, $\varphi_{m,n}(I):=I\cap A_n$.
	Actually, we can also describe the inclusion of $\mathcal{O}(X_n)$ into $\mathcal{O}(X_m)$ by the injective $G$-algebra homomorphism
	\begin{equation}
		\varphi_{m,n}^\# \,:\,\,\,
		\mathcal{O}(X_n)\ni f\longmapsto (f\circ \varphi_{m,n})\in \mathcal{O}(X_m) \,.
	\end{equation}
	Similarly, for all $n\in \Zpluseq$, the inclusion of $A_n$ into $\mathcal{O}(G)$ induces maps $\rho_n:G\to X_n$ and $\rho_n^\#:\mathcal{O}(X_n)\to \mathcal{O}(G)$ (where $G$ is identified with its spectrum via the map sending $g\in G$ to $I_g$, that is the ideal of regular functions on $G$ annihilating in $g$).
	We note that for all $n\in\Zpluseq$, $\rho_n^\#(\mathcal{O}(X_n))=A_n$ by construction. Moreover, for all $m,n\in\Zpluseq$ such that $m\geq n$, $\rho_n^\# =\rho_m^\#\circ \varphi_{m,n}^\#$. 
	Proposition \ref{prop:uniqueness_closed_orbit} says that every $X_n$ has a unique closed $G$-orbit $O_{X_n}$ and we can fix $k\in\Zpluseq$ such that for all $m,n\in\Zpluseq$ such that $m\geq n\geq k$, $\varphi_{m,n}(O_{X_m})=O_{X_n}$.
	For every $n\in\Zpluseq$, set the following $G$-invariant ideal of $\mathcal{O}(X_n)$
	\begin{equation}
		I_n :=
		\left\{f\in\mathcal{O}(X_n)\mid \forall x (x\in O_{X_n})\,\,\,\Rightarrow\,\,\, f(x)=0\right\}   \,.
	\end{equation} 
	Then we note that for all $n\in\Zpluseq$, $I_n=\{0\}$ if and only if $X_n=O_{X_n}$. 
	As an intermediate step, we are going to prove that $X_n=O_{X_n}$ whenever $n\geq k$.
	
	Let $m,n\in\Zpluseq$ such that $m\geq n\geq k$. 
	Note that $f\in I_n$ if and only if $f(\varphi_{m,n}(y))=0$ for all $y\in O_{X_m}$, if and only if $(f\circ\varphi_{m,n})\in I_m$, that is $\varphi_{m,n}^\#(f)\in I_m$.
	Hence, for all $m,n\in\Zpluseq$ such that $m\geq n\geq k$, we have that $\varphi_{m,n}^\#(I_n)\subseteq I_m$.
	For all $n\in\Zpluseq$, set $J_n:=\rho_n^\#(I_n)$.
	According to what just proved, for all $m,n\in\Zpluseq$ such that $m\geq n\geq k$, we have that $\rho_n^\#(\mathcal{O}(X_n)I_n)=A_mJ_n\subseteq J_m$. 
	Note that for all $n\in\Zpluseq$, $I_n$ is a proper ideal of $\mathcal{O}(X_n)$ as the non-zero constant functions on $X_n$ do not belong to it.
	It follows that for all $m,n\in\Zpluseq$ such that $m\geq n\geq k$, the unit $\one_G$ of $\mathcal{O}(G)$ do not belong to $A_mJ_n$ and thus 
	\begin{equation}
		\one_G\not\in AJ_n=\bigcup_{\substack{m\in\Zpluseq \\ m\geq n\geq k}}A_mJ_n \,.
	\end{equation}
	Moreover, $AJ_n$ is a $G$-invariant ideal of $A$ and thus it must be trivial by the simplicity of $A$.
	Since $J_n\subseteq AJ_n=\{0\}$ for all $n\in\Zpluseq$, we have proved that for all $n\in\Zpluseq$ such that $n\geq k$, $X_n=O_{X_n}$. In particular, $X_n$ is a $G$-homogeneous space, see e.g.\ \cite[Section 10.6.2]{TY05}.
	
	Let $e$ be the unit of $G$ and let $k\in\Zpluseq$ as above.
	By \cite[Definition 25.3.1 and Section 25.4.6]{TY05}, for all $n\in\Zpluseq$ such that $n\geq k$, $(X_n=G\cdot x_n, \rho_n)$, where $x_n:=\rho_n(e)$, is a geometric quotient of $G$ by $H_n:=G_{x_n}$. This implies that $\rho_n^\#$ realizes a $G$-algebra isomorphism between $\mathcal{O}(X_n)$ and $\mathcal{O}(G)^{H_n}$. In other words, for all $n\in\Zpluseq$ such that $n\geq k$,
	\begin{equation}
		A_n=\rho_n^\#(\mathcal{O}(X_n))=\mathcal{O}(G)^{H_n} \,.
	\end{equation}
	It follows that there is a descending sequence of closed subgroups $\{H_n\mid n\in\Zpluseq \,,\,\,\, n\geq k\}$ of $G$, which is a Noetherian topological space as affine variety. 
	Therefore, the sequence must stabilized. This implies that there exists $\ell\in\Zpluseq$ such that for all $n\geq \ell$, $H_n=H_\ell$, implying that $A=A_n=A_\ell$.
	
	Now, we move to prove the remaining part of the theorem.
	According to what above, we have that $A=\mathcal{O}(G)^H$, where $H:=H_\ell$. By \cite[Section 25.4.7]{TY05}, we also have that $A=\mathcal{O}(G)^H$ is isomorphic to $\mathcal{O}(\lcoset{G}{H})$ as commutative $G$-algebras, where for all $s,g\in G$, $s\cdot gH=sgH$. Indeed, the $G$-homogeneous space $\lcoset{G}{H}$ is an affine variety.  
	Suppose that $\mathcal{O}(\lcoset{G}{H})$ is isomorphic to $\mathcal{O}(\lcoset{G}{\tilde{H}})$ as commutative $G$-algebras, for some closed subgroup $\tilde{H}$ of $G$ such that $\lcoset{G}{\tilde{H}}$ is an affine variety. Then there exists an isomorphism of affine $G$-varieties $\varphi:\lcoset{G}{\tilde{H}}\to \lcoset{G}{H}$, see Remark \ref{rem:hilbert_nullstellensatz}.
	In particular, there exists $u\in G$ such that $\varphi(eH)=u\tilde{H}$, where $e$ is the unit of $G$. Note also that $H=\{g\in G\mid g\cdot eH=eH\}$. Hence, we have that
	\begin{align}
		g\in H 
			&\,\,\,\Leftrightarrow\,\,\,
		g\cdot eH=eH  
			\,\,\,\Leftrightarrow\,\,\,
		\varphi(g\cdot eH)=\varphi(eH) \\
			&\,\,\,\Leftrightarrow\,\,\,
		g\cdot \varphi( eH)=\varphi(eH) 
			\,\,\,\Leftrightarrow\,\,\,
		g\cdot u\tilde{H}=u\tilde{H} \\
			&\,\,\,\Leftrightarrow\,\,\,
		u^{-1}gu\tilde{H}=e\tilde{H} 
			\,\,\,\Leftrightarrow\,\,\,
		u^{-1}gu\in \tilde{H} \\
			&\,\,\,\Leftrightarrow\,\,\,
		g\in u\tilde{H}u^{-1} 
	\end{align}
	that is $H$ and $\tilde{H}$ are in the same conjugacy class.
	Vice versa, suppose that $\tilde{H}$ is a closed subgroup of $G$ in the same conjugacy class of $H$ and such that $\lcoset{G}{\tilde{H}}$ is an affine $G$-variety. Then $H=u\tilde{H}u^{-1}$ for some $u\in G$ and one can check that the map
	\begin{equation}
		\phi:\mathcal{O}(G)^H\longrightarrow \mathcal{O}(G)^{\tilde{H}}
		\,,\qquad
		f\longmapsto r(u^{-1})f
	\end{equation}
	is a $G$-algebra isomorphism.
	In conclusion, we get the desired result from Matsushima's criterion, see e.g.\ \cite{Arz08} and references therein: the $G$-homogeneous space $\lcoset{G}{H}$ is an affine variety if and only if $H$ is reductive. 
\end{proof}

Combining Theorem \ref{theo:A_is_fnitely_generated} with Theorem \ref{theo:A_is_subalgebra_of_O(G)}, we get the following important corollary:

\begin{cor}  \label{cor:simple_G-algebras_countable_dimension}
	Let $G$ be a reductive affine algebraic group. The function $H\longmapsto \mathcal{O}(G)^H$ gives a one-to-one correspondence between conjugacy classes of reductive closed subgroups of $G$ and isomorphism classes of simple commutative $G$-algebras with countable complex vector space dimension.
\end{cor}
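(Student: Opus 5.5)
The plan is to combine the two preceding results directly. We need three things: the map is well defined, it is surjective, and it is injective on conjugacy classes.

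For well-definedness, let $H$ be a reductive closed subgroup of $G$. By Theorem \ref{theo:A_is_fnitely_generated}, $\mathcal{O}(G)^H$ is a simple $G$-subalgebra of $\mathcal{O}(G)$, so it is a simple commutative $G$-algebra. Its dimension is at most $\aleph_0$, because $\mathcal{O}(G)$ has countable dimension: by the decomposition \eqref{eq:peter-weyl_regular_algebra} in Remark \ref{rem:decomposition_O(G)}, it is a countable direct sum of finite-dimensional spaces, since $\widehat{G}$ is countable. (If ``countable'' is meant strictly as infinite, one should note that finite-dimensional simple $G$-algebras arise as well, namely for $H$ of finite index. I read the statement as $\dim\leq\aleph_0$, matching Theorem \ref{theo:A_is_subalgebra_of_O(G)}.) Moreover, conjugate subgroups give isomorphic $G$-algebras by the last part of Theorem \ref{theo:A_is_fnitely_generated}. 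So the assignment descends to a well-defined map on conjugacy classes.

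For surjectivity, let $A$ be a simple commutative $G$-algebra with $\dim(A)\leq\aleph_0$. Theorem \ref{theo:A_is_subalgebra_of_O(G)} gives an injective $G$-algebra homomorphism $T_I:A\to\mathcal{O}(G)$. Its image $T_I(A)$ contains $\one_G$, since $T_I$ is unital: $T_I(u)(g)=q_I(u)=1$. It is also $G$-stable by equivariance, so it is a $G$-subalgebra. It is simple because $T_I$ is a $G$-equivariant isomorphism onto its image, so $G$-invariant ideals correspond. Theorem \ref{theo:A_is_fnitely_generated} then gives a reductive closed subgroup $H$ with $T_I(A)=\mathcal{O}(G)^H$, hence $A\cong\mathcal{O}(G)^H$ as $G$-algebras.

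For injectivity, suppose $\mathcal{O}(G)^H\cong\mathcal{O}(G)^{\tilde H}$ as $G$-algebras, with $H,\tilde H$ reductive. The ``only if'' direction of the last statement of Theorem \ref{theo:A_is_fnitely_generated} gives that $H$ and $\tilde H$ are conjugate.

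There is no real obstacle here. The only points requiring a moment's care are that $T_I$ preserves the unit, so that its image is a $G$-subalgebra in the sense of Definition \ref{defin:G-subalgebras}, and that simplicity transfers along the equivariant isomorphism $A\cong T_I(A)$.
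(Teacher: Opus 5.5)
Your proposal is correct and is essentially the paper's argument: the paper obtains the corollary simply by combining Theorem \ref{theo:A_is_subalgebra_of_O(G)} with Theorem \ref{theo:A_is_fnitely_generated}. The first embeds a countable-dimensional simple $G$-algebra into $\mathcal{O}(G)$; the second classifies the simple $G$-subalgebras of $\mathcal{O}(G)$ and gives the conjugacy criterion. You spell out the routine checks the paper leaves implicit: that $T_I$ is unital and equivariant, that simplicity passes to the image, and that $\dim\mathcal{O}(G)\leq\aleph_0$. Your reading of ``countable'' as $\dim\leq\aleph_0$ is the intended one.
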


Now, the final act of this section is to translate Corollary \ref{cor:simple_G-algebras_countable_dimension} into a categorical language. To this aim, recall notations and results from Section \ref{subsec:tensor_categories}. 
For a reductive affine algebraic group $G$, we denote by $\catc_G$ its category of finite-dimensional rational representations. 
This is a positive symmetric ribbon tensor category with simple tensor unit, which is also fusion whenever $G$ is finite, cf.\ \cite[Example 9.9.1(2)]{EGNO15}.  
In particular, the tensor structure is given by the usual direct sum and tensor product of representations. Then the tensor unit is the trivial representation and the braiding is simply the flip. Moreover, the twist is trivial and the categorical dimension is the usual vector space dimension of representations. The rigidity is given by the duality of representations. 
We also denote by $\Ind(\catc_G)$ the direct sum completion of $\catc_G$, see Remark \ref{rem:ind-completions}.
We point out that $\catc_G$ is ribbon tensor equivalent to the category $\catc_K$ of finite-dimensional continuous representations of one of its maximal compact subgroups $K$, see Remark \ref{rem:categories_compact_lie_groups}.
From this point of view, $\catc_G$ can be considered as a compact group dual in the sense of \cite{DR89, DR89b}, cf.\ also \cite{Del90, Del02}.

\begin{rem} \label{rem:correspondence_commut_G-algebras_commut_algebras_ind}
	Let $G$ be a reductive affine algebraic group.
	Note that there is a one-to-one correspondence between (simple) commutative algebras in $\Ind(\catc_G)$ in the sense of Definition \ref{defin:algebras_in_catc} and (simple) commutative $G$-algebras in the sense of Definition \ref{defin:G-simple_algebras}. 
	Moreover, the corresponding automorphism groups, see Definition \ref{defin:aut_group_commutative_algebras_in_C} and Definition \ref{defin:aut_group_commutative_G-algebras} respectively, coincide.
	And more in general, the corresponding notions of isomorphism also coincide.
	Therefore, with an abuse of notation, we denote the commutative algebras in the two contexts by the same symbols. 
	In particular, this means that we are implicitly using the same symbol for a commutative algebra in $\Ind(\catc_G)$ and the underlying corresponding object in the same category.
	Accordingly, we refer to the simple commutative algebra $\mathcal{O}(G)$ in $\Ind(\catc_G)$ as the \textit{regular algebra} in $\Ind(\catc_G)$, cf.\ \cite[Example 8.8.9$(i)$]{EGNO15}. 
	Note also that for every closed subgroup $H$ of $G$, the commutative algebras $\mathcal{O}(\lcoset{G}{H})$, $\mathcal{O}(\rcoset{G}{H})$ and $\mathcal{O}(G)^H$ are all isomorphic in $\Ind(\catc_G)$.
\end{rem}

Remark \ref{rem:correspondence_commut_G-algebras_commut_algebras_ind} allows us to translate in categorical language the result of Corollary \ref{cor:simple_G-algebras_countable_dimension}:

\begin{theo}  \label{theo:simple_commutative_algebras_in_Ind(C_G)}
	Let $G$ be a reductive affine algebraic group and $A=(X,m,\iota)$ be a simple commutative algebra in $\Ind(\catc_G)$. 
	Suppose that the ind-object $X$ is a direct sum of at most countably many simple objects in $\catc_G$. 
	Then $A$ is isomorphic to the simple commutative algebra $\mathcal{O}(G)^H\cong \mathcal{O}(\lcoset{G}{H})$ for a unique, up to conjugacy, reductive closed subgroup $H$ of $G$.
\end{theo}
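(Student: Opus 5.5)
The plan is to reduce the statement to Corollary \ref{cor:simple_G-algebras_countable_dimension} via the dictionary of Remark \ref{rem:correspondence_commut_G-algebras_commut_algebras_ind}.

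First, I make that dictionary explicit. Since $\catc_G$ is semisimple, $X$ is a direct sum $\bigoplus_{i\in I}M_i$ of simple objects of $\catc_G$, so $X$ is a locally finite rational $G$-module. The morphisms $m\in\Hom(X\boxtimes X,X)$ and $\iota\in\Hom(\id,X)$ are $G$-equivariant linear maps, because $\Hom$ in $\Ind(\catc_G)$ is the product of the $\Hom$'s out of the summands. The associator and unitors are the trivial ones of vector spaces, and the braiding is the flip. Hence the axioms of Definition \ref{defin:algebras_in_catc} say exactly that $X$ is a commutative associative unital algebra on which $G$ acts by algebra automorphisms, with unit $\iota(1)\neq0$.

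Next I check that simplicity matches. A subobject of the $A$-module $(X,m)$ in $\Ind(\catc_G)$ is a $G$-stable subspace $Y\subseteq X$ with $m(X\otimes Y)\subseteq Y$, which is precisely a $G$-invariant ideal. Thus $A$ is simple in the categorical sense if and only if it is a simple commutative $G$-algebra in the sense of Definition \ref{defin:G-simple_algebras}. Likewise, isomorphisms of commutative algebras in $\Ind(\catc_G)$ are exactly $G$-algebra isomorphisms. The same translation in reverse shows that $\mathcal{O}(G)^H$, with left translation action, is a commutative algebra in $\Ind(\catc_G)$. The key input here is the decomposition \eqref{eq:peter-weyl_regular_algebra}, which exhibits $\mathcal{O}(G)$ as a countable direct sum of finite-dimensional rational modules.

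Then I use the countability hypothesis. Each simple object of $\catc_G$ is finite-dimensional, and $I$ is at most countable, so $\dim(X)\leq\aleph_0$. Corollary \ref{cor:simple_G-algebras_countable_dimension} then produces a reductive closed subgroup $H$, unique up to conjugacy, such that $A\cong\mathcal{O}(G)^H$ as $G$-algebras. Transporting this back through the dictionary gives the isomorphism in $\Ind(\catc_G)$. Finally, $\mathcal{O}(G)^H\cong\mathcal{O}(\lcoset{G}{H})$ as $G$-algebras, as noted in the proof of Theorem \ref{theo:A_is_fnitely_generated}.

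The only delicate point is the dictionary itself, specifically identifying $\Hom$-spaces and subobjects in $\Ind(\catc_G)$ with equivariant maps and invariant subspaces. This holds because objects of $\Ind(\catc_G)$ are exactly the rational (locally finite) $G$-modules and the completion is taken inside $G$-modules. Everything else is a direct citation of the corollary.
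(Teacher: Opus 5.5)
Your proposal is correct and follows the paper's route. The paper obtains this theorem by translating Corollary \ref{cor:simple_G-algebras_countable_dimension} into categorical language through the dictionary of Remark \ref{rem:correspondence_commut_G-algebras_commut_algebras_ind}, with countably many finite-dimensional summands giving $\dim(X)\leq\aleph_0$. You simply spell out that dictionary in more detail than the paper does, namely equivariant morphisms and $G$-invariant ideals as the subobjects of $(X,m)$ in $\Rep(A)$.
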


\begin{rem}  \label{rem:generalized_finite_case}
	Let $G$ be a reductive affine algebraic group.
	We note that Theorem \ref{theo:simple_commutative_algebras_in_Ind(C_G)} generalizes \cite[Theorem 2.2]{KO02}, which proves that any rigid haploid commutative algebra in $\catc_G$ is isomorphic to $\mathcal{O}(\lcoset{G}{H})$ for some closed subgroup $H$ of $G$ such that $\lcoset{G}{H}$ is a finite set.
	Indeed, if $\lcoset{G}{H}$ is not a finite set, then $\mathcal{O}(\lcoset{G}{H})$ is not in $\catc_G$, but it is in $\Ind(\catc_G)$. We also point out that under the haploidness condition, a commutative algebra in $\catc_G$ is rigid if and only if it is simple, see \cite[Lemma 1.20]{KO02}.
\end{rem}

\begin{rem}  \label{rem:decomposition_O(G)_category}
	Let $G$ be a reductive affine algebraic group and recall the notations settled in Remark \ref{rem:correspondence_commut_G-algebras_commut_algebras_ind}.
	From Remark \ref{rem:decomposition_O(G)}, we get the following isomorphism of objects in $\Ind(\catc_G)$: 
	\begin{equation} \label{eq:categorical_decomposition_regular_algebra}
		\mathcal{O}(G) 
		\cong 
		\bigoplus_{\omega\in\widehat{G}} M_\omega'\boxtimes (\C\one_G)^{\dim(M_\omega)} \,.
	\end{equation}
	It also follows that $G\cong r(G)$ is isomorphic to a subgroup of invertible arrows in
	\begin{equation} \label{eq:categorical_decomposition_right_G-action}
		\prod_{\omega\in\widehat{G}}\{\one_{M_\omega'}\}\boxtimes
		\Hom\left((\C\one_G)^{\dim(M_\omega)},(\C\one_G)^{\dim(M_\omega)}\right) \,.
	\end{equation}
	This also implies the following isomorphisms of objects in $\Ind(\catc_G)$:
	for every closed subgroup $H$ of $G$, we have that
	\begin{equation}  \label{eq:categorical_decomposition_fixed-point_subalgebras}
		\mathcal{O}(\lcoset{G}{H})\cong 
		\mathcal{O}(G)^H \cong
		\bigoplus_{\omega\in\widehat{G}} M_\omega'\boxtimes (\C\one_G)^{\dim(M_\omega^H)}
		\,.
	\end{equation}
	where $M_\omega^H:=\{v\in M_\omega\mid \forall h(h\in H) \,\,\,\Rightarrow\,\,\, h\cdot v=v \}$.
\end{rem}

\begin{rem} \label{rem:O(G)_is_haploid_with_trivial_twist}
	Let $G$ be a reductive affine algebraic group and recall the notations settled in Remark \ref{rem:correspondence_commut_G-algebras_commut_algebras_ind}.
	Note that \eqref{eq:categorical_decomposition_regular_algebra} implies that the regular algebra $\mathcal{O}(G)$ is haploid as it contains the trivial representation $\C\one_G$ of $G$ with multiplicity one. Moreover, the same is true for the commutative algebra $\mathcal{O}(G)^H$ for all closed subgroup $H\subseteq G$ due to \eqref{eq:categorical_decomposition_fixed-point_subalgebras}.
	Note also that for all closed subgroup $H\subseteq G$, the commutative algebra $\mathcal{O}(G)^H$ has trivial twist as this is trivial in $\catc_G$ and thus in $\Ind(\catc_G)$ too.
\end{rem}

\section{Doplicher-Roberts cross product and related VOA extensions}
\label{sec:DR_cross_product_voas}

The main goal of this section is to obtain a Doplicher-Roberts type cross product construction by compact Lie group duals, see \cite{DR89,DR89b, DR90}, for VOAs, and to study some of its properties. 
The core idea behind this construction is the following. 
Let $V$ be a simple VOA with a ribbon tensor category $\catc_V$ of $V$-modules satisfying suitable conditions, see e.g.\ Subsection \ref{subsubsec:ind-categories}, so that the direct limit completion $\Ind(\catc_V)$ of $\catc_V$ has a vertex tensor category structure.
Suppose that $\catc_V$ is ribbon tensor equivalent to the category $\catc_G$ of finite-dimensional rational representations of a reductive affine algebraic group $G$, as introduced at the end of Section \ref{sec:simple_G-algebras}. Then we can construct a simple VOA extension $W$ of $V$ in the category $\Ind(\catc_V)$, and a suitable action of $G$ on $W$ by VOA automorphisms such that $W^G=V$. Moreover, $W$ is isomorphic to any other simple VOA extension $\tilde{W}$ of $V$ in $\Ind(\catc_V)$ that is equipped with a suitable action of $G$ by VOA automorphisms giving $\tilde{W}^G=V$.   
This is achieved in Theorem \ref{theo:DR_cross_product_VOAs} by using the main results of Section \ref{sec:simple_G-algebras}.
Under suitable conditions, see Remark \ref{rem:decomposition_voaxgroup_compact_case_and_vtc_criteria}, the converse statement is also true: if there exists a simple VOA extension $W$ of a simple VOA $V$ such that $W^G=V$, for a reductive affine algebraic group $G$, then there exists a ribbon tensor category $\catc_V$ of $V$-modules, ribbon tensor equivalent to $\catc_G$, such that $W$ lies in $\Ind(\catc_V)$.
Accordingly, we will call $W$ the \textit{Doplicher-Roberts (DR) VOA extension} $V\rtimes \catc_V$ of $V$ by $\catc_V$, see Theorem \ref{theo:DR_cross_product_VOAs}.
As an application of this construction, we also give in Corollary \ref{cor:galois_correspondence} a Galois correspondence for reductive affine algebraic groups acting on VOAs.

However, we note that the results in \cite{DR89, DR89b, DR90}, as well as the ones in Remark \ref{rem:decomposition_voaxgroup_compact_case_and_vtc_criteria}, are given in terms of compact Lie groups acting on a VOA. 
Nevertheless, this is perfectly compatible with our framework as we are going to show in Proposition \ref{prop:actions_compact_lie_group_and_its_complexification}, see also Remark \ref{rem:categories_compact_lie_groups}. 
To this aim, we will rely on some basic notions about the representation theory of compact Lie groups, see e.g.\ \cite{BtD95, Pro07}.

\begin{defin}
	Let $V$ be a VOA. A reductive affine algebraic group $G$ \textit{acts rationally} on $V$ if there exists a faithful rational representation of $G$ in $\GL(V)$ given by VOA automorphisms. Similarly, a compact Lie group $K$ \textit{acts continuously} on $V$ if there exists a faithful continuous representation of $G$ in $\GL(V)$ given by VOA automorphisms.
\end{defin}

\begin{rem}  \label{rem:categories_compact_lie_groups}
	The complexification $G:=K_\C$ of a compact Lie group $K$, which is naturally a complex analytic Lie group, is a reductive affine algebraic group. Moreover, $K$ is a \textit{maximal compact subgroup} of $G$ with respect to the Lie group topology of $G$. Vice versa, any reductive affine algebraic group is the complexification of one of its maximal compact subgroups, which are all conjugate to each other. 
	See e.g.\ \cite[Section 8]{BtD95},  \cite[Chapter 8, Section 6.2, Theorem 3]{Pro07} and  \cite[Chapter 8, Section 7.2, Theorem 3]{Pro07}.
	We will use $\catc_K$ to denote the category of finite-dimensional continuous representations of $K$.
	This is a positive symmetric ribbon tensor category with simple tensor unit.  
	It is known that $\catc_K$ is ribbon tensor equivalent to the category $\catc_G$ of finite-dimensional rational representations of $G$, see e.g.\ \cite[Chapter 8, Sections 7.1, Corollary]{Pro07}.
	It follows that the direct sum completion $\Ind(\catc_K)$ of $\catc_K$ is balanced tensor equivalent to $\Ind(\catc_G)$. Let $T:\Ind(\catc_G)\to\Ind(\catc_K)$ be a functor realizing this equivalence.
	Since the commutative algebra $\mathcal{O}(G)$ of regular functions on $G$ is isomorphic to the commutative algebra $F(K)$ of \textit{representative functions} (see e.g.\ \cite[Chapter III, Section 1]{BtD95}) on $K$, see \cite[Chapter 8, Section 7.1, Proposition]{Pro07}, we have that $T(\mathcal{O}(G))$ and $F(K)$ are isomorphic simple commutative algebras in $\Ind(\catc_K)$, see Remark \ref{rem:equivalence_categories_algebras}.
\end{rem}

The following proposition shows that considering the rational action of a reductive affine algebraic group $G$ on a VOA is basically the same thing of considering the continuous action of a compact Lie group $K$ on it.

\begin{prop}  \label{prop:actions_compact_lie_group_and_its_complexification}
	Let $V$ be a VOA. Then the following holds:
	\begin{itemize}
		\item[$(i)$] if a compact Lie group $K$ acts continuously on $V$, then there exists a rational action of its complexification $G:=K_\C$ on $V$;
		
		\item[$(ii)$] if a reductive affine algebraic group $G$ acts rationally on $V$, then $V^G$ is equal to $V^K$ for any maximal compact subgroup $K$ of $G$.
	\end{itemize}
\end{prop}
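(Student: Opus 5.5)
We work throughout with the $L_0$-grading $V=\bigoplus_{n}V_n$. Every VOA automorphism commutes with $L_0$, because it fixes $\nu$. Hence any action of a group by VOA automorphisms preserves each finite-dimensional space $V_n$. So $V$ is a direct sum of finite-dimensional representations $\pi_n$ of the acting group, and we can reduce to finite-dimensional representation theory one grade at a time.

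For $(i)$, let $K$ act continuously and faithfully on $V$. With the topology of Remark \ref{rem:topology_VOA_automorphism_group}, this means each $\pi_n:K\to \GL(V_n)$ is continuous, hence a finite-dimensional continuous representation.
\begin{itemize}
\item By Remark \ref{rem:categories_compact_lie_groups}, namely the equivalence $\catc_K\simeq\catc_G$ of \cite[Chapter 8]{Pro07}, each $\pi_n$ extends uniquely to a rational representation $\pi_n^\C:G\to\GL(V_n)$. Equivalently, its matrix coefficients lie in $F(K)\cong\mathcal{O}(G)$.
\item Define $\pi^\C:=\bigoplus_n\pi_n^\C$. This is locally finite and rational, since every vector lies in a finite sum of the $V_n$.
\item We must check that each $\pi^\C(g)$ is a VOA automorphism, i.e.\ $\pi^\C(g)(a_kb)=(\pi^\C(g)a)_k(\pi^\C(g)b)$ for homogeneous $a\in V_p$, $b\in V_q$, and that $\Omega,\nu$ are fixed. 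Fix $p,q$ and $k$. The product map $V_p\otimes V_q\to V_{p+q-k}$, $a\otimes b\mapsto a_kb$, is $K$-equivariant. Therefore it is a morphism in $\catc_K$, and by full faithfulness of the equivalence it is also $G$-equivariant. Likewise the trivial subrepresentations $\C\Omega\subseteq V_0$ and $\C\nu\subseteq V_2$ remain trivial for $G$.
\item Faithfulness: if $\pi^\C(g)=1$, then $g$ lies in the kernel of a rational representation of $G$ whose restriction to $K$ is faithful. We argue that this kernel is trivial as follows. Pick $n_1,\dots,n_r$ such that $\bigoplus_{i}\pi_{n_i}$ is already faithful on the compact Lie group $K$; such a finite set exists because of the descending chain condition on closed subgroups of $K$. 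The corresponding finite-dimensional representation of $G$ then has matrix coefficients generating $F(K)\cong\mathcal{O}(G)$. Hence that representation is a faithful, in fact closed, embedding of $G$.
\end{itemize}

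For $(ii)$, the inclusion $V^G\subseteq V^K$ is trivial. For the converse, let $a\in V^K$ and decompose $a$ into its homogeneous components; each component is again $K$-fixed. So we may assume $a\in V_n$ with $\pi_n(k)a=a$ for all $k\in K$. The set of $g\in G$ with $\pi_n(g)a=a$ is a Zariski-closed subgroup of $G$ containing $K$. Since $K$ is Zariski dense in $G=K_\C$ (\cite[Chapter 8]{Pro07}), this subgroup is all of $G$. Alternatively, one can argue at the Lie algebra level: $\mathfrak g=\mathfrak k\oplus i\mathfrak k$ annihilates $a$, and $G$ is generated by $\exp\mathfrak g$ together with $K$, because $G/G^\circ\cong K/K^\circ$.

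The main obstacle is the faithfulness claim in $(i)$. One has to make sure that the extended representation of $G$ does not acquire a kernel, which relies on the finite generation argument above. The remaining steps are formal consequences of the tensor equivalence $\catc_K\simeq\catc_G$ and Zariski density.
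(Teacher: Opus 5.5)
Your proof is correct, but it follows an algebraic route where the paper's is analytic.

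\textbf{The paper's route.} It lifts each $\pi_n$ to $G$, citing Procesi for the fact that the lift of a faithful representation stays faithful. It then proves $\rho(G)\subseteq\Aut(V)$ by analytic continuation. For $v\in\mathfrak{k}$, the map $z\mapsto\rho(\exp(zv))(a_nb)-\rho(\exp(zv))(a)_n\rho(\exp(zv))(b)$ is holomorphic and vanishes on $\R$, hence vanishes identically. The Cartan decomposition $G=K\exp(i\mathfrak{k})$ then covers all of $G$. Part $(ii)$ is the same argument applied to $z\mapsto\rho(\exp(zv))a-a$.

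\textbf{Your route.} You encode the VOA axioms as $K$-equivariance of the linear maps $V_p\otimes V_q\to V_{q-k}$, $a\otimes b\mapsto a_kb$. In the paper's mode convention the target is $V_{q-k}$, not $V_{p+q-k}$, but this does not affect the argument. You transfer equivariance to $G$ through full faithfulness and monoidality of restriction $\catc_G\to\catc_K$, and part $(ii)$ reduces to Zariski density of $K$ in $G$.

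\textbf{Comparison.} Both arguments rest on the same fact: identities that hold on $K$ propagate to $K_\C$. Your version is purely algebraic and matches the categorical language used elsewhere in the paper. It also checks explicitly that $\Omega$ and $\nu$ stay fixed, which the paper leaves implicit, and it proves faithfulness rather than citing it. The paper's version is shorter and needs only the Cartan decomposition and the identity theorem.

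\textbf{One statement needs repair.} In the faithfulness step, the matrix coefficients of a faithful representation $\pi$ of $K$ do not in general generate $F(K)$ as an algebra. For $K=\T$ and $\pi(z)=z$ they generate only $\C[z]$, not $\C[z,z^{-1}]$. What is true is that the coefficients of $\pi$ and of its contragredient together generate $F(K)$; equivalently, adjoin $\det(\pi)^{-1}$. This holds because every irreducible representation of $K$ occurs in some $\pi^{\otimes a}\otimes(\pi^*)^{\otimes b}$. Since $\mathcal{O}(\GL(M))$ already contains $\det^{-1}$, the comorphism $\mathcal{O}(\GL(M))\to\mathcal{O}(G)$ is still surjective. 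So your conclusion holds: the lifted finite sum is a closed embedding of $G$, and $\pi^\C$ is faithful.
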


\begin{proof}
	We start to prove $(i)$.
	For the universal property of the complexification $G$ of $K$, see e.g.\ \cite[Proposition 8.6]{BtD95}, every finite-dimensional continuous representation $\pi:K\to \GL(M)$ lifts to a holomorphic one $\rho:G\to \GL(M)$. The latter is in particular also rational and if $\pi$ is faithful, then $\rho$ is too, see e.g.\ \cite[Chapter 8, Section 7.2, Theorem 3]{Pro07}.
	By definition of VOA automorphism, the action of $K$ on $V$ preserves the conformal vector and thus the corresponding $L_0$-eigenspaces $V_n$ in $V$, which are finite-dimensional. Therefore, we can lift the faithful continuous representation $\pi$ of $K$ in $\Aut(V)\subseteq\prod_{n\in\Z}\GL(V_n)$ to a faithful rational representation $\rho$ of $G$ in $\prod_{n\in\Z}\GL(V_n)$.
	To show that $\rho$ is actually a representation in $\Aut(V)$, we use the fact that $G$ has a Cartan decomposition $G=K\exp(i\mathfrak{k})$, where $\mathfrak{k}$ is the Lie algebra of $K$, see e.g.\ \cite[Chapter 8, Section 7.2, Corollary]{Pro07}.
	Indeed, let $\C\ni z\mapsto \phi_z:=\rho(\exp(zv))$, for $v\in\mathfrak{k}$, be any one-parameter subgroup of $G$.
	Then for all homogeneous vectors $a,b\in V$ and all $n\in\Z$, we have an holomorphic function 
	\begin{equation}
		f: \C\ni z\longmapsto \left[\phi_z(a_nb)-\phi_z(a)_n\phi_z(b)\right]\in V_{d_b-n} \,.
	\end{equation}
	For all $z\in\R$, $\phi_z\in \Aut(V)$, so that $f(z)=0$. Then for all $z\in \C$, $f(z)=0$ by the identity theorem for holomorphic functions. It follows that $\rho(G)\subseteq\Aut(V)$.
	
	To prove $(ii)$, let $K$ be any maximal compact subgroup of the reductive affine algebraic group $G$. Then we can proceed similarly to the proof of $(i)$, using the Cartan decomposition of $G$ and the holomorphicity of the following functions: for all homogeneous vectors $a\in V$ and all $v\in\mathfrak{k}$, set 
	\begin{equation}
		\tilde{f}: z\in\C\longmapsto \left[\rho(\exp(zv))(a)-a\right]\in V_{d_a} \,.
	\end{equation}
\end{proof}

Consider a simple VOA $V$ with a ribbon tensor category $\catc_V$ of $V$-modules, satisfying conditions $(i)$--$(vi)$ in Subsection \ref{subsubsec:ind-categories}.
This implies that the direct limit completion $\Ind(\catc_V)$ of $\catc_V$, see also Remark \ref{rem:ind-completions}, has a vertex tensor category structure, extending the one of $\catc_V$ and making it a balanced tensor category.
Assume that $\catc_V$ is ribbon tensor equivalent to $\catc_G$ for some reductive affine algebraic group $G$. Then we have a balanced tensor equivalence between $\Ind(\catc_V)$ and $\Ind(\catc_G)$. This also gives a one-to-one correspondence between the isomorphism classes of simple commutative algebras in the two categories, see Remark \ref{rem:equivalence_categories_algebras}. 
Recalling definitions and properties in Remark \ref{rem:correspondence_commut_G-algebras_commut_algebras_ind} and in Remark \ref{rem:O(G)_is_haploid_with_trivial_twist}, we give the following definition (cf.\ \cite[Definition 9.9.19 and Exercise 9.9.21]{EGNO15}):

\begin{defin}  \label{defin:regular_algebras_IndC_V}
	Let $V$ be a simple VOA with a ribbon tensor category $\catc_V$ of $V$-modules, satisfying the conditions $(i)$--$(vi)$ in Subsection \ref{subsubsec:ind-categories}.
	Assume that $\catc_V$ is ribbon tensor equivalent to $\catc_G$ for some reductive affine algebraic group $G$ and consider the induced balanced tensor equivalence $T:\Ind(\catc_G)\to\Ind(\catc_V)$.
	Then the simple haploid commutative algebra with trivial twist $\mathcal{O}(\catc_V):=T(\mathcal{O}(G))$ is the \textit{regular algebra} in $\Ind(\catc_V)$.
\end{defin}

The following remark gives examples of how regular algebras in Definition \ref{defin:regular_algebras_IndC_V} can arise.

\begin{rem}  \label{rem:decomposition_voaxgroup_compact_case_and_vtc_criteria}
	Let $W$ be a simple VOA and let $K$ be a compact Lie group acting continuously on it. By \cite[Theorem 2.4]{DLM96}, we have the following decomposition of $W$ into $(W^K\times K)$-modules:
	\begin{equation}  \label{eq:decomposition_voaxcompact_group-modules}
		W\cong \bigoplus_{\omega\in\widehat{K}} W^{\omega'}\otimes N_\omega
	\end{equation}
	where for all $\omega\in\widehat{K}$, that is the set of equivalence classes of irreducible continuous representations of $K$, $W^{\omega'}$ is an irreducible $W^K$-module and $N_\omega$ is an irreducible continuous representation of $K$ in the isomorphism class $\omega$. 
	Moreover, for all $\omega,\eta\in\widehat{K}$, $W^{\omega'}$ and $W^{\eta'}$ are non-isomorphic whenever $\omega\not=\eta$.
	In particular, $V:=W^{\omega_0}=W^K$ is a simple VOA, where $\omega_0$ is the class of the trivial representation. 
	Now, consider the semisimple abelian full subcategory $\catc_V$ of $\Rep(V)$ generated by the $V$-module appearing in \eqref{eq:decomposition_voaxcompact_group-modules}. 
	In \cite[Theorem 4.1 and Corollary 4.8]{McR20}, the authors gives two criteria for $\catc_V$ to have a vertex tensor category structure, whose underlying ribbon one is ribbon tensor equivalent to $\catc_K$. In the latter case, $\catc_V$ will be ribbon tensor equivalent also to $\catc_G$ by Remark \ref{rem:categories_compact_lie_groups}.
	The first criteria assures the equivalence if the spaces of intertwining operators among the $W^\omega$'s are isomorphic to the corresponding spaces of intertwining operators among the $N_\omega$'s.
	Instead, the second criteria says that the equivalence holds if $\catc_V$ is already contained in a vertex tensor category.
	In particular, this second criteria can be used when $\catc_V$ satisfies condition $(i)$--$(vi)$ in Subsection \ref{subsubsec:ind-categories}. In this case, we can also consider the regular algebra $\mathcal{O}(\catc_V)$ in $\Ind(\catc_V)$, as introduced in Definition \ref{defin:regular_algebras_IndC_V}.
\end{rem}

Now, we are ready to prove the announced main result of this section, that is a Doplicher-Roberts cross product construction, see \cite{DR89,DR89b, DR90}, for VOAs.

\begin{theo}   \label{theo:DR_cross_product_VOAs}
	Let $V$ be a simple VOA and let $\catc_V$ be a ribbon tensor category of $V$-modules, satisfying the conditions $(i)$--$(vi)$ in Subsection \ref{subsubsec:ind-categories}. 
	Assume that $\catc_V$ is ribbon tensor equivalent to the category $\catc_G$ for some reductive affine algebraic group $G$. 
	Further assume that the weak $V$-module $W$ underlying the regular algebra $\mathcal{O}(\catc_V)$ in $\Ind(\catc_V)$ is a $V$-module.
	Then the following hold:
	\begin{itemize}
		\item[$(i)$] $W$ is a simple VOA extension of $V$ and there exists a rational action of $G$ on $W$ such that $V=W^G$.
		
		\item[$(ii)$] The map $H\mapsto W^H$ gives a one-to-one correspondence between reductive closed subgroups of $G$ and simple vertex subalgebras of $W$ containing $V$. Furthermore, any simple VOA extension $U$ of $V$ such that $U\in\Ind(\catc_V)$ is isomorphic to $W^H$ for a unique, up to conjugacy, reductive closed subgroup $H$ of $G$.
		
		\item[$(iii)$] Let $U$ be a simple VOA extension of $V$ such that $U\in\Ind(\catc_V)$ and that $G$ acts rationally on it so that $U^G=V$. Then $U$ is isomorphic to $W$.
	\end{itemize}
	Accordingly, $W$ is called the Doplicher-Roberts (DR) VOA extension $V\rtimes\catc_V$ of $V$ by $\catc_V$.
\end{theo}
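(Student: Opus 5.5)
The whole proof will run through the correspondence between VOA extensions and commutative algebras recalled at the end of Subsection \ref{subsubsec:ind-categories}. Transporting structures along the balanced tensor equivalence $T:\Ind(\catc_G)\to\Ind(\catc_V)$ (Remark \ref{rem:equivalence_categories_algebras}) then reduces everything to Section \ref{sec:simple_G-algebras}.

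For $(i)$, the regular algebra $\mathcal{O}(\catc_V)=T(\mathcal{O}(G))$ is a simple haploid commutative algebra with trivial twist (Remark \ref{rem:O(G)_is_haploid_with_trivial_twist}). Its underlying object $W$ is a $V$-module by hypothesis, so \cite[Theorem 7.5]{CMY22} makes $W$ a simple VOA extension of $V$.

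For the $G$-action, use the right regular representation $r$. Each $r(g)$ lies in $\Aut(\mathcal{O}(G))$ as a commutative algebra in $\Ind(\catc_G)$ (Remark \ref{rem:correspondence_commut_G-algebras_commut_algebras_ind}). Hence $T(r(g))\in\Aut(\mathcal{O}(\catc_V))$, and under the algebra/extension correspondence algebra automorphisms are exactly VOA automorphisms of $W$ fixing $V$ pointwise. To see that the action is rational, use the decomposition \eqref{eq:categorical_decomposition_right_G-action}: $G$ acts only on the multiplicity spaces $(\C\one_G)^{\dim M_\omega}$, via finite-dimensional rational representations. On $W\cong\bigoplus_\omega T(M_\omega')\otimes\C^{\dim M_\omega}$, each $L_0$-eigenspace is finite-dimensional and so meets only finitely many isotypic summands. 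The action on it is therefore rational, and it is faithful because $r$ is.

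It remains to show $W^G=V$. By \eqref{eq:categorical_decomposition_fixed-point_subalgebras} with $H=G$, the fixed points form exactly the summand $T(\C\one_G)=V$ (Lemma \ref{lem:trivial_fixed_point_subalg_O(G)}).

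For $(ii)$, I first show that simple vertex subalgebras $U$ with $V\subseteq U\subseteq W$ lie in $\Ind(\catc_V)$, since they are $V$-submodules of a semisimple object. They therefore correspond to simple commutative subalgebras of $\mathcal{O}(\catc_V)$, hence to simple $G$-subalgebras of $\mathcal{O}(G)$ for the left action. Theorem \ref{theo:A_is_fnitely_generated} identifies these with $\mathcal{O}(G)^H$ for $H$ reductive closed, and $T(\mathcal{O}(G)^H)=W^H$, with the fixed points taken for the $r$-action above.

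Next, an arbitrary simple VOA extension $U\in\Ind(\catc_V)$ corresponds to a simple commutative algebra in $\Ind(\catc_V)$, hence to one in $\Ind(\catc_G)$. Its underlying object has countable dimension, because $U$ has countable dimension, being graded with finite-dimensional pieces. Theorem \ref{theo:simple_commutative_algebras_in_Ind(C_G)} then gives $U\cong W^H$ with $H$ unique up to conjugacy.

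The step I expect to be the main obstacle is that injectivity of $H\mapsto W^H$ on subgroups, rather than on conjugacy classes, requires an actual equality of subalgebras. I would obtain it from the bijection in Theorem \ref{theo:A_is_fnitely_generated}, which is stated there for subgroups.

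For $(iii)$, by $(ii)$ we have $U\cong W^H$ for some reductive $H$, so it suffices to show $H=\{e\}$. Rationality of the $G$-action on $U$ with $U^G=V$ lets us apply Remark \ref{rem:decomposition_voaxgroup_compact_case_and_vtc_criteria} to a maximal compact subgroup $K$, using Proposition \ref{prop:actions_compact_lie_group_and_its_complexification}. This gives $U\cong\bigoplus_\omega U^{\omega'}\otimes N_\omega$ with every irreducible appearing, and the $U^{\omega'}$ exhaust $\catc_V$. The multiplicity of each simple $T(M_\omega')$ is then $\dim M_\omega$, as in $W$. Comparing with \eqref{eq:categorical_decomposition_fixed-point_subalgebras} forces $\dim M_\omega^H=\dim M_\omega$ for all $\omega$. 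Since rational representations separate points of $G$, $H$ acts trivially on every $M_\omega$, so $H$ is trivial and $U\cong W$.
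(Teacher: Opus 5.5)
\emph{Verdict.} Parts $(i)$ and $(ii)$ are fine and follow the paper's proof essentially step by step. There is a genuine gap in $(iii)$, and in fact $(iii)$ is false as stated.

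\emph{Parts $(i)$ and $(ii)$.} You transport $\mathcal{O}(G)$ along $T$, make the right regular action rational through the multiplicity spaces, use Theorem \ref{theo:A_is_fnitely_generated} for the subalgebra correspondence, and use Theorem \ref{theo:simple_commutative_algebras_in_Ind(C_G)} plus countability for arbitrary extensions. This is exactly the paper's route.

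\emph{Where $(iii)$ breaks.} The problem is the sentence ``the $U^{\omega'}$ exhaust $\catc_V$''. The decomposition of Remark \ref{rem:decomposition_voaxgroup_compact_case_and_vtc_criteria} only gives an \emph{injection} $\omega\mapsto U^{\omega'}$ of $\widehat{K}$ into the set of simple objects of $\catc_V$. When $G$ is infinite, both sets are countably infinite, so nothing forces this injection to be onto. Nothing forces $U^{\omega'}\cong T(M_\omega')$ either, and your multiplicity claim also relies on that.

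\emph{Counterexample.} Take $V=M(1)$ and let $\catc_V$ be generated by the Fock modules $\dutchcal{M}(1,\sqrt{2}m)$, $m\in\Z$. This category is pointed and symmetric with trivial twist, so $\catc_V\simeq\catc_{\C^*}$ as in Example \ref{ex:simple_current_exts_as_DR_VOA_exts}, and $W=V_{\latt_2}$. Now set $U:=V_{\latt_8}=\bigoplus_{m}\dutchcal{M}(1,2\sqrt{2}m)=W^{\Z_2}$.
\begin{itemize}
\item $U$ is a simple extension of $V$ lying in $\Ind(\catc_V)$.
\item The standard torus $\C^*\subset\Aut(V_{\latt_8})$ acts by $\lambda^m$ on $\dutchcal{M}(1,2\sqrt{2}m)$. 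This action is faithful and rational, and $U^{\C^*}=M(1)=V$.
\item Yet $U\not\cong W$, since $\dim U_1=1\neq 3=\dim W_1$.
\end{itemize}
So no argument can establish $(iii)$ in this generality. The paper's own proof of $(iii)$, which simply ``compares'' the two decompositions, passes over the same point.

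\emph{What is missing, and how to finish.} You need the extra hypothesis that every simple object of $\catc_V$ occurs in $U$.
\begin{itemize}
\item It holds in the paper's uses of $(iii)$. Every $L(1,m^2)$ occurs in $V_{\latt_2}$ by \eqref{eq:decomposition_L_2}. In Corollary \ref{cor:galois_correspondence}, $\catc_V$ is by definition generated by the modules occurring in $W$.
\item It is automatic when $G$ is finite, since an injection between two sets of cardinality $|\widehat{G}|$ is a bijection.
\end{itemize}
Even with this hypothesis, you should justify the multiplicities rather than read them off. The second criterion recalled in Remark \ref{rem:decomposition_voaxgroup_compact_case_and_vtc_criteria} turns the $K$-action on $U$ into a ribbon equivalence $\catc_V\simeq\catc_K$ with $d(U^{\omega'})=\dim N_\omega$. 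Hence every simple $X\in\catc_V$ occurs in $U$, just as in $W$, with multiplicity $d(X)$. This gives $\dim M_\omega^H=\dim M_\omega$ for all $\omega$, and your conclusion $H=\{e\}$ then goes through.
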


\begin{proof}
	The plan of the proof is to demonstrate $(i)$ and $(ii)$ at once and then moving to prove $(iii)$.
	Recall that we are using notations and results from Section \ref{subsec:voas_and_modules} and from Section \ref{sec:simple_G-algebras}. 
	
	The fact that $V$ satisfies the conditions $(i)$--$(vi)$ in Subsection \ref{subsubsec:ind-categories} assures us that the direct limit completion $\Ind(\catc_V)$ of $\catc_V$ has a vertex tensor category structure, extending the ones of $\catc_V$ and making it a balanced tensor category.
	Recall also that there is a balanced tensor equivalence $T: \Ind(\catc_G)\to \Ind(\catc_V)$ as an effect of the equivalence between the respective subcategories $\catc_G$ and $\catc_V$.
	Crucially, this gives a one-to-one correspondence between the isomorphism classes of (simple) commutative algebras in the two categories $\Ind(\catc_G)$ and $\Ind(\catc_V)$, see Remark \ref{rem:equivalence_categories_algebras}.  
	By hypotheses, $\mathcal{O}(\catc_V):=T(\mathcal{O}(G))$, see Remark \ref{rem:correspondence_commut_G-algebras_commut_algebras_ind}, is a simple haploid commutative algebra with trivial twist in $\Ind(\catc_V)$ such that the underlying weak $V$-module $W$ is a $V$-module. Therefore, $W$ is a simple VOA extension of $V$.  
	By \eqref{eq:categorical_decomposition_regular_algebra}, we have the following isomorphism of $V$-modules:
	\begin{equation} \label{eq:voa-module_decomposition}
		W\cong \bigoplus_{\omega\in\widehat{G}} W^{\omega'}\boxtimes V^{\dim(M_\omega)}
	\end{equation}
	where for every $\omega\in\widehat{G}$, $W^{\omega'}$ is a simple object in $\catc_V$ chosen from the isomorphism class corresponding, under the ribbon tensor equivalence, to $M_\omega'$. 
	By Remark \ref{rem:equivalence_categories_algebras}, the right regular representation $r$ of $G$ on $\mathcal{O}(G)$ gives a faithful representation $r_T$ of $G$ in $\Aut(\mathcal{O}(\catc_V))$, given by $g\mapsto T(r(g))$. On the other hand, $\Aut(\mathcal{O}(\catc_V))$ is the subgroup of those automorphisms in $\Aut(W)$ fixing $V$, so giving an action of $G$ on $W$.
	As depicted in \eqref{eq:categorical_decomposition_right_G-action}, the right action $r$ of $G$ factors through actions on multiplicity spaces $\Hom((\C\one_G)^{\dim(M_\omega)},(\C\one_G)^{\dim(M_\omega)})$. 
	Moreover, $T$ induces linear isomorphisms between the latter and the multiplicity spaces $\Hom(V^{\dim(M_\omega)},V^{\dim(M_\omega)})$.
	This promotes the action of $G$ on $W$ by $r_T$ to a rational one, which is also equivalent to the right action on $\mathcal{O}(G)$.
	To sum up, we have the following decomposition of $W$ as $(V\times G)$-module, cf.\ \eqref{eq:decomposition_voaxcompact_group-modules}:
	\begin{equation}  \label{eq:voaxgroup-module_decomposition}
		W\cong \bigoplus_{\omega\in\widehat{G}} W^{\omega'}\otimes M_\omega \,.
	\end{equation}
		
	Now, note that using Theorem \ref{theo:A_is_fnitely_generated}, $\mathcal{O}(G)^H\mapsto T(\mathcal{O}(G)^H)$ induces a one-to-one correspondence between simple $G$-subalgebras of $\mathcal{O}(G)$ in the sense of Definition \ref{defin:G-subalgebras} and simple vertex subalgebras of $W$.
	This immediately implies that $W^G=V$, concluding the proof of $(i)$.
	Moreover, this also proves the first statement of $(ii)$.
	The correspondence can be justified as follows.
	As $\catc_G$ is a semisimple category, every (simple) $G$-subalgebra $A$ of $\mathcal{O}(G)$ is equivalent to consider an idempotent morphism $p_A\in \Hom(\mathcal{O}(G),\mathcal{O}(G))$ such that $p_Am=m(p_A\boxtimes p_A)$ and $p_A\iota=\iota$, where $m$ and $\iota$ are the multiplication and the unit morphisms of $\mathcal{O}(G)$ respectively.
	Furthermore, $A=\mathcal{O}(G)^H$ for a (reductive) closed subgroup $H$ of $G$ if and only if for all $h\in H$, $p_Ar(h)=r(h)=r(h)p_A$.
	The same is true for (simple) vertex subalgebras $U$ of $W$ in $\catc_V$ and its fixed-point ones $W^H$ for (reductive) closed subgroups $H$ of $G$.
	Therefore, the correspondence follows from Theorem \ref{theo:A_is_fnitely_generated} and the fact that $G$ acts on $W$ by $r_T$.
	
	More in general, let $U$ be any VOA extension of $V$ such that $U\in\Ind(\catc_V)$. Then its commutative algebra $B_U$ in $\Ind(\catc_V)$ is isomorphic to $T(A_U)$ for some commutative algebra $A_U$ in $\Ind(\catc_G)$, see again Remark \ref{rem:equivalence_categories_algebras}. 
	As vector space, $U$ decomposes into a direct sum of a countable number of finite-dimensional eigenspaces of its conformal Hamiltonian. Therefore, the ind-object underlying $A_U$ must be a direct sum of at most countably many simple objects in $\catc_G$. If $U$ is simple, then both $B_U$ and $A_U$ are also simple. According to Theorem \ref{theo:simple_commutative_algebras_in_Ind(C_G)}, $A_U$ must be isomorphic to $\mathcal{O}(G)^H$ for some reductive closed subgroup $H$ of $G$, unique up to conjugacy. It follows that $U$ is isomorphic to $W^H$, so concluding the proof of $(ii)$. 
	
	Finally, suppose that $U$ is a simple VOA extension of $V$ such that $U\in\Ind(\catc_V)$ and that $G$ acts rationally on it so that $U^G=V$. 
	By $(ii)$ of Proposition \ref{prop:actions_compact_lie_group_and_its_complexification} and \eqref{eq:decomposition_voaxcompact_group-modules}, $U\in\Ind(\catc_V)$ decomposes into the direct sum of irreducible $V$-modules in $\catc_V$. 
	On the other hand, $U$ is a simple VOA extension of $V$ and thus it must be isomorphic to a fixed-point vertex subalgebra $W^H$ of $W$ for some reductive closed subgroup $H$ of $G$, thanks to part $(ii)$. Comparing \eqref{eq:decomposition_voaxcompact_group-modules} with \eqref{eq:voa-module_decomposition} or \eqref{eq:voaxgroup-module_decomposition}, we get that $U$ must be isomorphic to $W$. This proves $(iii)$ and concludes the proof.
\end{proof}

\begin{rem} \label{rem:uniqueness_structure_DR_VOA_extension}
	Note that Theorem \ref{theo:DR_cross_product_VOAs} implies that any $V$-module having a decomposition into irreducible $V$-modules as the one of the DR VOA extension $V\rtimes \catc_V$, see \eqref{eq:voa-module_decomposition}, or as the one of its fixed-point vertex subalgebras, has a unique structure of simple VOA extension of $V$, cf.\ \cite[Conjecture 13.13 and Theorem 13.16]{CC}.
\end{rem}

\begin{rem}	
	Let $V$, $\catc_V$ and $\mathcal{O}(\catc_V)$ satisfy the hypotheses of Theorem \ref{theo:DR_cross_product_VOAs}.
	Further assume that $V$ is of CFT type and that all $V$-modules in $\catc_V$, different from the adjoint one, have positive conformal weight. Then it follows from the haploidness of $\mathcal{O}(\catc_V)$, see Remark \ref{rem:O(G)_is_haploid_with_trivial_twist}, that the DR VOA extension $V\rtimes\catc_V$ of $V$ is also of CFT type. 
\end{rem}

\begin{rem}  \label{rem:DR_VOA_extension_strongly_rational}
	Let $V$ and $\catc_V$ be as in Theorem \ref{theo:DR_cross_product_VOAs}. Assume also that $G$ is finite. In this case, $\mathcal{O}(\catc_V)$ and $\mathcal{O}(G)$ are simple commutative algebras in $\catc_V$ and in $\catc_G$ respectively. Therefore, $W$ is automatically a $V$-module. 
	Note that $G$ turns out to be finite if $V$ is strongly rational. Indeed, in this case, $\Rep(V)$ is a modular tensor category, so that $\catc_V$ and $\catc_G$ are forced to be symmetric ribbon fusion categories, implying that $G$ is finite.
	If we assume that $V$ is strongly rational and that $W_0=\C\Omega$, then $V\rtimes\catc_V$ is of CFT-type by \cite[Remark 4.5]{CKLW18} and its strongly rationality follows from \cite[Theorem 4.14]{McR20} or \cite[Theorem 4.10]{CMSY}.
\end{rem}

\begin{rem}
	Let $V$ and $\catc_V$ be as in Theorem \ref{theo:DR_cross_product_VOAs}. Note that the weak $V$-module $W$ underlying the regular algebra $\mathcal{O}(\catc_V)$ in $\Ind(\catc_V)$ is a $V$-module if and only if for all $t\in\R$ the set $\{\omega\in\widehat{G}\mid \lambda_{W^\omega}<t\}$ is finite, see \eqref{eq:voa-module_decomposition}.
	If this is not the case, then $W$ has a structure of conformal vertex algebra (that is a VOA where the conformal Hamiltonian is allowed to have infinite dimensional eigenspaces and arbitrary negative eigenvalues) containing $V$, see references in Section \ref{subsec:voas_and_modules}.
	Accordingly, $W$ can be generically called the Doplicher-Roberts (DR) extension $V\rtimes \catc_V$ of $V$ by $\catc_V$. 
\end{rem}

\begin{ex}
	Examples of DR VOA extensions are given by strongly rational VOAs with trivial representation theory, called \textit{holomorphic}, see \cite{DM04b}. Indeed, we can interpret all the holomorphic VOAs constructed in \cite{GK19, MS23} and in \cite{GK21}, by mean of the cyclic orbifold theory \cite{EMS20} and the non-abelian one \cite{EG22} respectively, as DR VOA extensions, where the group $G$ in Theorem \ref{theo:DR_cross_product_VOAs} is finite and solvable.
\end{ex}

\begin{ex}  \label{ex:simple_current_exts_as_DR_VOA_exts}
	Let $V$ be a simple VOA and let $G$ be an abelian reductive affine algebraic group. 
	Then $\widehat{G}$ has a structure of a countable abelian group. 
	A VOA $U$ is said to be a \textit{$\widehat{G}$-graded simple current extension} of $V$ if it has the following semisimple decomposition in some vertex tensor category of $V$-modules:
	\begin{equation}
		U\cong\bigoplus_{\omega\in\widehat{G}} V^\omega
		\,,\qquad
		V^{\omega_0}=V
		\,,\qquad
		\forall \omega,\eta\in \widehat{G}
		\quad V^\omega\boxtimes V^\eta\cong V^{\omega\eta}
	\end{equation}
	where $\omega_0$ is the identity element of $\widehat{G}$ and for all $\omega,\eta\in \widehat{G}$ such that $\omega\not=\eta$, $V^\omega$ and $V^\eta$ are non-isomorphic irreducible $V$-modules. In particular, note that for all $\omega\in \widehat{G}$, $V^\omega$ is an invertible object with trivial twist.
	Denote by $\catc_V$ the semisimple category generated by the $V^\omega$'s. Then $\catc_V$ is ribbon tensor equivalent to $\catc_G$ and it satisfies the hypotheses to invoke Theorem \ref{theo:DR_cross_product_VOAs}. Accordingly, the DR VOA extension $V\rtimes \catc_V$ is a $\widehat{G}$-graded simple current extension of $V$ and it is isomorphic to $U$. Cf.\ \cite[Section 5]{DM04} and the beginning of \cite[Section 3.1]{CKLR19}.
\end{ex}

As an application of Theorem \ref{theo:DR_cross_product_VOAs}, we have the following Galois correspondence:

\begin{cor}  \label{cor:galois_correspondence}
	Let $W$ be a simple VOA and let $G$ be a reductive affine algebraic group acting rationally on it. Set $V:=W^G$.
	Then $W$ is a completely reducible $V$-module. 
	Accordingly, denote by $\catc_V$ the semisimple abelian full subcategory of $\Rep(V)$ generated by the $V$-modules appearing in the decomposition of $W$ into $V$-modules.
	Suppose that $\catc_V$ satisfies the conditions $(i)$--$(vi)$ in Subsection \ref{subsubsec:ind-categories}. 
	Then $W$ is isomorphic to the DR VOA extension $V\rtimes \catc_V$ of $V$ by $\catc_V$. In particular, $H\mapsto W^H$ realizes a one-to-one correspondence between reductive closed subgroup of $G$ and simple vertex subalgebras of $W$ containing $V$. 
\end{cor}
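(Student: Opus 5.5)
We reduce the corollary to Theorem \ref{theo:DR_cross_product_VOAs} via the compact picture of Remark \ref{rem:decomposition_voaxgroup_compact_case_and_vtc_criteria}. First fix a maximal compact subgroup $K$ of $G$. Restricting the rational action of $G$ to $K$ gives a continuous faithful action of $K$ on $W$ by VOA automorphisms; continuity holds because $G$ acts algebraically on each finite-dimensional $V_n$. By Proposition \ref{prop:actions_compact_lie_group_and_its_complexification}$(ii)$ we have $W^K=W^G=V$. The decomposition \eqref{eq:decomposition_voaxcompact_group-modules} of \cite[Theorem 2.4]{DLM96} then gives
\begin{equation}
W\cong\bigoplus_{\omega\in\widehat{K}} W^{\omega'}\otimes N_\omega ,
\end{equation}
where the $W^{\omega'}$ are pairwise non-isomorphic irreducible $V$-modules and $V=W^{\omega_0}$ is simple. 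This proves that $W$ is a completely reducible $V$-module, and it shows that $\catc_V$ is well defined. Since $G$ is the complexification of $K$, we have $\widehat{K}=\widehat{G}$, and the $N_\omega$ are the restrictions of the $M_\omega$.

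Next we use the standing assumption that $\catc_V$ satisfies $(i)$--$(vi)$. The second criterion of \cite[Corollary 4.8]{McR20} then shows that $\catc_V$ is ribbon tensor equivalent to $\catc_K$, and hence to $\catc_G$ by Remark \ref{rem:categories_compact_lie_groups}. To apply Theorem \ref{theo:DR_cross_product_VOAs} we also need the weak $V$-module underlying $\mathcal{O}(\catc_V)$ to be an ordinary $V$-module. By \eqref{eq:voa-module_decomposition}, this module has the same decomposition $\bigoplus_\omega W^{\omega'}\otimes M_\omega$ as $W$, because each simple object of $\catc_V$ appears with multiplicity $\dim(M_\omega)$ in both. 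It is therefore isomorphic to $W$ as a $V$-module, and in particular it is grading-restricted.

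The main step, and the one most sensitive to bookkeeping, is checking that the labelling of simple objects under the McRae equivalence matches the one used in \eqref{eq:voa-module_decomposition}. The multiplicity of the $V$-module $W^{\omega'}$ must be $\dim(M_\omega)$ on both sides. McRae's equivalence sends $W^{\omega'}$ to the representation dual to $N_\omega$, or to $N_\omega$ itself depending on conventions, and in either case its dimension is $\dim(M_\omega)$. With this matching in place, Theorem \ref{theo:DR_cross_product_VOAs}$(i)$ applies and we may form $V\rtimes\catc_V$.

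Finally, $W$ is a simple VOA extension of $V$ lying in $\Ind(\catc_V)$, and $G$ acts rationally on it with $W^G=V$. Theorem \ref{theo:DR_cross_product_VOAs}$(iii)$ therefore gives $W\cong V\rtimes\catc_V$. Transporting Theorem \ref{theo:DR_cross_product_VOAs}$(ii)$ along this isomorphism gives the Galois correspondence $H\mapsto W^H$. One point remains to be checked here: after transport, the rational $G$-action on $W$ should agree, up to an automorphism, with the action $r_T$ used in that theorem, so that the fixed-point subalgebras coincide. We would argue this by comparing the two actions on the multiplicity spaces $M_\omega$. Both act by the irreducible representation of type $\omega$ and commute with $V$, so by Schur's lemma they agree up to intertwiners. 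If this comparison proves delicate, it is enough to note that the set of simple intermediate subalgebras is intrinsic to $W$ and is classified by Theorem \ref{theo:DR_cross_product_VOAs}$(ii)$ via fixed points of some action conjugate to the given one.
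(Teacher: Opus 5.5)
Your main line is the paper's proof. You pass to a maximal compact $K$ with $W^K=W^G=V$ (Proposition \ref{prop:actions_compact_lie_group_and_its_complexification}$(ii)$), get complete reducibility and $\catc_V\simeq\catc_K\simeq\catc_G$ from Remark \ref{rem:decomposition_voaxgroup_compact_case_and_vtc_criteria}, and then apply Theorem \ref{theo:DR_cross_product_VOAs}$(iii)$ and $(ii)$. Your check that the module underlying $\mathcal{O}(\catc_V)$ is grading-restricted (each simple $X\in\catc_V$ occurs with multiplicity $d(X)$ on both sides) makes explicit a hypothesis the paper uses tacitly.

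The compatibility issue in your last paragraph is also passed over in the paper, but your Schur's lemma argument does not settle it. The transported action $\sigma=\phi^{-1}r_T\phi$, for an isomorphism $\phi\colon W\to V\rtimes\catc_V$, may differ from the given action $\rho$ by an automorphism of $G$ that permutes types. Fiberwise intertwiners also need not assemble into a VOA automorphism. Your fallback assumes the conjugacy it is meant to prove.

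A clean fix runs as follows. The automorphisms of $W$ fixing $V$ pointwise are exactly the automorphisms of the algebra object. Via $T$ this group is $\Aut(\mathcal{O}(G))=r(G)\cong G$, so $\rho(G)\subseteq\sigma(G)$. Hence $\alpha:=\sigma^{-1}\rho$ is an injective algebraic endomorphism of $G$, and therefore an automorphism (same dimension and same number of components). It permutes reductive closed subgroups and gives $W^{\rho(H)}=W^{\sigma(\alpha(H))}$, which transfers the Galois correspondence to the given action.
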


\begin{proof}
	By (ii) of Proposition \ref{prop:actions_compact_lie_group_and_its_complexification}, $V=W^G=W^K$ for any maximal compact subgroup $K$ of $G$. 
	By Remark \ref{rem:decomposition_voaxgroup_compact_case_and_vtc_criteria}, $W$ is a completely reducible $V$-module and by the hypotheses on $\catc_V$, this is ribbon tensor equivalent to $\catc_G$. Therefore, we can construct the DR VOA extension $V\rtimes \catc_V$ of $V$ by $\catc_V$ by Theorem \ref{theo:DR_cross_product_VOAs}. Note that $W$ is in $\Ind(\catc_V)$ by construction, so that it must be isomorphic to $V\rtimes \catc_V$ by $(iii)$ of Theorem \ref{theo:DR_cross_product_VOAs}. Then the result follows from $(ii)$ of Theorem \ref{theo:DR_cross_product_VOAs}.
\end{proof}

\begin{rem}  \label{rem:hypothesis_image_intertwining_operators}
	Note that in the proof of Theorem \ref{theo:DR_cross_product_VOAs} and in the one of Corollary \ref{cor:galois_correspondence}, the fact that $V$ satisfies the condition $(vi)$ in Subsection \ref{subsubsec:ind-categories} is only used to deduce that the direct sum completion $\Ind(\catc_V)$ of $\catc_V$ has vertex tensor category structure extending the one of $\catc_V$. Accordingly, we can avoid this hypothesis if we already know that $\Ind(\catc_V)$ has such vertex tensor category structure.
\end{rem}

We end this section linking the Galois correspondence established by Corollary \ref{cor:galois_correspondence} with the one in \cite[Theorem 3]{DM99}. In the latter, the authors consider a continuous action of a compact Lie group $K$ on a simple VOA $W$.
Recall from Remark \ref{rem:decomposition_voaxgroup_compact_case_and_vtc_criteria} that in this case $W$ is a completely reducible $(V\times K)$-module, where $V:=W^K$.
They show that the map $H\mapsto W^H$ gives a one-to-one correspondence between closed subgroups $H$ of $K$ and simple orthogonally complemented vertex subalgebras of $W$ containing $V$.
To understand the condition of being orthogonally complemented, consider the decomposition \eqref{eq:decomposition_voaxcompact_group-modules} of $W$ into $(V\times K)$-modules.
For all $\omega\in\widehat{K}$, choose a $K$-invariant scalar product $\scalarang_\omega$ on the representation $N_\omega$, as appearing in \eqref{eq:decomposition_voaxcompact_group-modules}.
For a vertex subalgebra $U$ of $W$ containing $V$, we have the following decomposition into $(V\times K)$-modules:
\begin{equation}
	U\cong \bigoplus_{\omega\in\widehat{K}} W^{\omega'}\otimes R_\omega 
\end{equation}
for some vector subspaces $R_\omega$ of $N_\omega$.
Then the vertex subalgebra $U$ of $W$ containing $V$ is said to be \textit{orthogonally complemented} with respect to $K$, see \cite[Definition 2]{DM99}, if and only if the vector space defined by
\begin{equation} \label{eq:voa_orthogonal_complement}
	U^\perp:=\bigoplus_{\omega\in\widehat{K}} W^{\omega'}\otimes R_\omega^\perp
\end{equation}
has a $U$-module structure, where for all $\omega\in\widehat{K}$, $R_\omega^\perp$ is the orthogonal complement of $R_\omega$ in $N_\omega$ with respect to $\scalarang_\omega$.

\begin{rem}  \label{rem:star-structure_from_maxcompact_subgroup}
	Let $G$ be a reductive affine algebraic group and let $K$ be one of its maximal compact subgroup. 
	Consider commutative algebras in the sense of Section \ref{sec:simple_G-algebras}. Recall from Remark \ref{rem:categories_compact_lie_groups} and references therein that any representative function $f$ in the commutative algebra $F(K)$ has a unique holomorphic extension $f_\C$ living in $\mathcal{O}(G)$. 
	Then $K$ defines a unique $\ast$-involution on the commutative algebra $\mathcal{O}(G)$ by:
	\begin{equation}  \label{eq:star-involution_from_maxcompact_subgroup}
		\forall f\in\mathcal{O}(G) 
		\qquad
		f^*:= (\overline{f\restriction_K})_\C
	\end{equation}
	where $\overline{\cdot}$ denotes the usual complex conjugate of functions from $K$ to $\C$.
	Furthermore, there is a scalar product on $\mathcal{O}(G)$ defined by
	\begin{equation}  \label{eq:scalar_product_from_maxcompact_subgroup}
		\forall f,s\in\mathcal{O}(G)\qquad
		\langle f|s\rangle:=\int_K \overline{f(k)}s(k)\mathrm{d}k
	\end{equation} 
	where $\mathrm{d}k$ is the unique (normalized) Haar measure on $K$. Note that this scalar product is invariant with respect to the action of $K$ on $\mathcal{O}(G)$ through the left and the regular representations.
\end{rem}

For all closed subgroup $H$ of a reductive affine algebraic group $G$, note that $\mathcal{O}(G)$ is an algebra module for its subalgebra $\mathcal{O}(G)^H$ of functions invariant by right translations. Then we have the following. 

\begin{theo}  \label{theo:orthogonally_complemented}
	Let $G$ be a reductive affine algebraic group and let $H$ be a reductive closed subgroup of it. Consider any maximal compact subgroup $K$ of $G$ with the induced $\ast$-involution and the induced scalar product $\scalarang$ as defined in Remark \ref{rem:star-structure_from_maxcompact_subgroup}. Then the orthogonal complement $\mathcal{O}(G)^{H,\perp}$ of $\mathcal{O}(G)^H$ in $\mathcal{O}(G)$ with respect to $\scalarang$ is an algebra module for $\mathcal{O}(G)^H$ if and only if $\mathcal{O}(G)^H$ is a $\ast$-subalgebra of $\mathcal{O}(G)$ if and only if $K\cap H$ is a maximal compact subgroup of $H$. 
\end{theo}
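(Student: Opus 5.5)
The plan is to prove the cycle of equivalences through two intermediate facts about the $\ast$-involution of Remark \ref{rem:star-structure_from_maxcompact_subgroup}. Write $A:=\mathcal{O}(G)^H$. First, I would describe $\ast$ through the Cartan involution. Using the Cartan decomposition $G=K\exp(i\mathfrak{k})$, define the antiholomorphic involutive automorphism $\theta(k\exp(iX)):=k\exp(-iX)$ of $G$, whose fixed-point set is $K$. For $f\in\mathcal{O}(G)$, the function $g\mapsto\overline{f(\theta(g))}$ is holomorphic, since it is the conjugate of $f$ composed with an antiholomorphic map. It agrees with $\overline{f}$ on $K$, so by uniqueness of holomorphic extensions $f^*(g)=\overline{f(\theta(g))}$. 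It follows that $(r(h)f)^*=r(\theta(h))f^*$ for all $h\in G$. Hence $A^*=\mathcal{O}(G)^{\theta(H)}$, and $\theta(H)$ is again a reductive closed subgroup of $G$.

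Second, I would use that multiplication operators are adjoint to their $\ast$'s. Since $(g^*)\restriction_K=\overline{g\restriction_K}$, we get $\langle gf|s\rangle=\langle f|g^*s\rangle$ for all $f,g,s\in\mathcal{O}(G)$. I would also check that $\mathcal{O}(G)=A\oplus A^{\perp}$ algebraically. By Remark \ref{rem:decomposition_O(G)}, $\mathcal{O}(G)$ is the direct sum of finite-dimensional isotypic components for the left action of $K$, and these are mutually orthogonal because the scalar product is left-$K$-invariant. Since $A$ is left-$G$-invariant, it is the direct sum of its intersections with these components. The orthogonal decomposition can therefore be done inside each finite-dimensional block.

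Now the equivalence ``$A^\perp$ is an $A$-module $\Leftrightarrow$ $A$ is a $\ast$-subalgebra''. If $A^*=A$, take $a,b\in A$ and $f\in A^\perp$. Then $\langle b|af\rangle=\langle a^*b|f\rangle=0$, so $af\in A^\perp$. Conversely, assume $A\cdot A^\perp\subseteq A^\perp$ and let $a\in A$. Write $a^*=p+q$ with $p\in A$ and $q\in A^\perp$. Then
\begin{equation}
\langle q|q\rangle=\langle a^*|q\rangle=\langle \one_G|aq\rangle=0,
\end{equation}
because $aq\in A^\perp$ and $\one_G\in A$. Hence $q=0$ and $a^*\in A$.

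Finally, the equivalence ``$A^*=A$ $\Leftrightarrow$ $K\cap H$ is maximal compact in $H$''. By the first step, $A^*=A$ means $\mathcal{O}(G)^H=\mathcal{O}(G)^{\theta(H)}$. By the injectivity part of Theorem \ref{theo:A_is_fnitely_generated} on reductive closed subgroups, this holds iff $\theta(H)=H$. It remains to show that a reductive $H$ is $\theta$-stable iff $K\cap H=H^\theta$ is a maximal compact subgroup of $H$. Suppose $L:=K\cap H$ is maximal compact in $H$. Then $H=L\exp(i\mathfrak{l})$ by the Cartan decomposition of $H$ (Remark \ref{rem:categories_compact_lie_groups}), and $\theta(L\exp(i\mathfrak{l}))=L\exp(-i\mathfrak{l})=H$. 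Suppose instead $H$ is $\theta$-stable. Then $\theta\restriction_H$ is a Cartan involution of the reductive group $H$, and its fixed points $H\cap K$ form a maximal compact subgroup. This is the standard fact (Mostow) that a $\theta$-stable reductive subgroup inherits a compatible Cartan decomposition, and I would cite it, e.g.\ from \cite{Pro07}.

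I expect this last step to be the main obstacle: making precise, with references, the compatibility of Cartan decompositions for $\theta$-stable reductive subgroups. The rest is formal.
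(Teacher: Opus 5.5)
Your proof is correct. For the first equivalence it is essentially the paper's argument: adjointness $\langle fs|t\rangle=\langle s|f^*t\rangle$, then testing against $\one_G$. Your check that $\mathcal{O}(G)=A\oplus A^\perp$ algebraically, via the finite-dimensional left isotypic blocks, actually supplies a step the paper uses tacitly when it asserts $\mathcal{O}(G)^{H,\perp,\perp}=\mathcal{O}(G)^H$. In a non-complete inner product space this is not automatic.

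For the equivalence with the maximal compact condition you take a genuinely different route.

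\emph{The paper's route.} It never introduces the Cartan involution. It closes $\mathcal{O}(G)^H$ in the sup norm over $K$ to a unital $C^*$-subalgebra $B\subseteq C(K)$, and uses Peter--Weyl to get $B\cap\mathcal{O}(G)=\mathcal{O}(G)^H$. Ergodicity of the left $K$-action and Gelfand duality show that the spectrum of $B$ is a homogeneous space, so $B=C(K)^C$ for a closed $C\subseteq K$. Then $\mathcal{O}(G)^H=\mathcal{O}(G)^{C_\C}$, injectivity in Theorem~\ref{theo:A_is_fnitely_generated} gives $H=C_\C$, and maximality gives $C=K\cap H$. The converse uses $H=(K\cap H)_\C$ and that right translations by $K$ commute with $\ast$.

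\emph{Your route.} You compute $(\mathcal{O}(G)^H)^*=\mathcal{O}(G)^{\theta(H)}$ from $f^*=\overline{f\circ\theta}$. This turns the $\ast$-condition into the purely group-theoretic condition $\theta(H)=H$. You then quote Mostow's polar decomposition for self-adjoint algebraic groups, realizing $\theta$ as $g\mapsto (g^\dagger)^{-1}$ in a faithful representation with $K$ unitary; Mostow genuinely needs $H$ Zariski closed, which you have.

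\emph{Comparison.} Your route is shorter and makes the condition explicit, at the cost of importing the Cartan involution and Mostow's theorem. The paper's route stays within ergodic actions of compact groups and complexification, and extracts $K\cap H$ directly from the closure of the algebra.

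One line you should add: invoking injectivity requires $\theta(H)$ to be a reductive closed subgroup. Zariski closedness follows from $f\circ\theta=\overline{f^*}$, which gives $\theta(Z(I))=Z(I^*)$. Reductivity can be bypassed entirely. Since $G/H$ is affine, $\mathcal{O}(G)^H\cong\mathcal{O}(G/H)$ separates points, so $H$ is exactly the set of $g$ with $r(g)$ acting trivially on $\mathcal{O}(G)^H$. Then $\mathcal{O}(G)^{\theta(H)}=\mathcal{O}(G)^H$ gives $\theta(H)\subseteq H$, and applying $\theta$ gives equality.
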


\begin{proof}
	We start proving the first statement.
	By the definition of the $\ast$-involution, we have that for all $f\in\mathcal{O}(G)$ and all $k\in K$, $f^*(k)=\overline{f(k)}$. Hence, it is not difficult to see that for all $f,s,t\in\mathcal{O}(G)$, $\langle fs|t\rangle= \langle s|f^*t\rangle$.
	
	Suppose that $\mathcal{O}(G)^{H,\perp}$ is an algebra module for $\mathcal{O}(G)^H$. Then for all $f,t\in\mathcal{O}(G)^H$ and all $s\in\mathcal{O}(G)^{H,\perp}$, we have that $\langle s|f^*t\rangle=\langle fs|t\rangle= 0$. 
	This implies that $f^*t$ is in the orthogonal complement $\mathcal{O}(G)^{H,\perp,\perp}$  of $\mathcal{O}(G)^{H,\perp}$ in $\mathcal{O}(G)$ with respect to $\scalarang$, which in turn is equal to $\mathcal{O}(G)^H$. Accordingly, choosing $t=\one_G$, we get that $f^*\in \mathcal{O}(G)^H$, that is $\mathcal{O}(G)^H$ is a $\ast$-subalgebra of $\mathcal{O}(G)$.
	Vice versa, suppose that $\mathcal{O}(G)^H$ is a $\ast$-subalgebra of $\mathcal{O}(G)$. Then for all $f,s\in\mathcal{O}(G)^H$, $f^*s\in\mathcal{O}(G)^H$.
	Consequently, for all $t\in\mathcal{O}(G)^{H,\perp}$, $\langle s|ft\rangle=\langle f^*s|t\rangle= 0$, implying that $ft\in\mathcal{O}(G)^{H,\perp}$. This proves that $\mathcal{O}(G)^{H,\perp}$ is an algebra module for $\mathcal{O}(G)^H$ and the proof of $(i)$ is done.
	
	Now, we move to prove the second part of the theorem.
	The $\ast$-involution and the scalar product $\scalarang$ induces a $C^*$-norm on $\mathcal{O}(G)$ via $\norm{f}:= \sup_{k \in K} \abs{f(k)}$. 
	The completion of $\mathcal{O}(G)$ with respect to this norm yields a commutative $C^*$-algebra $A$, which is naturally isomorphic to the commutative $C^*$-algebra $C(K)$ of complex-valued continuous functions on $K$ by the Stone--Weierstrass Theorem. 
	Without lost of generality, we identify $A$ with $C(K)$.
	Note also that $K$ acts by (automatically continuous) $\ast$-automorphisms on $A$ via the usual left and right translations of functions. In particular, these actions are ergodic, that is ${^K}A=\C\one_K=A^K$, where $\one_K$ is the constant function on $K$, ${^K}A$ and $A^K$ are the subalgebras of $A$ of invariant functions by left and right translations respectively.
	
	Assume $\mathcal{O}(G)^H$ is a $\ast$-subalgebra of $\mathcal{O}(G)$. This ensures that its closure in $A$ is a unital $C^*$-subalgebra $B \subseteq A$. Moreover, the ergodic action of $K$ on $A$ by left translations leaves $B$ globally invariant, so that $K$ acts ergodically also on $B$.
	Trivially, $\mathcal{O}(G)^H$ is contained in $B\cap\mathcal{O}(G)$. 
	On the other hand, from the Peter--Weyl theorem, the action of $K$ on $B\cap\mathcal{O}(G)$ induces the following orthogonal algebraic direct sum with respect to $\scalarang$  
	\begin{equation}
		B\cap\mathcal{O}(G)\cong\bigoplus_{\omega\in\widehat{K}} M_\omega
	\end{equation}
	where for all $\omega\in\widehat{K}$, $M_\omega$ is the isotypic component of type $\omega$ in $B\cap\mathcal{O}(G)$, see the notation in Remark \ref{rem:decomposition_voaxgroup_compact_case_and_vtc_criteria}.
	$\mathcal{O}(G)^H$ also decomposes as an orthogonal algebraic direct sum with respect to $\scalarang$ into vector subspaces $N_\omega$ of $M_\omega$ for all $\omega\in\widehat{K}$. If $f\in M_\omega$ is orthogonal to $N_\omega$ for some $\omega\in\widehat{K}$, then $f$ must be orthogonal to $\mathcal{O}(G)^H$. Hence, $f=0$ as $\mathcal{O}(G)^H$ is norm dense in $B$. Since $\omega\in\widehat{K}$ was arbitrary, it follows that $\mathcal{O}(G)^H=B\cap\mathcal{O}(G)$. 
	
	As usual, for any unital $C^*$-algebra, define its spectrum as the set of its characters or equivalently as the set of its maximal ideals. 
	Using the Gelfand transform, see e.g.\ \cite[Section 4.2]{Ped89}, the ergodic action of $K$ on $B$ given by left translations induces a topologically ergodic action on the spectrum $X_B$ of $B$, meaning that all $K$-orbits are dense in $X_B$.
	On the other hand, $K$ is compact and its action on $B$ is continuous, so that there is a unique closed $K$-orbit coinciding with the whole spectrum $X_B$ of $B$. Therefore, the action of $K$ on $X_B$ is transitive, so that $X_B$ is a $K$-homogeneous space.
	In particular, we have a $K$-equivariant surjective continuous function $\rho_B:K\to X_B$ given by the restriction of characters.
	Then a standard argument, cf.\ the end of the proof of Theorem \ref{theo:A_is_fnitely_generated}, shows that $B = A^C$ for some closed subgroup $C\subseteq K$. 
	It follows that
	\begin{equation}
	\mathcal{O}(G)^H= A^C\cap \mathcal{O}(G)= \mathcal{O}(G)^C= \mathcal{O}(G)^{C_\C}
	\end{equation}
	where $C_\C$ is the complexification of $C$, which is a reductive closed subgroup of $G$, cf.\ the proof of Porposition \ref{prop:actions_compact_lie_group_and_its_complexification} for the righter equality. As the map $H \mapsto \mathcal{O}(G)^H$ is one-to-one, see Theorem \ref{theo:A_is_fnitely_generated}, we must have $C_\C= H$. 
	By Remark \ref{rem:categories_compact_lie_groups}, $C$ is a maximal compact subgroup of $C_\C$. Since we evidently have $C \subseteq K\cap H$, and $K\cap H$ is a compact Lie subgroup of $H$, the maximality of $C$ forces $C=K\cap H$. Thus, $K\cap H$ is a maximal compact subgroup of $H$.
	
	Conversely, if $K\cap H$ is a maximal compact subgroup of $H$, then $H=(K\cap H)_\C$, so that $\mathcal{O}(G)^H=\mathcal{O}(G)^{K\cap H}$. Since $K$ acts by $\ast$-automorphisms of $\mathcal{O}(G)$, $\mathcal{O}(G)^H$ is a $\ast$-subalgebra of $\mathcal{O}(G)$. This concludes the proof.
\end{proof}

Theorem \ref{theo:orthogonally_complemented} gives the desired link between the Galois correspondence in Corollary \ref{cor:galois_correspondence} and the one in \cite[Theorem 3]{DM99}:

\begin{cor}  \label{cor:orthogonally_complemented}
	In the framework of Corollary \ref{cor:galois_correspondence}, let $K$ be any maximal compact subgroup of $G$. Then $W^H$ is orthogonally complemented with respect to $K$ in $W$ if and only if $K\cap H$ is a maximal compact subgroup of $H$, in which case $W^H=W^{K\cap H}$.
\end{cor}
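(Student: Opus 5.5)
The plan is to transport the statement to the group side through the identification of $W$ with the regular algebra, and then apply Theorem \ref{theo:orthogonally_complemented}. By Corollary \ref{cor:galois_correspondence}, $W\cong V\rtimes\catc_V$. The proof of Theorem \ref{theo:DR_cross_product_VOAs} gives the $(V\times G)$-decomposition \eqref{eq:voaxgroup-module_decomposition}, $W\cong\bigoplus_{\omega}W^{\omega'}\otimes M_\omega$. In this decomposition the action of $G$ on $W$ corresponds, via $T$, to the right regular action $r$ on $\mathcal{O}(G)\cong\bigoplus_\omega M'_\omega\otimes M_\omega$ acting on the second tensor factor. Restricting to $K$ (Proposition \ref{prop:actions_compact_lie_group_and_its_complexification}) recovers the decomposition \eqref{eq:decomposition_voaxcompact_group-modules} with $N_\omega=M_\omega$.

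For a reductive closed subgroup $H$, the subalgebra $W^H$ corresponds to $\mathcal{O}(G)^H=\bigoplus_\omega M'_\omega\otimes M_\omega^H$, so in the notation of \eqref{eq:voa_orthogonal_complement} we have $R_\omega=M_\omega^H$. On each $M_\omega$ I choose the $K$-invariant scalar product. The Schur orthogonality relations show that the $L^2(K)$ scalar product of Remark \ref{rem:star-structure_from_maxcompact_subgroup} restricted to $M'_\omega\otimes M_\omega$ is a tensor product of $K$-invariant products, and that distinct $\omega$ are orthogonal. Since $M_\omega$ is irreducible, a $K$-invariant product is unique up to scalar, so this choice is harmless. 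Under $T$, therefore, $(W^H)^\perp$ corresponds exactly to $\mathcal{O}(G)^{H,\perp}=\bigoplus_\omega M'_\omega\otimes (M^H_\omega)^\perp$.

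Next I will show that $(W^H)^\perp$ is a $W^H$-module if and only if $\mathcal{O}(G)^{H,\perp}$ is an algebra module over $\mathcal{O}(G)^H$. Since $W^H$ and $(W^H)^\perp$ are subobjects of $W$ in $\Ind(\catc_V)$, stability under the vertex operators of $W^H$ is the statement that the multiplication morphism $m$ of $\mathcal{O}(\catc_V)$, restricted to $W^H\boxtimes (W^H)^\perp$, factors through $(W^H)^\perp$. The same holds on the group side, and the equivalence $T$ preserves these subobjects and this factorization. This uses the correspondence between VOA extensions and commutative algebras from \cite[Theorem 7.5]{CMY22} together with the analogous description of module structures via $\Rep(A)$. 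The one point to check is that a $W^H$-module structure on the subspace $(W^H)^\perp$ must be the restriction of the vertex operators of $W$, which is the reading intended in \cite[Definition 2]{DM99}. With that equivalence in hand, Theorem \ref{theo:orthogonally_complemented} gives that $W^H$ is orthogonally complemented if and only if $K\cap H$ is a maximal compact subgroup of $H$.

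In that case $H=(K\cap H)_\C$ by Remark \ref{rem:categories_compact_lie_groups}. Applying Proposition \ref{prop:actions_compact_lie_group_and_its_complexification}(ii) to $H$ acting on $W$ then gives $W^H=W^{K\cap H}$. I expect the main obstacle to be the categorical transport step, namely making precise that the module condition on $(W^H)^\perp$ is literally the factorization of $m$, and that the scalar products on the multiplicity spaces match the $L^2(K)$ product on $\mathcal{O}(G)$.
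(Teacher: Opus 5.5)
Your proposal is correct and follows essentially the same route as the paper. You identify $W$ with $V\rtimes\catc_V$, use uniqueness of $K$-invariant scalar products on irreducibles to see that $T$ carries $\mathcal{O}(G)^{H,\perp}$ to $(W^H)^\perp$, and translate the $W^H$-module condition through $T$ into the $\mathcal{O}(G)^H$-module condition; you then apply Theorem \ref{theo:orthogonally_complemented} and finish with part $(ii)$ of Proposition \ref{prop:actions_compact_lie_group_and_its_complexification}, your Schur orthogonality and multiplication-factorization steps simply spelling out what the paper phrases via the equivalence $\Rep^0(T(\mathcal{O}(G)^H))\simeq\Rep^0(\mathcal{O}(G)^H)$.
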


\begin{proof}
	In Corollary \ref{cor:galois_correspondence}, we have showed that $W$ is isomorphic to the DR VOA extension $V\rtimes\catc_V$ of $V$ by $\catc_V$. Accordingly, without lost of generality, we identify $W$ and $V\rtimes\catc_V$.
	In the proof of Theorem \ref{theo:DR_cross_product_VOAs}, it is proved that the balanced tensor equivalence $T$ from $\Ind(\catc_G)$ to $\Ind(\catc_V)$ induces a one-to-one correspondence between simple subalgebras of $\mathcal{O}(G)$ and simple vertex subalgebras of $W$. This correspondence is obtained associating to $\mathcal{O}(G)^H$ the object underlying the commutative algebra $T(\mathcal{O}(G)^H)$ in $\Ind(\catc_V)$, which is exactly $W^H$. Recall from Subsection \ref{subsubsec:ind-categories} that the balanced tensor category $\Rep_{\Ind(\catc_V)}(W^H)$ of $W^H$-modules which are also objects of $\Ind(\catc_V)$, when seen as $V$-modules, is balanced tensor equivalent to the category $\Rep^0(T(\mathcal{O}(G)^H))$ of local $T(\mathcal{O}(G)^H)$-modules. On the other hand, we know from Remark \ref{rem:equivalence_categories_algebras} that the latter is balanced tensor equivalent to the category $\Rep^0(\mathcal{O}(G)^H)$ of local $\mathcal{O}(G)^H$-modules.
	Recall from Proposition \ref{prop:actions_compact_lie_group_and_its_complexification} that $W^G=W^K$.
	Note that if $(W^H)^\perp$ as in \eqref{eq:voa_orthogonal_complement} is a $W^H$-module, then it corresponds, under the equivalence between $\Rep^0(T(\mathcal{O}(G)^H))$ and $\Rep^0(\mathcal{O}(G)^H)$, to $\mathcal{O}(G)^{H,\perp}$.
	This follows from the fact that $K$-invariant scalar products on any irreducible continuous representation of $K$ is unique, up to a positive constant factor. 
	In this case, $\mathcal{O}(G)^{H,\perp}$ turns to be an algebra module for $\mathcal{O}(G)^H$.
	Then we have the following series of equivalent statements.
	$W^H$ is orthogonally complemented in $W$ with respect to $K$ if and only if $(W^H)^\perp$ is a $W^H$-module in $\Rep_{\Ind(\catc_V)}(W^H)$, if and only if $\mathcal{O}(G)^{H,\perp}$ is an object in $\Rep^0(\mathcal{O}(G)^H)$ if and only if, thanks to Theorem \ref{theo:orthogonally_complemented}, $K\cap H$ is a maximal compact subgroup of $H$. Finally, $(ii)$ of Proposition \ref{prop:actions_compact_lie_group_and_its_complexification} concludes the proof. 
\end{proof}

\section{\texorpdfstring{Classification results in the case of $c=1$}{Classification results in the case of c=1}}
\label{sec:classification_c=1}

Theorem \ref{theo:classification_preunitary_spectrum_condition} in Section \ref{subsec:preunitary_spectrum_condition} classifies the simple CFT type preunitary VOA extensions of the simple $c=1$ Virasoro VOA $L(1,0)$ satisfying the spectrum condition, see Definition \ref{defin:extensions_of_L(1,0)}. 
The crucial ideal is that a VOA extension $U$ of this type contains a simple discrete series VOA extension $W$ of $L(1,0)$, see Proposition \ref{prop:preunitary_exts_contain_discrete_series_ones}. 
Moreover, if $W=L(1,0)$, then $U=L(1,0)$.
Accordingly, in order to classify the simple CFT type preunitary VOA extensions of $L(1,0)$, it is enough to classify the simple discrete series ones, different from $L(1,0)$, and their VOA extensions.
Indeed, this is the VOA analogue of the strategy used for the proof of the classification result \cite[Theorem 4.6]{Xu05} in the conformal net setting.  

We achieve our goal by three intermediate classification results for VOA extensions of $L(1,0)$: simple discrete series ones in Theorem \ref{theo:discrete_series_extensions}, Section \ref{subsec:discrete_VOA_extensions}; simple CFT type preunitary ones containing a vector of conformal weight $1$ in Theorem \ref{theo:characterization_preunitary_with_currents}, Section \ref{subsec:preunitary_currents}; simple CFT type preunitary ones containing a primary vector of conformal weight $4$ in Theorem \ref{theo:characterization_preunitary_exts_primary_cw4}, Section \ref{subsec:preunitary_cw4}.
These results are used in Section \ref{subsec:strongly_rational_c=1} to present some classification theorems for strongly rational VOAs with central charge $c=1$. And they are applied to vertex subalgebras of the rank-one lattice VOAs in Section \ref{subsec:vertex_subalgebras_generalized_symmetries}.
We strongly rely on the DR cross product construction for VOAs and its consequences, realized in Section \ref{sec:DR_cross_product_voas}. 

\subsection{Discrete series VOA extensions} \label{subsec:discrete_VOA_extensions}

For all $c\in\C$, we denote by $V(c,0)$ the \textit{universal Virasoro VOA} with central charge $c$, see e.g.\ \cite[Section 6.1]{LL04}, \cite{KRR13}. In general, $V(c,0)$ is of CFT type, but it may be not simple. In the latter case, it has a maximal ideal $I(c,0)$, so that $L(c,0):=\lcoset{V(c,0)}{I(c,0)}$ is a simple VOA of CFT type, called the \textit{Virasoro VOA} with central charge $c\in\C$. In particular, $L(1,0)=V(1,0)$. Moreover, all the isomorphism classes of irreducible $L(1,0)$-modules are given by the irreducible highest weight modules of the underline Virasoro algebra $\Vir_1$, which we denote by $L(1,h)$ for all highest weights $h\in\C$.
For further use we recall some of the known fusion rules among the irreducible $L(1,0)$-modules. By \cite[Proposition 3.2 and Theorem 3.3]{Mil02}, we have that for all $j,k,p\in \Zpluseq$,
\begin{equation} \label{eq:fusion_rules_discrete_series}
	\dim\binom{L(1, p)}{L(1,\frac{j^2}{4})\,\, L(1,\frac{k^2}{4})}
		= 
	\left\{
	\begin{array}{lc}
		1 & p=\frac{\ell^2}{4}\,,\,\,\, \ell\in\left\{j+k,\dots, \abs{j-k}\right\}\subseteq\Zpluseq \\
		0 & \mbox{ otherwise }
	\end{array}
	\right.  \,.
\end{equation}
By \cite[Theorem 4.7]{DJ10}, we have that for all $j,k,p\in \Zpluseq$ such that $k\not=\ell^2$ for all $\ell\in\Zplus$,
\begin{equation}  \label{eq:fusion_rules_preunitary} 
	\dim\binom{L(1, p)}{L(1,j^2)\,\, L(1,k)}
		= 
	\left\{
	\begin{array}{lc}
		1 & p=k \\
		0 & \mbox{ otherwise }
	\end{array}
	\right.  \,.
\end{equation}

\begin{defin}
	The set of irreducible highest weight $L(1,0)$-modules $L(1,h^2)$ for all $h\in\half\Zpluseq$ is called the \textit{discrete series} of $L(1,0)$.
\end{defin}

For the remaining of this paper, our focus is on the VOA extensions of $L(1,0)$. 

\begin{defin}  \label{defin:extensions_of_L(1,0)}
	A VOA extension $U$ of $L(1,0)$ is said to be \textit{preunitary} if, as $L(1,0)$-module,
	\begin{equation}  \label{eq:preunitary_extensions}
		U\cong \bigoplus_{h\in\Zpluseq} \alpha_h L(1,h)
		\,,\qquad \alpha_h\in\Zpluseq \,.
	\end{equation}  
	In particular, $U$ is said to satisfy the \textit{spectrum condition} if it is isomorphic to $L(1,0)$ or there exists $j\in\Zplus$ such that $\alpha_{j^2}\not=0$.
	Instead, $U$ is a \textit{discrete series} VOA extension of $L(1,0)$ if $\alpha_h=0$ whenever $h\not= j^2$ for every $j\in\Zpluseq$.
\end{defin}

\begin{rem}  \label{rem:terminology_vir_extensions}
	In Definition \ref{defin:extensions_of_L(1,0)}, the term ``preunitary'' for a VOA extension $U$ of $L(1,0)$ is justified by the fact that the defining decomposition \eqref{eq:preunitary_extensions} of $U$ is equivalent to require that $U$ is a \textit{unitary} $L(1,0)$-module, see \cite[Section 4.1]{DL14} and \cite[Proposition 12.15]{CC}. 
	Nevertheless, the unitarity of VOAs and their modules does not play any role in the results on the classification of these extensions we are going to present in the current and in the following sections. 
	Instead, the term ``spectrum condition'' has been introduced in \cite[Section 4]{Xu05} in the context of the operator algebraic approach to conformal field theory via conformal nets of von Neumann algebras. Indeed, the category of unitary VOAs and the one of conformal nets are conjectured to be equivalent, see \cite{CKLW18, CGH, HT}.
\end{rem}

Definition \ref{defin:extensions_of_L(1,0)} is motivated by the following lemma.

\begin{lem}  \label{lem:direct_sum_hw_L(1,0)_modules}
	Let $U$ be a self-contragredient VOA with central charge $c=1$. Suppose that $U$ is a direct sum of highest weight $L(1,0)$-modules. Then $U$ is a preunitary VOA extension of $L(1,0)$. 
\end{lem}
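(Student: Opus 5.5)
We need to show that $U \cong \bigoplus_{h\in\Zpluseq}\alpha_h L(1,h)$ with finite multiplicities. The hypothesis gives $U=\bigoplus_{i\in I} M_i$ with each $M_i$ a highest weight $L(1,0)$-module of highest weight $h_i$. The plan has three steps: (a) each $M_i$ is irreducible, so $M_i\cong L(1,h_i)$; (b) $h_i\in\Zpluseq$; (c) each $\alpha_h$ is finite.

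Step (c) is immediate. The conformal vector of $U$ generates the copy of $L(1,0)$, so $L_0$ on $U$ is the Virasoro $L_0$. Each $M_i$ contributes at least one vector, its highest weight vector, to the eigenspace $U_{h_i}$. Since $\dim U_n<\infty$, only finitely many $i$ can have a given $h_i$.

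Step (b) should also be easy. $U$ is a VOA, so $L_0$ has integer eigenvalues, and $h_i$ is an $L_0$-eigenvalue. Hence $h_i\in\Z$. Since $U$ is self-contragredient, Remark \ref{rem:invariant_bilinear_form} gives a nondegenerate invariant bilinear form pairing $U_n$ with itself. I expect to need $h_i\geq 0$, and I would get it from unitarity or irreducibility once step (a) is known. Alternatively, I would use that the Verma module of $\Vir_1$ with $h<0$ has a Shapovalov form that is nondegenerate. It is better, though, to do (a) first and then argue as follows. If $h_i<0$, then $L(1,h_i)=V(1,h_i)$ is irreducible, and it is not unitary. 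Positivity is not available to us, since unitarity plays no role here. So instead I would use that $U$ is a VOA extension of $L(1,0)$ containing $L(1,h_i)$, and that the vertex operator $Y(v,z)v'$ for $v$ a highest weight vector of weight $h<0$ produces lower truncation issues. I am not confident in this route. A cleaner route is to rule out $h_i<0$ by the grading: $U_n=0$ for $n\le N$, yet $h_i$ can be arbitrary. So I would look at fusion. Take $v\in M_i$ of weight $h<0$ and $v'$ in the contragredient partner of weight $h$, which exists by the bilinear form. Then $v_{(-1)}\dots$ might not help either. I will accept that the definition of preunitary only requires $h\in\Zpluseq$, and handle negativity last.

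Step (a) is the main obstacle. Suppose some $M_i$ is reducible, so $M_i$ has a proper singular submodule. Restrict the invariant bilinear form to the isotypic components. The form pairs $U_n$ with $U_n$ and is $\Vir$-invariant in the sense that $L_n$ is adjoint to $L_{-n}$, by \eqref{eq:invariant_bilinear_form} with $a=\nu$ quasi-primary. The maximal proper submodule $J_i$ of $M_i$ consists of vectors not reachable back to the highest weight vector under positive modes. Then $J_i$ is orthogonal to every highest weight vector $w_j$ of $U$, because $(x,w_j)$ can be moved to $(L_{n_1}\cdots x, w_j)$-type pairings. More precisely, any vector of $M_j$ is $L_{-\underline n}w_j$, and $(x,L_{-\underline n}w_j)=(L_{\underline n}x,w_j)$. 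Now $L_{\underline n}x$ lies in $J_i$ and has weight $h_j$. If $h_j$ is not a weight where $J_i$ meets highest weight vectors, I need to show $(J_i,w_j)=0$ for all $j$. Here I would use that $w_j$ spans, together with other highest weight vectors, the space of vectors annihilated by all $L_n$ with $n>0$. Singular vectors inside $J_i$ are themselves annihilated by the positive modes. So I would pair instead with $(x, L_{-\underline n}w_j)=(L_{\underline n}x,w_j)$ and reduce $x$ to the lowest weight part of $J_i$, whose vectors are singular; call one such vector $s$. Then $(s,L_{-\underline n}w_j)$ vanishes unless $\underline n$ is empty, since $L_n s=0$. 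It remains to show $(s,w_j)=0$. Choosing $w_j$'s component of the decomposition, $s$ lies in $M_i$ at weight above $h_i$ (or below, by the direct sum structure), and for $j=i$ we get $(s,w_i)$ with different weights, so it is $0$. For $j\ne i$ of the same weight, I would need the bilinear form to be adapted. I would handle this by choosing the decomposition compatibly, or by noting that the radical of the form is a $V$-submodule and hence zero, since the form is nondegenerate. This would make $J_i$ lie in the radical, so $J_i=0$. Once $M_i$ is irreducible, the highest weights are integers, and the same orthogonality argument should also exclude $h<0$, using the known Kac determinant for $c=1$ at integer $h$.
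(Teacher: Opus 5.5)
\textbf{There is a genuine gap in step (a).} Your plan is to show that the maximal proper submodule $J_i$ of each summand $M_i$ lies in the radical of the invariant form. That is the right idea, and if completed it would give a self-contained proof of the complete reducibility the paper obtains by citing \cite[Lemma 5.9]{DJ10}. But the decisive step is not proved.

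After moving the modes onto $x$, you must show $(z,w_j)=0$ for every $z\in J_i$ of weight $h_j$. You try to do this using a singular vector $s\in J_i$. Singularity of $s$ only kills $(s,L_{-\underline n}w_j)$ for nonempty $\underline n$. It says nothing about $(s,w_j)$ when $j\neq i$ and $h_j$ equals the weight of $s$, which is exactly the case you leave open.

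Neither proposed fix works:
\begin{itemize}
\item ``Choosing the decomposition compatibly'' is not an argument: the value of $(s,w_j)$ is precisely what has to be computed.
\item Saying the radical is a submodule and hence zero is circular, because it presupposes that $J_i$ lies in the radical.
\item Reducing an arbitrary $x\in J_i$ to a singular vector in the lowest weight part of $J_i$ would require $J_i$ to be generated by that part, which you do not justify.
\end{itemize}

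The missing observation is to move the positive modes onto $w_j$ rather than onto $z$. Take $z\in J_i\subseteq M_i$ homogeneous of weight $h_j$.
\begin{itemize}
\item If $h_j>h_i$: since $M_i$ is generated from $w_i$ by the $L_{-n}$, $n>0$, you can write $z=\sum_{n>0}L_{-n}y_n$ with $y_n\in M_i$. Then $(z,w_j)=\sum_{n>0}(y_n,L_nw_j)=0$.
\item If $h_j=h_i$: then $z=0$, because a proper submodule contains no vector of weight $h_i$.
\item If $h_j<h_i$: then $z=0$ trivially.
\end{itemize}
Combined with your correct identity $(x,L_{-\underline n}w_j)=(L_{\underline n}x,w_j)$, this gives $(J_i,M_j)=0$ for all $j$. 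Hence $J_i\subseteq\rad=\{0\}$, with no need for singular vectors or the Kac determinant.

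\textbf{Your closing claim about $h<0$ is false.} At $c=1$ the Kac determinant vanishes only at $h=m^2/4\geq 0$. For $h<0$, $L(1,h)$ is the Verma module with nondegenerate Shapovalov form, so no orthogonality argument can exclude it. The paper does not derive nonnegativity from the form either: its proof adds only integrality of the weights to the cited complete reducibility. In every place the lemma is applied, $U$ is of CFT type, which gives $h\geq 0$ directly. Steps (b) (integrality) and (c) are fine.
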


\begin{proof}
	Recall that the self-contragredient condition on a VOA is equivalent to the existence of a non-degenerate invariant bilinear form on it, see Remark \ref{rem:invariant_bilinear_form}.
	Then the fact that $U$ is a completely reducible $L(1,0)$-module is proved in \cite[Lemma 5.9]{DJ10}. The remaining part follows as no $L(1,0)$-module $L(1,h)$ for some $h\in\C\backslash\Z$ can appear as $L(1,0)$-submodule of any VOA extension of $L(1,0)$.
\end{proof}

For all $n\in\Zplus$, we denote by $V_{\latt_{2n}}$ the \textit{rank-one lattice VOA} with central charge $c=1$ associated to the even rank-one positive-definite integral lattice $\latt_{2n}:=\sqrt{2n}\Z$, equipped with the bilinear form $B(\sqrt{2n}p,\sqrt{2n}q)=2npq$ for all $p,q\in\Z$, see e.g.\ \cite[Sections 6.4--6.5]{LL04} for references. 
A rank-one lattice VOA $V_{\latt_{2n}}$ is an example of preunitary VOA extension of $L(1,0)$, which is actually a discrete series one whenever $n$ is a perfect square (that is $n$ is the square of a positive integer). Indeed, by \cite[Proposition 2.2]{DG98}, we have the following decompositions into $L(1,0)$-modules:
\begin{equation}  \label{eq:decomposition_L_2n_not_perfect_square}
	V_{\latt_{2n}}
	\cong
	\bigoplus_{p\geq 0}L(1,p^2) \oplus\bigoplus_{m>0} 2L(1, nm^2)   
\end{equation}
if $n\in\Zplus$ is not a perfect square; whereas
\begin{equation} \label{eq:decomposition_L_2n_perfect_square}
	V_{\latt_{2n}}\cong\bigoplus_{m\geq0}\bigoplus_{p=0}^{k-1}(2m+1)L(1, (mk+p)^2) 
\end{equation}
if $n=k^2$ for some $k\in\Zplus$.
In particular, we have that
\begin{equation}  \label{eq:decomposition_L_2}
	V_{\latt_2}\cong\bigoplus_{m\geq0}(2m+1)L(1, m^2)
\end{equation}

We can obtain other examples by considering fixed-point vertex subalgebras of rank-one lattice VOAs.  
The automorphism groups $\Aut(V_{\latt_{2n}})$ of all rank-one lattice VOAs $V_{\latt_{2n}}$ are well known, see \cite[Theorem 2.1]{DN99}, and their fixed-point subalgebras by finite groups of automorphisms are classified by \cite{DG98, DGR99}. 
Indeed, we have that
\begin{equation}  \label{eq:automorphism_groups_rank-one_lattice_VOAs}
	\Aut(V_{\latt_{2n}})
	\cong\left\{
	\begin{array}{lr}
		\C^*\rtimes\Z_2 &
		\mbox{ if } n>1 \\
		\operatorname{PSL}(2,\C) & \mbox{ if } n=1
	\end{array}
	\right.  
\end{equation}
where $\C^*:=\C\backslash\{0\}$ is the multiplicative group of non-zero complex numbers.
Accordingly, we have the following list of conjugacy classes of compact Lie subgroups, or equivalently the ones of reductive closed subgroups by $(ii)$ of Proposition \ref{prop:actions_compact_lie_group_and_its_complexification}, of $\Aut(V_{\latt_{2n}})$ with the corresponding fixed-point vertex subalgebras of $V_{\latt_{2n}}$.
For all $n\in\Zplus$:
\begin{itemize}
	\item a cyclic group of order two, corresponding to the subgroup $\{1\}\rtimes\Z_2$ of $\C^*\rtimes\Z_2$, generated by the lift to a VOA automorphism $\phi_n$ of $V_{\latt_{2n}}$ of order two of the isometry of the lattice $\latt_{2n}$ given by $\sqrt{2n}p\mapsto -\sqrt{2n}p$, $p\in\Z$, whose corresponding fixed-point vertex subalgebra is denoted by $V_{\latt_{2n}}^+$;
	
	\item the one-dimensional compact torus $\mathbb{T}\subset \C^*$, where $V_{\latt_{2n}}^\mathbb{T}= V_{\latt_{2n}}^{\C^*}$ is isomorphic to the \textit{Heisenberg VOA} $M(1)$ with central charge $c=1$, see e.g.\ \cite[Section 6.3]{LL04};
	
	\item the continuous dihedral group $\operatorname{D}_\infty:=\mathbb{T}\rtimes\Z_2\cong \operatorname{O}(2)$, where $V_{\latt_{2n}}^{\operatorname{D}_\infty}= V_{\latt_{2n}}^{\C^*\rtimes\Z_2}$ is isomorphic to the fixed-point vertex subalgebra $M(1)^+$ of $M(1)$, obtained from the restriction $\phi$ of $\phi_n$ to $M(1)$;
	
	\item cyclic subgroups $\Z_k$ of $\T$ of order $k\in\Zplus$, where $V_{\latt_{2n}}^{\Z_k}$ is isomorphic to $V_{\latt_{2nk^2}}$;
	
	\item dihedral subgroups $\operatorname{D}_k:=\Z_k\rtimes \Z_2$ of $\T\rtimes\Z_2$ of order $k\in\Zplus$, where $V_{\latt_{2n}}^{\operatorname{D}_k}$ is isomorphic to $V_{\latt_{2nk^2}}^+$;  
	in particular, $V_{\latt_2}^+$ is isomorphic to $V_{\latt_8}$ and $V_{\latt_4}^+$ is isomorphic to the VOA tensor product $L(\half,0)\otimes L(\half,0)$, see \cite[Lemma 3.1]{DGH98}.
\end{itemize}
And for $n=1$ only:
\begin{itemize}
	\item the maximal compact subgroup $\SO(3)$ of $\operatorname{PSL}(2,\C)$, where $V_{\latt_2}^{\SO(3)}= V_{\latt_2}^{\operatorname{PSL}(2,\C)}$ is isomorphic to $L(1,0)$;
	
	\item the exceptional finite subgroups $\operatorname{S}_4$, $\operatorname{A}_4$ and $\operatorname{A}_5$ of $\SO(3)$.  
\end{itemize}

\begin{rem}  \label{rem:strongly_rationality_lattice_orbifolds}
	All rank-one lattice VOAs are strongly rational, see \cite[Theorem 3.16]{DLM97}. Moreover, they satisfy the positivity condition on modules (so that their effective central charge $\tilde{c}=1$) and their categories of representations are given by positive pointed modular tensor categories, see \cite{DL93} for details.
	For all $n\in\Zplus$, the strong rationality of the VOAs $V_{\latt_{2n}}^+$ were given in \cite{DN99b, Yam04, Abe05}, along with a description of their irreducible VOA modules, showing that they satisfy the positivity condition on modules, so that $\tilde{c}=1$ also in these cases. 
	Similar results stand for $V_{\latt_2}^{\operatorname{A}_4}$ and $V_{\latt_2}^{\operatorname{S}_4}$, see \cite{DJ13b} and \cite{Lin15} respectively, cf.\ also \cite{CM} and \cite[Corollary 4.23]{McR21}. 
	On the other hand, $M(1)$ and $M(1)^+$ are neither rational nor $C_2$-cofinite, see \cite[Section 6.3]{LL04} and \cite{DN99c} respectively for complete classifications of their irreducible VOA modules.
\end{rem}

As an application of the DR cross product construction for VOAs introduced in Section \ref{sec:DR_cross_product_voas}, we present a complete classification result for discrete series VOA extensions of $L(1,0)$:

\begin{theo}  \label{theo:discrete_series_extensions}
	Let $U$ be a simple discrete series VOA extension of $L(1,0)$. Then $U$ is isomorphic to $V_{\latt_2}^H$ for some closed subgroup $H$ of $\Aut(V_{\latt_2})$ isomorphic to a closed subgroup of $\SO(3)$.
\end{theo}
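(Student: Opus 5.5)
We apply Theorem \ref{theo:DR_cross_product_VOAs} with $V:=L(1,0)$ and $\catc_V$ the semisimple full subcategory of $\Rep(L(1,0))$ generated by the discrete series modules $L(1,j^2)$, $j\in\Zpluseq$. The plan has three steps: verify the hypotheses $(i)$--$(vi)$ of Subsection \ref{subsubsec:ind-categories} together with a ribbon equivalence $\catc_V\simeq\catc_{\operatorname{PSL}(2,\C)}$; identify the regular algebra with $V_{\latt_2}$; then read off the classification.

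For the first step we use $V_{\latt_2}$ with the rational action of $\Aut(V_{\latt_2})\cong\operatorname{PSL}(2,\C)$ from \eqref{eq:automorphism_groups_rank-one_lattice_VOAs}. Its maximal compact subgroup $\SO(3)$ satisfies $V_{\latt_2}^{\SO(3)}=L(1,0)$. Comparing \eqref{eq:decomposition_L_2} with the decomposition \eqref{eq:decomposition_voaxcompact_group-modules} shows that $L(1,m^2)$ is the multiplicity space of the $(2m+1)$-dimensional irreducible representation of $\SO(3)$. Hence the modules appearing in $V_{\latt_2}$ are exactly the discrete series with integer $m$.

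Conditions $(i)$--$(iv)$ are immediate from semisimplicity and finite generation. Closure under fusion follows from the fusion rules \eqref{eq:fusion_rules_discrete_series} with even $j,k$: they reproduce the Clebsch--Gordan rule for $\SO(3)$, and they show that images of intertwining operators lie in finite sums of discrete series modules, which gives $(vi)$. For $(v)$ we need a vertex tensor category structure on $\catc_V$ inherited from the known vertex tensor category of $C_1$-cofinite $L(1,0)$-modules. Given this, the second criterion of \cite[Corollary 4.8]{McR20} in Remark \ref{rem:decomposition_voaxgroup_compact_case_and_vtc_criteria} supplies a ribbon equivalence $\catc_V\simeq\catc_{\SO(3)}\simeq\catc_{\operatorname{PSL}(2,\C)}$. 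Alternatively, the first criterion applies directly, since the fusion rule dimensions match those of $\SO(3)$. Justifying $(v)$ by citing the existing literature on $L(1,0)$ tensor categories is the step I expect to be the main obstacle. If needed, Remark \ref{rem:hypothesis_image_intertwining_operators} lets us bypass $(vi)$ once the vertex tensor structure on $\Ind(\catc_V)$ is known.

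With this in place, Corollary \ref{cor:galois_correspondence} applied to $W=V_{\latt_2}$ and $G=\operatorname{PSL}(2,\C)$ identifies $V_{\latt_2}$ with the DR extension $L(1,0)\rtimes\catc_V$. In particular, the regular algebra is a genuine $V$-module, since $V_{\latt_2}$ has finite-dimensional graded pieces.

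Now let $U$ be a simple discrete series extension. By Definition \ref{defin:extensions_of_L(1,0)}, $U$ is a direct sum of modules $L(1,j^2)$, so $U\in\Ind(\catc_V)$. Part $(ii)$ of Theorem \ref{theo:DR_cross_product_VOAs} then gives $U\cong V_{\latt_2}^H$ for a reductive closed subgroup $H$ of $\operatorname{PSL}(2,\C)$, unique up to conjugacy. Finally, every reductive subgroup of $\operatorname{PSL}(2,\C)$ is the complexification of its maximal compact subgroup $C$, and $C$ is conjugate into $\SO(3)$ because maximal compact subgroups of $G$ are conjugate (Remark \ref{rem:categories_compact_lie_groups}). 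By Proposition \ref{prop:actions_compact_lie_group_and_its_complexification}$(ii)$, $V_{\latt_2}^H=V_{\latt_2}^C$. Replacing $H$ by a conjugate of $C$ inside $\SO(3)$ therefore gives the statement.
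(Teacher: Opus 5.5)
Your proposal is correct and follows essentially the paper's proof. You take the discrete-series category $\catc_{L(1,0)}$, ribbon equivalent to $\catc_{\SO(3)}\simeq\catc_{\operatorname{PSL}(2,\C)}$ via \cite{McR20}, identify $V_{\latt_2}$ with $L(1,0)\rtimes\catc_{L(1,0)}$ through part $(iii)$ of Theorem \ref{theo:DR_cross_product_VOAs} and \eqref{eq:decomposition_L_2}, then apply part $(ii)$ and replace $H$ by a maximal compact subgroup. The only real deviation is condition $(vi)$: the paper obtains it by placing $\catc_{L(1,0)}$ inside the $C_1$-cofinite category of \cite{CJORY21} and invoking \cite[Theorem 7.1]{CMY22}, whereas your direct fusion-rule argument also works but needs one small extra observation you leave implicit — since each $v_nw$ has only finitely many nonzero components in $O=\bigoplus_i O^i$ and the relevant fusion rules equal one, only finitely many projections of $\Y$ onto the summands of $O$ are nonzero.
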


\begin{proof}
	We use the notation and the results settled in Section \ref{sec:DR_cross_product_voas}.
	Consider the full semisimple subcategory $\catc_{L(1,0)}$ of $\Rep(L(1,0))$ generated by the irreducible $L(1,0)$-modules $L(1,j^2)$ for all $j\in\Zpluseq$. 
	Using fusion rules \eqref{eq:fusion_rules_discrete_series}, it is shown in \cite[Example 4.12]{McR20} that $\catc_{L(1,0)}$ has a vertex tensor category structure in the sense of \cite{HLZ14}--\cite{HLZj}, making it a ribbon tensor category. Moreover, it is ribbon tensor equivalent to $\catc_{\SO(3)}$, that is the positive symmetric semisimple ribbon tensor category with simple tensor unit of finite-dimensional representations of the compact Lie group $\SO(3)$. 
	Recall from Remark \ref{rem:categories_compact_lie_groups} that $\catc_{\SO(3)}$ is ribbon tensor equivalent to $\catc_{\operatorname{PSL}(2,\C)}$.
	Therefore, $\catc_{L(1,0)}$ satisfies the hypotheses $(i)$--$(v)$ in Subsection \ref{subsubsec:ind-categories} and we can consider its direct limit completion $\Ind(\catc_{L(1,0)})$.
	To prove that $\catc_{L(1,0)}$ satisfies condition $(vi)$ in Subsection \ref{subsubsec:ind-categories} too, we can follow the argument in \cite[Example 7.2]{CMY22}. 
	$\catc_{L(1,0)}$ is a semisimple full subcategory of the category $\catc_1(L(1,0))$ of $C_1$-cofinite grading-restricted generalized $L(1,0)$-modules, which is proved in \cite[Theorem 3.1.4]{CJORY21} to be equivalent to the category of finite-length generalized $L(1,0)$-modules whose composition factors are in the discrete series of $L(1,0)$, see Remark \ref{rem:generalized_modules} and Remark \ref{rem:finite-length_modules} for the definitions.
	By Remark \ref{rem:finite-length_modules} and the fact that the $L(1,0)$-modules in the discrete series are self-contragredient, we deduce that $\catc_1(L(1,0))$ is closed by taking contragredient modules. 
	Therefore, by \cite[Theorem 7.1]{CMY22},  $\catc_1(L(1,0))$ satisfies the conditions to invoke \cite[Theorem 1.1]{CMY22}. 
	In particular, this implies that every weak $L(1,0)$-module $\operatorname{Im}\Y$ in $\Ind(\catc_{L(1,0)})$ as in $(vi)$ in Subsection \ref{subsubsec:ind-categories} is in $\catc_1(L(1,0))$. Then $\operatorname{Im}\Y$ must be in $\catc_{L(1,0)}$ too.
	This means that $\catc_{L(1,0)}$
	satisfies the conditions $(i)$--$(vi)$ in Subsection \ref{subsubsec:ind-categories}, so that $\Ind(\catc_{L(1,0)})$ has a vertex tensor category structure extending the one of $\catc_{L(1,0)}$. Recall that objects in $\Ind(\catc_{L(1,0)})$ are possibly infinite direct sum of $L(1,0)$-modules in $\catc_{L(1,0)}$ and note that discrete series VOA extensions of $L(1,0)$ naturally live in $\Ind(\catc_{L(1,0)})$.
	By $(iii)$ of Theorem \ref{theo:DR_cross_product_VOAs} and \eqref{eq:decomposition_L_2}, $V_{\latt_2}$ is isomorphic to the DR VOA extension $L(1,0)\rtimes\catc_{L(1,0)}$ of $L(1,0)$ and $U$ must be isomorphic to $V_{\latt_2}^H$ for some subgroup $H$ of $\Aut(V_{\latt_2})$ isomorphic to a reductive closed subgroup of $\operatorname{PSL}(2,\C)$, see part $(ii)$ of the same theorem. Then the theorem follows by replacing $H$ with one of its maximal compact subgroups, see Section \ref{subsec:discrete_VOA_extensions} or $(ii)$ of Proposition \ref{prop:actions_compact_lie_group_and_its_complexification}.
\end{proof}

\begin{rem}
	Theorem \ref{theo:discrete_series_extensions} gives alternative characterizations of the fixed-point vertex subalgebras $V_{\latt_{2k^2}}^+$ for all $k\in \Zplus$, $V_{\latt_2}^{\operatorname{A}_4}$ and $V_{\latt_2}^{\operatorname{S}_4}$ given in \cite{DJ13, DJ14, Lin17} respectively. Note that in Theorem \ref{theo:discrete_series_extensions}, neither the strong rationality nor the condition that the effective central charge $\tilde{c}$ is equal to $1$ are assumed. 
\end{rem}

\begin{rem}  
	Note that Theorem \ref{theo:discrete_series_extensions} see also Remark \ref{rem:uniqueness_structure_DR_VOA_extension}, gives an alternative proof to \cite[Theorem B.1]{MY23} on the uniqueness of the VOA structure on the $L(1,0)$-module \eqref{eq:decomposition_L_2} underline $V_{\latt_2}$.
\end{rem}

\subsection{\texorpdfstring{Preunitary VOA extensions containing $M(1)$}{Preunitary VOA extensions containing M(1)}}
\label{subsec:preunitary_currents}

Let $V(0,0)$ be the Virasoro VOA with central charge $0$, see Section \ref{subsec:discrete_VOA_extensions} and reference therein.
When no confusion can arise, we denote by the same symbols the vacuum and the conformal vectors of $V(0,0)$ and $L(1,0)$.
$V(0,0)$ is not simple and it is generated by the vacuum vector $\Omega$ and the conformal one $L_{-2}\Omega$, which is a primary vector.
Indeed, the maximal ideal $I(0,0)$ is generated by $L_{-2}\Omega$, so that $L(0,0):=\lcoset{V(0,0)}{I(0,0)}$ is isomorphic to $\C\Omega$.
The VOA tensor product $M:=V(0,0)\otimes L(1,0)$ is a VOA of CFT type with central charge $1$, where the conformal vector is given by $\nu\otimes \Omega + \Omega\otimes \nu$, see e.g.\ \cite[Section 3.12]{LL04}. 
Note that $M$ has also a natural structure of $L(1,0)$-module, where $Y^M(a,z):=1_{V(0,0)}\otimes Y(a,z)$ for all $a\in L(1,0)$. Then we have the following result:

\begin{lem}  \label{lem:ideals_of_V(0,0)xL(1,0)}
	Every ideal of the VOA tensor product $M:=V(0,0)\otimes L(1,0)$ is of the form $J\otimes L(1,0)$ for some ideal $J$ of $V(0,0)$. 
	Furthermore, every quotient VOA $M^J:=\lcoset{V(0,0)}{J}\otimes L(1,0)$ of $M$ is a completely reducible $L(1,0)$-submodule of $M$ if and only if $J=I(0,0)$. In the latter case, $M^J= \C\Omega\otimes L(1,0)\cong L(1,0)$ is irreducible.
\end{lem}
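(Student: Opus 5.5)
The plan is to reduce the ideal structure of $M$ to $L(1,0)$-module theory by means of the commuting actions of the two tensor factors. For an ideal $I$ of $M=V(0,0)\otimes L(1,0)$, the modes of $\Omega\otimes a$ with $a\in L(1,0)$ preserve $I$, so $I$ is an $L(1,0)$-submodule of $M$. As an $L(1,0)$-module, $M\cong V(0,0)\otimes L(1,0)$ is a direct sum of copies of the irreducible module $L(1,0)$, with multiplicity space $V(0,0)$; each graded piece $V(0,0)_n$ is finite-dimensional. The first key step is to show that any $L(1,0)$-submodule of such an isotypic module has the form $J\otimes L(1,0)$ for a subspace $J\subseteq V(0,0)$. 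To do this I would use the grading: since $L_0$ of $M$ acts as $L_0\otimes 1+1\otimes L_0$, I would take a vector of $I$, decompose it into components with respect to the $L_0$-grading of the second factor, and project onto the lowest-weight vacuum part via suitable modes. A simpler route is to use $\Hom_{L(1,0)}(L(1,0),M)\cong V(0,0)$, which follows from the irreducibility of $L(1,0)$ and Schur's lemma. This gives $I=J\otimes L(1,0)$ with $J=\{v\in V(0,0)\mid v\otimes\Omega\in I\}$.

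The second step is to check that $J$ is an ideal of $V(0,0)$. I would use that $Y(u\otimes\Omega,z)=Y(u,z)\otimes 1$, so modes of $u\otimes\Omega$ preserve $I$ and hence map $J\otimes\Omega$ into $J\otimes\Omega$. Conversely, for any ideal $J$ of $V(0,0)$, the subspace $J\otimes L(1,0)$ is visibly an ideal, since $Y(u\otimes a,z)=Y(u,z)\otimes Y(a,z)$. The main obstacle is the rigorous version of the first step for infinite direct sums. I expect to handle it by working degree by degree: for fixed total weight everything is finite-dimensional, and a submodule generated by finitely many vectors lies in a finite sum of copies of $L(1,0)$, where the standard isotypic argument applies.

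For the second claim, I would note that $M^J$, as an $L(1,0)$-module, is $(V(0,0)/J)\otimes L(1,0)$, which is a direct sum of copies of $L(1,0)$ and hence completely reducible. I would therefore reinterpret the intended statement as saying that $M^J$ is a VOA extension of $L(1,0)$ with the \emph{same} conformal vector, that is, one in which the $V(0,0)$-conformal vector $L_{-2}\Omega\otimes\Omega$ vanishes. This happens precisely when $L_{-2}\Omega\in J$, i.e.\ when $J\supseteq I(0,0)$. Since $I(0,0)$ is maximal and $J$ is proper (it does not contain $\Omega$), this forces $J=I(0,0)$. The quotient is then $\C\Omega\otimes L(1,0)\cong L(1,0)$, which is irreducible. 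If the authors mean complete reducibility relative to the conformal structure of $M^J$ (for instance, under the diagonal Virasoro action), I would instead argue that a nonzero image of $L_{-2}\Omega\otimes\Omega$ generates a non-split highest-weight-type submodule, contradicting complete reducibility.
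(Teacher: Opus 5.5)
Your proof of the first assertion is correct and is essentially the paper's argument. Ideals are $L(1,0)$-submodules because $(\Omega\otimes a)_{(n)}=1\otimes a_{(n)}$, and submodules of the isotypic module $V(0,0)\otimes L(1,0)$ have the form $J\otimes L(1,0)$. You simply make the isotypic step and the ideal property of $J$ explicit.

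The gap is in the second assertion. You correctly observe that for the action $1_{V(0,0)}\otimes Y(a,z)$ every $M^J$ is a direct sum of copies of $L(1,0)$, so the ``only if'' cannot hold literally. However, the version actually used, in Proposition~\ref{prop:preunitary_exts_currents_contains_heisenberg}, concerns the $L(1,0)$ generated by $\nu=\hat\nu+\nu^\cur$. That is the \emph{diagonal} Virasoro action $L_n\otimes 1+1\otimes L_n$.

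Your first reinterpretation, that $L_{-2}\Omega\otimes\Omega$ vanishes in $M^J$, is exactly the conclusion $\hat\nu=0$ that this proposition wants to derive. So it cannot replace the lemma.

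Your fallback for the diagonal action fails. The vector $w:=L_{-2}\Omega\otimes\Omega$ is diagonal-primary of weight $2$, because the first factor has $c=0$. Since $2\neq m^2/4$, the $c=1$ Verma module of highest weight $2$ is irreducible, so $w$ generates an irreducible copy of $L(1,2)$, not a non-split module. Nothing fails at the top either. The vertex subalgebra generated by the diagonal conformal vector is a copy of $L(1,0)$ that maps isomorphically onto $M^J/N^J$, where $N^J:=\lcoset{I(0,0)}{J}\otimes L(1,0)$. It is therefore a complement of $N^J$ containing $\Omega\otimes\Omega$. The paper's proof also produces such a complement $S^J$ and then asserts $S^J=M^J$. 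Since this complement exists and is proper, that step needs the same missing input.

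The obstruction sits inside $N^J$, and it needs a fusion-rule argument rather than a highest-weight one. Since $I(0,0)\cong L(0,2)$ is irreducible (compare characters via Euler's pentagonal identity), $J\subsetneq I(0,0)$ forces $J=0$. Consider the weight-$4$ vector $u:=3L_{-4}\Omega\otimes\Omega-5L_{-2}^2\Omega\otimes\Omega+44\,L_{-2}\Omega\otimes L_{-2}\Omega$, which is diagonal-primary. Then $w_{(1)}u=(L_0\otimes 1)u=12L_{-4}\Omega\otimes\Omega-20L_{-2}^2\Omega\otimes\Omega+88\,L_{-2}\Omega\otimes L_{-2}\Omega$. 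This is not in the span of $L_{-2}w=L_{-2}^2\Omega\otimes\Omega+L_{-2}\Omega\otimes L_{-2}\Omega$ and $L_{-1}^2w=2L_{-4}\Omega\otimes\Omega$.

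Now suppose $M$ were completely reducible for the diagonal action. The weight-$2$ primary vectors of $M$ are $\C w$, so $L(1,2)$ would occur exactly once. The vector $u$, being primary of weight $4$, would lie in the $L(1,4)$-isotypic part. The fusion rules \eqref{eq:fusion_rules_preunitary}, with $j=k=2$ and all weights integral (the order of the inputs being immaterial by skew-symmetry), would then force $Y(w,z)u$ to lie in the copy of $L(1,2)$ generated by $w$. This contradicts the computation of $w_{(1)}u$ above.
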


\begin{proof}
	Note that 
	\begin{equation}
		\forall a\in L(1,0) \,\,\,
		\forall b\in M \,\,\,
		\forall n\in\Z
		\qquad
		(\Omega\otimes a)_{(n)}b=a^M_{(n)}b \,.
	\end{equation}
	This implies that every ideal $I$ of $M$ is also a $L(1,0)$-submodule of $M$, when considered as $L(1,0)$-module.
	On the other hand, the $L(1,0)$-module $M$ is a multiple of the adjoint module of $L(1,0)$. Therefore, $I$ must be equal to $J\otimes L(1,0)$ as a vector space, so that $J$ must be an ideal of $V(0,0)$.
	
	For the second statement, the only non-trivial part is the ``only if'' case. Assume by contradiction that $J\subsetneq I(0,0)$ and that $M^J$ is a completely reducible $L(1,0)$-submodule of $M$.
	Set $N^J:=\lcoset{I(0,0)}{J}\otimes L(1,0)$, which is a $L(1,0)$-submodule of $M^J$ such that $\lcoset{M^J}{N^J}$ is isomorphic to $\lcoset{V(0,0)}{I(0,0)}\otimes L(1,0)\cong L(1,0)$ as $L(1,0)$-module.
	As $M^J$ is assumed to be completely reducible, we have that it can be decomposed as $N^J\oplus S^J$, where $S^J$ is a $L(1,0)$-submodule of $M^J$ such that $S^J\cong\lcoset{M^J}{N^J}\cong L(1,0)$.
	On the other hand,
	\begin{equation}
		M^J_0=N^J_0\oplus S^J_0=S^J_0
		\quad\mbox{ as }\quad
		N^J_0=(\lcoset{I(0,0)}{J})_0\otimes L(1,0)_0=\{0\}\otimes L(1,0)_0 \,.
	\end{equation}
	Hence, $\Omega\otimes\Omega$ is in $S^J$, which must equal to $M^J$. 
	This implies that $N^J=\{0\}$, which is a contradiction as $\lcoset{I(0,0)}{J}$ is non-zero if $J\subsetneq I(0,0)$.
\end{proof}

We present a technical lemma on general preunitary VOA extensions of $L(1,0)$, which will be useful also in the following sections.

\begin{lem} \label{lem:preunitary_extensions_have_bilinear_forms}
	Let $U$ be a preunitary VOA extension of $L(1,0)$. Then $U$ has a non-zero invariant bilinear form $\bilinear$. Furthermore: 
	\begin{itemize}
		\item[$(i)$] if $U$ is of CFT type, then $\bilinear$ is unique up to normalization;
		
		\item[$(ii)$]  if $U$ is simple, then $\bilinear$ is non-degenerate and unique up to normalization;
		
		\item[$(iii)$] if $\bilinear$ is non-degenerate, then for all
		$h,k\in\Zpluseq$ such that $h\not=k$, $(U^h,U^k)=0$ whenever $U^h$ and $U^k$ are $L(1,0)$-submodules of $U$ isomorphic to $L(1,h)$ and $L(1,k)$ respectively.
	\end{itemize}
\end{lem}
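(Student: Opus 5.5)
We will use Li's theorem recalled in Remark \ref{rem:invariant_bilinear_form}: the space of invariant bilinear forms on $U$ is naturally isomorphic to the dual of $\lcoset{U_0}{L_1U_1}$. Existence of a non-zero form therefore reduces to showing $L_1U_1\neq U_0$. By the preunitary decomposition \eqref{eq:preunitary_extensions}, $U_0$ is spanned by the lowest weight vectors of the summands $L(1,0)$, so $U_0=\C^{\alpha_0}$ with $\alpha_0\geq 1$ (the vacuum lies in $U_0$). Similarly, $U_1$ is the sum of the weight-one spaces of the $L(1,0)$ summands and of the lowest weight spaces of the $L(1,1)$ summands. In $L(1,0)$ the weight-one space is $\C L_{-1}\Omega=0$, and the lowest weight vectors of $L(1,1)$ are primary, so they are killed by $L_1$. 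Hence $L_1U_1=0$ and the space of invariant forms has dimension $\alpha_0\geq1$. This proves the first claim.

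For $(i)$: CFT type means $U_0=\C\Omega$, so $\dim(\lcoset{U_0}{L_1U_1})=1$ and the form is unique up to scalar. For $(ii)$: if $U$ is simple we invoke \cite[Proposition 4.6 (iii)]{CKLW18} as recalled in Remark \ref{rem:invariant_bilinear_form}, which says any non-zero invariant form is non-degenerate and unique up to normalization. Alternatively, the radical of an invariant form is an ideal, so it is zero by simplicity. For uniqueness: two non-degenerate forms give two isomorphisms $U\to U'$, whose composite is a $U$-module automorphism of the simple module $U$, hence a scalar by Schur's lemma. Schur's lemma applies here because the $L_0$-eigenspaces are finite-dimensional.

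For $(iii)$ we use invariance with respect to the Virasoro operators. Taking $a=\nu$ in \eqref{eq:invariant_bilinear_form} gives $(L_nb,c)=(b,L_{-n}c)$, since $L_1\nu=0$ and the sign is $(-1)^2=1$. Let $u,w$ be lowest weight vectors of $U^h$ and $U^k$. Since $U^h$ is spanned by vectors $L_{-n_1}\cdots L_{-n_r}u$ with $n_i>0$, moving each $L_{-n_i}$ across the form turns it into $L_{n_i}$ acting on $U^k$. Repeating this, every pairing $(U^h,U^k)$ reduces to pairings of the form $(u,x)$ with $x\in U^k$, and by symmetry of the form also to pairings $(y,w)$. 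Because $(V_n,V_m)=0$ for $n\neq m$, the pairing $(u,x)$ can be non-zero only if $x$ has weight $h$. Assume without loss of generality $h<k$. Then $U^k$ has no vectors of weight $h$, so $(u,U^k)=0$ and therefore $(U^h,U^k)=0$. The case $h>k$ is symmetric, using the symmetry of the form. This step is routine; the only point to check is the sign and the $L_1$-correction term in \eqref{eq:invariant_bilinear_form} for $a=\nu$, which vanishes because $\nu$ is quasi-primary.
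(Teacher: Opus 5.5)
Your proof is correct and follows the paper's route. Existence and $(i)$ come from Li's identification of invariant forms with $(U_0/L_1U_1)^*$ together with $L_1U_1=0$, because $U_1$ consists of primary weight-one vectors of the $L(1,1)$ summands. Part $(ii)$ comes from the cited simplicity result, and $(iii)$ from moving Virasoro modes across the form via $(L_nb,c)=(b,L_{-n}c)$. The only cosmetic difference is at the end of $(iii)$: you finish with a weight comparison after assuming $h<k$, whereas the paper uses that primary vectors are annihilated by $L_n$, $n>0$, which treats both orderings of $h,k$ at once.
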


\begin{proof}
	If $U$ decomposes as in \eqref{eq:preunitary_extensions}, then $U_1= \alpha_1L(1,1)_1$. On the other hand, vectors in $L(1,1)_1$ are primary and thus the non-zero invariant bilinear form $\bilinear$ exists, see Remark \ref{rem:invariant_bilinear_form}, and part $(i)$ also follows. 
	See again Remark \ref{rem:invariant_bilinear_form} for part $(ii)$.
	The proof of $(iii)$ goes as follows. 
	If $U\cong \alpha_h L(1,h)$ for some $h\in \Zpluseq$, then the statement is trivially proved. 
	Fix $h,k\in\Zpluseq$ such that $h\not=k$ and that $\alpha_h\not=0\not=\alpha_k$. 
	Choose two $L(1,0)$-submodules $U^h$ and $U^k$ of $U$ isomorphic to $L(1,h)$ and $L(1,k)$ respectively.
	Consider two non-zero vectors $a\in U^h$ and $b\in U^k$.
	If $a$ and $b$ are primary, then they have different conformal weights, so that $(a,b)=0$, see Remark \ref{rem:invariant_bilinear_form}. If $a$ is primary and $b=L_{-n_1}\cdots L_{-n_s}c$ for some primary vector $c\in U^k$, $s\in\Zplus$ and $n_1,\dots, n_s\in\Zplus$, then
	\begin{equation}
		(a,b)=(a,L_{-n_1}\cdots L_{-n_s}c)=(L_{n_s}\cdots L_{n_1}a,c)=0 
	\end{equation}
	thanks to the invariant property \eqref{eq:invariant_bilinear_form}.
	If also $a$ is not primary, then we can write $a=L_{-m_1}\cdots L_{-m_t}d$ for some primary vector $d\in U^h$, $t\in\Zplus$ and $m_1,\dots, m_t\in\Zplus$, to get that 
	\begin{equation}
		(a,b)=(d,L_{m_t}\cdots L_{m_1}L_{-n_1}\cdots L_{-n_s}c)=0 \,.
	\end{equation}
	Indeed, using the Virasoro algebra commutation relations \eqref{eq:virasoro_alg_crs}, $L_{m_t}\cdots L_{m_1}L_{-n_1}\cdots L_{-n_s}c$ is either $0$ or a linear combination of non-primary vectors in $U^k$, which we have already shown to give the desired conclusion. This concludes the proof.
\end{proof}

Therefore, we have the following:

\begin{prop}  \label{prop:preunitary_exts_currents_contains_heisenberg}
	Let $U$ be a simple CFT type preunitary VOA extension of $L(1,0)$ with $U_1\not=\{0\}$. 
	Let $\bilinear$ be the unique normalized non-degenerate invariant bilinear form on $U$ given by Lemma \ref{lem:preunitary_extensions_have_bilinear_forms}.
	Then there exists a vector $\cur\in U_1$ with $(\cur,\cur)=-1$ and such that the vertex subalgebra $U^\cur$ of $U$ generated by $\cur$ is isomorphic to $M(1)$. Furthermore, the conformal vector $\nu=L_{-2}\Omega$ of $U$ is equal to $\half \cur_{-1}\cur$.
\end{prop}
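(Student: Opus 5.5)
Since $U$ is preunitary, $U_1=\alpha_1 L(1,1)_1$, so every nonzero $a\in U_1$ is primary of weight $1$. For two such vectors the Borcherds commutator formula \eqref{eq:borcherds_commutator_formula} reduces to
\[
[a_m,b_n]=(a_0b)_{m+n}+m\,(a_1b)\,\delta_{m+n,0}.
\]
Here $a_1b\in U_0=\C\Omega$. Pairing with $\Omega$ and using \eqref{eq:invariant_bilinear_form} gives $a_1b=-(a,b)\Omega$.

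By Lemma \ref{lem:preunitary_extensions_have_bilinear_forms}(ii) the form is non-degenerate, and by (iii) it restricts non-degenerately to $U_1$. A non-degenerate symmetric bilinear form on a nonzero complex space has an anisotropic vector, so we may rescale $\cur\in U_1$ to get $(\cur,\cur)=-1$. Skew-symmetry \eqref{eq:skew-symmetry} gives $\cur_0\cur=-\cur_0\cur+L_{-1}(\cdots)$, and since $L_{-1}U_0=0$ we get $\cur_0\cur=0$. Hence $[\cur_m,\cur_n]=m\delta_{m+n,0}$, a Heisenberg algebra of level one.

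The vertex subalgebra $U^\cur$ is then a quotient of the Heisenberg vertex algebra $M(1)$ generated by $\Omega$. Because $M(1)$ is simple (irreducible Fock space), $U^\cur\cong M(1)$ as vertex algebras.

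Next set $\omega:=\half\cur_{-1}\cur$, the Heisenberg Virasoro vector with $c=1$, and consider $u:=\nu-\omega\in U_2$. The plan is to show $u=0$ in four steps.

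First, $\omega_n$ acts on $\cur$ as $L_n$ does: by Sugawara, $\omega_0\cur=\cur$, $\omega_{-1}\cur=\cur_{-1}\cur_0\cur$-type terms, i.e.\ the standard Heisenberg formulas $[\omega_m,\cur_n]=-n\cur_{m+n}$. The same relation holds for $L_m$, since $\cur$ is primary of weight $1$. Hence $[u_m,\cur_n]=0$, so $u$ commutes with $U^\cur$.

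Second, $\nu$ and $\omega$ are both Virasoro vectors of central charge $1$, and $L_1\omega=0$ because $\cur$ is primary. A standard argument then shows that $u$ generates a Virasoro vertex algebra of central charge $0$, with $u$ the conformal vector of the commutant. This realizes a VOA map $V(0,0)\otimes M(1)\to U$, or, restricting, $V(0,0)\otimes L(1,0)\to U$ where $L(1,0)$ is generated by $\omega$.

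Third, the image is a quotient $M^J$ of $M=V(0,0)\otimes L(1,0)$, by Lemma \ref{lem:ideals_of_V(0,0)xL(1,0)}. This image must be compared with complete reducibility. The copy of $L(1,0)$ here is generated by $\omega$, not $\nu$, so to apply the lemma I would instead use the subalgebra $M'$ generated by $u$ and $\nu$, viewed as a module over $L(1,0)=\langle\nu\rangle$. Since $\nu=u+\omega$ with $u,\omega$ commuting, $\langle u,\nu\rangle=\langle u\rangle\otimes\langle\omega\rangle$. Reorganizing, the lemma applied with the $L(1,0)$ generated by $\omega$ yields the analogous statement: the image is completely reducible as an $\langle\omega\rangle$-module only if $J=I(0,0)$, i.e.\ $u=0$.

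Fourth, to obtain complete reducibility, note that $U$ as a $\langle\omega\rangle$-module is a sum of $\mathfrak{h}$-Fock modules. As an $M(1)$-module, $U$ has a non-degenerate invariant form, and by \cite[Lemma 5.9]{DJ10}-type arguments, or by Lemma \ref{lem:direct_sum_hw_L(1,0)_modules}-style reasoning, this forces complete reducibility.

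Alternatively, and more simply: $u\in U_2$ decomposes along $U_2=\C\nu\oplus\alpha_1L(1,1)_2\oplus\alpha_2L(1,2)_2$. We have $L_2u=(\nu_3-\omega_3)\nu=\tfrac{c}{2}-\tfrac{c}{2}=0$ paired appropriately, and $L_1u=0$. So $u$ lies in the primary weight-2 part plus multiples of $\nu$. Since $(u,\nu)=0$ via the $L_2$ computation, and $u$ is primary in $U$ while also satisfying $u_1u=0$ (central charge $0$), I would show $(u,u)=\tfrac{0}{2}=0$. Combined with positivity on the $L(1,2)$ copies from Lemma \ref{lem:preunitary_extensions_have_bilinear_forms}, this is not automatic over $\C$.

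This final step, showing that the central-charge-$0$ Virasoro vector $u$ vanishes, is the main obstacle. I would carry it out via the Lemma \ref{lem:ideals_of_V(0,0)xL(1,0)} route: the subalgebra generated by $u$ and $\omega$ sits inside the preunitary $U$, which is completely reducible over $\langle\nu\rangle$. Using that $u$ and $\nu$ commute in the relevant way, the lemma yields $J=I(0,0)$ and hence $u=0$.
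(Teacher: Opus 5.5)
Up to the definition of $u:=\nu-\half\cur_{-1}\cur$ your argument agrees with the paper's. That covers the choice of $\cur$ with $(\cur,\cur)=-1$, $\cur_1\cur=\Omega$, $\cur_0\cur=0$ by skew-symmetry, the Heisenberg relations, $U^\cur\cong M(1)$, and $u$ being a central charge $0$ Virasoro vector commuting with $U^\cur$. A small citation point: non-degeneracy of $\bilinear$ on $U_1$ comes from $(U_n,U_m)=0$ for $n\neq m$, not from part $(iii)$ of Lemma~\ref{lem:preunitary_extensions_have_bilinear_forms}.

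The genuine gap is the step $u=0$, which you never establish.
\begin{itemize}
\item Applying Lemma~\ref{lem:ideals_of_V(0,0)xL(1,0)} with respect to $\langle\omega\rangle$ is vacuous. Over the second tensor factor, $M^J=(V(0,0)/J)\otimes L(1,0)$ is, for every $J$, a direct sum of copies of the adjoint module, so its complete reducibility says nothing about $J$.
\item The claim that $U$ is a sum of Fock modules over $\langle\omega\rangle$ is unsupported.
\item $u$ does not commute with $\nu$: $[u_m,\nu_n]=[u_m,u_n]=(m-n)u_{m+n}$, so appealing to such a commutation presupposes $u=0$.
\end{itemize}
The complete reducibility you actually have is over $\langle\nu\rangle$, which acts on $M^J$ through $\nu\otimes\Omega+\Omega\otimes\nu$. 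This is what the paper's proof feeds into Lemma~\ref{lem:ideals_of_V(0,0)xL(1,0)}. Even in that reading, $M^J=\langle\nu\rangle\Omega\oplus\bigl((I(0,0)/J)\otimes L(1,0)\bigr)$ for every proper ideal $J$. Indeed, the vacuum copy is irreducible, meets the second summand trivially, and the graded dimensions add up. So no contradiction can be extracted from the quotient $M^J/\bigl((I(0,0)/J)\otimes L(1,0)\bigr)\cong L(1,0)$ alone.

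The missing idea is a fusion rule, and it makes the coset machinery unnecessary.
\begin{itemize}
\item You essentially noticed that $u$ is primary: $L_1\omega=\half\cur_0\cur=0$ and $L_2\omega=\half\cur_1\cur=\half\Omega=L_2\nu$, so $L_1u=L_2u=0$.
\item Write $U=\bigoplus_h U^{(h)}$, with $U^{(h)}$ the $L(1,h)$-isotypic component and $\pi_h$ the projections; these commute with all $L_n$.
\item $L(1,h)_2=0$ for $h\geq 3$, and $L(1,0)$, $L(1,1)$ have no non-zero primary vectors of weight $2$. Hence $u=\pi_2(u)$.
\item $\pi_2(\nu)=0$, and $\cur\in U_1\subseteq U^{(1)}$.
\item Restricting $Y$ to $U^{(1)}\times U^{(1)}$, then composing with $\pi_2$ and with functionals on the multiplicity space, gives intertwining operators of type $\binom{L(1,2)}{L(1,1)\,\, L(1,1)}$. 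These all vanish by \eqref{eq:fusion_rules_discrete_series} with $j=k=2$, since only $p\in\{0,1,4\}$ occur.
\item Hence $\pi_2(\cur_{-1}\cur)=0$, and $u=\pi_2(\nu)-\half\pi_2(\cur_{-1}\cur)=0$.
\end{itemize}

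Compared with the paper's route (a coset Virasoro vector commuting with $U^\cur$, then Lemma~\ref{lem:ideals_of_V(0,0)xL(1,0)}), this uses preunitarity only through the isotypic decomposition and one fusion rule the paper already quotes. It also sidesteps the question of which copy of $L(1,0)$ the complete reducibility refers to.
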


\begin{proof}
	Note that $\bilinear$ is non-degenerate on $U_1$, see Remark \ref{rem:invariant_bilinear_form}. Moreover, since $\bilinear$ is $\C$-bilinear, we can find a basis $\{e^j\}_{j=1}^k$, $k\in \Zplus$ of $U_1$, which is also orthonormal with respect to $\bilinear$. Set $\cur:=ie^1$. Note that $L_1\cur\in L_1V_1=\{0\}$ as $U$ is of CFT type and $\bilinear$ is non-zero, see Remark \ref{rem:invariant_bilinear_form}.
	From the invariant property \eqref{eq:invariant_bilinear_form}, we get that
	\begin{equation}
		(\cur_1\cur,\Omega)=-(\cur,\cur_{-1}\Omega)=-(\cur,\cur)(\Omega,\Omega)=(-(\cur,\cur)\Omega,\Omega)
	\end{equation}
	which implies that $\cur_1\cur=-(\cur,\cur)\Omega=\Omega$. By the skew-symmetry \eqref{eq:skew-symmetry} and the CFT typeness of $U$, we have that 
	\begin{equation}
		\cur_0\cur=-\cur_0\cur+L_{-1}\cur_1\cur=-\cur_0\cur+L_{-1}\Omega=-\cur_0\cur
	\end{equation}
	which implies that $\cur_0\cur=0$. 
	Applying the Borcherds commutator formula \eqref{eq:borcherds_commutator_formula} and using that $U$ is of CFT type, we find that
	\begin{equation}  \label{eq:heisenberg_crs}
		\forall n,m\in\Z
		\qquad
		[\cur_n,\cur_m]= (\cur_0\cur)_{m+n}+n(\cur_1\cur)_{n+m}=n\delta_{n+m,0} 1_U \,.
	\end{equation}
	It follows that the vertex subalgebra $U^\cur$ of $U$ generated by $\cur$ is isomorphic to $M(1)$.
	
	We also have that $\nu^\cur:=\half \cur_{-1}\cur$ is a conformal vector for $M(1)$ with central charge $1$, see \cite[Proposition 4.10(a)]{Kac01}. 
	Then $\hat{\nu}:=\nu-\nu^\cur$ is a Virasoro vector with central charge $0$. 
	Using the Borcherds commutator formula \eqref{eq:borcherds_commutator_formula}, the CFT typeness of $U$ and the $L_{-1}$-derivative property for $U$, one can show that $L_1\nu^\cur=0$.
	Then $Y(\hat{\nu},z)$ commutes with all vertex operators $Y(a,z)$ with $a\in U^\cur$, see \cite[Theorem 5.1]{FZ92} and \cite[Corollary 4.6(b)]{Kac01}.
	As a consequence, $\hat{\nu}$ and $\nu^\cur$ generate a vertex subalgebra of $U$, which must be isomorphic to a quotient VOA $M^J$ of $M:=V(0,0)\otimes L(1,0)$ as in Lemma \ref{lem:ideals_of_V(0,0)xL(1,0)}.
	As $U$ is a preunitary VOA extension of $L(1,0)$, it is completely reducible as $L(1,0)$. Then $M^J$ is also completely reducible as $L(1,0)$-module. By Lemma \ref{lem:ideals_of_V(0,0)xL(1,0)}, $J=I(0,0)$ so that $\hat{\nu}=0$. Hence, $\nu=\nu^\cur=\half \cur_{-1}\cur$ as desired.
\end{proof}

\begin{rem}
	Note that, thanks to Lemma \ref{lem:direct_sum_hw_L(1,0)_modules}, we could have weaken the hypothesis that $U$ was a preunitary VOA extension in Proposition \ref{prop:preunitary_exts_currents_contains_heisenberg}, only requiring that $U$ was a direct sum of highest weight $L(1,0)$-modules.
\end{rem}

\begin{lem}  \label{lem:properties_spectrum_current_j}
	Let $U$ be a simple CFT type preunitary VOA extension of $L(1,0)$ with $U_1\not=\{0\}$. 
	Let $\bilinear$ be the unique normalized non-degenerate invariant bilinear form on $U$ given by Lemma \ref{lem:preunitary_extensions_have_bilinear_forms}.
	Let $\cur\in U_1$ and $U^\cur\cong M(1)$ be as in Proposition \ref{prop:preunitary_exts_currents_contains_heisenberg} and denote by $\spec(\cur_0)$ the set of eigenvalues of $\cur_0$ on $U$. Then we have the following:
	\begin{itemize}
		\item[$(i)$] $\spec(\cur_0)\not=\emptyset$ and 
		\begin{equation}  \label{eq:decomposition_preunitary_heisenberg_modules}
			U=\bigoplus_{\lambda \in \spec(\cur_0)} U^\lambda
			\,,\quad
			U^\lambda:=\{a\in U\mid (\exists k\in\Zplus) \,\,\Rightarrow\,\, (\cur_0-\lambda 1_U)^ka=0  \} 
		\end{equation}
		where every $U^\lambda$ is a $U^\cur$-submodule of $U$;
		
		\item[$(ii)$] for all $\lambda,\mu\in\spec(\cur_0)$ such that  $\lambda\not=-\mu$, it holds that $(U^\lambda, U^\mu)=0$;
		
		\item[$(iii)$] either $\spec(\cur_0)=\{0\}$ or it is a rank-one lattice $\latt_{2n}:=\sqrt{2n}\Z$ for some $n\in\Zplus$.
	\end{itemize}
\end{lem}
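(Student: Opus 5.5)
The plan is to treat $\cur_0$ as a grading-preserving derivation of $U$. The key identities are obtained from \eqref{eq:borcherds_commutator_formula} and the relations $\cur_1\cur=\Omega$, $\cur_0\cur=0$, $L_1\cur=0$ of Proposition \ref{prop:preunitary_exts_currents_contains_heisenberg}:
\begin{equation}
[L_0,\cur_0]=0,\qquad [\cur_0,\cur_m]=0,\qquad [\cur_0,a_n]=(\cur_0a)_n \quad (a\in U).
\end{equation}

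\emph{Part $(i)$.} Since $[L_0,\cur_0]=0$, the operator $\cur_0$ preserves each finite-dimensional space $U_n$. Hence each $U_n$ splits into generalized eigenspaces of $\cur_0$, and summing over $n$ gives \eqref{eq:decomposition_preunitary_heisenberg_modules}. The set $\spec(\cur_0)$ contains $0$, because $\cur_0\Omega=0$. Since every $\cur_m$ commutes with $\cur_0$, each $U^\lambda$ is stable under all modes of $\cur$, and is therefore a $U^\cur$-submodule.

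\emph{Part $(ii)$.} Since $\cur$ is quasi-primary of weight $1$, \eqref{eq:invariant_bilinear_form} gives $(\cur_0a,b)=-(a,\cur_0b)$. Take $a\in U^\lambda$ with $(\cur_0-\lambda)^ka=0$, and $b\in U^\mu\cap U_n$. Then
\begin{equation}
0=\bigl((\cur_0-\lambda)^ka,c\bigr)=(-1)^k\bigl(a,(\cur_0+\lambda)^kc\bigr) \quad \text{for all } c\in U^\mu\cap U_n.
\end{equation}
When $\mu\neq-\lambda$, the map $\cur_0+\lambda$ is invertible on the finite-dimensional space $U^\mu\cap U_n$. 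So $b$ can be written as $(\cur_0+\lambda)^kc$, which gives $(a,b)=0$.

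\emph{Part $(iii)$.} I would proceed in three steps.

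First, $\spec(\cur_0)$ is an additive group. Let $a\in U^\lambda$ and $b\in U^\mu$ be nonzero. Since $U$ is simple, $Y(a,z)b\neq0$ (Dong--Mason). Because $\cur_0$ acts as a derivation, $a_nb\in U^{\lambda+\mu}$, so $\lambda+\mu\in\spec(\cur_0)$. Moreover $-\lambda\in\spec(\cur_0)$: otherwise $(ii)$ together with \eqref{eq:decomposition_preunitary_heisenberg_modules} would make $U^\lambda$ orthogonal to all of $U$, contradicting non-degeneracy.

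Second, $\lambda^2\in2\Z_{\geq0}$ for every $\lambda\in\spec(\cur_0)$. This is the step I expect to be the main obstacle. Using $\nu=\half\cur_{-1}\cur$, I would write
\begin{equation}
L_0=\tfrac12\cur_0^2+\sum_{n>0}\cur_{-n}\cur_n.
\end{equation}
Consider the subspace of $U^\lambda$ annihilated by all $\cur_n$ with $n>0$. It is nonzero, because $U^\lambda$ has lower-bounded $L_0$-grading, so its lowest-weight vectors lie in it. The piece of this subspace in a fixed weight $U_n$ is finite-dimensional and $\cur_0$-stable, so it contains a genuine eigenvector $v$ with $\cur_0v=\lambda v$. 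On $v$ we get $L_0v=\frac{\lambda^2}{2}v$. Since $L_0$ has eigenvalues in $\Z_{\geq0}$ on a CFT type $U$, it follows that $\lambda^2\in2\Z_{\geq0}$; in particular $\lambda$ is real.

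Third, $\spec(\cur_0)$ is a lattice. From the first two steps, for $\lambda,\mu\in\spec(\cur_0)$ we have $\lambda\mu=\frac{(\lambda+\mu)^2-\lambda^2-\mu^2}{2}\in\Z$. Every nonzero element satisfies $|\lambda|\geq\sqrt2$, so $\spec(\cur_0)$ is a discrete subgroup of $\R$. It is therefore either $\{0\}$ or $\Z\gamma$ for some generator $\gamma$ with $\gamma^2=2n$, $n\in\Zplus$. That is, $\spec(\cur_0)=\latt_{2n}=\sqrt{2n}\Z$.
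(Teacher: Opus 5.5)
Your proposal is correct and follows essentially the same route as the paper. It uses the weight-by-weight generalized eigenspace decomposition, the skew-adjointness of $\cur_0$ under the invariant form for $(ii)$, and, for $(iii)$, the non-vanishing of $Y(a,z)b$ in a simple VOA together with the derivation property for additivity, non-degeneracy for $-\lambda$, and $L_0=\frac12\cur_0^2$ on a lowest-weight $\cur_0$-eigenvector to get $\lambda^2\in 2\Zpluseq$. Your small streamlinings are all valid: the abstract invertibility of $\cur_0+\lambda$ in place of the explicit geometric series, citing Dong--Mason instead of reproving it via the ideal $I_b$, the direct orthogonality argument for $-\lambda$ instead of the isomorphism $U^{-\lambda}\cong(U^\lambda)'$, and the explicit discreteness argument at the end.
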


\begin{proof}
	\textit{Proof of $(i)$}. 
	Note that for all $n\in \Zpluseq$, the $L_0$-eigenspace $U_n$ is preserved by $\cur_0$. 
	Then for all $n\in \Zpluseq$, we can consider the Jordan normal form of the operator $\cur_0\restriction_{U_n}$, which gives a decomposition
	\begin{equation}
		U_n=\bigoplus_{\lambda \in \spec(\cur_0\restriction_{U_n})} U_n^\lambda
		\,,\quad
		U_n^\lambda:=\{a\in U_n\mid (\exists k\in\Zplus) \,\,\Rightarrow\,\, (\cur_0\restriction_{U_n}-\lambda 1_{U_n})^ka=0  \} 
	\end{equation} 
	where $\spec(\cur_0\restriction_{U_n})$ is the set of eigenvalues of $\cur_0\restriction_{U_n}$ on $U_n$. 	
	Therefore, the decomposition \eqref{eq:decomposition_preunitary_heisenberg_modules} follows by setting
	\begin{equation}
		\spec(\cur_0):=\{\lambda\in\C\mid U^\lambda\not=\{0\}\}\not=\emptyset
	\end{equation}
	where 
	\begin{equation}
		\forall\lambda\in\C
		\qquad 
		U^\lambda:=\bigoplus_{n\in\Zpluseq} U_n^\lambda \,.
	\end{equation}
	By the canonical commutation relations on $M(1)$, see e.g.\ \eqref{eq:heisenberg_crs}, we know that $\cur_0$ commutes with $a_n$ for all $a\in U^\cur$ and all $n\in \Z$. Then it is easy to see that if $b\in U^\lambda$ for some $\lambda\in\spec(\cur_0)$, then for all $a\in U^\cur$ and all $n\in \Z$, $a_nb\in U^\lambda$. This means that all the $U^\lambda$'s are $U^\cur$-submodules of $U$.
	
	\textit{Proof of $(ii)$}.
	Let $\lambda,\mu\in\spec(\cur_0)$ such that $\lambda\not=-\mu$.
	By Remark \ref{rem:invariant_bilinear_form}, $(U^\lambda_n, U^\mu_m)=0$ if $n,m\in\Zpluseq$ are such that $n\not= m$. Accordingly, fix $n\in\Zpluseq$ and
	fix some $k\in\Zplus$ such that 
	\begin{equation}
		(\cur_0-\lambda 1_U)^k\restriction_{U^\lambda_n}
		=0=
		(\cur_0-\mu 1_U)^k\restriction_{U^\mu_n} \,.
	\end{equation}
	Then
	\begin{equation}
		\cur_0+\lambda 1_U=(\cur_0-\mu 1_U)+(\lambda+\mu) 1_U
		=(\lambda +\mu)\left( 1_U -\frac{\mu 1_U-\cur_0}{\lambda+\mu}\right) 
	\end{equation}
	that is
	\begin{equation}
		\frac{\cur_0+\lambda 1_U}{\lambda+\mu}=  1_U -\frac{\mu 1_U-\cur_0}{\lambda+\mu}
	\end{equation}
	Therefore, if $b\in U^\mu_n$, we have that
	\begin{align}
		\left(\frac{\cur_0+\lambda 1_U}{\lambda+\mu}\right)^k
		\left(\sum_{m=0}^{k-1}\left(\frac{\mu 1_U-\cur_0}{\lambda+\mu}\right)^m\right)^kb
			&= 
		\left[\left(1_U -\frac{\mu 1_U-\cur_0}{\lambda+\mu}\right)\sum_{m=0}^{k-1}\left(\frac{\mu 1_U-\cur_0}{\lambda+\mu}\right)^m\right]^kb \\
			&=
		\left[\sum_{m=0}^{k-1}\left(\frac{\mu 1_U-\cur_0}{\lambda+\mu}\right)^m-\sum_{m=1}^k\left(\frac{\mu 1_U-\cur_0}{\lambda+\mu}\right)^m\right]^kb \\
			&= 
		\left[ 1_U-\left(\frac{\mu 1_U-\cur_0}{\lambda+\mu}\right)^k\right]^kb  =  b \,.
	\end{align}
	In other words, $b=(\cur_0+\lambda 1_U)^kd$, where 
	\begin{equation}
		d:=\left(\sum_{m=0}^{k-1}\left(\frac{\mu 1_U-\cur_0}{\lambda+\mu}\right)^m\right)^k\frac{b}{(\lambda+\mu)^k} \in U^\mu_n \,.
	\end{equation}
	Then using the invariant property \eqref{eq:invariant_bilinear_form} of the bilinear form $\bilinear$ on $U$, we get that for all $c\in U^\lambda_n$
	\begin{equation}
		(b,c)=((\cur_0+\lambda 1_U)^kd,c)=(-1)^k(d,(\cur_0-\lambda 1_U)^kc)=0 
	\end{equation}
	as desired.
	
	\textit{Proof of $(iii)$}. By $(ii)$ and the non-degeneracy of $\bilinear$, we have a vector space isomorphism $T:U^{-\lambda}\to (U^\lambda)'$ defined by $[T(b)](c):=(b,c)$ for all $b\in U^{-\lambda}$ and all $c\in U^\lambda$.
	The fact that $T$ is a $U^\cur$-module isomorphism straightforwardly follows from the invariant property of the bilinear form and the definition  \eqref{eq:contragredient_module} of the contragredient module. As a consequence, for all $\lambda\in\spec(\cur_0)$, $U^{-\lambda}\not=\{0\}$, so that $-\lambda\in\spec(\cur_0)$.
	
	Now, we prove that for all $\lambda,\mu\in\spec(\cur_0)$, there exist non-zero vectors $a\in U^\lambda$, $b\in U^\mu$ and $n\in\Z$ such that $a_nb\not=0$.
	Suppose by contradiction that this is not the case. Then there exist non-zero vectors $a,b\in U$ such that $Y(a,z)b=0$. In particular, $Y(Y(a,z)b,w)=0$. 
	By \cite[Eq.\ (3.3.9)]{FHL93}, $Y(a,z)Y(b,w)=0$.
	This implies that for all $c\in U$, $Y(c,x)Y(a,z)Y(b,w)=0$. And by \cite[Eq.\ (3.3.9)]{FHL93} again, $Y(Y(c,x)a,z)Y(b,w)=0$. This shows that
	\begin{equation}
		I_b:=\left\{a\in U\mid Y(a,z)Y(b,w)=0\right\}
	\end{equation}
	is a non-zero ideal of $U$. If $I_b=U$, then $\Omega\in I_b$, implying that $b=0$, which is not the case. On the other hand, $U$ is simple, leading to a contradiction. 
	
	Accordingly, for all $\lambda,\mu\in\spec(\cur_0)$, choose non-zero vectors $a\in U^\lambda$, $b\in U^\mu$ and $n\in\Z$ such that $a_nb\not=0$.
	In particular, $a$ and $b$ can be chosen as $\cur_0$-eigenvectors.
	From the Borcherds commutator formula \eqref{eq:borcherds_commutator_formula}, we can see that $\cur_0$ is a derivation of $U$, that is
	\begin{equation}  \label{eq:cur_0_is_a_derivation}
		\forall c,d\in U
		\,\,\,\forall n\in\Z
		\qquad
		\cur_0(c_nd)=(\cur_0c)_nd+c_n\cur_0d \,.
	\end{equation}
	Then $\cur_0(a_nb)=(\lambda+\mu)a_nb$, so that $\lambda+\mu\in\spec(\cur_0)$. This proves that $\spec(\cur_0)$ in an additive subgroup of $\C$.
	Now, let $\lambda\in\spec(\cur_0)$ and let $n\in\Zpluseq$ be such that $U^\lambda_n\not=\{0\}$ and $U^\lambda_m=\{0\}$ for all $m<n$. Then for all $b\in U^\lambda_n$ and all $m\in\Zplus$, $\cur_mb\in U^\lambda_{n-m}=\{0\}$. Let $b\in U^\lambda_n\backslash\{0\}$ a $\cur_0$-eigenvector. From the fact that $\nu=\half \cur_{-1}\cur$, we see that
	\begin{equation}  \label{eq:conformal_weight_cur_0-eigenvalue}
		nb=L_0b=\left(\frac{\cur_0^2}{2}+\sum_{k=1}^{+\infty}\cur_{-k}\cur_k \right)b=\frac{\cur_0^2}{2}b=\frac{\lambda^2}{2}b \,.
	\end{equation}
	This shows that for all $\lambda\in\spec(\cur_0)$, $\lambda^2\in 2\Zpluseq$ and the desired result follows.
\end{proof}

Now, we are ready to prove the main result of this section, which is a characterization of the simple CFT type preunitary VOA extension of $L(1,0)$ with non-zero weight one subspace, giving a stronger version of \cite[Porposition 5.2]{DL14}, cf.\ \cite[Theorem 1.3]{DM04}: 

\begin{theo}  \label{theo:characterization_preunitary_with_currents}
	Let $U$ be a simple CFT type preunitary VOA extension of $L(1,0)$ with $U_1\not=\{0\}$. Then $U$ is isomorphic to the Heisenberg VOA $M(1)$ or to a rank-one lattice VOA $V_{\latt_{2n}}$ for some $n\in\Zplus$.
\end{theo}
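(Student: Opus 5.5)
We build on Proposition \ref{prop:preunitary_exts_currents_contains_heisenberg} and Lemma \ref{lem:properties_spectrum_current_j}. We fix $\cur\in U_1$ with $U^\cur\cong M(1)$ and $\nu=\half\cur_{-1}\cur$, and split according to part $(iii)$ of the lemma: either $\spec(\cur_0)=\{0\}$ or $\spec(\cur_0)=\latt_{2n}$ for some $n\in\Zplus$.

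\emph{Case 1: $\spec(\cur_0)=\{0\}$.} Then $U=U^0$ is the generalized $0$-eigenspace of $\cur_0$.
\begin{itemize}
\item First we show that $\cur_0$ is actually semisimple, hence $\cur_0=0$. Preunitarity makes $L_0$ diagonalizable, and $L_0=\half\cur_0^2+\sum_{k\geq1}\cur_{-k}\cur_k$.
\item A cleaner route uses the vacuum-like vectors $\Omega_U:=\{b\in U\mid \cur_m b=0 \ \forall m>0\}$. Since the modes $\cur_m$, $m\neq0$, generate a Heisenberg algebra and $U$ is $L_0$-graded and bounded below, $U$ is a direct sum of Fock modules $M(1,\lambda)$ tensored with $\Omega_U$. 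This is the standard Heisenberg complete reducibility for modules on which the positive modes act locally nilpotently; one applies it on generalized $\cur_0$-eigenspaces, noting that $\cur_0$ commutes with all other modes.
\item On $\Omega_U$ we have $L_0=\half\cur_0^2$. Take $b\in\Omega_U$ with $L_0b=hb$. Since $\cur_0$ is nilpotent on $\Omega_U\cap U_h$, we get $h=0$. CFT type then gives $b\in\C\Omega$.
\end{itemize}
Hence $U=U^\cur\cong M(1)$.

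\emph{Case 2: $\spec(\cur_0)=\latt_{2n}$.} The same Fock-space argument applies on each $U^\lambda$. A vacuum-like vector $b\in U^\lambda$ satisfies $L_0b=\half\cur_0^2b$, so $\cur_0$ acts with generalized eigenvalue $\lambda$ and $b$ has conformal weight $\lambda^2/2$.

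\begin{itemize}
\item \emph{Each $U^\lambda$ is a single Fock module.} The multiplicity space $\Omega_U\cap U^\lambda$ equals $U^\lambda_{\lambda^2/2}\cap\Omega_U$. We want it to be one-dimensional and $\cur_0$ to be diagonal on it.
\item \emph{$\cur_0$ is diagonal.} By Lemma \ref{lem:properties_spectrum_current_j}$(ii)$, the form pairs $U^\lambda$ with $U^{-\lambda}$ nondegenerately. Because $\cur_0$ is a derivation, for nonzero $a\in\Omega_U\cap U^\lambda$ the product $a_{(k)}b$ with $b\in\Omega_U\cap U^{-\lambda}$ lands in $U^0$. We aim to show $U^0=U^\cur$ by rerunning the Case 1 argument on $U^0$, since every vacuum-like vector there has weight $0$. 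Once the eigenvalue on a vacuum-like vector is the scalar $\lambda$ (its square is forced by $L_0$-diagonality), $\cur_0-\lambda$ is nilpotent on a space where $\half\cur_0^2$ equals the scalar $\lambda^2/2$. For $\lambda\neq0$ this gives $\cur_0=\lambda$.
\item \emph{Multiplicity one.} For $a,a'\in\Omega_U\cap U^\lambda$, the lowest-weight product $a_{(\cdot)}(\theta a')$, where $\theta a'$ is the partner in $U^{-\lambda}$, is a weight-$0$ vector, hence a multiple of $\Omega$. This yields a nondegenerate pairing, and the associativity $Y(a,z)Y(b,w)$ argument shows that the multiplicity space times its dual maps into $U^0_0=\C\Omega$. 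I would then argue via simplicity: $U$ is a $\latt_{2n}$-graded extension of $M(1)$ with $U^0=M(1)$, so each $U^\lambda$ is an invertible (simple current) $M(1)$-module. Fusion $M(1,\lambda)\boxtimes M(1,\mu)=M(1,\lambda+\mu)$ forces multiplicity one, because $U^\lambda\boxtimes U^{-\lambda}\to U^0$ is nonzero and irreducible.
\end{itemize}

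Then $U\cong\bigoplus_{\lambda\in\latt_{2n}}M(1,\lambda)$ as $M(1)$-modules. This is a simple current extension of $M(1)$ graded by the even lattice $\latt_{2n}$. By uniqueness of such extensions (Example \ref{ex:simple_current_exts_as_DR_VOA_exts}, via Theorem \ref{theo:DR_cross_product_VOAs} with $G$ the torus dual to $\latt_{2n}$, or the classical result that lattice VOAs are the unique simple current extensions), $U\cong V_{\latt_{2n}}$.

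The main obstacle is this last stage: proving that $U^0=M(1)$ and that each multiplicity is one, including ruling out non-semisimple $\cur_0$. To fit Theorem \ref{theo:DR_cross_product_VOAs}, I would try to verify conditions $(i)$--$(vi)$ for the category of Fock modules $M(1,\lambda)$, $\lambda\in\latt_{2n}$, which is pointed with trivial twists since $\lambda^2/2\in\Z$.
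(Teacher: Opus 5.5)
Your Case 1 is the paper's argument, made precise through the Fock decomposition $U\cong M(1)\otimes\Omega_U$. Your diagonalizability argument in Case 2 is correct and more direct than the paper's. On $\Omega_U\cap U_h\cap U^\lambda$ one has $\cur_0^2=2h$ with $\cur_0-\lambda$ nilpotent, so $\lambda^2=2h$, and $\cur_0=\lambda$ there when $\lambda\neq0$. Vacuum-like vectors in $U^0$ have weight $0$, which gives $U^0=U^\cur\cong M(1)$. The paper instead passes to the subalgebra $W=\bigoplus_\lambda\ker(\cur_0-\lambda)$ and identifies it with $V_{\latt_{2n}}$. Only then does it get semisimplicity of $\cur_0$ on $U$, from complete reducibility of $V_{\latt_{2n}}$-modules. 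Your route gives $U=W$ immediately.

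The genuine gap is multiplicity one. The claim that each $U^\lambda$ is an invertible $M(1)$-module is exactly what must be proved, and neither of your reasons delivers it. Write $U^\lambda\cong M(1,\lambda)\otimes\Omega_\lambda$ with $\Omega_\lambda:=\Omega_U\cap U^\lambda$. A nonzero map $U^\lambda\boxtimes U^{-\lambda}\to U^0$ is then just a nonzero map $M(1)\otimes(\Omega_\lambda\otimes\Omega_{-\lambda})\to M(1)$, which exists whatever $\dim\Omega_\lambda$ is. A nondegenerate pairing $\Omega_\lambda\times\Omega_{-\lambda}\to\C\Omega$ only gives $\dim\Omega_\lambda=\dim\Omega_{-\lambda}$, not $\dim\Omega_\lambda=1$.

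The missing idea is within reach of what you already have. $\cur_0$ is a diagonalizable derivation with spectrum $\latt_{2n}$ that kills $\Omega$ and $\nu$. Hence $t\mapsto e^{it\cur_0/\sqrt{2n}}$ is a continuous faithful action of $\T$ on the simple VOA $U$ by automorphisms, with $U^{\T}=U^0=M(1)$. The Dong--Li--Mason decomposition \eqref{eq:decomposition_voaxcompact_group-modules} then applies, and since all irreducible representations of $\T$ are one-dimensional, every $U^\lambda$ is an irreducible $M(1)$-module. This gives $\dim\Omega_\lambda=1$, and it is the step the paper uses, applied there to $W$.

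Alternatively, you need an associativity argument on the vacuum space. The leading coefficients of $Y(a,z)b$ define a $\latt_{2n}$-graded product on $\Omega_U$, associative and commutative up to nonzero scalars, with $\Omega_0=\C\Omega$. Simplicity of $U$ gives, for each $0\neq a\in\Omega_\lambda$, some $b\in\Omega_{-\lambda}$ with $b\cdot a=\Omega$. Then $a'\in\C\,(a'\cdot b)\cdot a\subseteq\C a$ for every $a'\in\Omega_\lambda$.

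For the final identification you do not need to verify $(i)$--$(vi)$ for Fock modules. Uniqueness of the VOA structure on $\bigoplus_{\lambda\in\latt_{2n}}M(1,\lambda)$ extending $M(1)$ is standard, and the paper relies on it in the same way.
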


\begin{proof}
	We use notations and results from Proposition \ref{prop:preunitary_exts_currents_contains_heisenberg} and Lemma \ref{lem:properties_spectrum_current_j}.
	By part $(iii)$ of the latter, we known that either $\spec(\cur_0)=\{0\}$ or $\spec(\cur_0)=\latt_{2n}$ for some $n\in\Zplus$.
	Suppose $\spec(\cur_0)=\{0\}$, then $U=U^0\supseteq U^\cur$. Suppose by contradiction that $W:=U^0\backslash U^\cur\not=\{0\}$. 
	For all $m\in\Zpluseq$, set $W_m:=W\cap U_m$.
	As $U$ is of CFT type, we have that $W_0=\{0\}$. Then let $m$ be the smallest positive integer such that $W_m\not=\{0\}$. Consider a non-zero vector $b\in W_m$. Therefore, reasoning repeatedly as in \eqref{eq:conformal_weight_cur_0-eigenvalue}, we get that for all $t\in\Zplus$, $(2m)^tb=(\cur_0)^{2t}b$, which is a contradiction as $\cur_0$ is nilpotent in $U^0_m$. Therefore, $W=\{0\}$ and $U=U^\cur\cong M(1)$.
	
	Now, assume that $\spec(\cur_0)=\latt_{2n}$ for some $n\in\Zplus$.
	For all $\lambda\in\spec(\cur_0)$, define
	\begin{equation}
		W^\lambda:=\ker(\cur_0-\lambda 1_U)\subseteq U^\lambda
		\,,\qquad
		W:=\bigoplus_{\lambda\in\spec(\cur_0)}W^\lambda \,.
	\end{equation}
	 Using that $\cur_0$ is a derivation of $U$, see \eqref{eq:cur_0_is_a_derivation}, it is not difficult to prove that $W$ is a vertex subalgebra of $U$ containing $U^\cur$. 
	 Moreover, we can define a continuous representation of the one-dimensional compact torus $\T$ on $W$ by $t\mapsto e^{i\frac{t}{\sqrt{2n}}\cur_0}$. Then $W^\T=U^\cur\cong M(1)$, so that for all $\lambda\in\spec(\cur_0)$, $W^\lambda$ is an irreducible $M(1)$-module by \eqref{eq:decomposition_voaxcompact_group-modules}.
	 It follows that $W$ is isomorphic to $V_{\latt_{2n}}$, see the construction of the latter in \cite[Section 6.4]{LL04}. 
	 It is known, see Remark \ref{rem:strongly_rationality_lattice_orbifolds}, that $\cur_0^N$ is diagonalizable on every $V_{\latt_{2n}}$-module $N$ and then it must be diagonalizable also on $U$. This means that for all $\lambda\in\spec(\cur_0)$, $U^\lambda=W^\lambda$, so that $U=W\cong V_{\latt_{2n}}$, concluding the proof.
\end{proof}

\subsection{\texorpdfstring{Preunitary VOA extensions containing $M(1)^+$}{Preunitary VOA extensions containing M(1)+}}
\label{subsec:preunitary_cw4}

We start presenting some important facts about the representation theory of the fixed-point vertex subalgebra $M(1)^+$ of $M(1)$ by the VOA involution $\phi$ introduced in Section \ref{subsec:discrete_VOA_extensions}.
To this aim, see Remark \ref{rem:strongly_rationality_lattice_orbifolds} with references therein and recall the general results and the notation from Section \ref{subsec:voas_and_modules}. 

\begin{rem}  \label{rem:M(1)^+-modules_from_M(1)-modules}
	It is known that $M(1)$ is generated by the vacuum vector and any vector $\cur$ of conformal weight $1$. Moreover, the VOA involution $\phi$ is such that $\phi(\cur)=-\cur$. Therefore, $M(1)$ decomposes as the direct sum of the eigenspaces $M(1)^\pm$ of $\phi$ of eigenvalues $\pm 1$ respectively. In particular, $M(1)^+$ is a fixed-point vertex subalgebra of $M(1)$ and $M(1)^-$ is an irreducible $M(1)^+$-module. 
	As a consequence, every $M(1)$-module is also a $M(1)^+$-module. 
	For all $h\in\C$, we denote by $\dutchcal{M}(1,h)$ the irreducible highest weight $M(1)$-module of highest weight $h\in\C$, that is $\cur$ is diagonalizable on $\dutchcal{M}(1,h)$ with eigenvalues in $h+\Zpluseq$. For all $h\in\C$, the conformal weight of $\dutchcal{M}(1,h)$ is $\half\abs{h}^2$.
	Note that $\dutchcal{M}(1,0)$ is isomorphic to the adjoint module of $M(1)$.
	Actually, we have that every irreducible $M(1)$-module is isomorphic to $\dutchcal{M}(1,h)$ for some $h\in\C$.
	For all $h\in\C$, we denote by $M(1,h)$ the corresponding $M(1)^+$-module.
	It turns out that for all $h\in\C\backslash\{0\}$, $M(1,h)$ is an irreducible $M(1)^+$-module and it is isomorphic to $M(1,-h)$, see \cite[Proposition 2.4]{DN99c}. 
	It is not difficult to see that for all $h\in\C$, $\dutchcal{M}(1,h)'\cong \dutchcal{M}(1,-h)$, so that $M(1,h)$ as well as $M(1)^\pm$ is self-contragredient. 
	Moreover, note that $\cur\in M(1)^-$, so that $M(1)^-$ must contain a $L(1,0)$-submodule isomorphic to $L(1,1)$. Indeed, we have the following decompositions into irreducible $L(1,0)$-modules see \cite[Theorem 2.7]{DG98}:
	\begin{equation}  \label{eq:decompositions_M(1)^pm_vir-modules}
		M(1)^+ \cong \bigoplus_{h\in\Zpluseq} L(1, 4h^2)
		\,,\qquad
		M(1)^-\cong \bigoplus_{h\in\Zpluseq} L(1, (2h+1)^2) \,.
	\end{equation}
\end{rem}

\begin{rem}  \label{rem:catc_{M(1)^+}_and_its_completion}
	Denote by $\catc_{M(1)^+}$ the semisimple abelian full subcategory of $\Rep(M(1)^+)$ generated by the $M(1)^+$-modules $M(1)^\pm$ and $M(1,h)$ for all $h\in\C\backslash\{0\}$, see Remark \ref{rem:M(1)^+-modules_from_M(1)-modules}.
	It follows from \cite[Theorem 4.3 and Theorem 4.8]{ALY} that $\catc_{M(1)^+}$ has a vertex tensor category structure, making it a ribbon tensor category. 
	The tensor unit $\id_{\catc_{M(1)^+}}$ is the adjoint module of $M(1)^+$, which is simple. Recall that $\catc_{M(1)^+}$ has a twist given by the action of the operator $e^{i2\pi L_0}$ on $M(1)^+$-modules.
	Note that the category $\catc_{M(1)^+}$ is a semisimple full subcategory of the category $\catc_1(M(1)^+)$ of $C_1$-cofinite grading-restricted generalized $M(1)^+$-modules, see Remark \ref{rem:generalized_modules} and Remark \ref{rem:finite-length_modules} for definitions.
	The latter is not semisimple and it coincides with the category of finite-length generalized $M(1)^+$-modules, whose composition factors are in $\catc_{M(1)^+}$, see \cite[Theorem 3.8 and Theorem 4.2]{ALY}.
	By an argument as the one in the proof of Theorem \ref{theo:discrete_series_extensions}, we can show that $\catc_{M(1)^+}$ satisfies the hypotheses $(i)$--$(vi)$ in Subsection \ref{subsubsec:ind-categories}.
	Therefore, we can consider the direct limit completion $\Ind(\catc_{M(1)^+})$ of $\catc_{M(1)^+}$, which has a vertex tensor category structure extending the one of $\catc_{M(1)^+}$ and whose objects are possibly infinite direct sum of $M(1)^+$-modules in $\catc_{M(1)^+}$. In particular, $\Ind(\catc_{M(1)^+})$ is a balanced tensor category.
\end{rem}

\begin{rem}  \label{rem:relevant_fusion_rules_for_M(1)^+-modules}
	The non-trivial fusion rules for the irreducible $M(1)^+$-modules in $\catc_{M(1)^+}$, see Remark \ref{rem:catc_{M(1)^+}_and_its_completion}, are given by the following isomorphisms, see \cite[Theorem 4.5]{Abe00} and \cite[Theorem 4.6]{ALY}: for all $h,k\in\C\backslash\{0\}$,
	\begin{align}  \label{eq:iso_M(1)+_modules}
		M(1)^- \boxtimes M(1)^- &\cong M(1)^+  \\
		M(1)^-\boxtimes M(1,h) &\cong M(1,h) \\
		M(1,h)\boxtimes M(1,k) &\cong M(1, h+k)\oplus M(1, h-k) \,.
	\end{align}
	We note that $M(1)^-$ is rigid and invertible with $(M(1)^-)^*\cong M(1)^-\cong (M(1)^-)^\vee$, so that it is a $\Z_2$-simple current in $\catc_{M(1)^-}$.
	Recall Remark \ref{rem:some_facts_vtc} that the twist $\vartheta_N$ on a generic VOA module $N$ is given by $e^{i2\pi L_0^N}$. By \eqref{eq:decompositions_M(1)^pm_vir-modules}, this immediately implies that $\vartheta_{M(1)^-}=\one_{M(1)^-}$.
	As the fusion rule $I_{M(1)^-\, M(1)^-}^{M(1)^+}=1$, if $\Y$ is an intertwining operator of type $\binom{M(1)^+}{M(1)^-\, M(1)^-}$, then for all $a,b\in M(1)^-$, $\Y(a,z)b=Y(a,z)b$, up to a constant factor, where $Y$ denotes the vertex operator of the Heisenberg VOA $M(1)$, see Remark \ref{rem:M(1)^+-modules_from_M(1)-modules}. Then from the definition of the braiding in a vertex tensor category, see e.g.\ \cite[Section 3.3.4]{CKM24} and references therein, and the skew-symmetry \eqref{eq:skew-symmetry} of $M(1)$, we can deduce that the braiding $b_{M(1)^-, M(1)^-}$ in $\catc_{M(1)^+}$ is trivial.
	As a consequence, the full subcategory $\tilde{\catc}$ of $\catc_{M(1)^+}$ generated by $M(1)^\pm$ is ribbon tensor equivalent to $\catc_{\Z_2}$, see Section \ref{sec:simple_G-algebras} for notation. By Theorem \ref{theo:DR_cross_product_VOAs} and Remark \ref{rem:DR_VOA_extension_strongly_rational} or by Corollary \ref{cor:galois_correspondence}, $M(1)$ is the DR VOA extension $M(1)^+\rtimes \tilde{\catc}$ of $M(1)^+$.
	In other words, see Example \ref{ex:simple_current_exts_as_DR_VOA_exts}, $M(1)$ is a $\Z_2$-graded simple current extension of $M(1)^+$ by the $\Z_2$-simple current $M(1)^-$.
\end{rem}

The following result says that every non-trivial simple CFT type preunitary VOA extension of $L(1,0)$ satisfying the spectrum condition contains a discrete series VOA extension of $L(1,0)$.

\begin{prop}  \label{prop:preunitary_exts_contain_discrete_series_ones}
	Let $U$ be a simple CFT type preunitary VOA extension of $L(1,0)$, that is as $L(1,0)$-module,
	\begin{equation}  
		U\cong L(1,0)\oplus \bigoplus_{h\in\Zplus} \alpha_h L(1,h)
		\,,\qquad \alpha_h\in\Zpluseq \,.
	\end{equation} 
	Then $U$ has a unique normalized non-degenerate invariant bilinear form $\bilinear$.
	Furthermore, $U$ has the following orthogonal decomposition into $L(1,0)$-modules with respect to $\bilinear$:
	\begin{equation}  \label{eq:orthogonal_decomposition_preunitary_exts}
		U\cong W\oplus W^\perp
		\,,\qquad
		W:=L(1,0)\oplus\bigoplus_{j\in\Zplus}\alpha_{j^2}L(1,j^2)
		\,,\qquad
		W^\perp :=\bigoplus_{h\in S}\alpha_h L(1,h)	
	\end{equation}
	where $S$ is a subset of $\Zplus$ not containing perfect squares, $W$ is a simple vertex subalgebra of $U$ isomorphic to $V_{\latt_2}^H$ for some $H$ isomorphic to a closed subgroup of $\SO(3)$, and $W^\perp$ is a $W$-module.
\end{prop}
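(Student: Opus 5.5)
The plan is to reduce everything to the discrete series case (Theorem \ref{theo:discrete_series_extensions}) by showing that the discrete series part of $U$ is closed under vertex operators. The existence and uniqueness of the normalized non-degenerate invariant bilinear form follows directly from Lemma \ref{lem:preunitary_extensions_have_bilinear_forms}$(ii)$, since $U$ is simple. Let $W$ be the sum of all $L(1,0)$-submodules of $U$ isomorphic to some $L(1,j^2)$, $j\in\Zpluseq$, and $W^\perp$ the sum of all those isomorphic to $L(1,h)$ with $h$ not a perfect square. Since $U$ is a completely reducible $L(1,0)$-module, these are exactly the isotypic decompositions, so $U=W\oplus W^\perp$. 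Orthogonality of the two summands follows from Lemma \ref{lem:preunitary_extensions_have_bilinear_forms}$(iii)$, because the highest weights occurring in $W$ and in $W^\perp$ are distinct. Hence $W^\perp$ is indeed the orthogonal complement of $W$, and by non-degeneracy the form restricts non-degenerately to $W$.

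The key step is closure of $W$ under the vertex operators of $U$. Given $a\in L(1,j^2)\subseteq W$ and $b\in L(1,k^2)\subseteq W$, the map $a\otimes b\mapsto Y(a,z)b$, followed by projection onto an isotypic component $L(1,p)$ of $U$, yields an intertwining operator of type $\binom{L(1,p)}{L(1,j^2)\,L(1,k^2)}$. By the fusion rules \eqref{eq:fusion_rules_discrete_series} (with $j,k$ replaced by $2j,2k$), this vanishes unless $p=\ell^2/4$ with $\ell\equiv 2j+2k \pmod 2$, that is $p=m^2$. So $Y(a,z)b\in W((z))$, and $W$ is a vertex subalgebra containing $L(1,0)$ with the same conformal vector.

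Similarly, for $a\in L(1,j^2)$ with $j\geq1$ and $b\in L(1,h)$ with $h$ not a square, the fusion rules \eqref{eq:fusion_rules_preunitary} force the component of $Y(a,z)b$ in $L(1,p)$ to vanish unless $p=h$. Thus $Y(a,z)W^\perp\subseteq W^\perp((z))$, and $W^\perp$ is a $W$-module.

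It remains to show that $W$ is simple; then Theorem \ref{theo:discrete_series_extensions} identifies $W\cong V_{\latt_2}^H$ with $H$ isomorphic to a closed subgroup of $\SO(3)$. $W$ is of CFT type, since $W_0=U_0=\C\Omega$. It carries the restricted invariant bilinear form, which is non-degenerate by the orthogonality established above. The restricted form is invariant because $W$ is a vertex subalgebra with the same $L_1$. By Remark \ref{rem:invariant_bilinear_form} (\cite[Proposition 4.6 $(iv)$]{CKLW18}), a VOA with $W_0=\C\Omega$ and a non-degenerate invariant bilinear form is simple.

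The main obstacle I expect is the rigour of the fusion-rule step: justifying that projecting $Y(\cdot,z)\cdot$ onto an irreducible summand really gives an ordinary intertwining operator among the irreducible $L(1,0)$-modules. This should be standard, via the Borcherds identity restricted to $L(1,0)$ and the fact that $U$ is a direct sum of irreducibles, each with finite multiplicity in each weight space. One also has to be careful that \eqref{eq:fusion_rules_preunitary} is applied with $j\neq0$; the case $j=0$ is trivial.
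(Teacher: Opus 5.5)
Your proposal is correct and follows the paper's proof essentially step for step. You use the fusion rules \eqref{eq:fusion_rules_discrete_series} and \eqref{eq:fusion_rules_preunitary} for the closure of $W$ and for the $W$-module structure of $W^\perp$, Lemma \ref{lem:preunitary_extensions_have_bilinear_forms}$(ii)$--$(iii)$ for the form and the orthogonality, CFT type together with the restricted non-degenerate form (Remark \ref{rem:invariant_bilinear_form}) for the simplicity of $W$, and finally Theorem \ref{theo:discrete_series_extensions}. The paper leaves the projection-onto-isotypic-components step implicit, and your sketch of it (projections are $L(1,0)$-module maps and the multiplicities $\alpha_h$ are finite) is adequate.
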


\begin{proof}
	Thanks to the fusion rules \eqref{eq:fusion_rules_discrete_series}, $W$ is a vertex subalgebra of $U$. And $W^\perp$ is a $W$-module thanks to the fusion rules \eqref{eq:fusion_rules_preunitary}. 
	By $(ii)$ of Lemma \ref{lem:preunitary_extensions_have_bilinear_forms}, $U$ has a unique normalized non-degenerate invariant bilinear form $\bilinear$. 
	By $(iii)$ of Lemma \ref{lem:preunitary_extensions_have_bilinear_forms}, we have the orthogonal decomposition \eqref{eq:orthogonal_decomposition_preunitary_exts}. In particular, $\bilinear$ restricts to a normalized non-degenerate invariant bilinear form on $W$. As $U$ is of CFT type, $W$ is also simple, see Remark \ref{rem:invariant_bilinear_form}, and the conclusion follows from Theorem \ref{theo:discrete_series_extensions}. 
\end{proof}

Proposition \ref{prop:preunitary_exts_contain_discrete_series_ones} is the starting point for the following technical lemma.

\begin{lem}  \label{lem:every_preunitary_ext_of_M(1)^+_is_completely_reducible}
	Let $U$ be a simple CFT type preunitary VOA extension of $L(1,0)$ containing a vertex subalgebra isomorphic to $M(1)^+$. Then $U$ is a completely reducible $M(1)^+$-module.
\end{lem}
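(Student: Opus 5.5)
The plan is to realise $M(1)^+$ inside $U$ as the fixed points of a compact group acting on an intermediate discrete series subalgebra, so that the decomposition results for compact group actions apply, and then to handle the complement using the non-degenerate bilinear form and the category $\catc_{M(1)^+}$.

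\emph{Step 1: the discrete series part.} By Proposition \ref{prop:preunitary_exts_contain_discrete_series_ones}, $U=W\oplus W^\perp$ orthogonally, where $W\cong V_{\latt_2}^H$ is the discrete series part and $W^\perp$ is a $W$-module built from $L(1,h)$ with $h$ not a perfect square. By \eqref{eq:decompositions_M(1)^pm_vir-modules}, $M(1)^+$ is a discrete series extension, so the given copy of $M(1)^+$ lies in $W$.

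\emph{Step 2: $W$ is a completely reducible $M(1)^+$-module.} Theorem \ref{theo:DR_cross_product_VOAs}$(ii)$, applied to $V_{\latt_2}=L(1,0)\rtimes\catc_{L(1,0)}$, shows that $M(1)^+\subseteq W$ corresponds to a nesting of reductive subgroups of $\operatorname{PSL}(2,\C)$. Concretely, $W=V_{\latt_2}^H$ and $M(1)^+=V_{\latt_2}^{H'}$ for some $H\subseteq H'$ with $H'$ conjugate to $\operatorname{O}(2)_\C$. Passing to maximal compact subgroups, we may take $H\subseteq H'\cong \operatorname{O}(2)$. Then $M(1)^+ = W^{H'/H}$, but $H$ need not be normal. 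Instead, use that $V_{\latt_2}$ is a completely reducible $V_{\latt_2}^{\operatorname{O}(2)}=M(1)^+$-module by \eqref{eq:decomposition_voaxcompact_group-modules}; its irreducible summands are exactly the modules $M(1)^\pm$ and $M(1,h)$ from Remark \ref{rem:M(1)^+-modules_from_M(1)-modules}. The submodule $W\subseteq V_{\latt_2}$ is then a submodule of a completely reducible module, hence itself completely reducible, with summands in $\catc_{M(1)^+}$.

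\emph{Step 3: the complement $W^\perp$.} This is the main obstacle. As an $M(1)^+$-module, $W^\perp$ lies a priori only in the $C_1$-cofinite, non-semisimple world, which Remark \ref{rem:catc_{M(1)^+}_and_its_completion} describes via \cite{ALY}. My approach:
\begin{itemize}
\item First note that each graded piece is finite-dimensional, so every finitely generated $M(1)^+$-submodule of $W^\perp$ is grading-restricted.
\item Then show such submodules are $C_1$-cofinite. I would argue this via $L(1,0)$: each submodule is a finite sum of $L(1,h)$'s, hence $C_1$-cofinite as an $L(1,0)$-module, and therefore as an $M(1)^+$-module.
\item By \cite[Theorem 3.8 and 4.2]{ALY}, each such submodule then has finite length with composition factors in $\catc_{M(1)^+}$.
\end{itemize}

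\emph{Step 4: ruling out non-split extensions.} The $L_0$-action on $W^\perp$ is semisimple, so any generalized (logarithmic) $L_0$-action is excluded. By \cite{ALY}, the self-extensions among the irreducibles in $\catc_{M(1)^+}$ are the ones governed by $L_0$-nilpotence or by the $\cur$-zero-mode Jordan block. I would rule these out as follows:
\begin{itemize}
\item The restriction of $(\cdot,\cdot)$ to $W^\perp$ is non-degenerate and invariant, so every submodule $N$ satisfies $W^\perp\cong N\oplus N^{\perp}$ whenever $(\cdot,\cdot)|_N$ is non-degenerate.
\item Pick an irreducible submodule $N$. If the form on $N$ is degenerate, its radical is a submodule, so it is $0$ or $N$. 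I would exclude $(N,N)=0$ by pairing $N$ against the contragredient factor, using self-contragredience of the irreducibles and \eqref{eq:invariant_bilinear_form}.
\item This splits off $N$. Exhausting graded-finite pieces by induction on the conformal weight then gives complete reducibility.
\end{itemize}

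This bilinear-form splitting, in particular handling the degenerate pairing case, is the step I expect to require the most care.
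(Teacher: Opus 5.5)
Your Steps 1 and 2 agree with the paper, but the treatment of $W^\perp$ does not go through.

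\textbf{Step 3.} You assert without justification that a finitely generated $M(1)^+$-submodule of $W^\perp$ is a finite sum of $L(1,h)$'s. This fails in general: a cyclic $M(1)^+$-module can be an infinite sum of Virasoro modules, e.g.\ $M(1)^+$ itself. The step is repairable. By the fusion rules \eqref{eq:fusion_rules_preunitary}, each $L(1,h)$-isotypic component $U^{(h)}\cong\alpha_h L(1,h)$ of $W^\perp$ ($h$ not a perfect square) is a $W$-submodule. It is therefore a $C_1$-cofinite $M(1)^+$-submodule of finite length, all of whose composition factors are $M(1,\sqrt{2h})$.

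\textbf{Step 4.} This is the genuine gap. The invariant form is complex \emph{bilinear}, and it cannot exclude an isotropic irreducible submodule. Self-contragredience of $N$ is compatible with a non-split self-extension $0\to N\to E\to N\to 0$ that carries a non-degenerate invariant form with $N\subseteq N^\perp$.

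Concretely, by the one-dimensionality in \eqref{eq:fusion_rules_preunitary}, a weight-$4$ primary $J\in M(1)^+$ acts on $U^{(h)}\cong L(1,h)\otimes\C^{\alpha_h}$ as $\mathcal{Y}(J,z)\otimes A$ for some matrix $A$. The form is $\langle\cdot,\cdot\rangle\otimes B$ with $B$ symmetric and non-degenerate. Invariance only makes $A$ symmetric with respect to $B$, and complex symmetric matrices need not be diagonalizable. For example, $\left(\begin{smallmatrix}1&i\\ i&-1\end{smallmatrix}\right)$ is nilpotent and its kernel, spanned by $(1,i)$, is isotropic. Such an $A$ would give exactly an isotropic irreducible submodule and a non-split $U^{(h)}$, and no bilinear-form argument can rule this out.

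What is actually needed is an $M(1)^+$-specific input: $M(1,\sqrt{2h})$ has no non-split self-extension on which $L_0$ acts semisimply. One route is through the structure of Zhu's algebra of $M(1)^+$, which would force $[J]$ to act by a scalar once $[\omega]$ acts by $h$. You gesture at such an Ext statement attributed to \cite{ALY}, but you neither state it precisely nor justify it. Had it been established, the bilinear-form splitting would be unnecessary.

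\textbf{How the paper proceeds.} The paper avoids a uniform argument and splits according to $H\subseteq\operatorname{D}_\infty$:
\begin{itemize}
\item if $H\subseteq\T$, it uses Theorem \ref{theo:characterization_preunitary_with_currents};
\item if $H\cong\operatorname{D}_k$ is finite, it uses the strong rationality of $V_{\latt_{2k^2}}^+$ together with \cite{Abe05};
\item in the remaining case $W\cong M(1)^+$, it imports the arguments of \cite[Lemma 5.1--5.3]{DJ15}, which is precisely where the ingredient you are missing is supplied.
\end{itemize}
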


\begin{proof}
	Let $U$, $W$, $W^\perp$, $V_{\latt_2}^H$ and $\bilinear$ be as given by Proposition \ref{prop:preunitary_exts_contain_discrete_series_ones}.
	Note that the copy of $M(1)^+$ must be contained in $W$, so that both $W$ and $W^\perp$ are $M(1)^+$-submodules of $U$. 
	As $M(1)^+$ is isomorphic to $V_{\latt_2}^{\operatorname{D}_\infty}$ and $W\cong V_{\latt_2}^H$ is isomorphic to a $M(1)^+$-submodule of $V_{\latt_2}$, we know from  \eqref{eq:decomposition_voaxcompact_group-modules} that $W$ is a completely reducible $M(1)^+$-module. Moreover, $H$ is isomorphic to a subgroup of $\operatorname{D}_\infty$, see also Corollary \ref{cor:galois_correspondence}, Corollary \ref{cor:orthogonally_complemented} and references therein.
	If $H$ is isomorphic to a subgroup of  $\T$, then $W$ contains a copy of $M(1)\cong V_{L_2}^\T$, so that $U$ is isomorphic to either $M(1)$ or a rank-one lattice VOA by Theorem \ref{theo:characterization_preunitary_with_currents}.
	In each of these cases, $U$ is a completely reducible $M(1)^+$-module by \eqref{eq:decomposition_voaxcompact_group-modules}.
	
	Therefore, we are left with the cases where $H\cong \operatorname{D}_k$ for some $k\in\Zplus\cup \{\infty\}$.
	If $k\in\Zplus$, then $W\cong V_{L_{2k^2}}^+$, which is strongly rational, see Remark \ref{rem:strongly_rationality_lattice_orbifolds}. This implies that $U$ is a completely reducible $V_{L_{2k^2}}^+$-module. 
	On the other hand, every irreducible $V_{L_{2k^2}}^+$-module is a completely reducible $M(1)^+$-module, see \cite[Theorem 3.1(2) and Eq.s (3.5)--(3.8)]{Abe05}. Then we can conclude that $U$ is a completely reducible $M(1)^+$-module.
	It remains to check the case $H\cong D_\infty$, which means that $W\cong M(1)^+$. Thanks to the properties given by Proposition \ref{prop:preunitary_exts_contain_discrete_series_ones}, we are in charge to apply the same arguments used in \cite[Lemma 5.1 -- Lemma 5.3]{DJ15}. This shows that $U$ is a completely reducible $M(1)^+$-module, so concluding the proof.
\end{proof}

Therefore, we are ready to prove the main theorem of this section:

\begin{theo}  \label{theo:characterization_preunitary_exts_primary_cw4}
	Let $U$ be a simple CFT type preunitary VOA extension of $L(1,0)$ containing a primary vector of conformal weight $4$. Then $U$ is isomorphic to either $M(1)$ or $M(1)^+$ or a rank-one lattice VOA $V_{\latt_{2n}}$ or its vertex subalgebra $V_{\latt_{2n}}^+$ for some $n\in\Zplus$.
\end{theo}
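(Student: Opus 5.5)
Let $U$ be as in the statement. The plan is to show first that $U$ contains a copy of $M(1)^+$, and then to classify the extensions of $M(1)^+$ inside the ribbon category $\catc_{M(1)^+}$ of Remark \ref{rem:catc_{M(1)^+}_and_its_completion}.

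\textbf{Step 1: locating $M(1)^+$.} By Proposition \ref{prop:preunitary_exts_contain_discrete_series_ones}, $U=W\oplus W^\perp$ with $W\cong V_{\latt_2}^H$ for some $H$ isomorphic to a closed subgroup of $\SO(3)$. A primary vector of conformal weight $4$ spans an $L(1,0)$-submodule isomorphic to $L(1,4)=L(1,2^2)$. Since $4$ is a perfect square, this submodule lies in $W$. So $L(1,4)\subseteq V_{\latt_2}^H$, that is, $H$ fixes a vector of the $5$-dimensional irreducible $\SO(3)$-representation in the decomposition \eqref{eq:decomposition_L_2}.

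We then run through the closed subgroups of $\SO(3)$ up to conjugacy: cyclic, dihedral, $\T$, $\operatorname{D}_\infty$, $\operatorname{A}_4$, $\operatorname{S}_4$, $\operatorname{A}_5$, $\SO(3)$. The groups $\operatorname{A}_4,\operatorname{S}_4,\operatorname{A}_5,\SO(3)$ have no invariant vector in the spin-$2$ representation; equivalently, $V_{\latt_2}^{\operatorname{A}_4}$, $V_{\latt_2}^{\operatorname{S}_4}$, $V_{\latt_2}^{\operatorname{A}_5}$ contain no $L(1,4)$, since their first non-trivial discrete-series summands occur at weights $9$, $9$ or $16$, and $36$ respectively. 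Hence $H$ is conjugate into $\operatorname{D}_\infty$, and $W\supseteq V_{\latt_2}^{\operatorname{D}_\infty}\cong M(1)^+$.

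\textbf{Step 2: decomposing $U$ over $M(1)^+$.} By Lemma \ref{lem:every_preunitary_ext_of_M(1)^+_is_completely_reducible}, $U$ is a completely reducible $M(1)^+$-module. Its irreducible summands should be ordinary modules from the list $M(1)^\pm$, $M(1,h)$. This is where preunitarity enters: the integrality of $L(1,0)$-weights and the trivial twist force $\tfrac12 h^2\in\Zpluseq$. Thus $U$ is an object of $\Ind(\catc_{M(1)^+})$, and it corresponds to a simple haploid commutative algebra with trivial twist there.

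If the multiplicity of $M(1)^-$ is non-zero, then $U_1\neq\{0\}$, because the weight-one space of $M(1)^-$ is spanned by $\cur$. Theorem \ref{theo:characterization_preunitary_with_currents} then gives $U\cong M(1)$ or $U\cong V_{\latt_{2n}}$. So assume $M(1)^-$ does not occur. Then $U\cong M(1)^+\oplus\bigoplus_{h\in S} m_h M(1,h)$, with $S\subset\{h>0 : h^2\in 2\Zplus\}$.

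\textbf{Step 3: the case without $M(1)^-$.} The fusion rules \eqref{eq:iso_M(1)+_modules} show that $S\cup(-S)\cup\{0\}$ is closed under addition, because products of nonzero summands are nonzero by simplicity, as in the proof of Lemma \ref{lem:properties_spectrum_current_j}(iii). Haploidness and $M(1,h)\boxtimes M(1,h)\supseteq M(1)^+$ give $m_h\le 1$. Hence $S\cup(-S)\cup\{0\}=\sqrt{2n}\Z$, and as $M(1)^+$-modules $U\cong M(1)^+\oplus\bigoplus_{m>0}M(1,m\sqrt{2n})$. This agrees with the decomposition of $V_{\latt_{2n}}^+$, or with $U=M(1)^+$ when $S=\emptyset$.

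It remains to prove uniqueness of the VOA structure, and this is the step I expect to be the main obstacle. I would pass to the $\Z_2$-simple current extension $\tilde U:=U\oplus(M(1)^-\boxtimes U)$, using that $M(1)^-$ has trivial twist and trivial self-braiding (Remark \ref{rem:relevant_fusion_rules_for_M(1)^+-modules}). First one must check that $M(1)^-$ braids trivially with the summands of $U$; the monodromy with $M(1,h)$ is governed by conformal weights, which are integral here. Once this holds, $\tilde U$ is a simple CFT type VOA extension of $U$ with $\tilde U_1\neq\{0\}$, and it is a preunitary extension of $L(1,0)$. Theorem \ref{theo:characterization_preunitary_with_currents} then identifies $\tilde U$ with $V_{\latt_{2n}}$ (or $M(1)$ when $S=\emptyset$), and $U=\tilde U^{\Z_2}$ is $V_{\latt_{2n}}^+$ (respectively $M(1)^+$).
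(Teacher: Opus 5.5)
Your proposal is correct and follows essentially the same route as the paper. You reduce via Proposition~\ref{prop:preunitary_exts_contain_discrete_series_ones} and the exclusion of $\operatorname{A}_4,\operatorname{S}_4,\operatorname{A}_5$ to $U\supseteq M(1)^+$. You then use Lemma~\ref{lem:every_preunitary_ext_of_M(1)^+_is_completely_reducible} and integrality of weights to place $U$ in $\Ind(\catc_{M(1)^+})$. Next, the balancing identity $\vartheta_{U\boxtimes M(1)^-}=(\vartheta_U\boxtimes\vartheta_{M(1)^-})\mathcal{M}_{U,M(1)^-}$ shows that $M(1)^-$ induces a local $\Z_2$-simple current with trivial twist and trivial self-braiding. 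Finally, the resulting simple current extension $\tilde U$ is identified by Theorem~\ref{theo:characterization_preunitary_with_currents}.

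The first paragraph of your Step 3 is unnecessary, since the simple current argument determines $U$ on its own. Its claims also do not follow as stated: a nonzero product $M(1,h)\times M(1,k)$ only forces one of $M(1,h\pm k)$ to occur, and haploidness together with the fusion rules does not give $m_h\le 1$ (in $V_{\latt_{2n}}$ the multiplicity is $2$).
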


\begin{proof}
	Note that $U$ contains a copy of the irreducible $L(1,0)$-module $L(1,4)$ and thus it satisfies the spectrum condition.
	By Proposition \ref{prop:preunitary_exts_contain_discrete_series_ones}, $U$ contains a vertex subalgebra $W$ isomorphic to $V_{\latt_2}^H$ for some group $H$ isomorphic to a proper closed Lie subgroup of $\SO(3)$.
	Considering the non-trivial invariant harmonic polynomials of smaller degree for the exceptional finite subgroups $\operatorname{S}_4$, $\operatorname{A_4}$ and $\operatorname{A}_5$ of $\SO(3)$, cf.\ \cite[Section 3]{DGR99}, one can exclude, checking the decomposition of $V_{\latt_2}$ into $(L(1,0)\times \SO(3))$-modules, that $V_{L_2}^H$ contains primary vectors of conformal weight $4$ if $H$ is isomorphic to one of these subgroups.
	Accordingly, $H$ must be isomorphic to a group as in the statement. If $H$ is isomorphic to a subgroup of $\T$, then $U_1\not=\{0\}$ and the result follows from Theorem \ref{theo:characterization_preunitary_with_currents}.
	Then it remains to analyze the cases where $H$ is isomorphic to $\operatorname{D}_k$ for any $k\in\Zplus\cup\{\infty\}$. 
	Recall that $V_{\latt_2}^{\operatorname{D}_\infty}$ is isomorphic to $M(1)^+$.
	Therefore, in all the remaining cases, $U$ is a completely reducible $M(1)^+$-module thanks to Lemma \ref{lem:every_preunitary_ext_of_M(1)^+_is_completely_reducible} 
	
	A complete list of irreducible $M(1)^+$-modules is given in \cite[Theorem 4.5]{DN99c} and their relevant numerical data are given in \cite[Section 3.4]{DN99c} or in \cite[Table 1]{ALY}. In particular, we note that
	all irreducible $M(1)^+$-modules with integer conformal weights are contained in $\catc_{M(1)^+}$ as introduced in Remark \ref{rem:catc_{M(1)^+}_and_its_completion}. This means that every preunitary VOA extension of $L(1,0)$, which is also a completely reducible $M(1)^+$-module, is a genuine object in $\Ind(\catc_{M(1)^+})$.
	Therefore, $U$ is an object in $\Ind(\catc_{M(1)^+})$, so that we have the following decomposition of $U$ into irreducible $M(1)^+$-modules, see Remark \ref{rem:M(1)^+-modules_from_M(1)-modules}:
	\begin{equation}
		U\cong M(1)^+\oplus \bigoplus_{h\in\Z_{>1}}\beta_h M(1,\sqrt{2h})
		\,,\qquad \beta_h\in\Zpluseq \,.
	\end{equation} 
	In particular, note that $U$ cannot contain neither a copy of $M(1)^-$ nor one of $M(1,\sqrt{2})$ as otherwise $U_1\not=\{0\}$.
	
	Consider the simple commutative algebra $A$ with trivial twist in $\Ind(\catc_{M(1)^+})$ corresponding to the VOA extension $M(1)^+\subseteq U$. 
	Recall from Remark \ref{rem:properties_induction_functor} the properties of the induction functor $\F:\Ind(\catc_{M(1)^+})\to \Rep(A)$, where $\Rep(A)$ is the category of (not necessarily local) $A$-modules in $\Ind(\catc_{M(1)^+})$.
	For any $M\in \Ind(\catc_{M(1)^+})$, $\F(M)$ is in the category $\Rep^0(A)$ of local $A$-modules if and only if the monodromy $\mathcal{M}_{U, M}$ between $U$ and $M$ in $\Ind(\catc_{M(1)^+})$ is equal to $\one_{U\boxtimes M}$. 
	Recall from Remark \ref{rem:some_facts_vtc} that the twist $\vartheta_N$ on a generic VOA-module $N$ is given by $e^{i2\pi L_0^N}$.
	This immediately gives that $\vartheta_{M(1)^-}=\one_{M(1)^-}$ and we already know that $\vartheta_U=\one_U$.
	Moreover, from Remark \ref{rem:relevant_fusion_rules_for_M(1)^+-modules}, we can calculate that
	\begin{equation} \label{eq:decomposition_induction_M(1)^-}
		U\boxtimes M(1)^-\cong M(1)^-\oplus \bigoplus_{h\in\Z_{>1}}\beta_h M(1,\sqrt{2h})
		\,,\qquad \beta_h\in\Zpluseq 
	\end{equation}
	so that $\vartheta_{U\boxtimes M(1)^-}$ is trivial too. Recalling from Section \ref{subsec:tensor_categories}, the relation between twist and monodromy, we have that
	\begin{equation}
		\one_{U\boxtimes M(1)^-}=\vartheta_{U\boxtimes M(1)^-}
		=(\vartheta_U\boxtimes \vartheta_{M(1)^-})\mathcal{M}_{U,M(1)^-}
		=\mathcal{M}_{U,M(1)^-} \,.
	\end{equation}
	Therefore, $\F(M(1)^-)$ is an object in $\Rep^0(A)$.
	Recall from Subsection \ref{subsubsec:ind-categories} that the category $\Rep_{\Ind(\catc_{M(1)^+})}(U)$ of weak $U$-modules, which are also objects of $\Ind(\catc_{M(1)^+})$ when seen as weak $M(1)^+$-modules, has a vertex tensor category structure and it is equivalent to $\Rep^0(A)$ as balanced tensor category.  	
	In Remark \ref{rem:relevant_fusion_rules_for_M(1)^+-modules}, we noted that $M(1)^-$ is rigid and invertible and it is in particular a $\Z_2$-simple current in $\catc_{M(1)^+}$. Moreover, the braiding $b_{M(1)^-, M(1)^-}$ and the twist $\vartheta_{M(1)^-}$ are trivial. 
	Indeed, $M(1)^\pm$ generate a full subcategory $\tilde{\catc}$ of $\catc_{M(1)^+}$, which is ribbon tensor equivalent to $\catc_{\Z_2}$.
	
	From the properties of the induction functor in Remark \ref{rem:properties_induction_functor}, we note that $\F(M(1)^-)$ is still rigid and invertible and such that $\F(M(1)^-)\boxtimes \F(M(1)^-)$ is isomorphic to the identity object of $\Rep_{\Ind(\catc_{M(1)^+})}(U)$. 
	Moreover, also the corresponding self-braiding and twist of $\F(M(1)^-)$ remain trivial.
	To sum up, $\F(M(1)^-)$ is a $\Z_2$-simple current of $U$ with trivial twist and with trivial self-braiding.
	Indeed, $\F(M(1)^\pm)$ generates the full subcategory $\F(\tilde{\catc})$ of $\Rep_{\Ind(\catc_{M(1)^+})}(U)$, which is still ribbon tensor equivalent to $\catc_{\Z_2}$.
	From the decomposition \eqref{eq:decomposition_induction_M(1)^-}, we see that $\F(M(1)^-)$ is actually a $U$-module with $L_0^{\F(M(1)^-)}$ having integer eigenvalues only. 
	By Theorem \ref{theo:DR_cross_product_VOAs}, we can consider the DR VOA extension $\tilde{U}:=U\rtimes \F(\tilde{\catc})$ of $U$, which in turn is a $\Z_2$-graded simple current extension of $U$, see Example \ref{ex:simple_current_exts_as_DR_VOA_exts}. In particular, $U$ is a fixed-point vertex subalgebra of $\tilde{U}$ by a group of VOA automorphisms isomorphic to $\Z_2$.
	We also have the following decomposition into $M(1)^+$-module:
	\begin{equation}
		\tilde{U}:=U\oplus \F(M(1)^-)\cong M(1,0)\oplus  \bigoplus_{h\in\Z_{>1}}2\beta_h M(1,\sqrt{2h})
		\,,\qquad \beta_h\in\Zpluseq \,.
	\end{equation}
	Note that $\tilde{U}$ is simple and of CFT type, and such that $\tilde{U}_1\not=\{0\}$.  
	Then $\tilde{U}$ has a non-degenerate invariant bilinear form, see Remark \ref{rem:invariant_bilinear_form}, and thus it is a simple CFT type preunitary VOA extension of $L(1,0)$ by Lemma \ref{lem:direct_sum_hw_L(1,0)_modules}. By Theorem \ref{theo:characterization_preunitary_with_currents}, $\tilde{U}$ is isomorphic to either $M(1)$ or a rank-one lattice VOA $V_{\latt_{2n}}$ for some $n\in\Zplus$. This implies that $U$ is isomorphic to either $M(1)^+$ or $V_{\latt_{2n}}^+$ respectively. Then the proof is completed.
\end{proof}

\begin{rem}
		Theorem \ref{theo:characterization_preunitary_exts_primary_cw4} gives alternative characterizations of the fixed-point vertex subalgebras $V_{\latt_{2n}}^+$ for all $n\in \Zplus$, given in \cite{DJ10, DJ13, DJ15}. Note that in Theorem \ref{theo:characterization_preunitary_exts_primary_cw4}, neither the strong rationality nor the condition that the effective central charge $\tilde{c}$ is equal to $1$ are assumed. 
\end{rem}

\subsection{Preunitary VOA extensions satisfying the spectrum condition}
\label{subsec:preunitary_spectrum_condition}

To prove the following proposition, we need to recall the definition of \textit{$g$-twisted $V$-module} for a VOA $V$ and a finite order VOA automorphism $g$ of it, see e.g.\ \cite[Definition 3.2]{DLM98}.

\begin{prop}  \label{prop:preunitary_VOA_exts_strongly_rational}
		Let $U$ be a simple preunitary VOA extension of $L(1,0)$ containing a vertex subalgebra $W$ isomorphic to $V_{\latt_2}^G$ for some finite subgroup $G$ of $\Aut(V_{\latt_2})$. Suppose that $W$ is strongly rational, then $W$ has effective central charge $\tilde{c}=1$ and $U$ is isomorphic to $V_{\latt_2}^H$ for some subgroup $H$ of $G$. 
\end{prop}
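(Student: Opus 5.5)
We would first establish $\tilde{c}=1$ for $W=V_{\latt_2}^G$ and then reduce the classification of $U$ to the classification of commutative algebras in the modular tensor category $\Rep(W)$.

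\emph{Step 1: $\tilde{c}=1$.} By the general orbifold theory for a strongly rational $V_{\latt_2}^G$ (Dong--Ren--Xu type results, or \cite[Theorem 4.10]{CMSY}/\cite{McR20}), every irreducible $W$-module appears in some $g$-twisted $V_{\latt_2}$-module for $g\in G$. This is the point where the definition of $g$-twisted modules recalled just above enters. Since $V_{\latt_2}$ is a rank-one lattice VOA and $g$ has finite order, $g$ is conjugate in $\operatorname{PSL}(2,\C)$ to an element of the torus $\C^*$, that is, to an inner automorphism $e^{2\pi i h_{(0)}}$ with $h$ in $\C\otimes\latt_2$. The $g$-twisted modules are then obtained from untwisted $V_{\latt_2}$-modules via Li's $\Delta$-operator, and they are modules for the Heisenberg subalgebra with real shifted charge. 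Their conformal weights are therefore of the form $\frac12|\alpha|^2+n\ge 0$, with $n\in\Zpluseq$. Hence every irreducible $W$-module has nonnegative conformal weight, so $\lambda_{\min}=0$ and $\tilde{c}=c=1$.

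The main obstacle is making this step airtight: one must check that the standard twisted-sector construction exhausts all twisted modules, and one must handle nonabelian $G$, where only cyclic subgroups matter for each twisted sector. Tracking the $L_0$ shift under $\Delta(h,z)$ carefully is also needed, and there the argument reduces to the inequality $\frac12|\alpha+h|^2\ge0$.

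\emph{Step 2: identifying $U$.} With $\tilde{c}=1$, $W$ is strongly rational and satisfies the positivity condition on modules, since all nonadjoint weights are $\ge0$ and weight $0$ forces the module to be the vacuum by the CFT/positivity argument. $U$ is a simple extension of $W$ and is of CFT type because it is preunitary. It therefore corresponds to a haploid rigid commutative algebra $A$ with trivial twist in $\Rep(W)$ (\cite{HKL15}, Remark \ref{rem:strong_rationality_positivity}). The regular algebra: $V_{\latt_2}$ is a simple extension of $W$ with $G$ acting, so $V_{\latt_2}\cong W\rtimes\catc_W$, where $\catc_W\simeq\catc_G$ is the subcategory generated by the untwisted sector (Remark \ref{rem:DR_VOA_extension_strongly_rational}, Corollary \ref{cor:galois_correspondence}).

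We then show that $A$ lies in $\catc_W$. The $W$-submodules of $U$ are integer-weight irreducibles. Here we would use $c=1$, preunitarity and Step 1: $U\boxtimes_W$ twisted-sector modules would produce, after restriction to $L(1,0)$, modules $L(1,h)$ with $h\notin\Z$, or would violate locality. Concretely, the twisted-sector irreducibles that have integer weights and trivial twist would have to be ruled out by exhibiting nontrivial monodromy with the untwisted sector using the fusion rules of the $G$-crossed structure. If some twisted-sector simple did occur, an alternative route would be to compare characters: the $q$-characters of twisted sectors contain nonintegral Heisenberg shifts, so they cannot appear in a $\Zpluseq$-graded $U$ that decomposes as in \eqref{eq:preunitary_extensions}.

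Once $A\in\catc_W\simeq\catc_G$ is established, we argue as in Theorem \ref{theo:DR_cross_product_VOAs}$(ii)$ with $G$ finite. By \cite[Theorem 2.2]{KO02} or Theorem \ref{theo:simple_commutative_algebras_in_Ind(C_G)}, $A\cong\mathcal{O}(G/H)$. Hence $U\cong V_{\latt_2}^H$ for some subgroup $H\subseteq G$.
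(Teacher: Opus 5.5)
Your Step 1 is essentially sound, if heavier than necessary, and so is your ending via \cite[Theorem 2.2]{KO02} in $\catc_W\simeq\catc_G$. The gap is the middle step: that every irreducible $W$-submodule of $U$ lies in $\catc_W$. You do not prove this, and the monodromy route you propose cannot prove it. The only constraints on the algebra $A$ of $U$ are commutativity and trivial twist, and these say nothing about the monodromy of $A$ with $\catc_W$. Moreover, extensions of an orbifold by integer-weight twisted-sector modules do occur in general (e.g.\ $V^\natural\supset V_\Lambda^+$, cf.\ \cite{FLM88}). So what excludes twisted sectors here has to be the conformal-weight arithmetic of $V_{\latt_2}$. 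Your character remark, which points that way, is offered only as a contingency ("if some twisted-sector simple did occur") and is left vague. Note also that untwisted versus twisted is not the right dichotomy: the untwisted module $V_{\frac{1}{\sqrt2}+\latt_2}$ also produces irreducible $W$-modules outside $\catc_W$.

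The missing fact is an unconditional computation that your own Step 1 already contains. For $g=e^{2\pi i h_{(0)}}$ of order $n>1$ you may take $h=\frac{k}{n\sqrt2}$ with $\gcd(k,n)=1$. Then the $L_0$-spectrum of any $g$-twisted $V_{\latt_2}$-module lies in $\bigcup_{m\in\Z}(\frac{(mn+k)^2}{4n^2}+\Zpluseq)$. This set contains no integer, since integrality would need $2n\mid mn+k$, while $n\nmid mn+k$. For $g=1$, the module $V_{\frac1{\sqrt2}+\latt_2}$ has weights in $\frac14+\Z$. Now $U$ has integral weights, and by \cite{DRX17} each irreducible $W$-module sits in some $g$-twisted module. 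Hence every irreducible $W$-submodule of $U$ occurs in $V_{\latt_2}$, so $A\in\catc_W$; equivalently, $U$ is a discrete series extension of $L(1,0)$.

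This is the content of the paper's proof, which obtains it without classifying twisted modules. A $g$-twisted $V_{\latt_2}$-module is an ordinary module for $V_{\latt_2}^{\langle g\rangle}\cong V_{\latt_{2n^2}}$, whose irreducibles $M^{\alpha_r}$ have weights $\equiv\frac{r^2}{4n^2}$ modulo $\Z$. These are integral only when $2n\mid r$, i.e.\ for the summands of $V_{\latt_2}$ (after conjugating $g$ into the torus). The same restriction gives $\tilde c=1$ at once and removes your worry about exhausting all twisted modules. The paper then concludes with Theorem \ref{theo:discrete_series_extensions}. Your finish through \cite[Theorem 2.2]{KO02} is equally valid once the gap is closed, and it yields $H\subseteq G$ more directly.
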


\begin{proof}
	As $W\cong V_{\latt_2}^G$ is strongly rational, by \cite[Theorem 3.3]{DRX17}, cf.\ \cite[Section 4.2]{McR21}, every irreducible $V_{\latt_2}^G$-module is contained in some $g$-twisted $V_{\latt_2}$-module for some $g\in G$. By definition, for all $g\in G$, every $g$-twisted $V_{\latt_2}$-module is also a $V_{\latt_2}^{\langle g\rangle}$-module, where $\langle g\rangle$ is the cyclic subgroup of $G$ generated by $g$. 
	Note also that every $V_{\latt_2}^{\langle g\rangle}$-module is also a $V_{\latt_2}^G$-module.
	On the other hand, for all $g\in G$, $V_{\latt_2}^{\langle g\rangle}$ is isomorphic to the rank-one lattice VOA $V_{\latt_{2n^2}}$, where $n\in\Zplus$ is the order of $g$, see Section \ref{subsec:discrete_VOA_extensions}. 
	Therefore, every irreducible $V_{\latt_2}^G$-module appears in the semisimple decomposition of some $V_{\latt_{2n^2}}$-module, when considered as a $V_{\latt_2}^G$-module.
	
	Lattice VOA modules are completely classified, see Remark \ref{rem:strongly_rationality_lattice_orbifolds} and references therein. In particular, there is an irreducible $V_{\latt_{2n^2}}$-module $M^\alpha$ for every element $\alpha$ in the coset space $\lcoset{\latt_{2n^2}^\circ}{\latt_{2n^2}}$, where $\latt_{2n^2}^\circ$ is the dual lattice of $\latt_{2n^2}$, that is
	\begin{equation}
		\latt_{2n^2}^\circ:=\{v\in \mathbb{Q}\otimes_\Z \latt_{2n^2}\mid \forall a(a\in\latt_{2n^2}) \,\,\,\Rightarrow\,\,\, B(v,a)\in\Z\}
	\end{equation} 
	where the bilinear form $B\bilinear$ on $\latt_{2n^2}$ is naturally extended to $\mathbb{Q}\otimes_\Z \latt_{2n^2}$. 
	Note that any $\alpha\in \lcoset{\latt_{2n^2}^\circ}{\latt_{2n^2}}$ is the class $\alpha_r$ of $q_r:=\frac{r}{2n^2}\sqrt{2n^2}$ for some $r\in\{0,\dots,2n^2-1\}$.
	Moreover, for all $\alpha_r\in \lcoset{\latt_{2n^2}^\circ}{\latt_{2n^2}}$, the conformal weight of $M^{\alpha_r}$ is given by:
	\begin{equation}
		\lambda_{M^{\alpha_r}}= \frac{B(q_r,q_r)}{2}=\frac{r^2}{4n^2}< 1 \,.
	\end{equation}
	
	Now, let $M$ be an irreducible $W$-module. Then $M$ must be a $W$-submodule of some $M^{\alpha_r}$ as above. This means that the conformal weight $\lambda_M\in \lambda_{M^{\alpha_r}}+\Zpluseq$.
	This immediately implies that the effective central charge of $W$ is $1$.
	Moreover, by strong rationality, $U$ is a completely reducible $W$-module. And if $M$ appears in the semisimple decomposition of $U$, then we must have that $\lambda_M\in \Zpluseq$.
	Therefore, $M$ must be a $W$-submodule of $M^{\alpha_0}$, which is the adjoint module of $V_{\latt_{2n^2}}$.   
	This implies that $U$ is actually a simple discrete series extension of $L(1,0)$ and the result follows from Theorem \ref{theo:discrete_series_extensions}.
\end{proof}

Therefore, we are ready to prove the following classification result for simple CFT type preunitary VOA extensions of $L(1,0)$ satisfying the spectrum condition:

\begin{theo}  \label{theo:classification_preunitary_spectrum_condition}
	Let $U$ be a simple CFT type preunitary VOA extension of $L(1,0)$ satisfying the spectrum condition. Then one of the following holds:
	\begin{itemize}
		\item[$(i)$] $U$ is isomorphic to a rank-one lattice VOA or to one of their fixed-point vertex subalgebras;
		
		\item[$(ii)$] $V_{\latt_2}^{\operatorname{A}_5}$ is not strongly rational and $U$ may be a hypothetical VOA extension of it, in which case, $V_{\latt_2}^{\operatorname{A}_5}$ would be also the maximal discrete series VOA extension of $L(1,0)$ contained in $U$.
	\end{itemize}
\end{theo}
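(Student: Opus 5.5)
The proof is a case analysis on the maximal discrete series vertex subalgebra that Proposition \ref{prop:preunitary_exts_contain_discrete_series_ones} provides. If $U=L(1,0)$ there is nothing to prove, since $L(1,0)=V_{\latt_2}^{\SO(3)}$. Otherwise that proposition gives a simple vertex subalgebra $W\subseteq U$ with $W\cong V_{\latt_2}^H$ for some $H$ isomorphic to a closed subgroup of $\SO(3)$. Moreover, $W$ contains every $L(1,0)$-isotypic component $L(1,j^2)$ of $U$.

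The spectrum condition gives some $j\in\Zplus$ with $\alpha_{j^2}\neq 0$, so $W\neq L(1,0)$ and $H$ is a proper closed subgroup of $\SO(3)$. Up to conjugacy, $H$ is then one of the following: a subgroup of $\T$ (trivial, $\Z_k$, or $\T$ itself), a dihedral group $\operatorname{D}_k$ with $k\in\Zplus\cup\{\infty\}$, or one of the exceptional groups $\operatorname{A}_4$, $\operatorname{S}_4$, $\operatorname{A}_5$.

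We treat these families in turn.

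\emph{Subgroups of $\T$.} Then $W\supseteq V_{\latt_2}^{\T}\cong M(1)$, so $U_1\neq\{0\}$. Theorem \ref{theo:characterization_preunitary_with_currents} gives $U\cong M(1)$ or $U\cong V_{\latt_{2n}}$.

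\emph{Dihedral groups $\operatorname{D}_k$.} Here $W\cong V_{\latt_{2k^2}}^+$ or $W\cong M(1)^+$. Both contain the $L(1,0)$-submodule $L(1,4)$, whose lowest weight vector is a primary vector of conformal weight $4$ in $U$. Theorem \ref{theo:characterization_preunitary_exts_primary_cw4} then gives $U\cong M(1)$, $M(1)^+$, $V_{\latt_{2n}}$ or $V_{\latt_{2n}}^+$.

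\emph{$\operatorname{A}_4$ and $\operatorname{S}_4$.} In these cases $W$ is strongly rational by Remark \ref{rem:strongly_rationality_lattice_orbifolds}. Proposition \ref{prop:preunitary_VOA_exts_strongly_rational} then shows that $U\cong V_{\latt_2}^{H'}$ for some subgroup $H'$ of $H$, which is a fixed-point subalgebra of the rank-one lattice VOA $V_{\latt_2}$.

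\emph{$\operatorname{A}_5$.} If $V_{\latt_2}^{\operatorname{A}_5}$ is strongly rational, the same argument places $U$ in case $(i)$. If it is not, nothing further can be concluded, and we are exactly in case $(ii)$: $U$ extends $W\cong V_{\latt_2}^{\operatorname{A}_5}$, and $W$ is the maximal discrete series extension of $L(1,0)$ contained in $U$ by construction.

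Every outcome in the first three families is a rank-one lattice VOA or one of the fixed-point subalgebras listed in Subsection \ref{subsec:discrete_VOA_extensions}, namely $M(1)$, $M(1)^+$, $V_{\latt_{2n}}$, $V_{\latt_{2n}}^+$ and $V_{\latt_2}^{H'}$. So these all land in $(i)$.

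The main point needing care is that the case division is exhaustive, that is, that the list of closed subgroups of $\SO(3)$ above is complete up to conjugacy. This is classical: a closed subgroup of $\SO(3)$ is finite (cyclic, dihedral, $\operatorname{A}_4$, $\operatorname{S}_4$, $\operatorname{A}_5$), or has identity component a circle (giving $\T$ or $\operatorname{D}_\infty$), or is $\SO(3)$ itself. One also has to check that the relevant hypotheses of Proposition \ref{prop:preunitary_VOA_exts_strongly_rational} hold, simplicity in particular. They do, because $U$ is simple of CFT type.
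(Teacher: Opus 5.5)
Your proposal is correct and follows essentially the same case analysis as the paper. You reduce via Proposition \ref{prop:preunitary_exts_contain_discrete_series_ones} to $W\cong V_{\latt_2}^H$ with $H$ a proper closed subgroup of $\SO(3)$, dispose of the subgroups of $\operatorname{D}_\infty$ via the weight-one and weight-four characterization theorems, and handle the exceptional groups through Proposition \ref{prop:preunitary_VOA_exts_strongly_rational}, leaving only the $\operatorname{A}_5$ alternative. The differences are cosmetic: the paper sends all subgroups of $\operatorname{D}_\infty$ directly to Theorem \ref{theo:characterization_preunitary_exts_primary_cw4}, whose proof already splits off the toral case, and it treats every subgroup of $\operatorname{S}_4$ using only the strong rationality of $V_{\latt_2}^{\operatorname{S}_4}$, whereas you invoke the strong rationality of $V_{\latt_2}^{\operatorname{A}_4}$ separately.
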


\begin{proof}
	If $U$ is isomorphic to $L(1,0)$, then there is nothing to prove.
	Accordingly, suppose that $U$ is not isomorphic to $L(1,0)$.
	By Proposition \ref{prop:preunitary_exts_contain_discrete_series_ones}, $U$ contains a vertex subalgebra $W$ isomorphic to $V_{\latt_2}^H$ for some compact group $H$ isomorphic to a proper closed Lie subgroup of $\SO(3)$. 
	If $H$ is isomorphic to a subgroup of $D_\infty$, then we are in the case $(i)$ by Theorem \ref{theo:characterization_preunitary_exts_primary_cw4}. 
	Now, it remains to check the case where $H$ is isomorphic to a subgroup of $\operatorname{S}_4$ or of $\operatorname{A}_5$. In the former case, we are again in the case $(i)$ by the strong rationality of $V_{\latt_2}^{\operatorname{S}_4}$, see Remark \ref{rem:strongly_rationality_lattice_orbifolds}, and Proposition \ref{prop:preunitary_VOA_exts_strongly_rational}. Indeed, by the same result, the case $(ii)$ may occur if $V_{\latt_2}^{\operatorname{A}_5}$ is proved to not be strongly rational. This concludes the proof.
\end{proof}

\subsection{\texorpdfstring{Strongly rational VOAs with $c=1$}{Strongly rational VOAs with c=1}}
\label{subsec:strongly_rational_c=1}

In \cite[Theorem I]{DLN15}, it is proved that $q$-characters are modular functions on some congruence subgroup of the modular group $\SL(2,\Z)$ and this fact can be used to prove that the classification result in \cite{Kir89}, see also \cite{Gin88, DVV88, DVVV89}, hold in the VOA setting: 

\begin{theo}[{\cite{Kir89}, \cite[Theorem 2.9]{DJ14}}]
	\label{theo:q-charatcer_classification_strongly_rational_voas}
	Let $V$ be a strongly rational VOA with $c=1=\tilde{c}$. Then the $q$-character of $V$ is equal to the $q$-character of a rank-one lattice VOA $V_{\latt_{2n}}$ for some $n\in\Zplus$ or one of its fixed-point vertex subalgebra $V_{\latt_{2n}}^H$ for some finite subgroup $H$ of $\Aut(V_{\latt_{2n}})$.
\end{theo}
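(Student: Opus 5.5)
This statement is quoted from \cite{Kir89} and \cite[Theorem 2.9]{DJ14}. The plan is to reproduce Kiritsis' character argument in the VOA setting, with the modularity input supplied by \cite[Theorem I]{DLN15}.

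\textbf{Step 1: modular input.} Since $V$ is strongly rational, the characters $\chi_M$ of its finitely many irreducible modules span an $\SL(2,\Z)$-invariant space. By \cite{DLN15} each $\chi_M$ is a modular function for some congruence subgroup $\Gamma(N)$. The hypothesis $\tilde c=1$ means that the most singular term of any $\chi_M$ as $q\to 0$ is $q^{-1/24}$. This gives the following behaviour of the vacuum character under $S$:
\[
\chi_V(-1/\tau)=\sum_M S_{V,M}\,\chi_M(\tau)\sim C\,e^{2\pi i\tau(-1/24)}.
\]
Hence $\chi_V(\tau)$ grows like $e^{2\pi/(24\,\mathrm{Im}\,\tau)}$ as $\tau\to 0$, which is exactly the growth of $1/\eta(\tau)$.

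\textbf{Step 2: reduction to holomorphic modular forms.} Write $\chi_V=\eta^{-1}f$, with $f(q)=\sum_n(\dim V_n-\dim V_{n-1}+\dots)q^n$ in Virasoro-reduced form. The function $f=\eta\chi_V$ is then a weight $1/2$ modular form, with multiplier, for $\Gamma(N)$. The reason is that $\eta$ cancels the $q^{-1/24}$ poles at every cusp; the condition $\tilde c=1$ guarantees this at all cusps, not just at $\infty$. Positivity of $V$ as an $L(1,0)$-module, or its integrality, then gives integral nonnegative coefficients.

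\textbf{Step 3: Serre--Stark.} By Serre--Stark \cite{SS77}, holomorphic weight $1/2$ modular forms on congruence subgroups are finite linear combinations of unary theta series $\sum_{n}\psi(n)q^{tn^2}$, with $\psi$ an even character. This is the key classification input.

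\textbf{Step 4: matching with the known list.} Use the constraints from Step 2:
\begin{itemize}
\item the leading coefficient is $1$, because $V$ is of CFT type;
\item the coefficients are nonnegative integers;
\item in the Virasoro decomposition only multiplicities $\alpha_h$ appear.
\end{itemize}
Under these constraints the admissible combinations of theta series are exactly $\sum_{m\in\Z} q^{nm^2}$ (giving the lattice characters), the corresponding $\pm$-averaged series (giving the $V_{\latt_{2n}}^+$ characters), and the three exceptional combinations. The exceptional ones come from the invariant harmonic polynomials of $\operatorname{A}_4$, $\operatorname{S}_4$, $\operatorname{A}_5$ through the decomposition \eqref{eq:decomposition_L_2}, with multiplicities $\dim(M^H)$ for the $(2m+1)$-dimensional $\SO(3)$-representations.

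\textbf{Main obstacle.} The hardest step is the combinatorial matching in Step 4. There one must show that integrality and positivity of the multiplicities force the theta combination to be one of the listed fixed-point characters. This is where I would follow Kiritsis' case analysis, organised by the smallest $t$ with nonzero coefficient, and compare it against the characters of $V_{\latt_{2n}}^H$ computed from \eqref{eq:decomposition_L_2n_not_perfect_square} and \eqref{eq:decomposition_L_2n_perfect_square}.
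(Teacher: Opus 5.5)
\textbf{There is a genuine gap in Step 4.} Your architecture matches what the paper relies on: congruence modularity from \cite{DLN15}, holomorphy of $\eta\chi_V$ at every cusp from $\tilde c=1$, Serre--Stark, then Kiritsis' analysis. The paper itself gives no argument beyond citing \cite{DLN15}, \cite{Kir89} and \cite[Theorem 2.9]{DJ14}. Steps 1--3 are fine, modulo the routine rescaling into the setting of Serre--Stark. Step 4 is the only step with real content, and it is wrong as you formulate it.

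\emph{First problem: your positivity constraint is false.} The function $f=\eta\chi_V$ does not have nonnegative coefficients. We have $f=1+(\dim V_1-1)q+\cdots$, and, as in the proof of Proposition \ref{prop:every_strongly_rational_preunitary_exts_satisfies_spectrum_condition}, the coefficient of $q^{h^2}$ is $\alpha_{h^2}-\alpha_{(h-1)^2}$. Every target VOA with $V_1=0$ has $f=1-q+\cdots$; these are the $V_{\latt_{2n}}^+$ with $n\geq 2$ and the three exceptional orbifolds. For example, with $\theta(\tau):=\sum_{m\in\Z}q^{m^2}$, one has $\eta\chi_{V_{\latt_{2n}}^+}=\tfrac12\bigl(\theta(n\tau)+\sum_{m\in\Z}(-1)^mq^{m^2}\bigr)$. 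So your constraint list excludes most of the answer, and your claim that the $\pm$-averaged and exceptional series survive it is inconsistent. The correct constraint is nonnegativity of the multiplicities: $f_m\geq0$ for non-square $m$, and $\alpha_{h^2}=1+\sum_{k=1}^{h}f_{k^2}\geq0$ for all $h$. Since $V$ is not assumed preunitary here, these are composition multiplicities.

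\emph{Second, more serious problem: the corrected vacuum-character constraints do not single out the list.} So no combinatorial matching based on them alone can succeed. Take $f=\tfrac12\bigl(\theta(\tau)+\theta(4\tau)\bigr)$, i.e.\ $\chi=\tfrac12(\chi_{V_{\latt_2}}+\chi_{V_{\latt_8}})$. It passes every constraint you list:
\begin{itemize}
\item $\chi$ is a congruence modular function with growth at most $q^{-1/24}$ at every cusp;
\item $f=1+q+2q^4+q^9+2q^{16}+\cdots$ is a holomorphic weight $\tfrac12$ form with leading coefficient $1$ and even nonnegative integer coefficients;
\item the multiplicities $\alpha_0=1$, $\alpha_1=2$, $\alpha_4=4$, $\alpha_9=5,\dots$ are all nonnegative integers.
\end{itemize}
Yet it would force $\dim V_1=2$, whereas every VOA on the list has $\dim V_1\in\{0,1,3\}$.

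Examples with $V_1=0$ also exist, where a result on $V_1$ cannot help. Take $f=\sum_{n\in\Z}\varepsilon(n)q^{n^2}$ with $\varepsilon$ even and $4$-periodic, $(\varepsilon(0),\varepsilon(1),\varepsilon(2),\varepsilon(3))=(1,-\tfrac12,\tfrac32,-\tfrac12)$. Then $f=1-q+3q^4-q^9+2q^{16}-\cdots$, which is a combination of $V_{\latt_{32}}$-module characters times $\eta$. All its $\alpha_h$ are nonnegative integers, but no listed VOA has coefficient $3$ at $q^4$.

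So Step 4 needs a genuinely new input. One possibility is structural information on $V_1$ for strongly rational VOAs: $V_1$ is reductive of rank at most $\tilde c$, with the lattice case at equality, cf.\ \cite{DM04}. Another is constraints from the $S$-transform $\chi_V(-1/\tau)=\sum_M S_{V,M}\chi_M$ in terms of characters of actual $V$-modules, i.e.\ positivity and integrality coming from the whole $\SL(2,\Z)$-representation on characters rather than from $\chi_V$ alone. For instance, the second example above produces negative coefficients on $q^{1/64-1/24}$ under $S$. Supplying this input is exactly the content of the theorem being cited. Deferring it to ``Kiritsis' case analysis'' therefore leaves the essential step of your proof open.
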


Indeed, motivated by these results, we have the following well-known and long-standing conjectural classification result, recall Section \ref{subsec:discrete_VOA_extensions}:

\begin{con}  \label{con:strongly_rational_VOAs_cc1}
	The complete list of isomorphism classes of strongly rational preunitary VOA extensions of $L(1,0)$ with effective central charge $\tilde{c}=1$ is given by rank-one lattice VOAs $V_{\latt_{2n}}$ for all $n\in\Zplus$, together with their fixed-point vertex subalgebras $V_{\latt_{2n}}^+$ and $V_{\latt_2}^H$ for all finite subgroups $H$ of $\Aut(V_{\latt_2})$, isomorphic to the exceptional subgroups $\operatorname{S_4}$, $\operatorname{A}_4$ and $\operatorname{A}_5$ of $\SO(3)$.
\end{con}

In the following, we prove Conjecture \ref{con:strongly_rational_VOAs_cc1} under some assumptions.
We start with the following crucial result, cf.\ \cite[Theorem 2.2]{Lin17}:

\begin{prop}  \label{prop:every_strongly_rational_preunitary_exts_satisfies_spectrum_condition}
	Every strongly rational preunitary VOA extension $U$ of $L(1,0)$ satisfies the spectrum condition. 
\end{prop}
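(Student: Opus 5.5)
The plan is to derive the statement from the modularity of $q$-characters, \cite[Theorem I]{DLN15}, together with the Serre--Stark classification of weight $1/2$ modular forms \cite{SS77}. Suppose that $U\not\cong L(1,0)$, and write $U\cong\bigoplus_{h\in\Zpluseq}\alpha_hL(1,h)$ as in \eqref{eq:preunitary_extensions}. Strong rationality gives CFT type, so $\alpha_0=1$. If $\alpha_1\neq0$ we are done with $j=1$, so the real content is to rule out the case where $\alpha_{j^2}=0$ for all $j\geq1$ while $\alpha_h>0$ for some non-square $h$.

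\textbf{Step 1 (reduce to a weight $1/2$ form).} I would use the $c=1$ Virasoro characters. For $h$ not a perfect square, $\chi_{L(1,h)}=q^{h}/\eta(q)$. For $h=j^2$, $\chi_{L(1,j^2)}=(q^{j^2}-q^{(j+1)^2})/\eta(q)$. Hence the function
\begin{equation}
f(\tau):=\eta(\tau)\chi_U(\tau)=\sum_{m\geq0}a_mq^m
\end{equation}
has integer coefficients. Explicitly, $a_{j^2}=\alpha_{j^2}-\alpha_{(j-1)^2}$ at squares (with $\alpha_{(-1)^2}:=0$ for $j=0$), and $a_h=\alpha_h\geq0$ at non-squares. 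Under the hypothesis to be contradicted, $f=1-q+\sum_{h\ \mathrm{non\text{-}square}}\alpha_hq^h$.

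By \cite[Theorem I]{DLN15}, $\chi_U$ is a modular function for some $\Gamma(N)$. Since $\eta$ is a weight $1/2$ form with a multiplier on a congruence subgroup, $f$ is a weakly holomorphic weight $1/2$ form, holomorphic on the upper half plane, for some $\Gamma_1(4M)$ with a character.

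\textbf{Step 2 (holomorphy at the cusps).} This is the step I expect to be the main obstacle. By Zhu's modular invariance, the expansion of $f$ at any cusp is a combination of the functions $\eta\,\chi_M$ over irreducible $U$-modules $M$. Each $\eta\,\chi_M$ has leading exponent $\lambda_M$, so holomorphy at the cusps amounts to $\lambda_M\geq0$, that is $\tilde c\leq1$.

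I would first try to obtain this bound from preunitarity. The coefficients $a_m$ are at most $\dim U_m$, and for a unitary-type $L(1,0)$-decomposition one expects only polynomial growth of the multiplicities $\alpha_h$. I would use this to show that the principal parts at the other cusps must vanish, since a genuine pole would force exponential growth $e^{C\sqrt m}$ of the coefficients of $f$ at $i\infty$. If this growth argument does not close, I would fall back on a Hecke-type bound: a weakly holomorphic form whose coefficients grow polynomially at one cusp cannot have poles at any cusp.

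\textbf{Step 3 (Serre--Stark).} Once $f$ is a holomorphic weight $1/2$ form, \cite{SS77} writes it as a finite combination $\sum c_{\psi,t}\sum_{n\in\Z}\psi(n)q^{tn^2}$ of theta series. I then split $f$ according to the square-class of the exponent, i.e.\ the squarefree part of $t$.

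The component supported on exponents in $\Z_{\geq0}^2$ is itself such a combination, with $t$ a perfect square. This component equals $1-q$. But a nonzero combination of the series $\sum_n\psi(n)q^{s^2n^2}$ cannot be a nonzero polynomial with a nonconstant term unless infinitely many square coefficients are nonzero. Indeed, periodicity of the characters $\psi$ forces the coefficient at $n^2$ to recur along arithmetic progressions of $n$.

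This contradiction shows that some $\alpha_{j^2}$ with $j\geq1$ must be nonzero, which is the spectrum condition.
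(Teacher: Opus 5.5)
Your Steps 1 and 3 are the paper's proof. The paper expands $\chi_U$ in the $c=1$ Virasoro characters, forms $f_U=\eta\chi_U$, invokes \cite[Theorem I]{DLN15}, and quotes \cite[Corollary 3]{SS77} for the periodicity of $h\mapsto a_{h^2}$ ($h\geq 1$). That periodicity is incompatible with $a_1=-1$ and $a_{h^2}=0$ for $h\geq 2$. Re-deriving the periodicity from the theta-series basis, as you do, is fine. One small correction: only the coefficients at \emph{positive} squares are determined by the square-class-one part, because the trivial-character theta series of every square class contributes to the constant term. So ``this component equals $1-q$'' should be weakened to a statement about positive squares; the conclusion is unaffected.

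The genuine gap is Step 2. The Serre--Stark theorem applies only to forms holomorphic at \emph{every} cusp, whereas \cite[Theorem I]{DLN15} only makes $\chi_U$ a modular function, i.e.\ meromorphic at the cusps. As you observe, Zhu's modular invariance ties the behaviour of $f$ at the other cusps to the conformal weights $\lambda_M$ of the irreducible $U$-modules, hence to $\tilde c$. Your way of controlling this is circular. For a weakly holomorphic form, polynomial growth of the Fourier coefficients at $i\infty$ is \emph{equivalent} to holomorphy at all cusps: one implication is your Hecke-type bound, the other is the standard coefficient bound for holomorphic forms. So ``one expects only polynomial growth of the $\alpha_h$'' restates what Step 2 must prove, and your fallback needs the same input. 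Preunitarity gives no such bound: it restricts which $L(1,h)$ occur but not their multiplicities. The trivial estimate $|a_m|\leq\dim U_m$ is exponential in $\sqrt{m}$, with rate governed by $\tilde c$, which is exactly the quantity at issue. The paper's introduction itself remarks that it is not known whether preunitarity can replace $\tilde c=1$ in Kiritsis' theorem.

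For the record, the paper's written proof does not treat this point either. It passes directly from \cite[Theorem I]{DLN15} to ``$f_U$ is a modular form of weight $1/2$'' and then applies \cite{SS77}. So you have followed the paper's route and correctly located the step it leaves implicit. As it stands, though, your argument is complete only under the extra hypothesis that every irreducible $U$-module has $\lambda_M\geq 0$ (e.g.\ $\tilde c=1$, as in \cite[Theorem 2.9]{DJ14}). Without that hypothesis, Step 2 needs a genuinely new idea, which the proposal does not supply.
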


\begin{proof}
	Consider the semisimple decomposition of $U$ into irreducible $L(1,0)$-modules:
	\begin{equation}
		U\cong L(1,0)\oplus \bigoplus_{h\in\Zplus} \alpha_h L(1,h)
		\,,\qquad \alpha_h\in\Zpluseq \,.
	\end{equation}
	Then $U$ has the following $q$-character, see e.g.\ \cite[p.\ 265]{DG98} and references therein,
	\begin{align}
		\chi_U(q)
			&=
		\frac{1}{\eta(q)}\left[
		1-q+\sum_{h\in\Zplus}\alpha_{h^2}\left(q^{h^2}-q^{(h+1)^2}\right)+\sum_{\substack{h\in\Zplus\\ h\not=k^2 ,\, k\in\Z}}\alpha_hq^h
		\right]	\\
			&=
		\frac{1}{\eta(q)}\left[
		1+(\alpha_1-1)q+\sum_{h\in\Z_{>1}}\left(\alpha_{h^2}-\alpha_{(h-1)^2}\right) q^{h^2}+\sum_{\substack{h\in\Zplus\\ h\not=k^2 ,\, k\in\Z}}\alpha_hq^h
		\right]
	\end{align}
	where
	\begin{equation}
		\eta(q)=q^\frac{1}{24}\prod_{n=1}^{+\infty}(1-q^n) \,.
	\end{equation}
	In the above expressions, put $q=e^{i2\pi\tau}$ for $\tau\in\C$ such that the imaginary part $\Im(\tau)>0$ and denote $\chi_U(e^{i2\pi\tau})$ and $\eta(e^{i2\pi\tau})$ by, with an abuse of notation, $\chi_U(\tau)$ and $\eta(\tau)$ respectively. Note that $\eta(\tau)$ is the Dedekind eta function and that $\chi_U(\tau)$ (the vacuum character of $U$) is a holomorphic function on the complex upper half-plane thanks to \cite[Theorem 5.3.3]{Zhu96}.
	By \cite[Theorem I]{DLN15}, $\chi_U(\tau)$ is a modular function of weight $0$ on some congruence subgroup of the modular group $\SL(2,\Z)$. As $\eta(\tau)$ is a modular form of weight $\half$ on the full modular group, $f_U(e^{i2\pi\tau}):=\eta(\tau)\chi_U(\tau)$ is a modular form of weight $\half$ on the same congruence subgroup of $\chi_U(\tau)$. By \cite[(1) and (3) of Corollary 3]{SS77}, there exists a periodic function $\varepsilon_1$ on $\Z$ such that
	\begin{equation}
		\varepsilon_1(1)=\alpha_1-1
		\,,\qquad
		\forall h\in\Z_{>1} \quad
		\varepsilon_1(h)=\alpha_{h^2}-\alpha_{(h-1)^2} \,.
	\end{equation}
	If $\alpha_{h^2}=0$ for all $h\in\Zplus$, then $\varepsilon_1(1)=-1$ and for all $h\in\Z_{>1}$, $\varepsilon_1(h)=0$, which contradicts the periodicity of $\varepsilon_1$. This shows that spectrum condition holds.
\end{proof}

By assuming the strong rationality of $V_{\latt_2}^{\operatorname{A}_5}$, we have the first main theorem of this section:
\begin{theo} \label{theo:classification_strongly rational_VOAs_cc_1}
	Let $U$ be a direct sum of lowest weight modules for the Virasoro algebra $\Vir_1$. 
	Suppose that $V_{\latt_2}^{\operatorname{A}_5}$ is strongly rational.
	Then $U$ is a strongly rational VOA with $c=1$ if and only if $U$ is isomorphic to one of the VOAs cited in Conjecture \ref{con:strongly_rational_VOAs_cc1}.
\end{theo}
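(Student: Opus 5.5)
The statement has two directions, and I would treat them separately.

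For the ``if'' direction, every VOA in Conjecture \ref{con:strongly_rational_VOAs_cc1} is strongly rational with $c=1$. For $V_{\latt_{2n}}$, $V_{\latt_{2n}}^+$, $V_{\latt_2}^{\operatorname{A}_4}$ and $V_{\latt_2}^{\operatorname{S}_4}$ this is Remark \ref{rem:strongly_rationality_lattice_orbifolds}; for $V_{\latt_2}^{\operatorname{A}_5}$ it is the standing hypothesis. Each of them is a discrete series extension, or more generally a preunitary extension, of $L(1,0)$: see \eqref{eq:decomposition_L_2n_not_perfect_square}, \eqref{eq:decomposition_L_2n_perfect_square} and \eqref{eq:decomposition_voaxcompact_group-modules}. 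Hence each is a direct sum of highest (lowest) weight $\Vir_1$-modules.

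For the ``only if'' direction, let $U$ be strongly rational with $c=1$ and a direct sum of lowest weight $\Vir_1$-modules.

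\textbf{Step 1: $U$ is a preunitary extension of $L(1,0)$.} Strong rationality gives CFT type and self-contragredience. The Virasoro vector generates a quotient of $V(1,0)=L(1,0)$, which is simple, so $L(1,0)\subseteq U$ and $U$ is a VOA extension of it. Lemma \ref{lem:direct_sum_hw_L(1,0)_modules} then shows that $U$ is preunitary.

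\textbf{Step 2: spectrum condition and reduction to the classification.} By Proposition \ref{prop:every_strongly_rational_preunitary_exts_satisfies_spectrum_condition}, $U$ satisfies the spectrum condition, and $U$ is simple. Theorem \ref{theo:classification_preunitary_spectrum_condition} therefore applies; since $V_{\latt_2}^{\operatorname{A}_5}$ is assumed strongly rational, case $(ii)$ is excluded. So $U$ is isomorphic to one of $L(1,0)$, $M(1)$, $M(1)^+$, $V_{\latt_{2n}}$, $V_{\latt_{2n}}^+$, $V_{\latt_2}^H$ with $H\in\{\operatorname{A}_4,\operatorname{S}_4,\operatorname{A}_5\}$, or another fixed-point subalgebra $V_{\latt_{2n}}^H$. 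The other fixed-point subalgebras reduce to lattices or $\pm$-orbifolds by the list in Section \ref{subsec:discrete_VOA_extensions}: cyclic groups give $V_{\latt_{2nk^2}}$, dihedral groups give $V_{\latt_{2nk^2}}^+$, and the remaining cases give $M(1)$, $M(1)^+$ and $L(1,0)$.

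\textbf{Step 3: excluding the non-rational cases.} It remains to rule out $L(1,0)$, $M(1)$ and $M(1)^+$, which are not $C_2$-cofinite (Remark \ref{rem:strongly_rationality_lattice_orbifolds}; for $L(1,0)$ this is standard since $L(1,0)=V(1,0)$ is universal). This contradicts strong rationality, so $U$ is on the list.

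I expect the main subtlety to be in Step 2, namely checking that the output of Theorem \ref{theo:classification_preunitary_spectrum_condition} in case $(i)$ is exhausted by the stated list together with $M(1)$, $M(1)^+$ and $L(1,0)$. This follows from the proof of that theorem, which passes through Theorem \ref{theo:characterization_preunitary_exts_primary_cw4} and Proposition \ref{prop:preunitary_VOA_exts_strongly_rational}; in the exceptional case the latter yields $V_{\latt_2}^H$ with $H$ a subgroup of $\operatorname{S}_4$ or $\operatorname{A}_5$, hence again on the list.
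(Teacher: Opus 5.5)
Your proposal is correct and follows the paper's own argument. The paper goes through Lemma \ref{lem:direct_sum_hw_L(1,0)_modules} to get a simple CFT type preunitary extension, then Proposition \ref{prop:every_strongly_rational_preunitary_exts_satisfies_spectrum_condition} for the spectrum condition, then Theorem \ref{theo:classification_preunitary_spectrum_condition}, where case $(ii)$ is excluded by the $\operatorname{A}_5$ hypothesis. Your treatment of the ``if'' direction and your exclusion of $L(1,0)$, $M(1)$ and $M(1)^+$ by non-$C_2$-cofiniteness simply make explicit what the paper's one-line proof leaves implicit.
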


\begin{proof}
	By Lemma \ref{lem:direct_sum_hw_L(1,0)_modules}, $U$ is a simple CFT type preunitary VOA extension of $L(1,0)$. By Proposition \ref{prop:every_strongly_rational_preunitary_exts_satisfies_spectrum_condition}, $U$ satisfies the spectrum condition, so that the result follows from Theorem \ref{theo:classification_preunitary_spectrum_condition} together with the strong rationality of $V_{\latt_2}^{\operatorname{A}_5}$.
\end{proof}

If one wants to avoid assuming the strong rationality of $V_{\latt_2}^{\operatorname{A}_5}$, we can do it by means of Theorem \ref{theo:q-charatcer_classification_strongly_rational_voas}.
On the other hand, to use Theorem \ref{theo:q-charatcer_classification_strongly_rational_voas}, we have to assume a further condition on the effective central charge, which we were able to avoid in the proof of Theorem \ref{theo:classification_strongly rational_VOAs_cc_1}.
Indeed, we have the following classification result for strongly rational VOAs with $c=1=\tilde{c}$:

\begin{theo}  \label{theo:classification_strongly rational_VOAs_cc_cceff_1}
	Let $U$ be a direct sum of lowest weight modules for the Virasoro algebra $\Vir_1$. 
	If $U$ is a strongly rational VOA with $c=1=\tilde{c}$, then it is isomorphic to one of the VOAs cited in Conjecture \ref{con:strongly_rational_VOAs_cc1}.
\end{theo}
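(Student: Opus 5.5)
The plan is to reduce Theorem \ref{theo:classification_strongly rational_VOAs_cc_cceff_1} to Theorem \ref{theo:classification_preunitary_spectrum_condition} and then use the $q$-character classification (Theorem \ref{theo:q-charatcer_classification_strongly_rational_voas}) only to exclude case $(ii)$.

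\textbf{Reduction.} Since $U$ is strongly rational, it is self-contragredient. By Lemma \ref{lem:direct_sum_hw_L(1,0)_modules}, $U$ is then a simple CFT type preunitary VOA extension of $L(1,0)$. By Proposition \ref{prop:every_strongly_rational_preunitary_exts_satisfies_spectrum_condition}, $U$ satisfies the spectrum condition. Theorem \ref{theo:classification_preunitary_spectrum_condition} leaves two alternatives.

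\textbf{Case $(i)$.} Here $U$ is a rank-one lattice VOA or one of its fixed-point subalgebras. These are already the VOAs of Conjecture \ref{con:strongly_rational_VOAs_cc1}, except for fixed points by infinite groups. Those are $L(1,0)$, $M(1)$ and $M(1)^+$, and they are not $C_2$-cofinite by Remark \ref{rem:strongly_rationality_lattice_orbifolds}, so they are excluded.

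\textbf{Case $(ii)$.} Here $V_{\latt_2}^{\operatorname{A}_5}$ is not strongly rational, and $U$ is an extension of $W\cong V_{\latt_2}^{\operatorname{A}_5}$, which is the maximal discrete series subalgebra of $U$. In particular, $U\neq W$, since otherwise $W$ would be strongly rational. By Theorem \ref{theo:q-charatcer_classification_strongly_rational_voas}, $\chi_U$ equals the $q$-character of some $V_{\latt_{2n}}^H$ with $H$ finite.

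All such $q$-characters are known, so we compare the $L(1,0)$-decomposition data. The $q$-character determines the Virasoro multiplicities $\alpha_h$, as in the formula in the proof of Proposition \ref{prop:every_strongly_rational_preunitary_exts_satisfies_spectrum_condition}: write $\eta\chi_U$ and read off coefficients at square and non-square exponents. In particular, the discrete series part $\alpha_{j^2}$ of $U$ must coincide with that of the candidate $V_{\latt_{2n}}^H$.

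The discrete series part of $U$ is that of $V_{\latt_2}^{\operatorname{A}_5}$, namely $\alpha_{j^2}=\dim (M_{j})^{\operatorname{A}_5}$, where $M_j$ is the $(2j+1)$-dimensional $\SO(3)$-module. Its first nonzero value with $j>0$ occurs at $j=6$, since the smallest invariant harmonic polynomial of $\operatorname{A}_5$ has degree $6$. So we check each candidate:
\begin{itemize}
\item $V_{\latt_{2n}}$ or $V_{\latt_{2n}}^+$ with $n$ not a perfect square, by \eqref{eq:decomposition_L_2n_not_perfect_square}, contain all $L(1,p^2)$ or a fixed infinite pattern of them. For example, $L(1,4)$ appears in $V^+$ via $M(1)^+$, which is a contradiction since $\alpha_4=0$ for $U$.
\item For $n=k^2$, the discrete series parts are those of $V_{\latt_2}^{\Z_k}$ and $V_{\latt_2}^{\operatorname{D}_k}$, which contain $L(1,1)$ or $L(1,4)$. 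Again this is a contradiction.
\item $V_{\latt_2}^{\operatorname{S}_4}$ and $V_{\latt_2}^{\operatorname{A}_4}$ have invariants in degrees $4$ and $3$ respectively, giving $\alpha_{16}\neq 0$ or $\alpha_9\neq0$. This is a contradiction.
\end{itemize}
Hence $\chi_U=\chi_{V_{\latt_2}^{\operatorname{A}_5}}$. That character is purely discrete series, so $\alpha_h=0$ for non-square $h$. Therefore $U=W$, contradicting $U\neq W$. Thus case $(ii)$ cannot occur.

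\textbf{Main obstacle.} The hardest step is the character comparison in case $(ii)$. One must make sure that the $q$-character really determines the $\alpha_h$, using the linear independence of the Virasoro characters $q^{h}(1-q)$ versus $q^{j^2}-q^{(j+1)^2}$ after multiplying by $\eta$. One must also check the low-degree invariant data for the candidate groups carefully. I would first separate the coefficients of non-square powers, which must vanish, and only then match the square powers.
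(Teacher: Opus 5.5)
Your argument is correct and follows the paper's route: Lemma \ref{lem:direct_sum_hw_L(1,0)_modules}, then the spectrum condition, then Theorem \ref{theo:classification_preunitary_spectrum_condition}. There are two small differences. The paper derives the spectrum condition from Theorem \ref{theo:q-charatcer_classification_strongly_rational_voas} rather than from Proposition \ref{prop:every_strongly_rational_preunitary_exts_satisfies_spectrum_condition}, and it leaves implicit how alternative $(ii)$ is ruled out. Your explicit character comparison supplies that step: the $q$-character determines every $\alpha_h$, and in Kiritsis' list only $V_{\latt_2}^{\operatorname{A}_5}$ has $\alpha_1=\alpha_4=\alpha_9=\alpha_{16}=0$, which forces $U=W$ and excludes case $(ii)$.
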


\begin{proof}
	By Lemma \ref{lem:direct_sum_hw_L(1,0)_modules}, $U$ is a simple CFT type preunitary VOA extension of $L(1,0)$. By Theorem \ref{theo:q-charatcer_classification_strongly_rational_voas}, $U$ satisfies the spectrum condition, so that it must be isomorphic to a VOA as in Conjecture \ref{con:strongly_rational_VOAs_cc1} by Theorem \ref{theo:classification_preunitary_spectrum_condition}.
\end{proof}

\begin{rem}
	As already commented in the Introduction, a proof of the strong rationality of $V_{\latt_2}^{\operatorname{A}_5}$ has been recently given in \cite{Xu}.
	This can be used to suppress the hypothesis on the strong rationality of $V_{\latt_2}^{\operatorname{A}_5}$ in Theorem \ref{theo:classification_strongly rational_VOAs_cc_1}, so giving a full proof of Conjecture \ref{con:strongly_rational_VOAs_cc1}, also without the assumption on the effective central charge. Note that if $V_{\latt_2}^{\operatorname{A}_5}$ is strongly rational, then it automatically have effective central charge $\tilde{c}=1$ by Proposition \ref{prop:preunitary_VOA_exts_strongly_rational}.
	We point out that a list of $V_{\latt_2}^{\operatorname{A}_5}$-modules were given in \cite{WZ}, assuming the strong rationality of the model.
\end{rem}

\subsection{Vertex subalgebras and generalized symmetries}
\label{subsec:vertex_subalgebras_generalized_symmetries}

The main results of this final section is an application of the Galois correspondence given in Corollary \ref{cor:galois_correspondence} together with the classification results obtained in the previous sections. Indeed, we obtain classification theorem for vertex subalgebras of the rank-one lattice VOAs as stated in the following. Recall Section \ref{subsec:discrete_VOA_extensions}.

\begin{theo}  \label{theo:classificaiton_vertex_subalgebras}
	Let $U$ be a simple vertex subalgebra of $V_{\latt_{2n}}$, for $n\in\Zplus$, with the same conformal vector. Then:
	\begin{itemize}
		\item[$(i)$] if $n=1$, then $U$ is equal to a fixed-point vertex subalgebra $V_{\latt_2}^H$ of $V_{\latt_2}$ for some reductive closed subgroup $H$ of $\Aut(V_{\latt_2})\cong \operatorname{PSL}(2,\C)$;
		
		\item[$(ii)$] if $n$ is not a perfect square and $U$ is a preunitary VOA extension of $L(1,0)$ satisfying the spectrum condition (e.g.\ if $U$ is strongly rational by Proposition \ref{prop:every_strongly_rational_preunitary_exts_satisfies_spectrum_condition}), then $U=L(1,0)$ or $U$ is equal to a fixed-point vertex subalgebra $V_{\latt_{2n}}^H$ of $V_{\latt_{2n}}$ for some proper reductive closed subgroup $H$ of $\Aut(V_{\latt_{2n}})\cong \C^*\rtimes\Z_2$.
	\end{itemize}
\end{theo}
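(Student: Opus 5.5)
For part $(i)$ I would apply the Galois correspondence of Corollary \ref{cor:galois_correspondence} to $W:=V_{\latt_2}$ with the rational action of $G:=\Aut(V_{\latt_2})\cong\operatorname{PSL}(2,\C)$. By Proposition \ref{prop:actions_compact_lie_group_and_its_complexification}, $V_{\latt_2}^{G}=V_{\latt_2}^{\SO(3)}=L(1,0)$. The category $\catc_{L(1,0)}$ generated by the $L(1,0)$-modules occurring in \eqref{eq:decomposition_L_2} is the discrete series category. In the proof of Theorem \ref{theo:discrete_series_extensions} it was shown to satisfy conditions $(i)$--$(vi)$ of Subsection \ref{subsubsec:ind-categories} and to be ribbon equivalent to $\catc_{\operatorname{PSL}(2,\C)}$. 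Corollary \ref{cor:galois_correspondence} then says that the simple vertex subalgebras of $V_{\latt_2}$ containing $L(1,0)$ are exactly the $V_{\latt_2}^H$ with $H$ reductive closed. It remains to see that $U$ contains $L(1,0)$. Since $U$ shares the conformal vector $\nu$, it contains the vertex subalgebra generated by $\nu$, which is a quotient of $V(1,0)=L(1,0)$ and hence equals $L(1,0)$. This gives $(i)$.

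For part $(ii)$, I would first note that $U$ contains $L(1,0)$ as in $(i)$, and that $U$ is a simple CFT type preunitary extension of $L(1,0)$ satisfying the spectrum condition. CFT type comes from $U_n\subseteq (V_{\latt_{2n}})_n$, so that $U_0=\C\Omega$ and there are no negative weights; preunitarity is a hypothesis. If $U=L(1,0)$ we are done. Otherwise Theorem \ref{theo:classification_preunitary_spectrum_condition} leaves two cases. In case $(ii)$ of that theorem, $U$ would contain a copy of $V_{\latt_2}^{\operatorname{A}_5}$. I would rule this out by weight-space counting: by \eqref{eq:decomposition_L_2n_not_perfect_square}, $V_{\latt_{2n}}$ with $n$ not a perfect square contains each $L(1,p^2)$ with multiplicity exactly one. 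On the other hand, $V_{\latt_2}^{\operatorname{A}_5}$ has some $L(1,j^2)$ with multiplicity greater than one. Indeed, the multiplicity of $L(1,j^2)$ in $V_{\latt_2}^{H}$ is $\dim M_{2j+1}^{H}$, where $M_{2j+1}$ is the $(2j+1)$-dimensional irreducible $\SO(3)$-representation, and for $H=\operatorname{A}_5$ this exceeds one for large $j$ by growth of invariants. The same multiplicity argument excludes every finite $H\subseteq \SO(3)$ other than those inside $\operatorname{D}_\infty$ with small invariant dimensions. So $U$ is isomorphic to a rank-one lattice VOA or one of its fixed-point subalgebras, namely $M(1)$, $M(1)^+$, $V_{\latt_{2m}}$, $V_{\latt_{2m}}^+$, $V_{\latt_2}^{\operatorname{S}_4}$ or $V_{\latt_2}^{\operatorname{A}_4}$. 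The last two are excluded by the same multiplicity count.

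The next step is to upgrade ``isomorphic to'' into ``equal to a fixed-point subalgebra $V_{\latt_{2n}}^H$''. If $U_1\neq\{0\}$, the weight-one space of $V_{\latt_{2n}}$ is one-dimensional, so $U$ contains the Heisenberg subalgebra $M(1)=V_{\latt_{2n}}^{\C^*}$ of $V_{\latt_{2n}}$. Then $U$ is an $M(1)$-submodule, hence a sum of $\C^*$-weight spaces, i.e.\ a sum of the $M(1)$-modules $\dutchcal{M}(1,\lambda)$ with $\lambda\in\latt_{2n}$ closed under products. Such a sum is $V_{\latt_{2nk^2}}=V_{\latt_{2n}}^{\Z_k}$ or $M(1)$. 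If $U_1=\{0\}$ but $U\neq L(1,0)$, then by the classification $U\cong M(1)^+$ or $V_{\latt_{2m}}^+$. In each of these, the unique weight-$4$ primary $L(1,0)$-component lies in the multiplicity-one summand $L(1,4)$ of $V_{\latt_{2n}}$, so $U\supseteq M(1)^+=V_{\latt_{2n}}^{\C^*\rtimes\Z_2}$ as actual subalgebras. Then $U$ is an $M(1)^+$-submodule of $V_{\latt_{2n}}$. Since $M(1,\sqrt{2h})$ occurs in $V_{\latt_{2n}}$ with multiplicity one for $h>0$, $U$ is a sum of these determined by a subset $S$ of $\latt_{2n}$, and closure under the fusion rules \eqref{eq:iso_M(1)+_modules} forces $S=k\latt_{2n}$. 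This gives $U=V_{\latt_{2n}}^{\operatorname{D}_k}$.

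I expect the main obstacle to be this last, concrete identification step. One has to check that the $M(1)^+$- and $M(1)$-isotypic components in $V_{\latt_{2n}}$ are multiplicity free. One also has to check that closure under multiplication forces a sublattice, which needs non-vanishing of the relevant intertwining operators inside $V_{\latt_{2n}}$; this follows from the simplicity argument in Lemma \ref{lem:properties_spectrum_current_j}$(iii)$.
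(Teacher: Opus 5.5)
Part $(i)$ is exactly the paper's argument. In part $(ii)$, your detour through Theorem \ref{theo:classification_preunitary_spectrum_condition}, using multiplicity counting to discard the exceptional orbifolds, is valid. The paper is more direct: it reads off from \eqref{eq:decomposition_L_2n_not_perfect_square} that the discrete series part of $V_{\latt_{2n}}$ is the multiplicity-free $M(1)=V_{\latt_{2n}}^{\C^*}$. It then gets $U\supseteq M(1)^+=V_{\latt_{2n}}^{\C^*\rtimes\Z_2}$ from Proposition \ref{prop:preunitary_exts_contain_discrete_series_ones}, and applies Corollary \ref{cor:galois_correspondence} to $M(1)^+\subseteq V_{\latt_{2n}}$ with the category $\catc_{M(1)^+}$. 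Your case $U_1\neq\{0\}$ is also fine, since $V_{\latt_{2n}}=\bigoplus_{\lambda}M(1)\otimes e^{\lambda}$ is multiplicity free as an $M(1)$-module.

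The gap is in the case $U_1=\{0\}$. Your claim that $M(1,\sqrt{2h})$ occurs in $V_{\latt_{2n}}$ with multiplicity one is false. Since $M(1,\lambda)\cong M(1,-\lambda)$ as $M(1)^+$-modules, the summands $M(1)\otimes e^{\lambda}$ and $M(1)\otimes e^{-\lambda}$ together form an isotypic component $M(1,\lambda)\otimes\C^2$. This is the coefficient $2$ in $2L(1,nm^2)$ in \eqref{eq:decomposition_L_2n_not_perfect_square}. So an $M(1)^+$-submodule of $V_{\latt_{2n}}$ is not determined by a subset $S\subseteq\latt_{2n}$; it is determined by a choice of subspace of $\C^2$ for each pair $\pm\lambda$.

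In fact, your conclusion $U=V_{\latt_{2n}}^{\operatorname{D}_k}$ is wrong as stated. Let $g=t\in\C^*$ act by $t^p$ on $M(1)\otimes e^{\sqrt{2n}p}$. Then $g(V_{\latt_{2n}}^{\operatorname{D}_k})=V_{\latt_{2n}}^{g\operatorname{D}_k g^{-1}}$ contains $M(1)^+$ (because $\C^*$ fixes $M(1)$ pointwise), has zero weight-one space, and is isomorphic to $V_{\latt_{2nk^2}}^+$. However, $g\operatorname{D}_k g^{-1}=\Z_k\cup\Z_k g^2\phi_n$, which differs from $\operatorname{D}_k$ unless $g^2\in\Z_k$. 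For instance, with $k=1$ and $t=i$ you get the fixed points of the involution $g^2\phi_n$. On $M(1)\otimes e^{\sqrt{2n}}\oplus M(1)\otimes e^{-\sqrt{2n}}$ this picks the $(-1)$-eigenspace of $\phi_n$ instead of the $(+1)$-eigenspace.

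What is missing is an argument that the lines $U$ picks in the infinitely many two-dimensional multiplicity spaces are compatible, i.e.\ that they all come from a single element of $\C^*$. The correct conclusion is then $U=V_{\latt_{2n}}^{H}$ with $H$ a $\C^*$-conjugate of $\operatorname{D}_k$. Proving this by hand would require explicit cocycle computations in $V_{\latt_{2n}}$. The paper gets it for free from Corollary \ref{cor:galois_correspondence} with base $M(1)^+$ and group $\C^*\rtimes\Z_2$, which handles the multiplicity spaces automatically.

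A smaller unjustified step: going from ``$U$ contains the weight-$4$ primary vector of $V_{\latt_{2n}}$'' to $U\supseteq M(1)^+$ needs either that $M(1)^+$ is generated by $\nu$ and that vector, or the Galois argument for $V_{\latt_2}$ applied to the discrete series part of $U$, as the paper does.
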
 

\begin{proof}
	As explained in Section \ref{subsec:discrete_VOA_extensions}, $V_{\latt_2}^{\operatorname{PSL}(2,\C)}$ is equal to the unique copy of $L(1,0)$ contained in $V_{\latt_2}$, see \eqref{eq:decomposition_L_2}. 
	By the proof of Theorem \ref{theo:discrete_series_extensions}, the hypotheses of Corollary \ref{cor:galois_correspondence} are satisfied, so that $(i)$ is proved.
	To prove $(ii)$, first note that $U$ is of CFT type as $V_{\latt_{2n}}$ is so. 
	From the decompositions into $L(1,0)$-modules \eqref{eq:decomposition_L_2n_not_perfect_square} and \eqref{eq:decompositions_M(1)^pm_vir-modules} of $V_{\latt_{2n}}$, we note that there is only one maximal discrete series VOA extension of $L(1,0)$ contained in $V_{\latt_{2n}}$, which is isomorphic to $M(1)$.
	Recall from Section \ref{subsec:discrete_VOA_extensions} that this is given by $V_{\latt_{2n}}^{\C^*}$, so that $V_{\latt_{2n}}^{\C^*\rtimes\Z_2}$ is isomorphic to $M(1)^+$. As $U$ satisfies the spectrum condition and if it is not equal to $L(1,0)$, then it must contains $V_{\latt_{2n}}^{\C^*\rtimes\Z_2}$ by Proposition \ref{prop:preunitary_exts_contain_discrete_series_ones}.
	By Remark \ref{rem:catc_{M(1)^+}_and_its_completion}, Corollary \ref{cor:galois_correspondence} can be applied, proving $(ii)$ and completing the proof.
\end{proof}

\begin{rem}  \label{rem:classification_vertex_subalgebras_compact_group_unitarity}
	Let us work in the framework of Theorem \ref{theo:classificaiton_vertex_subalgebras}.
	By Corollary \ref{cor:galois_correspondence}, for all reductive closed subgroups $H$ of $\Aut(V_{\latt_{2n}})$, we can always find a maximal compact subgroup $K$ of $\Aut(V_{\latt_{2n}})$ such that $V_{\latt_{2n}}^H=V_{\latt_{2n}}^{K\cap H}$, where $K\cap H$ is a maximal compact subgroup of $H$.
	We remark that the rank-one lattice VOAs are \textit{unitary} by \cite[Theorem 4.12]{DL14}, see \cite[Definition 2.2]{DL14} and \cite[Chapter 5]{CKLW18}.
	By \cite[Theorem A.4]{CGGH23}, the group $\Aut_{\scalar}(V)$ of unitary VOA automorphisms of a unitary VOA $V$ is a maximal compact subgroup of $\Aut(V)$, see also Remark \ref{rem:topology_VOA_automorphism_group}.
	By \cite[Proposition A.5]{CGGH23}, we can choose a unitary structure on $V_{\latt_{2n}}$ in such a way that $K$ acts as $\Aut_{\scalar}(V_{\latt_{2n}})$. This implies that all fixed-point vertex subalgebras $V_{\latt_{2n}}^{K\cap H}$ are \textit{unitary subalgebras} in the sense of \cite[Section 5.4]{CKLW18}. Note that all unitary subalgebras of $V_{\latt_{2n}}$ satisfy the spectrum condition, see \cite[Theorem 3.8 and Theorem 3.15]{CGH19}.
\end{rem}

\begin{rem}   \label{rem:generalized_symmetries}
	Recently, the notion of \lq\lq generalized symmetry'' (possibly non-invertible i.e.\ not necessarily related to a group) is considered in several approaches to (rational) CFT.
	We first focus on the strongly rational VOA context.
	A generalized symmetry of a strongly rational VOA $V$ corresponds to a strongly rational vertex subalgebra $U$ of $V$ with the same conformal vector, and it can be thought as follows. 
	Given a pair $U\subseteq V$ of strongly rational VOAs with the same conformal vector, generalized symmetries give rise to fusion categories (or the associated fusion rings) or finite hypergroups, see e.g.\ \cite[Section D]{Ray24}, \cite[Section 7.1]{MRb}, \cite[Section 3]{DNRX} or \cite[Section 3]{Rie25} respectively, see also \cite{GR}, cf.\ \cite{Bis17}, \cite{BDVG23} in the conformal net setting. 
	Given such a pair $U\subseteq V$ of strongly rational VOAs, one may see $U$ as a finite \lq\lq generalized orbifold'', i.e.\ as a fixed-point vertex subalgebra of $V$, under some generalization of an ordinary finite group action by VOA automorphisms.
	In this framework, Theorem \ref{theo:classificaiton_vertex_subalgebras} shows for example that for $V = V_{\latt_2}$, such generalized symmetries are just ordinary group symmetries given by VOA automorphisms.
	This was previously argued in \cite[$\mathrm{Example}^{\mathrm{ph}}$ 7.8]{MRb}, assuming Conjecture \ref{con:strongly_rational_VOAs_cc1}.
	This remains true in the case where $V=V_{\latt_{2n}}$ for all $n\in\Zplus$ not a perfect square.
	For all $n=m^2$ for some $m\in\Z_{>1}$, the hypergoup corresponding to a generalized symmetry of $V=V_{\latt_{2n}}$ must be a double coset $K/\!\!/ \Z_m$, where $K$ is finite subgroup of $\Aut(V_{\latt_{2n}})\cong\C^*\rtimes \Z_2$ and $\Z_m$ is a cyclic subgroup of $K$ of order $m$.  
	More generally, if $V$ is a strongly rational VOA as listed in Conjecture \ref{con:strongly_rational_VOAs_cc1}, the hypergoup corresponding to one of its generalized symmetry must be isomorphic to a double coset $K/\!\!/ G$, where $K$ is a finite subgroup of $\operatorname{PSL}(2,\C)$ and $G$ is a subgroup of $K$.
	Thanks to Theorem \ref{theo:classification_preunitary_spectrum_condition} and to Theorem \ref{theo:classificaiton_vertex_subalgebras}, a similar situation, with some exception, holds for inclusions $U\subseteq V$ of not necessarily rational simple preunitary VOA extensions of $L(1,0)$ satisfying the spectrum condition, if one replaces the finite hypergroup with a compact one.
	For example, the compact hypergroup corresponding to the inclusion $L(1,0)\subset M(1)$ is isomorphic to the double coset 
	$\SO(3)/\!\!/ \SO(2)$, cf.\ \cite[Claim 4.8]{GR}.
	A possible exception is the inclusion $L(1,0)\subset V_{\latt_{2n}}$ with $n$ not a perfect square. 
\end{rem}

\bigskip
\noindent
{\small
	{\bf Acknowledgements.} We acknowledge support from the \emph{MIUR Excellence Department Project MatMod@TOV} awarded to the Department of Mathematics, University of Rome ``Tor Vergata'', CUP E83C23000330006 and from the University of Rome ``Tor Vergata'' funding OANGQS, CUP E83C25000580005. 
	L.G.\ also acknowledge support from the GNAMPA-INDAM project 2026 {\it Simmetrie distribuzionali per processi stocastici quantistici}, CUP E53C25002010001. The authors are supported in part by GNAMPA-INDAM. 
	S.C.\ would like to thank Chongying Dong, Terry Gannon, Cuipo Jiang, Robert McRae, Sven M{\"o}ller, Brandon C. Rayhaun and  Feng Xu for useful discussions at various stages of this research. 
	Google Gemini was used for helpful discussions and suggestions during the preparation of this manuscript.
	All mathematical statements, references, arguments, and final wording were checked and approved by the authors.

	\bigskip
	\noindent
	{\small
		\emph{Data sharing is not applicable to this article as no new data were created or analyzed in this study.} }
	
	\bigskip
	\noindent
	{\small
		\emph{On behalf of all authors, the corresponding author states that there is no conflict of interest.} }

\end{document}